\renewcommand{\int}{\operatorname{int}} 
\newcommand{\INT}{\operatorname{INT}}
\newcommand{\Ker}{\operatorname{Ker}}
\newcommand{\Arf}{\mathrm{Arf}}
\newcommand\W{{\sf W}} 
\newcommand\sL{{\sf L}}
\newcommand\sD{{\sf D}}
\newcommand\sK{{\sf K}}
\newcommand{\sW}{\W}
\newcommand{\N}{\mathbb{N}} 
\newcommand{\R}{\mathbb{R}}
\newcommand{\CP}{\mathbb{CP}} 
\newcommand{\RP}{\mathbb{RP}} 
\newcommand{\km}{\operatorname{km}}
\newcommand{\cT}{\mathcal{T}} 
\newcommand{\cV}{\mathcal{V}} 
\newcommand{\cW}{\mathcal{W}}
\newcommand{\imra}{\looparrowright} 
\newcommand{\sra}{\twoheadrightarrow}
\newcommand{\iinfty}{{\mathchoice
{\begin{minipage}{.15in}\includegraphics[width=.12in]{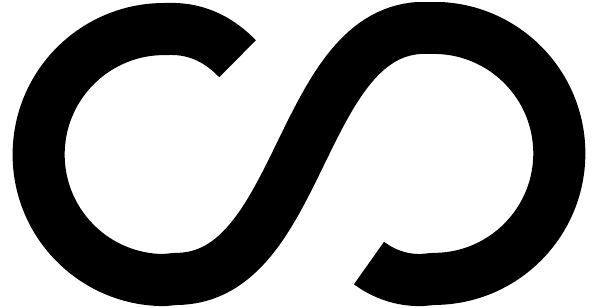}\end{minipage}}
{\begin{minipage}{.10in}\includegraphics[width=.10in]{Figures/infty2.pdf}\end{minipage}}
{\begin{minipage}{.08in}\includegraphics[width=.08in]{Figures/infty2.pdf}\end{minipage}}
{\begin{minipage}{.08in}\includegraphics[width=.08in]{Figures/infty2.pdf}\end{minipage}}
}}
\newtheorem{thm}{Theorem}[section]
      \newtheorem{lem}[thm]{Lemma}
      \newtheorem{prop}[thm]{Proposition}
      \newtheorem{cor}[thm]{Corollary}
      \newtheorem{defn}[thm]{Definition}
      \newtheorem{conj}[thm]{Conjecture}
      \newtheorem{rem}[thm]{Remark}
      \newtheorem{prob}[thm]{Open Problem}
\newtheorem{question}[thm]{Question}
\newcommand{\Z}{\mathbb{Z}}
\title{Introduction to Whitney towers}
\author[R. Schneiderman]{Rob Schneiderman}
\email{robert.schneiderman@lehman.cuny.edu}
\address{Dept. of Mathematics, Lehman College, City University of New York, Bronx, NY}
\begin{document}
\maketitle

\begin{abstract}
These introductory notes on Whitney towers in 4-manifolds, as developed in collaboration with Jim Conant and Peter Teichner, are an expansion of three expository lectures given at the Winter Braids X conference February 2020 in Pisa, Italy.
Topics presented include local manipulations of surfaces in $4$--space, fundamental definitions related to Whitney towers and their associated trees, geometric Jacobi identities,
the classification of order~$n$ twisted Whitney towers in the $4$--ball and higher-order Arf invariants, and low-order Whitney towers on $2$--spheres in $4$--manifolds and related invariants.
\end{abstract}



\begin{figure}[h]
\centerline{\includegraphics[scale=.7]{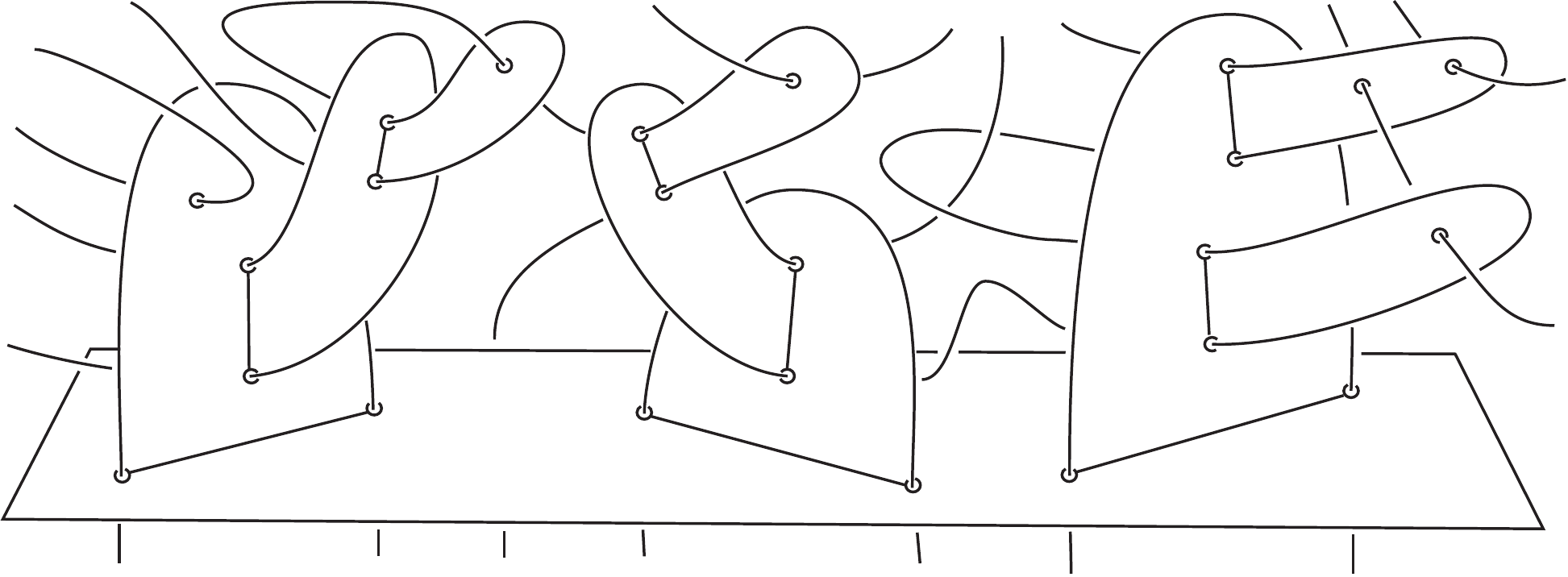}}
\end{figure}

\section*{Introduction}

These notes are an expansion of three introductory lectures given at the Winter Braids X conference February 2020 in Pisa, Italy
describing a theory of Whitney towers on immersed surfaces in $4$-manifolds, as developed in collaboration with Jim Conant and Peter Teichner. A Whitney tower is built on an immersed surface by iteratively adding Whitney disks pairing intersection points among the ``higher-order'' layers of Whitney disks and the surface (see section~\ref{subsec:w-tower-def}).
The theory's main goal is to use Whitney towers to gain topological information about the underlying immersed surface.
These notes are focussed on providing a detailed introduction to the theory while simultaneously describing some open problems.


Section~\ref{sec:disks-in-B4} covers local manipulations of surfaces and Whitney disks in $4$-space, and fundamental definitions related to Whitney towers, including the associated trivalent trees that organize Whitney towers.
This section culminates with the description of a geometric Jacobi Identity in the setting of Whitney towers.

Section~\ref{sec:twisted-order-n-classification-arf-conj} describes the classification of order $n$ twisted Whitney towers on properly immersed disks in the $4$-ball, which illustrates the order-raising intersection-obstruction theory and leads to the formulation of open problems related to certain higher-order Arf invariants which are invariants of classical link concordance.
Here the trees associated to Whitney towers are seen to represent invariants in abelian groups related to Milnor's classical link invariants.

Section~\ref{sec:2-spheres-in-4-manifolds} reviews the classical intersection and self-intersection (order 0) homotopy invariants of $2$-spheres in a $4$-manifold $X$ before introducing order~1 generalizations of these invariants in the setting of Whitney towers.
Here new subtleties coming from $\pi_1X$ and $\pi_2X$ enter the picture.
The end of this section describes open problems on the realization of the order~1 invariants when $X$ is closed and $\pi_1X$ is non-trivial. 

The appendix section~\ref{sec:appendix} provides additional material related to Section~\ref{sec:twisted-order-n-classification-arf-conj}  and Section~\ref{sec:2-spheres-in-4-manifolds} in the form of outlines and/or details of proofs of results from those sections.


Exercises appear at the end of each section.

Sections~\ref{sec:twisted-order-n-classification-arf-conj} and~\ref{sec:2-spheres-in-4-manifolds} are largely independent of each other, but both depend on Section~1.


%

\textbf{Conventions:} Manifolds and submanifolds are assumed to be smooth, with generic intersections, unless otherwise specified, and during cut-and-paste constructions corners will be assumed to be rounded.
The discussion throughout will also hold in the flat topological category via the notions of 4-dimensional topological tranversality from \cite[chap.9]{FQ}. 
Orientations will usually be assumed but suppressed unless needed.


%
%
%
%
%
%
%


{\bf Acknowledgments:} The author is supported by a Simons Foundation \emph{Collaboration Grant for Mathematicians}. Also thanks to the organizers of the \emph{Winter Braids X} conference, and of course collaborators Jim Conant and Peter Teichner.

\tableofcontents

\section{Disks in the $4$-ball}\label{sec:disks-in-B4}
This section focuses on local manipulations of surfaces in $4$-space, and fundamental definitions related to Whitney towers, including the associated trivalent trees that organize Whitney towers.
The section culminates with the description of a geometric Jacobi Identity in the setting of Whitney towers, followed by exercises related to the material covered here.

Throughout, the word ``disk'' means the $2$-disk $D^2$.
A \emph{sheet} of a surface in a $4$-manifold is an embedded disk which is the properly embedded intersection of the surface with a $4$-ball. Here a \emph{proper} map sends boundary to boundary, and sends interior to interior.
Following the tradition of Knot Theory, we will usually blur the distinction between a map and its image.
In the case that the boundary $\partial D$ of an immersed disk $D$ is contained in the interior of an immersed surface $A$, we require and assume that $\partial D$ is embedded, and also that the interior of $D$ is disjoint from $A$ near $\partial D$, i.e.~that there exists a collar in $D$ of $\partial D$ such that the intersection of this collar with $A$ is equal to $\partial D$. 
Orientations of surfaces and signs of intersection points will mostly be suppressed from notation.

\subsection{Local coordinates}

Figure~\ref{transverse-intersection-fig-1}
shows two properly embedded disks $A$ and $B$ in the $4$-ball $B^4=B^3\times I$ which have a single transverse intersection point $p=A\pitchfork B$. In this figure the $I$-parameter increases from left to right, and each $B^3$-slice $B^3\times t$ for $t\in I$ intersects each disk in an arc. Thus $A\cup B$ is the track of a homotopy of arcs in $B^3$ which fixes the vertical black arc tracing out $B$, and isotopes the horizontal blue arc tracing out $A$ from ``front to back'', realizing a crossing change at $p=A\pitchfork B$.
\begin{figure}[h]
\includegraphics[width=\textwidth]{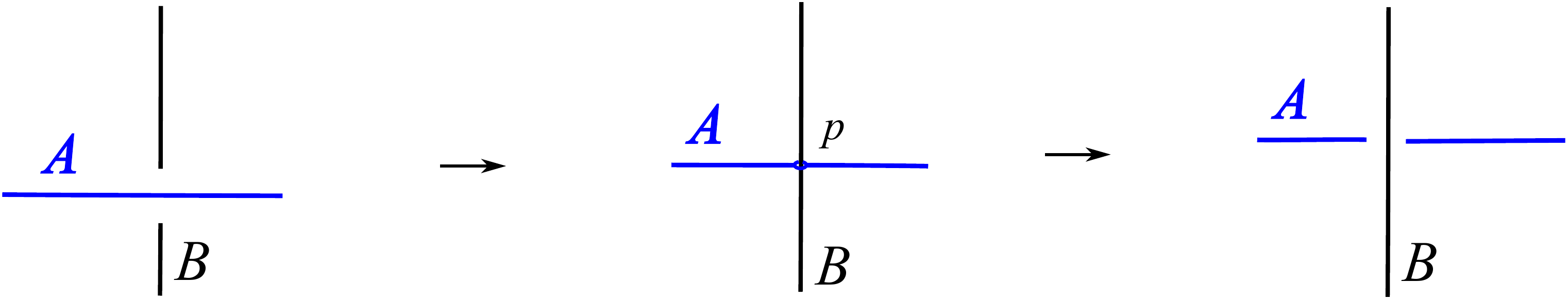}
\caption{Disks $A$ and $B$ in $B^4=B^3\times I$ with $p=A\pitchfork B$.}
\label{transverse-intersection-fig-1}
\end{figure}

In the interest of avoiding visual clutter, we do not attempt to show the boundary of $B^3$ in any slice.
So it is understood that the endpoints of the arcs lie in $\partial B^3\times t$ for all $t$, since $A$ and $B$ are each properly embedded.
Thus, to correctly interpret each picture in the movie one can either imagine the existence of the invisible
$2$-sphere $\partial B^3\times t$ containing the visible endpoints of the arcs, or one can assume that the arcs continue further outside the picture until eventually reaching $\partial B^3\times t$ which is lying outside the picture.

Thinking of the parameter $t$ as ``time'', we say that in Figure~\ref{transverse-intersection-fig-1} the transverse intersection $p=A\pitchfork B$ is contained in the \emph{present} slice $B^3\times 0\subset B^3\times I$, and that both disks extend as arcs into \emph{past} $B^3\times-\epsilon$ and \emph{future} $B^3\times+\epsilon$.
When describing surface sheets in this way we usually do not specify the endpoints of the interval $I$, which can be reparametrized as desired.

\begin{figure}[h]
\includegraphics[scale=.6]{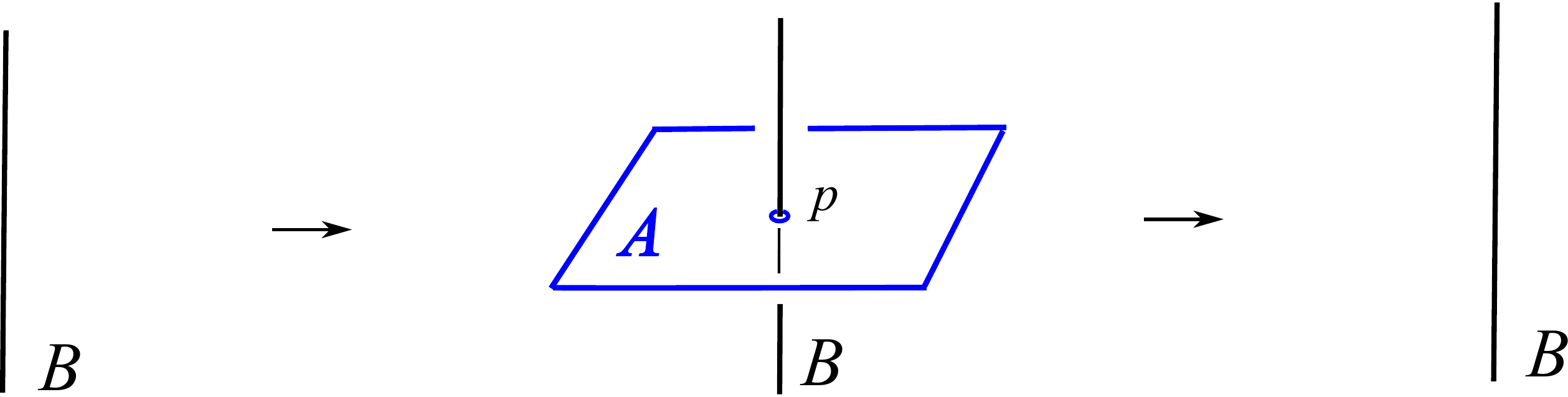}
\caption{$A$ and $B$ in $B^4=B^3\times I$ with $p=A\pitchfork B$ and $A\subset B^3\times 0$.}
\label{transverse-intersection-fig-2}
\end{figure}
 
Figure~\ref{transverse-intersection-fig-2} shows a different view of the same $4$-ball neighborhood of $p=A\pitchfork B$ by changing coordinates and/or isotopy. In this figure the disk $A$ is now contained in the present $B^3\times 0\subset B^3\times I$, while $B$ is still described by a fixed vertical arc times $I$.
This view has the advantage of being completely described by just the present slice. 

Again, $A$ and $B$ are each properly embedded in $B^4$. So the rectangular appearance of $A$ is either understood to extend further out of the picture until reaching reaching the boundary $2$-sphere of $B^3\times 0$, or we can just take each $B^3\times t$ to be a solid cube (with invisible PL  boundary sphere).

Note that the transverse intersection $p$ appears as a small ``puncture'' in the sheet $A$. This is just a visual device to help place the location of $p$ in $A$. Also, the $A$-sheet is shown here as translucent, with a sub-arc of $B$ visible ``behind'' $A$. In general, sheets in the present may appear translucent or opaque depending on which view provides a better clarification of the given configuration of sheets.


%

\subsection{Finger moves}
A \emph{finger move} is a regular homotopy supported near an arc that creates a pair of oppositely-signed intersections between two sheets. This is illustrated in Figure~\ref{finger-move-movies-fig} which shows a ``movie of movies'' starting from the disjoint sheets in the bottom and ending with the intersecting sheets in the top row. Each row shows the same $4$-ball, with the bottom row showing the disjoint sheets before the finger move. Vertically and horizontally adjacent pictures are understood to be connected by smooth interpolations of the black arcs, with the vertical parameter corresponding to the homotopy, and the horizontal parameter corresponding to the 
$I$-parameter of $B^4=B^3\times I$ as in the previous figures.
Between the middle row and the top row a single non-transverse tangential intersection will occur in the center picture at one moment of the homotopy.
\begin{figure}[h]
\includegraphics[width=\textwidth]{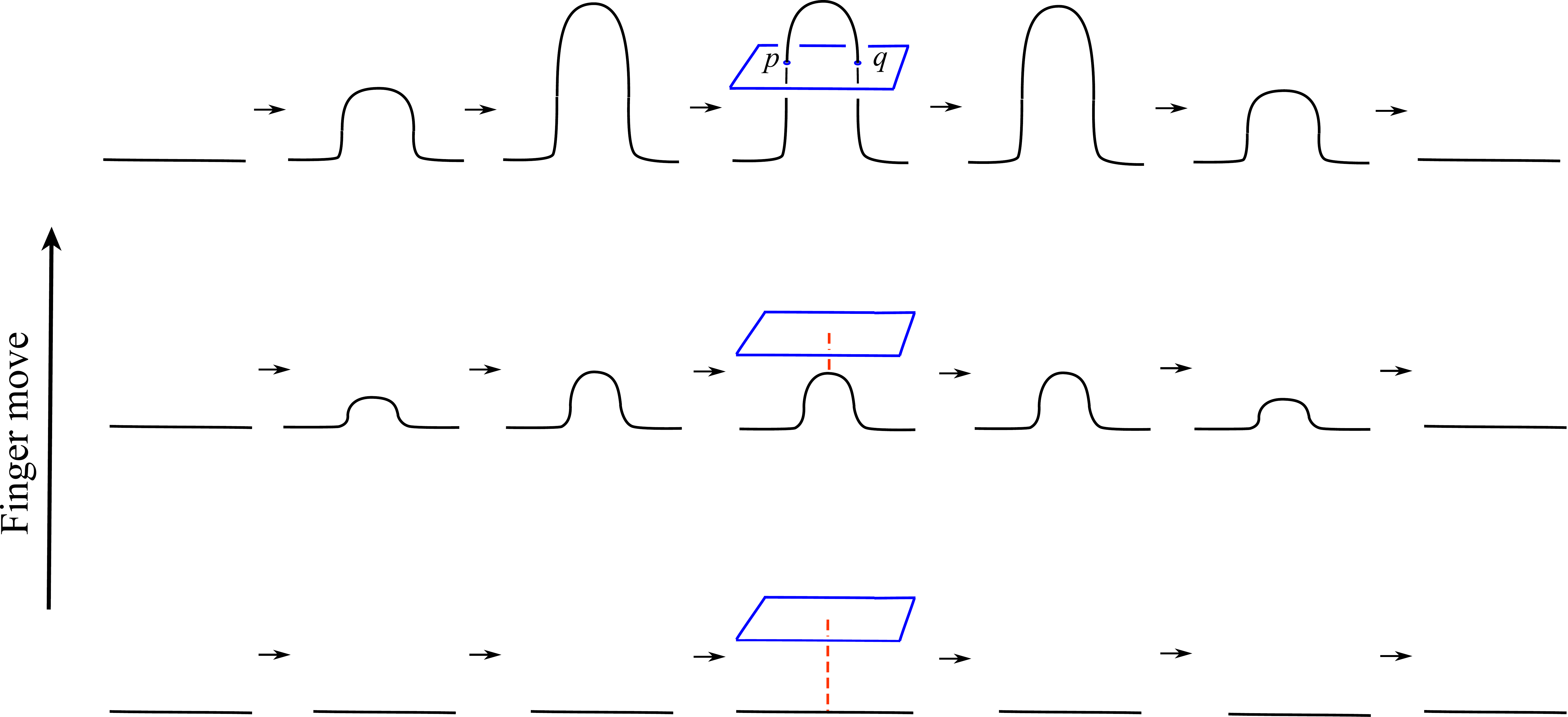}
\caption{A finger move guided by the red dashed arc.}
\label{finger-move-movies-fig}
\end{figure}

Usually only the center top and/or center bottom pictures as in Figure~\ref{finger-move-before-and-after-fig} will be shown, with the other nearby and interpolating pictures shown in Figure~\ref{finger-move-movies-fig} understood. Note that $p$ and $q$ have opposite signs for any choice of orientations on $A$, $B$ and the ambient $4$-ball (Exercise~\ref{ex:finger-move-ints-have-opposite-signs}).

A finger move is supported in a $4$-ball neighborhood of its guiding arc, so this picture also describes a finger move on two sheets in an arbitrary $4$-manifold. Since this neighborhood can be taken to be arbitrarily close to the guiding arc, it may be assumed that the support of a finger move is disjoint from any other surfaces, and hence only creates the pair of intersections. Up to isotopy, a finger move is symmetric in the two sheets (Exercise~\ref{ex:finger-move-symmetric}).

\begin{figure}[h]
\includegraphics[scale=.5]{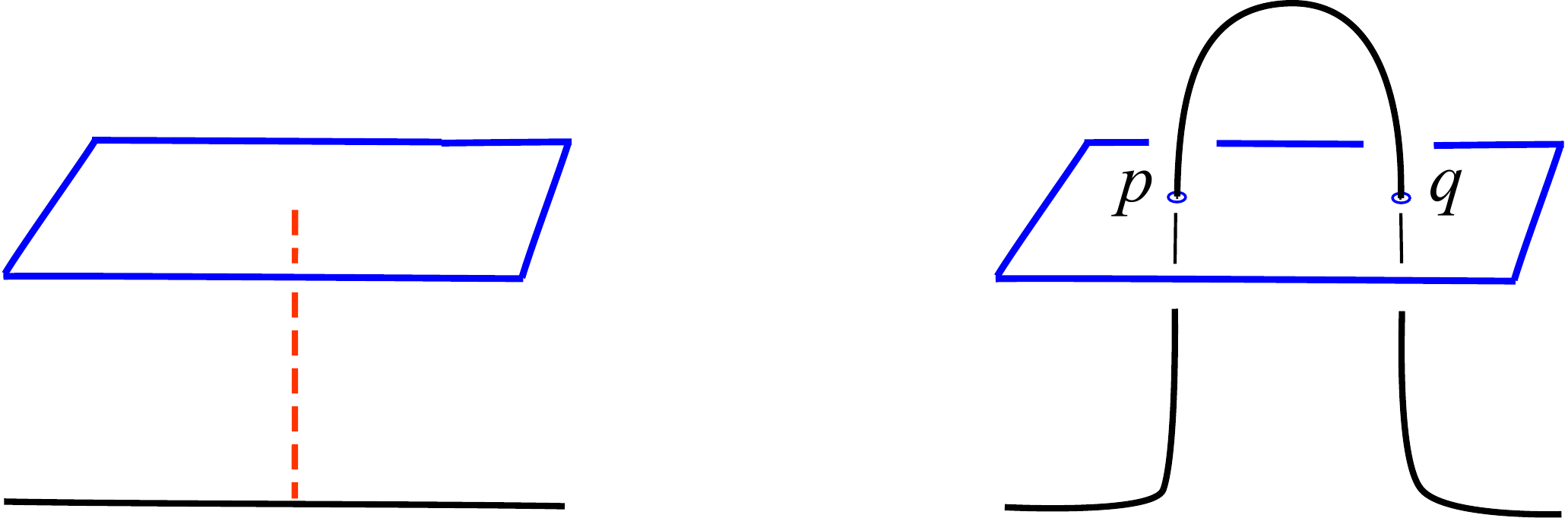}
\caption{Before (left) and after (right) a finger move: The center-bottom and center-top pictures from Figure~\ref{finger-move-movies-fig}.}
\label{finger-move-before-and-after-fig}
\end{figure}


\subsection{Whitney disks}\label{sec:whitney-disks}
Let $p$ and $q$ be oppositely-signed transverse intersections between connected properly immersed surfaces $A$ and $B$ in a $4$--manifold $X$, with $p$ and $q$ joined by embedded interior arcs $a\subset A$ and $b\subset B$ which are disjoint from all other singularities in $A$ and $B$. We allow the possibility that $A=B$, in which case the circle $a\cup b$ must be embedded and change sheets at $p$ and $q$. Any immersed disk $W$ in the interior of $X$ bounded by such a \emph{Whitney circle} $a\cup b$ is a \emph{Whitney disk} pairing $p$ and $q$. 

For any collection $W_i$ of Whitney disks pairing oppositely-signed $p_i,q_i\in A\pitchfork B$ we require that the union $\cup_i\partial W_i$ of Whitney circles is embedded. This can always be arranged by controlled isotopies of the $W_i$ (Exercise~\ref{ex:push-w-disk-boundary}).

\begin{figure}[ht!]
         \centerline{\includegraphics[width=\textwidth]{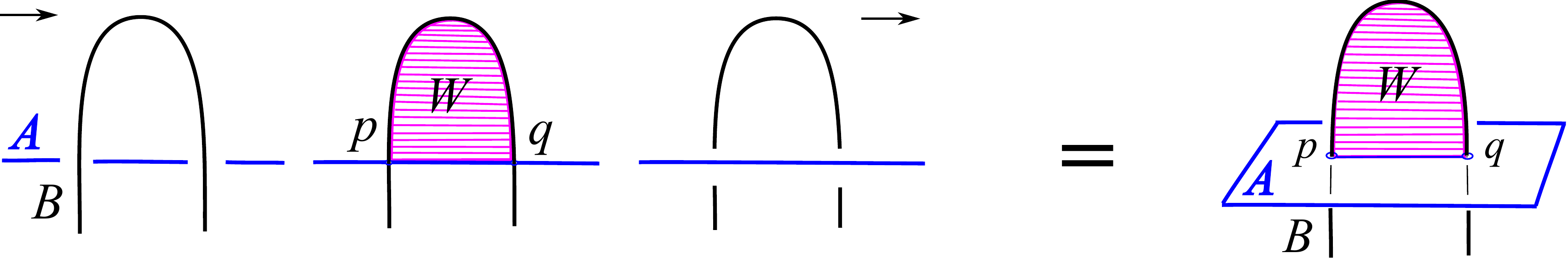}}
         \caption{Two views of a model Whitney disk $W$ pairing $p,q=A\pitchfork B$.}
         \label{Whitney-disk-pic-and-movie}
\end{figure}
Figure~\ref{Whitney-disk-pic-and-movie} shows a \emph{model} Whitney disk $W$ on sheets $A$ and $B$ in a $4$-ball.
In general, a Whitney disk in a $4$-manifold
has a neighborhood obtained by introducing \emph{plumbings} \cite[1.2]{FQ} into the model Whitney disk.
So a Whitney disk may have interior self-intersections and intersections with other surfaces, but has an embedded collar which is disjoint from all surfaces, except for its boundary arcs $a\subset A$ and $b\subset B$. 

An embedded Whitney disk whose interior is disjoint from all surfaces is called a \emph{clean} Whitney disk.




\begin{figure}[ht!]
         \centerline{\includegraphics[width=\textwidth]{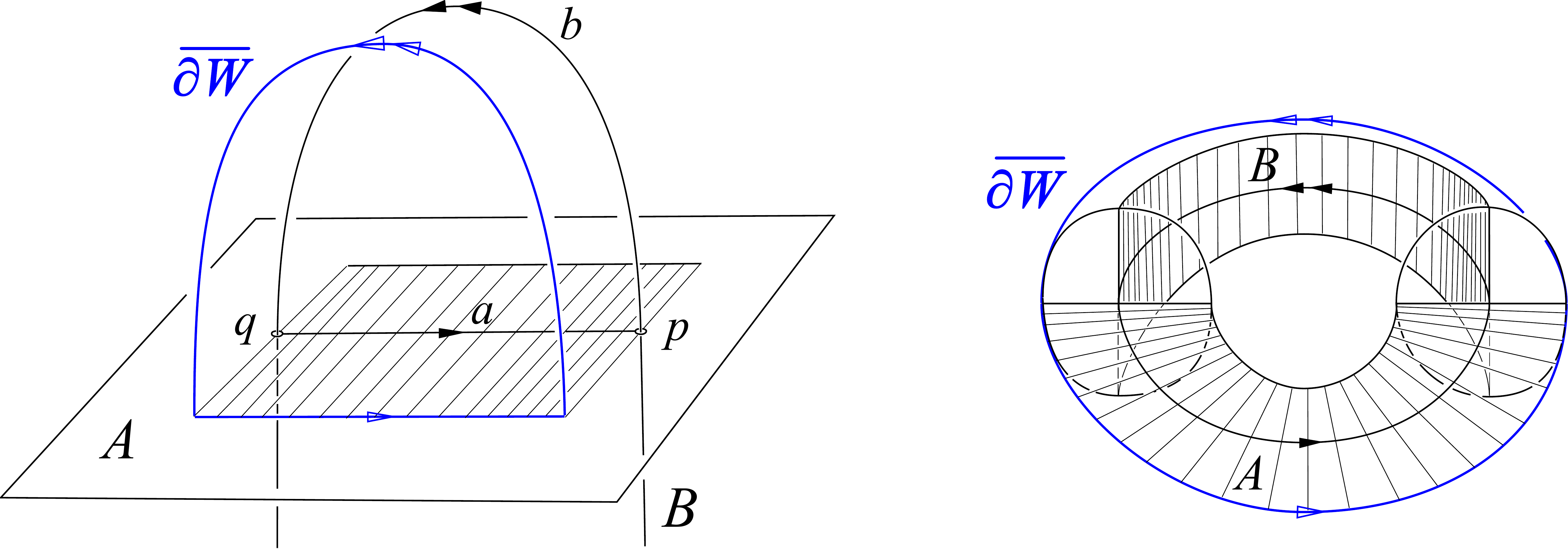}}
         \caption{Left: Near a Whitney disk $W$ pairing $p,q\in A\pitchfork B$, with $\partial W=a\cup b$, a Whitney section $\overline{\partial W}$ is shown in blue. This picture is accurate near $\partial W$; in general, the interior of the evident (but not explicitly indicated) embedded $W$ bounded by $a\cup b$ may have self-intersections as well as intersections with other surfaces. The line segments transverse to $a$ in $A$ indicate the correspondence with the right-hand picture of the normal disk-bundle $\nu_XW|_{\partial W}$.
         Right: The blue Whitney section $\overline{\partial W}$ is shown inside an embedding into $3$--space of $\nu_XW|_{\partial W}\cong S^1\times D^2$, with the sheets $A$ and $B$ indicated by line segments transverse to $\partial W$. The $A$-sheet cuts the front solid torus horizontally, while the $B$-sheet cuts the back of the solid torus vertically. The line segments transverse to $b$ in $B$ extend into past and future in the left-hand picture.}
         \label{W-subspaces}
\end{figure}

\subsection{Framed and twisted Whitney disks.}\label{subsec:framed-w-disks}
Let $W$ be a Whitney disk pairing intersections $p,q\in A\pitchfork B$, 
with boundary 
$\partial W= a\cup b$, for embedded arcs $a\subset A$ and $b\subset B$. 
Denote by $\nu_XW|_{\partial W}$ the restriction to $\partial W$ of the normal disk-bundle $\nu_XW$ of $W$ in $X$. Since $p$ and $q$ have opposite signs, $\nu_XW|_{\partial W}$ admits a nowhere-vanishing \emph{Whitney section} 
$\overline{\partial W}$ defined by taking vectors tangent to $A$ over $a$, and extending over $b$ by vectors which are normal to $B$, as shown in the left of Figure~\ref{W-subspaces}. 

 The right side of Figure~\ref{W-subspaces} shows $\overline{\partial W}$ inside an embedding into $3$--space of 
 $\nu_XW|_{\partial W}\cong S^1\times D^2$.
 Although this choice of embedding has $\overline{\partial W}$ corresponding to the $0$-framing of $D^2\times S^1\subset \R^3$, the section of 
 $\nu_XW|_{\partial W}$ determined (up to homotopy) by the canonical framing of $\nu_XW$ will in general differ by full twists relative to $\overline{\partial W}$. (If $p$ and $q$ had the same sign then there would have to be a half-twist in the sheets of $A$ and $B$, so the (continuous) Whitney section $\overline{\partial W}$ could not exist.)

If $\overline{\partial W}$ extends to a nowhere-vanishing section $\overline{W}$ of $\nu_XW$, then $W$ is said to be \emph{framed} (since a disk-bundle over a disk 
has a canonical framing, and a nowhere-vanishing normal section over an oriented surface in an oriented $4$--manifold determines a framing up to homotopy). In general, the obstruction to extending $\overline{\partial W}$ to a nowhere vanishing section of $\nu_XW$ is the relative Euler number $\chi(\nu_XW,\overline{\partial W})\in\Z$, called the \emph{twisting} of $W$ and denoted $\omega(W)$, so $W$ is framed if and only if $\omega(W)=0$. The twisting $\omega(W)$ can be computed by taking the intersection number of the zero section $W$ with any extension $\overline{W}$ of $\overline{\partial W}$ over $W$, so it does not depend on an orientation choice for $W$ (since switching the orientation on $W$ also switches the orientation of $\overline{W}$). And $\omega(W)$ is also unchanged by switching the roles of $a$ and $b$ in the construction of $\overline{\partial W}$, since interchanging the ``tangent to...'' and ``normal to...'' parts in the construction yields an isotopic section in $\nu_XW|_{\partial W}$ (isotopic through non-vanishing sections).

A Whitney disk $W$ with $\omega(W)\neq 0$ is called a \emph{twisted} Whitney disk.

\subsection{Parallel Whitney circles and disks}\label{subsec:general-whitney-section}
The twisting $\omega(W)$ of $W$, which is the element of $\pi_1(\mathrm{SO}(2))\cong\Z$ determined by a Whitney section as described above, 
can also be computed using any section $\overline{\partial W}$ of $\nu_XW|_{\partial W}$ such that $\overline{\partial W}$ is in the complement of the tangent spaces of both $A$ and $B$, since such $\overline{\partial W}$ will have the same number of rotations as a Whitney section (relative to the longitude determined by the canonical framing of $\nu_W$);
see Figure~\ref{W-subspaces}.

In these notes a \emph{parallel Whitney circle} will refer to either these ``nowhere tangent'' sections, or the standard sections in section~\ref{subsec:framed-w-disks}.

A disk is \emph{parallel} to a Whitney disk if it is the extension of a parallel Whitney circle over the Whitney disk.

\subsection{Whitney moves}
Figure~\ref{fig:model-W-move} describes a \emph{model Whitney move}: In a $4$--ball neighborhood of a framed embedded Whitney disk $W$, a pair of oppositely-signed transverse intersections $p$ and $q$ between surface sheets $A$ and $B$ is eliminated by a regular homotopy which isotopes one of the sheets across the clean framed $W$. 

Combinatorially, the result of this Whitney move is constructed by deleting a regular neighborhood (in its sheet) of one arc of $\partial W$ and replacing this neighborhood with a \emph{Whitney bubble} over that arc. This Whitney bubble is formed from two parallel copies of $W$ connected by a rectangular strip which is normal to a neighborhood (in its sheet) of the other arc. Figure~\ref{fig:model-W-move} shows the result of
adding a Whitney bubble to the sheet $A$. Although both these descriptions of the Whitney move involve a choice of arc of $\partial W$, up to isotopy the result is independent of this choice (Exercise~\ref{ex:W-move-symmetric}). 

A Whitney move on a clean framed $W$ in a $4$-manifold $X$ is described by any embedding of this model into $X$ which preserves the product structures and transversality of the sheets and $W$.

\begin{figure}[h]
\includegraphics[scale=.6]{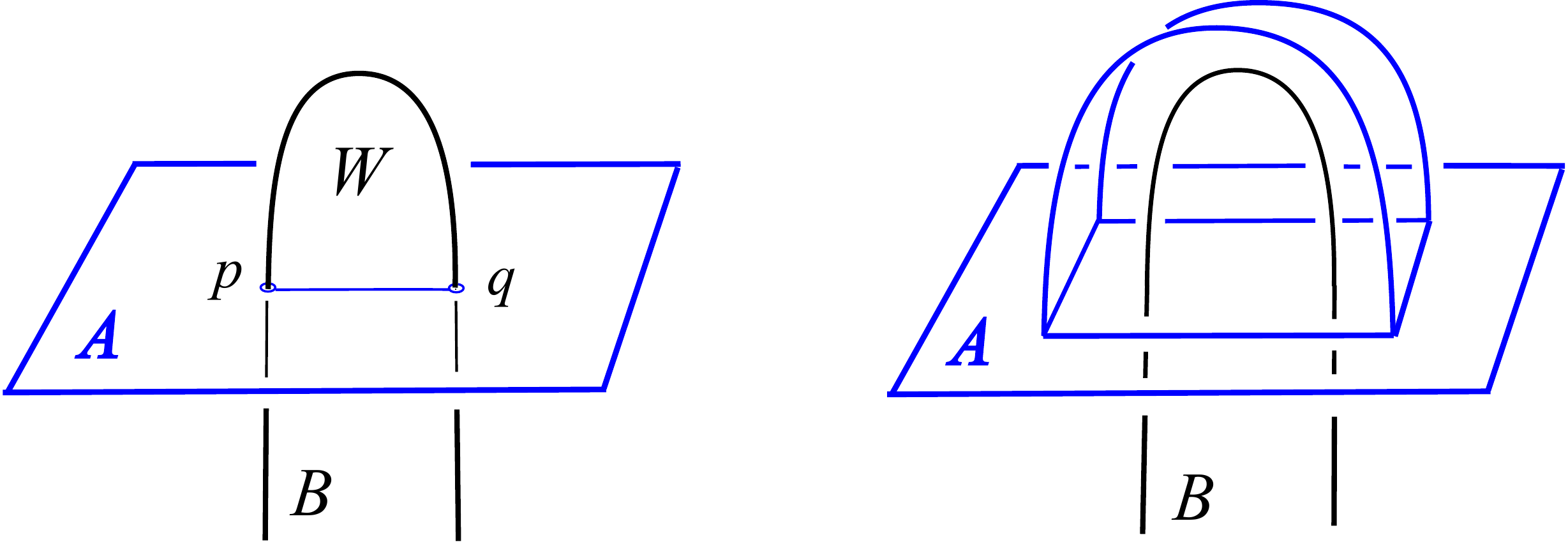}
\caption{Left: $W$ pairing $p,q=A\cap B$. Right: A Whitney move guided by $W$.}
\label{fig:model-W-move}
\end{figure}

In general, a \emph{Whitney move} on an arbitrary Whitney disk $W$ is described using the combinatorial description of adding a Whitney bubble to one sheet: The Whitney bubble is still formed from two parallel copies of $W$ together with a rectangular strip, and the intersections paired by $W$ will still be eliminated by the Whitney move, but the following new intersections will be created. Each intersection $r\in W\pitchfork C$ between the interior of $W$ and a sheet $C$ of another surface will give rise to two new intersections, as shown in 
Figure~\ref{fig:W-disk-int-and-W-move-color}; each self-intersection of $W$ will give rise to four new self-intersections; and if $W$ is twisted, then $|\omega(W)|$-many new self-intersections will be created corresponding to intersections between the two parallel copies of $W$ in the Whitney bubble (see Exercise~\ref{ex:count-w-move-ints}).
\begin{figure}[h]
\includegraphics[scale=.6]{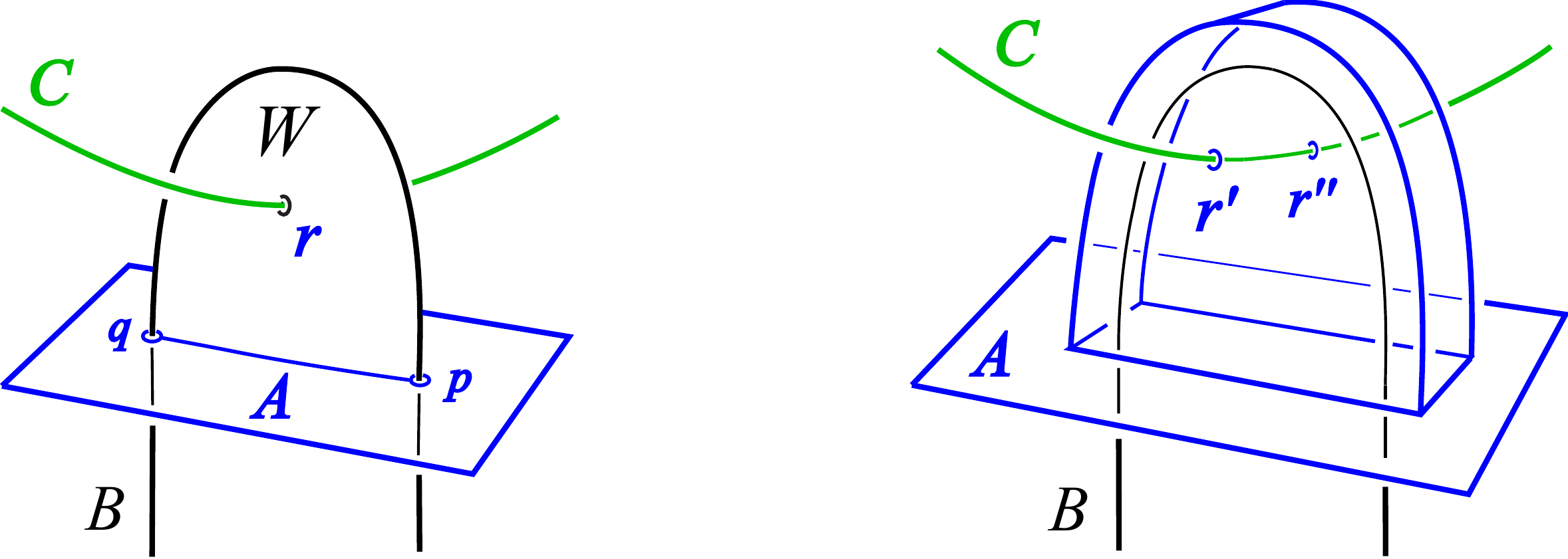}
\caption{
}
\label{fig:W-disk-int-and-W-move-color}
\end{figure}

In our settings we will generally be able to arrange that Whitney disks are embedded and framed at the cost of creating interior intersections with sheets of other surfaces, so the Whitney move shown in Figure~\ref{fig:W-disk-int-and-W-move-color} will be the most relevant.

%
%
%
%
%

%
%
%
%
%




\subsection{Whitney towers}\label{subsec:w-tower-def} 
%

A ``successful'' Whitney move eliminates the pair of intersections paired by the Whitney disk with out creating any new intersections.
Since this requires a clean framed Whitney disk, it is natural to try to somehow count interior intersections and twistings in an attempt to measure the obstructions to the existence of a homotopy of an immersed surface to an embedding.

Whenever these ``higher-order intersections'' can be paired by ``higher-order Whitney disks'' there is hope that they might be eliminated by  successful ``higher-order Whitney moves'', so one is led to construct a tower of Whitney disks by iteratively pairing up as many intersections as possible. As new Whitney disks are chosen, new intersections and twistings can appear which in turn need to be counted. We will see that invariants of the resulting Whitney tower can yield information about the bottom level immersed surface on which the tower is built. 
\begin{figure}[h]
\includegraphics[scale=.6]{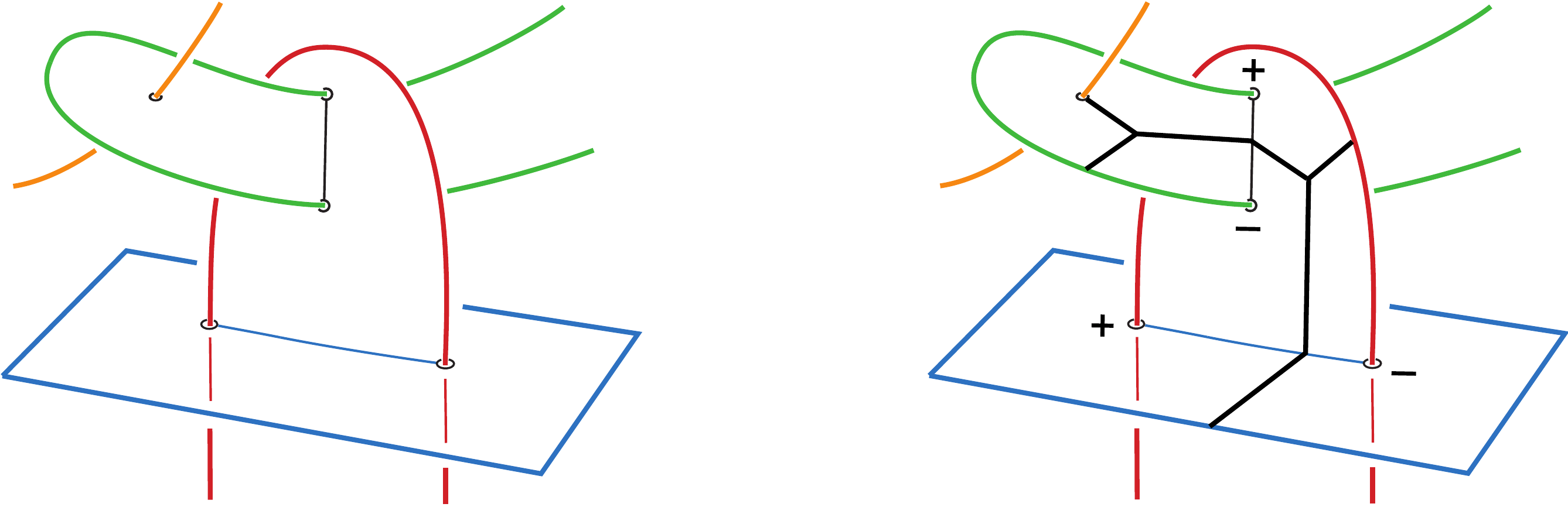}
\caption{Left: Four sheets and two Whitney disks in a $4$-ball. Right: With the tree associated to the unpaired intersection.}
\label{higher-order-intersection-color-and-with-tree}
\end{figure}

As a basic example, the left side of Figure~\ref{higher-order-intersection-color-and-with-tree} shows four sheets in $B^4$ supporting a Whitney tower: The two intersections between red and blue are paired by a Whitney disk which itself has two interior intersections with green that are paired by another Whitney disk. This second Whitney disk has a single interior intersection with yellow that appears to be a ``robust'' obstruction to successfully homotoping these sheets rel boundary to be disjointly embedded in this $4$-ball. In fact, we will see in Section~\ref{sec:twisted-order-n-classification-arf-conj} that this unpaired higher-order intersection does indeed represent such an obstruction (Exercise~\ref{ex:tau-of-bing-hopf-not-zero}).  

%

We can now state the general definition of a Whitney tower:
  
\begin{defn}\label{def:w-towers}
A \emph{Whitney tower} on $A^2\imra X^4$ is defined by:

\begin{enumerate}
\item
$A$ itself is a Whitney tower.

\item
If $\cW$ is a Whitney tower and $W$ is a Whitney disk pairing intersections in $\cW$, then
the union $\cW\cup W$ is a Whitney tower.
\end{enumerate}
\end{defn}
In a Whitney tower the Whitney disk boundaries are required to be pairwise disjointly embedded (section~\ref{sec:whitney-disks}).

Having constructed a Whitney tower by adding some number of Whitney disks as per the definition, we call the properly immersed surface $A$ the \emph{underlying surface} of the Whitney tower, and say that $A$ \emph{supports} the Whitney tower. The ``boundary of $\cW$'' is the boundary of $A$. 
We assume that $A$ comes with a given orientation, and that $\cW$ is \emph{oriented} by choosing (arbitrarily) and fixing orientations of all the Whitney disks in $\cW$. Specific orientations and signs will generally be suppressed from notation and discussion until needed, and at a first reading can be safely ignored while absorbing the combinatorics of Whitney towers.

Our underlying surfaces $A$ will always be collections of disks or annuli or spheres, with components $A_i$ indexed by $i\in\{1,2,\ldots,m\}$. 

Definition~\ref{def:w-towers} describes a very general notion of Whitney tower, and all of the figures so far give very basic examples that satisfy this definition. Structure will emerge upon the introduction of the unitrivalent trees that organize Whitney towers (Figure~\ref{higher-order-intersection-color-and-with-tree} right, and Figure~\ref{split-w-tower-with-trees-fig} bottom-right), 
and we will see that Whitney towers are interesting in their own right, as well as providing information about their underlying surfaces.


\begin{figure}[h]
\centerline{\includegraphics[width=\textwidth]{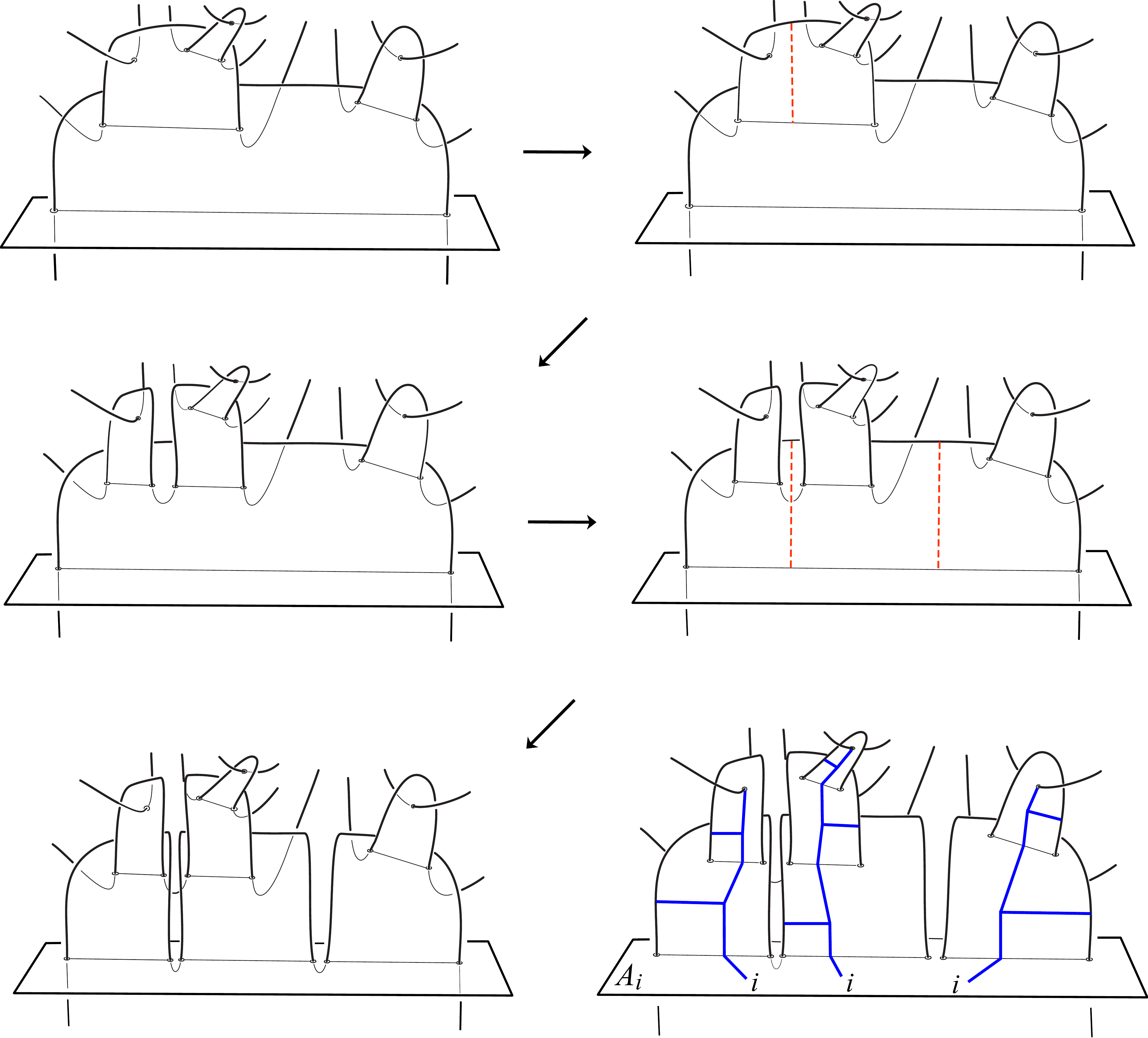}}
\caption{Zig-zagging down from top-left to bottom-left: Splitting a Whitney tower by finger moves. Bottom-right: The trees (blue) associated to unpaired intersections.}
\label{split-w-tower-with-trees-fig}
\end{figure}

As motivation for introducing formalism to organize Whitney towers, Figure~\ref {split-w-tower-with-trees-fig} shows how after ``splitting'' a Whitney tower by finger moves it can be arranged that all of its singularities are contained in regular neighborhoods of unitrivalent trees, with each Whitney disk containing only one ``problem'' (un-paired intersection or Whitney disk boundary-arc). In section~\ref{subsec:split-w-towers}
this notion of splitting is described precisely and extended to include splitting of twisted Whitney disks.



\subsection{Trees}\label{subsec:trees}
A \emph{tree} will always refer to a finite oriented unitrivalent tree, where the orientation of a tree is given by cyclic orderings of the adjacent edges around each trivalent vertex. 
Univalent vertices will usually be labeled from the index set $\{1, 2, 3, \ldots, m\}$, which will always correspond to the connected components of a properly immersed surface $A=A_1\cup A_2\cup A_3\cup\cdots\cup A_m\imra (X,\partial X)$ supporting a Whitney tower. Trees are considered up to isomorphisms which preserve labels and orientations.

We identify formal non-associative iterated bracketings 
of elements from
the index set $\{1,2,3,\ldots,m\}$ with
\emph{rooted} trees, which have each univalent vertex labeled by an element from the index set, except
for one \emph{root} univalent vertex which is left unlabeled.  
So for instance the bracket $(i,j)$ denotes the rooted tree $^{\,\,i}_{\,\,j}> \!\!\!\!-\!\!\!\!\!-$, for $i,j\in\{1,2,3,\ldots,m\}$; and inductively
$(I,J)$ denotes $^{\,\,I}_{\,\,J}> \!\!\!\!-\!\!\!\!\!-$, where $I$ and $J$ denote bracketings/rooted trees. This is formalized in the first part of the following definition:

\begin{defn}\label{def:Trees}
Let $I$ and $J$ be two rooted trees.
\begin{enumerate} 
\item The \emph{rooted product} $(I,J)$ is the rooted tree gotten
by identifying the root vertices of $I$ and $J$ to a single vertex $v$ and sprouting a new rooted edge at $v$.
This operation corresponds to the formal bracket (Figure~\ref{inner-product-trees-fig} upper right). The orientation of $(I,J)$ is inherited from those of $I$ and $J$, and at $v$ by the (left-to-right) ordering of the bracket product.

\item The \emph{inner product}  $\langle I,J \rangle $ is the
(unrooted) tree gotten by identifying the roots of $I$ and $J$ to a single non-vertex point.
Note that $\langle I,J \rangle $ inherits an orientation from $I$ and $J$, and that
all the univalent vertices of $\langle I,J \rangle $ are labeled.
(Figure~\ref{inner-product-trees-fig} lower right.)

\end{enumerate}
\end{defn}

\begin{figure}[ht!]
        \centerline{\includegraphics[scale=.40]{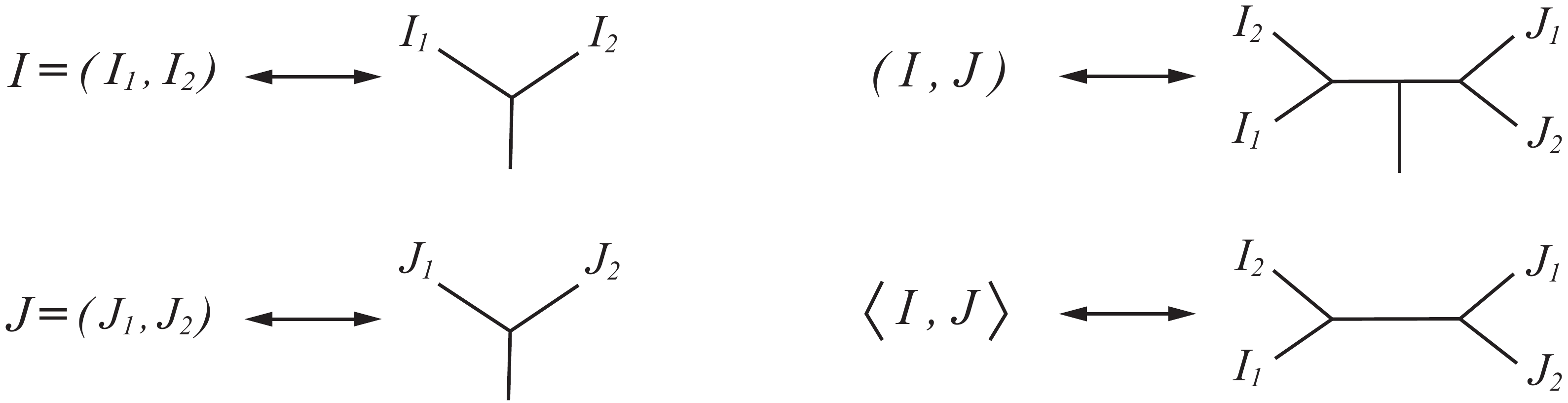}}
        \caption{The \emph{rooted product} $(I,J)$ and \emph{inner product} $\langle I,J \rangle$ of $I=(I_1,I_2)$ and $J=(J_1,J_2)$. All trivalent orientations correspond to a clockwise orientation of the plane.}
        \label{inner-product-trees-fig}
\end{figure}


\subsection{Trees for Whitey disks and intersection points}\label{subsec:trees-for-w-disks-and-ints}
Let $A=A_1,\ldots,A_m\looparrowright (X,\partial X)$ be a properly immersed surface supporting a Whitney tower $\cW$ (oriented), where the $A_i$ are the connected components of $A$. 

To each component $A_i$ is associated
the rooted tree $-\!\!\!\!-\!\!\!\!\!-\,i$ consisting of a single edge with one vertex labeled by $i$, and
to each transverse intersection $p\in A_i\pitchfork A_j$ is associated the 
tree $t_p:=\langle \,i\,,\,j\, \rangle=i\,-\!\!\!\!-\!\!\!\!\!-\,j$ consisting of an edge with vertices labeled by $i$ and $j$. Note that
for singleton brackets (rooted edges) we drop the bracket from notation, writing $i$ for $(\,i\,)$.

The rooted Y-tree $(i,j)=^{\,\,i}_{\,\,j}> \!\!\!\!-\!\!\!\!\!-$, with a single trivalent vertex and two univalent labels $i$ and $j$,
is associated to any Whitney disk $W_{(i,j)}$ pairing intersections between $A_i$ and $A_j$. This rooted tree
can be thought of as being embedded in $\cW$, with its trivalent vertex and rooted
edge sitting in $W_{(i,j)}$, and its two other edges descending into $A_i$ and $A_j$ as sheet-changing paths (Figure~\ref{Wij-Ak-fig} left). The cyclic orientation at the trivalent vertex corresponds to the orientation of $W_{(i,j)}$ via orientation conventions that will be described in section~\ref{subsec:w-tower-tree-orientations}.

Note that Figure~\ref{Wij-Ak-fig} shows the $j$-labeled univalent vertex of the tree $(i,j)$ in the present, but this $j$-labeled edge changes sheets into $A_j$ so the $j$-labeled univalent vertex is really in the past or future.

\begin{figure}[h]
\centerline{\includegraphics[scale=.5]{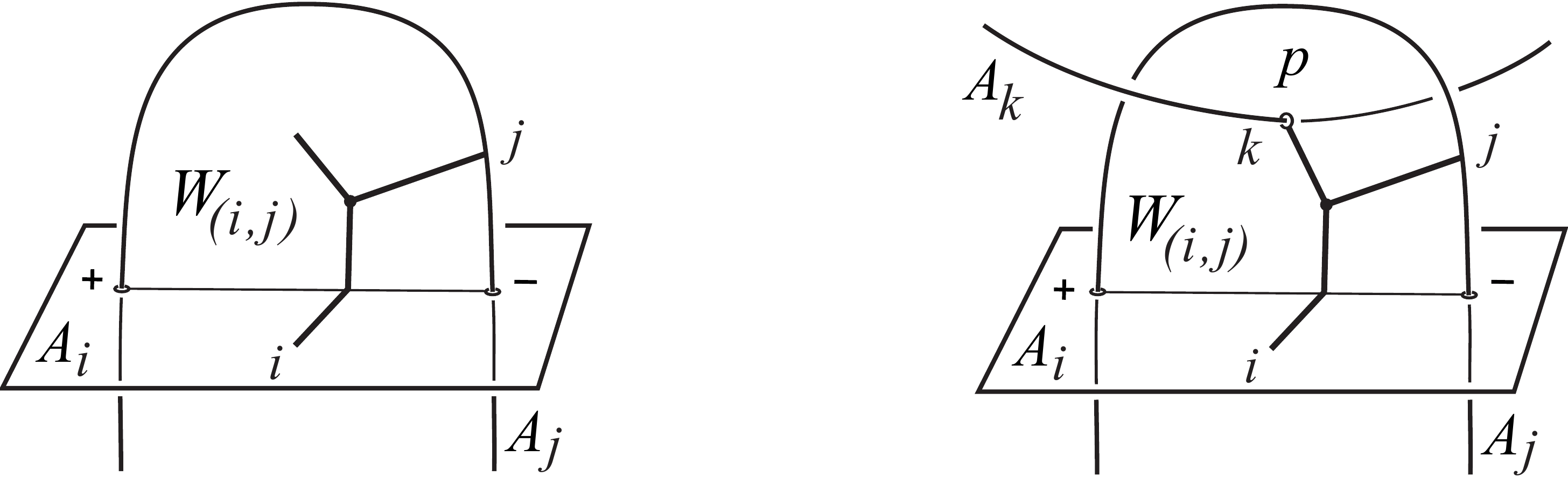}}
\caption{Left: The rooted tree $(i,j)\subset\cW$ associated to $W_{(i,j)}$ (oriented counter-clockwise from the reader's point of view). Right: The unrooted tree $t_p=\langle ((i,j),k)\rangle\subset\cW$ associated to $p\in W_{(i,j)}\pitchfork A_k$.}
\label{Wij-Ak-fig}
\end{figure}
Associated to any transverse intersection $p\in W_{(i,j)}\pitchfork A_k$ is the unrooted tree 
$t_p=\langle (i,j),k\rangle=^{\,\,i}_{\,\,j}> \!\!\!\!-\!\!\!\!\!-{\scriptstyle k}$, as illustrated in Figure~\ref{Wij-Ak-fig} right. 
We take the root vertex of each of the rooted trees associated to $W_{(i,j)}$ and $A_k$ to be at $p$, so the inner product $t_p=\langle (i,j),k\rangle$ is realized geometrically as the union $^{\,\,i}_{\,\,j}> \!\!\!-\!\!\!\!\!-\cup_p-\!\!\!\!\!-{\scriptstyle k}$ of these rooted trees in $\cW$.

Recursively, the rooted tree $(I,J)$ is associated to any Whitney disk $W_{(I,J)}$ pairing intersections
between $W_I$ and $W_J$ (see left-hand side of Figure~\ref{WdiskIJandIJKint-fig}); with the understanding that if, say, $I$ is just a singleton $i$, then $W_I=W_i$ denotes the surface component $A_i$. 
\begin{figure}[h]
\centerline{\includegraphics[scale=.5]{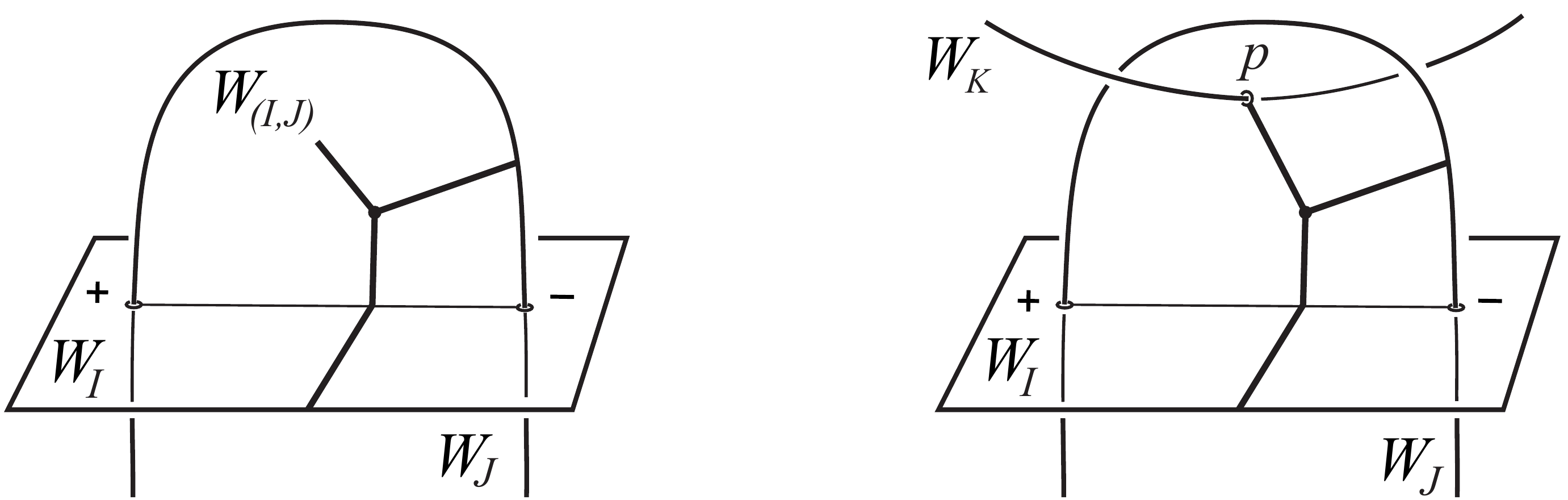}}
\caption{}
\label{WdiskIJandIJKint-fig}
\end{figure}
And to any transverse intersection $p\in W_{(I,J)}\cap W_K$ between $W_{(I,J)}$ and any
$W_K$ is associated the un-rooted tree $t_p:=\langle (I,J),K \rangle$  (see right-hand side of Figure~\ref{WdiskIJandIJKint-fig}).

The above description of the trees $t_p\subset \cW$ can always be arranged to be disjointly embedded in $\cW$, for any number of unpaired intersections in $\cW$ (Exercise~\ref{ex:embedded-trees}).

\subsection{Twisted trees for twisted Whitney disks}\label{subsec:twisted-trees}
So far we have associated rooted trees to Whitney disks, and unrooted trees to unpaired intersection points.
To keep track of the twistings associated to Whitney disks, we introduce one more type of tree:

For any rooted tree $J$, define the \emph{$\iinfty$-tree} 
$$
J^\iinfty:=J-\!\!\!-\iinfty
$$ 
by labeling the root of $J$ with the ``twist'' symbol $\iinfty$.
These $\iinfty$-trees are called \emph{twisted trees} since they are associated to twisted Whitney disks:

Recall from section~\ref{subsec:framed-w-disks} that a Whitney disk $W_J$ with non-zero twisting $\omega(W_J)\neq 0\in\Z$ is said to be \emph{twisted}. To each twisted Whitney disk $W_J\subset\cW$ we associate the twisted tree $J^\iinfty$.

\subsection{`Framed tree' terminology}\label{subsec:framed-trees}
Since twisted trees are associated to twisted Whitney disks, for clarity we will sometimes refer to the unrooted trees $t_p$ associated to intersection points as ``framed'' trees. In fact, section~\ref{subsec:split-w-towers} shows that it can always be arranged that every unpaired intersection $p\in\cW$ is an intersection between \emph{framed} Whitney disks or surface components. (Also, ``framed'' is more succinct than ``non-$\iinfty$ un-rooted'' or ``not-twisted un-rooted''.)


\subsection{Intersection forests of Whitney towers}\label{subsec:int-forest-def}
The above-described associations of trees capture the essential structure of Whitney towers, and the following key definition will be useful for both controlling constructions and defining invariants: 
\begin{defn}\label{def:intersection forests}
The \emph{intersection forest} $t(\cW)$ of a Whitney tower $\cW$ is the multiset
of
signed trees associated to unpaired intersections $p\in\cW$
and
$\Z$-coefficient $\iinfty$-trees associated to twisted Whitney disks $W_J\subset\cW$:
\[
t(\cW):=\sum \ \epsilon_p \cdot  t_p \,\, + \sum \ \omega(W_J)\cdot  J^\iinfty
\]
with $\epsilon_p\in\{+,-\}$ the usual sign of the transverse intersection point $p$, and $\omega(W_J)\in\Z$ the twisting of $W_J$. 
\end{defn}

Here the formal sums are over all unpaired $p$ and all twisted $W_J$ in $\cW$.
In fact, by splitting twisted Whitney disks (section~\ref{subsec:split-w-towers}) it can be arranged that all $\omega(W_J)=\pm 1$, just like the signs of the $p$.

In some papers $t(\cW)$ is notated as a disjoint union rather than a formal sum. Regardless of notation choice, it is helpful to think of $t(\cW)$ as being embedded in $\cW$ as per Figures~\ref{Wij-Ak-fig} and~\ref{WdiskIJandIJKint-fig} (and Exercise~\ref{ex:embedded-trees}), since
modifications of Whitney towers are described in terms of changes to $t(\cW)$, and these modifications correspond to relations in the targets of invariants represented by $t(\cW)$ that live in abelian groups generated by trees. 

\begin{rem}[Key Question]\label{rem:key-question}
Given a properly immersed surface $A$ in a $4$-manifold, constructing a Whitney tower $\cW$ on $A$ involves many possible choices of Whitney disks, and since the essential structure of $\cW$ is contained in $t(\cW)$, it is natural to ask: ``What does $t(\cW)$ tell us about $A$?''.  Various answers to this question have been found, as will be illustrated by results throughout these notes. More answers are waiting to be discovered...
\end{rem}

\begin{figure}[h]
\centerline{\includegraphics[scale=.45]{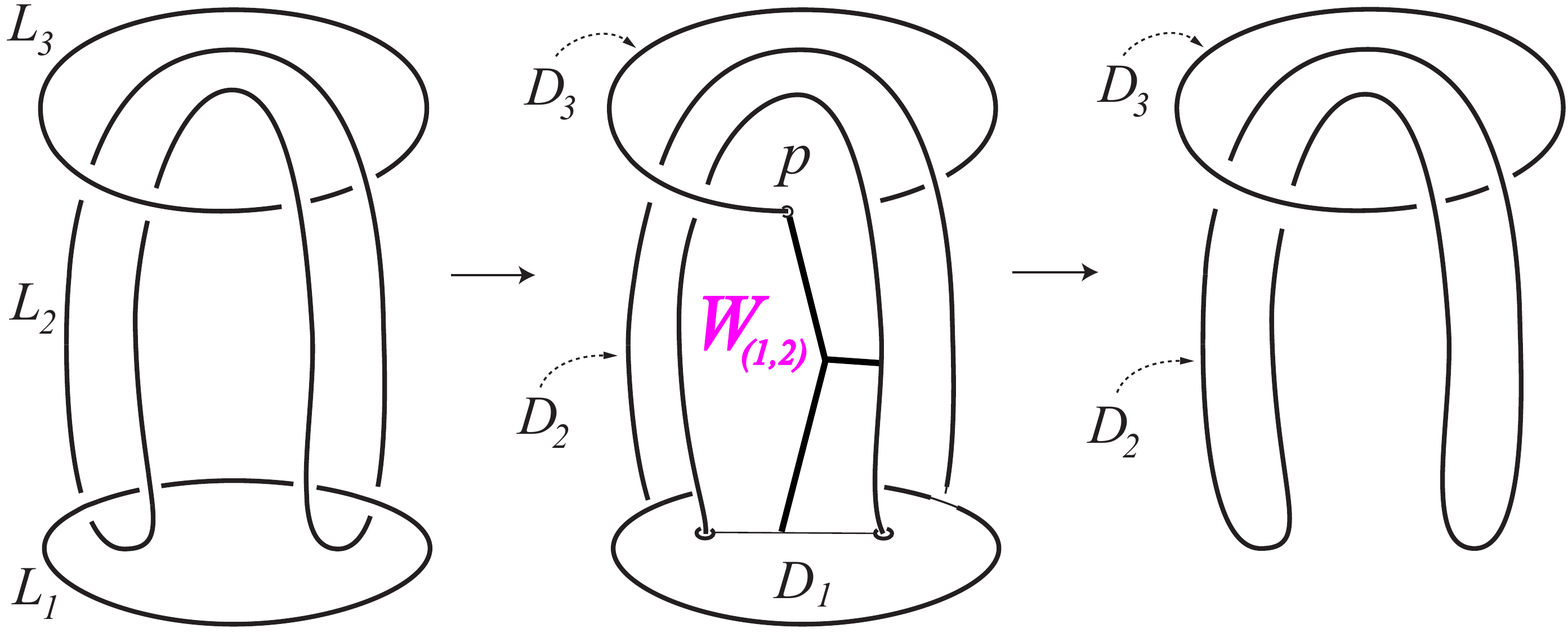}}
\caption{}
\label{fig:Borro-rings}
\end{figure}
\subsection{Examples}\label{subsec:examples-of-intersection-forests}

It will be shown in Lemma~\ref{lem:realization-of-geometric-trees} that for \emph{any} multiset of signed trees there exists a link $L\subset S^3$ bounding immersed disks into $B^4$ supporting a Whitney tower $\cW$ such that $t(\cW)$ is equal to the given multiset of signed trees. 

On the other hand, Exercise~\ref{ex:t(W)-on-S2-in-B4-restricted} will show that
there exist restrictions on the possible $t(\cW)$ for a Whitney tower $\cW$ on immersed $2$-spheres in $B^4$.

Some basic examples follow:


\subsubsection{}\label{eg:boro-rings}
Moving radially into $B^4$ from left to right, Figure~\ref{fig:Borro-rings} shows the Borromean Rings $L=L_1\cup L_2\cup L_3\subset S^3=\partial B^4$ bounding properly immersed disks $A=D_1\cup D_2\cup D_3\imra B^4$ which support $\cW=D_1\cup D_2\cup D_3\cup W_{(1,2)}$ with $t(\cW)=^{\,\,1}_{\,\,2}> \!\!\!-\!\!\!\!\!-{\scriptstyle 3}$.

The disk $D_1$ consists of the ``horizontal'' opaque disk in the lower part of the middle picture extended by an annular collar back to $L_1$ in the left picture. The disks $D_2$ and $D_3$ consist of the embedded annuli which are the products of $L_2$ and $L_3$ with the radial coordinate into $B^4$ together with embedded disks (not shown) extending further into $B^4$ that are bounded by the unlink in the right picture. The framed embedded Whitney disk $W_{(1,2)}$ which pairs $D_1\cap D_2$ is completely contained in the middle picture and has a single interior intersection point $p$ with $D_3$ such that $t_p=\langle(1,2),3\rangle=^{\,\,1}_{\,\,2}> \!\!\!-\!\!\!\!\!-{\scriptstyle 3}=t(\cW)$. 


\subsubsection{}\label{eg:Fig8-int-forest}
Moving radially into $B^4$ from left to right, Figure~\ref{Fig8-knot-twisted-W-fig} shows the Figure-8 knot in $S^3$ bounding a properly immersed disk $A=D_1\imra B^4$ supporting $\cW=D_1\cup W_{(1,1)}$ with $t(\cW)=(1,1)^\iinfty=^{\,\,1}_{\,\,1}> \!\!\!-\!\!\!\!\!-{\iinfty}$.

The track of the indicated null-homotopy of the Figure-8 knot describes the properly immersed disk $D_1$ with two self-intersections that are paired by the clean $+1$-twisted Whitney disk $W_{(1,1)}$. Part of $W_{(1,1)}$ is visible in the middle picture, and the unlink in the right hand picture can be capped off by two embedded disks which form the rest of $D_1$ and $W_{(1,1)}$. The twisting $\omega (W_{(1,1)})=1$ of $W_{(1,1)}$ is explained by Figure~\ref{Fig8knot-Wdisk-Whitney-section-fig}.
\begin{figure}[h]
\centerline{\includegraphics[scale=.45]{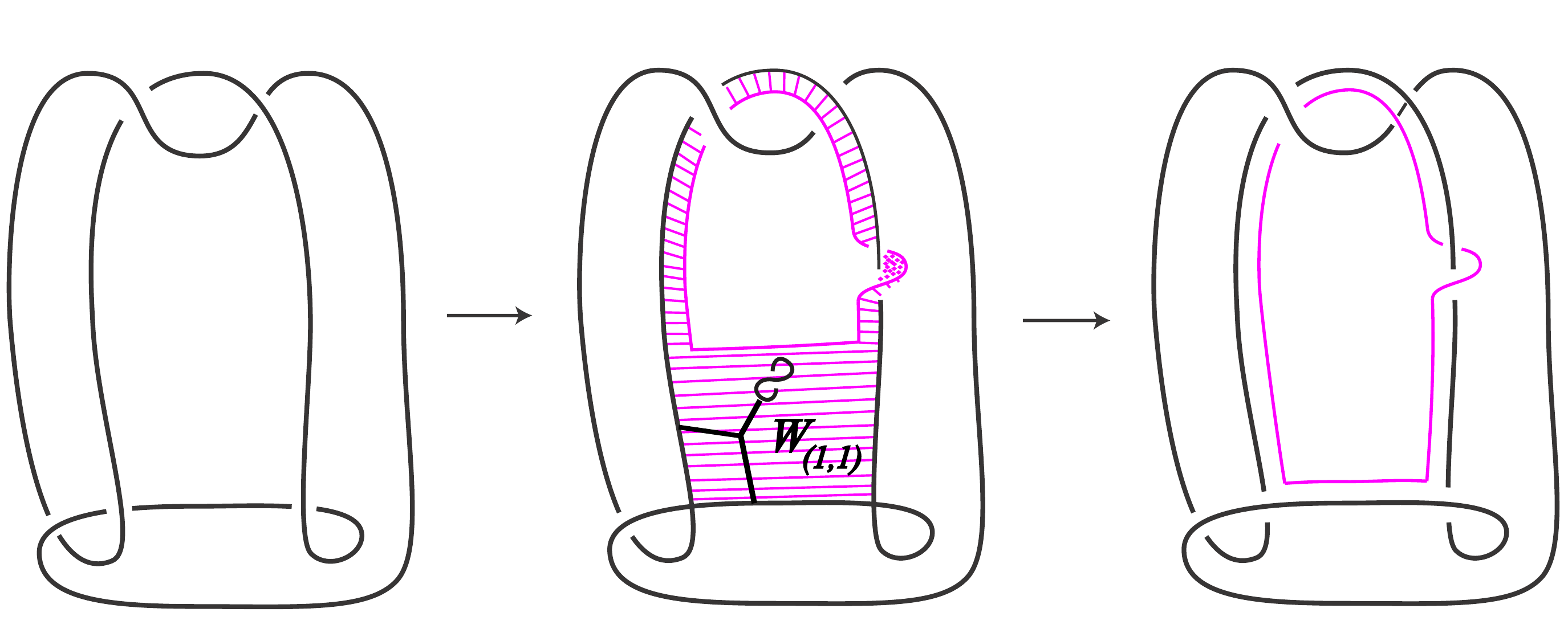}}
\caption{}
\label{Fig8-knot-twisted-W-fig}
\end{figure}
\begin{figure}[h]
\centerline{\includegraphics[scale=.45]{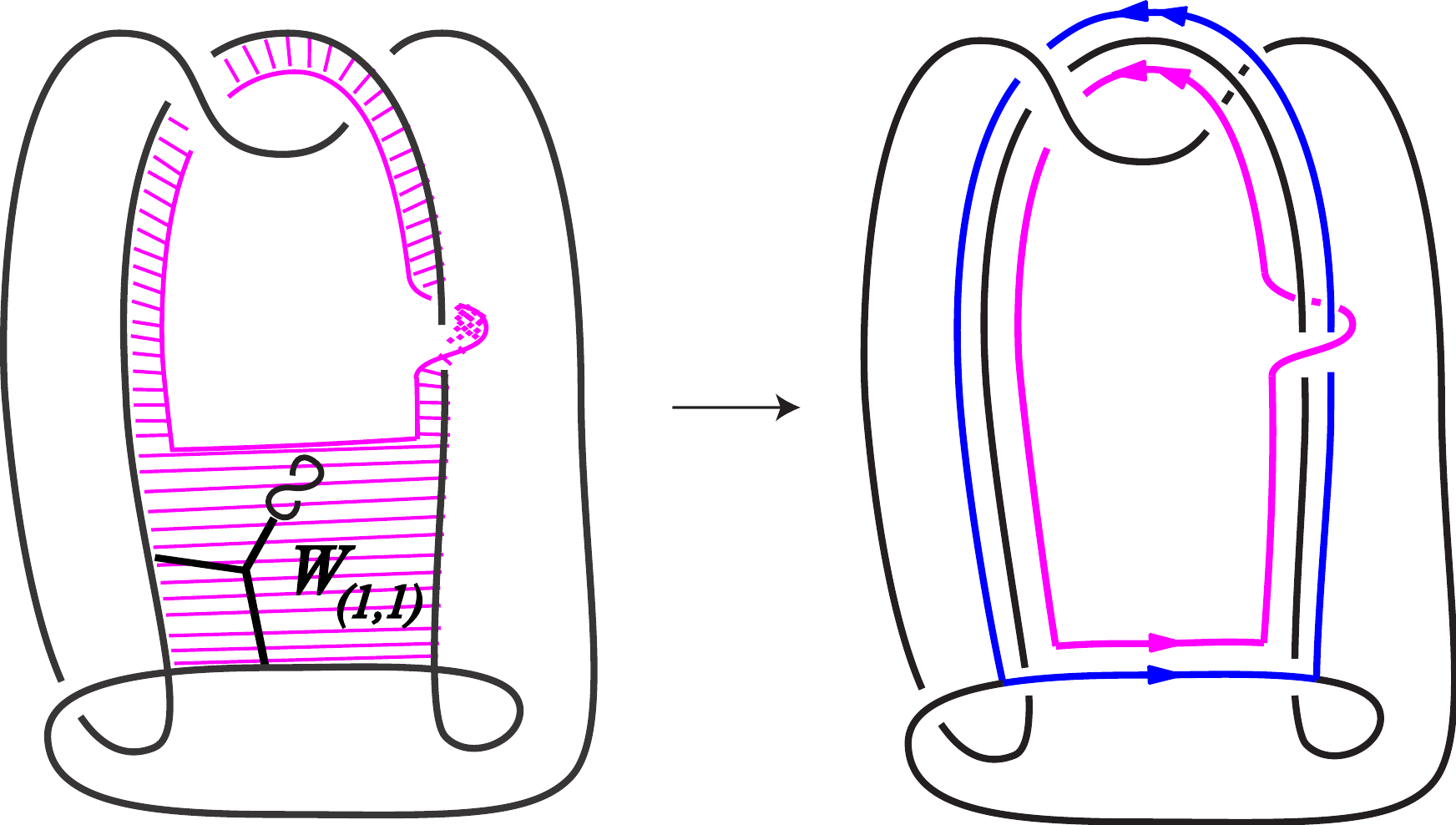}}
         \caption{The two right-most pictures from Figure~\ref{Fig8-knot-twisted-W-fig}, but here with a Whitney section $\overline{\partial W_{(1,1)}}$ shown in blue. Compare Figure~\ref{W-subspaces}. 
The $+1$-linking between the purple and blue circles corresponds to the twisting 
$\omega(W_{(1,1)})=1$. }
         \label{Fig8knot-Wdisk-Whitney-section-fig}
\end{figure}

%
\subsubsection{}\label{eg:no-trees-no-problem} 
If $\cW$ is a Whitney tower on $A$ such that $t(\cW)=\emptyset$,
then $A$ is regularly homotopic to an embedding 
(Exercise~\ref{ex:no-trees-no-problem}).

%

%
\subsection{Higher-order Whitney disks and intersections}\label{subsec:order-of-trees-disks-ints}
We will be using the following grading of our trees:
\begin{defn}\label{def:order-of-trees}
\item The \emph{order} of a tree, rooted or unrooted, is defined to be the number of trivalent vertices.
\end{defn}

Having associated trees to Whitney disks and intersection points  in section~\ref{subsec:trees-for-w-disks-and-ints}, we can define \emph{higher-order Whitney disks} and \emph{higher-order intersections} in Whitney towers:
\begin{defn}\label{def:order-of-w-disks-and-ints}
\item The \emph{order} of a Whitney disk or intersection point in a Whitney tower is defined to be the order of its corresponding tree.
\end{defn}

The components $A_i$ of the underlying surface are also referred to as \emph{order 0 surfaces}.

Linking numbers of links in $S^3$ can be computed by summing order $0$ intersections between order $0$ surfaces in $B^4$ bounded by the links. The higher-order intersections and Whitney disk twistings in a Whitney tower bounded by a link are closely related to ``higher-order'' linking numbers (Milnor's link invariants), as will be explained in Section~\ref{sec:twisted-order-n-classification-arf-conj}.

%
%
%


%
\subsection{Order $n$ framed Whitney towers}

\begin{defn}\label{def:order-n-framed-W-tower}
$\cW$ is an \emph{order $n$ framed Whitney tower} if
\begin{enumerate}
\item 
every framed tree $t_p$ in $t(\cW)$ is of order~$\geq n$, and 
\item
there are no $\iinfty$-trees in $t(\cW)$.
\end{enumerate}
\end{defn}

So in an order $n$ framed $\cW$ all unpaired intersections have order $\geq n$, and all Whitney disks are framed.


%
\subsection{Order $n$ twisted Whitney towers}\label{subsec:def-of-order-n-twisted-W-towers}

\begin{defn}\label{def:order-n-twisted-W-tower}
$\cW$ is an \emph{order $n$ twisted Whitney tower} if 
\begin{enumerate}
\item 
every framed tree $t_p$ in $t(\cW)$ is of order~$\geq n$, 
 and 
\item
every twisted $\iinfty$-tree in $t(\cW)$ is of order~$\geq \frac{n}{2}$.
\end{enumerate}
\end{defn}

So in an order $n$ framed $\cW$ all unpaired intersections have order $\geq n$, and all Whitney disks of order less than $n/2$ are framed.

The reason that Definition~\ref{def:order-n-twisted-W-tower} allows twisted Whitney disks in orders at least $n/2$ will become clear in Section~\ref{sec:twisted-order-n-classification-arf-conj}, where it will be shown (following  \cite{CST1}) that
intersection invariants extracted from the intersection forests of order $n$ twisted Whitney towers, and the corresponding order-raising obstruction theory, lead to a classification of
order $n$ twisted Whitney towers on properly immersed disks in the $4$-ball in terms of Milnor invariants and higher-order Arf invariants of the links on the boundary.
In \cite{CST1} an analogous classification of order $n$ framed Whitney towers is also derived from this twisted classification.
\subsection{Other gradings of Whitney towers}
We mention here just two of the other variations on organizing Whitney towers.
See also \cite[Sec.1.5]{CST1} for a brief comparison discussion.

\subsubsection{Non-repeating order $n$ Whitney towers:}\label{subsubsec:non-repeating-w-towers}

If $t_p$ is the tree associated to an intersection $p$ in a Whitney tower such that $t_p$ has distinct univalent labels, then
$t_p$ is called a \emph{non-repeating tree} and $p$ is called a \emph{non-repeating intersection}.

\begin{defn}\label{def:non-rep-tower}
A Whitney tower $\cW$ is an order $n$ \emph{non-repeating} Whitney tower if all non-repeating $t_p\in t(\cW)$ are of order $\geq n$.
\end{defn}
Note that there are no restrictions on $\iinfty$-trees in order $n$ non-repeating Whitney towers, and no restrictions on any \emph{repeating trees} which do not have distinctly labeled univalent vertices. 

Non-repeating Whitney towers characterize being able to ``pull apart'' components:
\begin{thm}[\cite{ST3}]\label{thm:pull-apart}
$A=\cup^m_{i=1}A_i\imra X$ supports an order $m-1$ non-repeating $\cW$ 
if and only if
$A$ is homotopic rel $\partial A$ to $A'=\cup^m_{i=1}A'_i$
with $A'_i\cap A'_j=\emptyset$ for all $i\neq j$.
\end{thm}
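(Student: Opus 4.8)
The plan is to prove the two implications separately, with the ``only if'' direction being essentially a packaging of standard Whitney-move technology and the ``if'' direction being the content that requires care.

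\emph{The easy direction.} Suppose $A$ is homotopic rel $\partial A$ to $A'$ with pairwise disjoint components $A'_i$. Then $A'$ supports a Whitney tower with \emph{no} non-repeating intersections at all: any intersection point of $A'$ is a self-intersection $A'_i \pitchfork A'_i$, whose tree $\langle i,i\rangle$ is a repeating tree of order $0$. Vacuously every non-repeating tree in $t(A')$ has order $\geq m-1$ (there are none), so $A'$ is an order $m-1$ non-repeating Whitney tower. Since ``supporting an order $m-1$ non-repeating Whitney tower'' should be a homotopy-rel-$\partial$ property (a homotopy from $A'$ back to $A$ drags the trivial tower along, with finger moves only creating repeating intersection pairs that can themselves be paired), $A$ supports such a tower. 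The one point to check here is that a homotopy of $A$ can be broken into finger moves and Whitney moves supported away from a given configuration, and that these moves only introduce repeating-tree data when performed within a single component — this is routine from Section~\ref{sec:disks-in-B4}.

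\emph{The hard direction.} The real work is: given an order $m-1$ non-repeating Whitney tower $\cW$ on $A$, produce a homotopy rel $\partial A$ to a component-wise disjoint $A'$. The strategy is an order-raising induction on the non-repeating part of the intersection forest. First split $\cW$ (section~\ref{subsec:split-w-towers}) so that all singularities sit in neighborhoods of embedded trees and each Whitney disk carries a single problem. For the base of the induction one needs that an order $\geq m-1$ non-repeating tree on $m$ components is forced to be ``geometrically controlled'' — here the pigeonhole/Milnor-type input enters: a non-repeating tree of order $m-1$ has $m$ univalent vertices carrying $m$ distinct labels, i.e.\ it is a non-repeating tree on the full index set, and one shows such configurations can be removed. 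The inductive step is the usual obstruction-theory move: given an order $k$ non-repeating tower with $k < m-1$, the lowest-order non-repeating intersections $p$ with $t_p$ of order $k$ come in a controlled collection; one uses geometric relations among trees (IHX / Jacobi, as previewed at the end of Section~\ref{sec:disks-in-B4}, but in the non-repeating setting where the algebra is that of the free Lie algebra / Milnor invariants) together with the fact that on the full index set every non-repeating bracket of order $k < m-1$ repeats a label somewhere in any IHX-partner, to find a homotopy pairing up these intersections by new Whitney disks whose new intersections are either of strictly higher order or are repeating. Iterating raises the non-repeating order past $m-1$; but an order $m$ non-repeating tree needs $m+1$ distinctly-labeled univalent vertices among only $m$ labels, which is impossible, so the non-repeating part of the forest becomes empty. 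A tower with empty non-repeating forest means the components only meet themselves, which (after discarding the Whitney disks, or by the ``no trees, no problem'' idea applied component-wise) gives the desired $A'$ with $A'_i \cap A'_j = \emptyset$ for $i \neq j$.

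\emph{Main obstacle.} The crux, and the step I expect to be genuinely delicate, is the order-raising inductive step: showing that the collection of order-$k$ non-repeating intersections can always be algebraically cancelled and that this cancellation is geometrically realizable by a homotopy introducing only higher-order or repeating intersections. This requires (i) the non-repeating geometric IHX relation to manipulate the trees, (ii) an argument that in the non-repeating setting the only obstruction group is trivial past order $m-1$ — essentially because the relevant quotient of the tree/Lie algebra on $m$ generators vanishes in multi-degrees using all $m$ letters with total degree $> m-1$ — and (iii) care that introducing the cancelling Whitney disks does not destroy the lower-order non-repeating triviality already achieved, which is handled by the splitting and by choosing the new disks with controlled intersections. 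I would cite \cite{ST3} for the full details of this obstruction theory and present the above as the conceptual skeleton.
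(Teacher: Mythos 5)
Your direction assignment is right, and the idea that the bound $m-1$ enters via a pigeonhole count on univalent labels is the right flavor, but an off-by-one error propagates through and changes the structure of what you are proving. An order $n$ unitrivalent tree has $n+2$ univalent vertices, not the $n+1$ your sketch assumes. So a non-repeating tree of order $m-1$ would need $m+1$ distinct labels from $\{1,\dots,m\}$, which is already impossible; there are \emph{no} non-repeating trees of any order $\geq m-1$ (this is exactly why $\Lambda_n(m)=0$ for $n\geq m-1$, as noted in section~\ref{subsec:higher-order-lambda}). Hence the hypothesis ``$\cW$ is an order $m-1$ non-repeating Whitney tower'' is equivalent to saying $t(\cW)$ contains no non-repeating trees whatsoever, and there is no order-raising left to do: the non-repeating part of the intersection forest is already empty, so the IHX/Milnor-type algebraic cancellation you propose has nothing to cancel. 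That machinery belongs to the separate problem of raising the non-repeating order of a \emph{lower}-order tower with vanishing $\lambda_n$; it does not convert the given tower into a homotopy.

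The more serious gap is the final step. Having $t(\cW)$ with empty non-repeating part does not mean the components meet only themselves; it means every non-repeating intersection of every order is \emph{paired} by a Whitney disk — the intersections $A_i\pitchfork A_j$ are all still there. ``Discarding the Whitney disks'' returns $A$, not a disjoint $A'$, and the ``no trees, no problem'' observation (Exercise~\ref{ex:no-trees-no-problem}) does not apply because $t(\cW)$ may contain arbitrary repeating trees and twisted trees. The actual content of this direction is the geometric pull-apart procedure the paper sketches in section~\ref{subsec:pulling-apart-pairs} for $m=2$: finger moves to push $A_j$ off the interiors of the Whitney disks (creating only self-intersections of $A_j$), followed by Whitney moves on $A_i$ to eliminate $A_i\cap A_j$ (creating only self-intersections of $A_i$). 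For $m\geq 3$ the paper explicitly flags the obstacle — removing $W_{(i,j)}\cap A_k$ naively threatens to create new $A_k\cap A_i$ or $A_k\cap A_j$ — and the proof in \cite{ST3} is a careful geometric induction using the higher-order non-repeating Whitney disks of $\cW$ to resolve this, not an algebraic cancellation argument. Lastly, in the easy direction, the claim that the moves ``only introduce repeating-tree data when performed within a single component'' is backwards: a finger move between distinct components $A_i,A_j$ creates a \emph{non-repeating} pair in $A_i\pitchfork A_j$. It is nonetheless harmless because the pair admits a clean local Whitney disk (the inverse of the finger move) that can be absorbed into the tower; invariance of ``supports an order $m-1$ non-repeating Whitney tower'' under homotopy is then established by factoring the regular homotopy through a common $A''$ as in section~\ref{subsubsec:htpy-invariance-tau1}.
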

The obstructions to the existence of order $n$ non-repeating Whitney towers are analogous to Milnor's non-repeating link homotopy invariants \cite{M1}, and for immersed disks in the $4$-ball are equivalent to the first non-vanishing non-repeating Milnor invariants \cite[Thm.8]{ST3}.

In Section~\ref{sec:2-spheres-in-4-manifolds} we will examine the obstructions to non-repeating Whitney towers on $2$-spheres in $4$-manifolds in detail for low orders.

%
%

%
\subsubsection{Symmetric Whitney towers:}\label{subsec:symmetric-w-towers}

\begin{defn}\label{def:symmetric-w-tower}
A Whitney tower $\cW$ is \emph{symmetric} if all Whitney disks in $\cW$ are framed and only have interior intersections with Whitney disks of the same order.
\end{defn}

The natural grading of symmetric Whitney towers is by \emph{height}:
\begin{defn}\label{def:height}
A symmetric Whitney tower $\cW$ has \emph{height $n$} if $\cW$ has order $(2n-2)$ as a framed Whitney tower.
\end{defn}

The Whitney disks in a symmetric Whitney tower correspond to symmetric rooted trees $Y^n$ defined in Figure~\ref{symmetric-trees-fig}.
\begin{figure}[h]
        \centerline{\includegraphics[scale=.5]{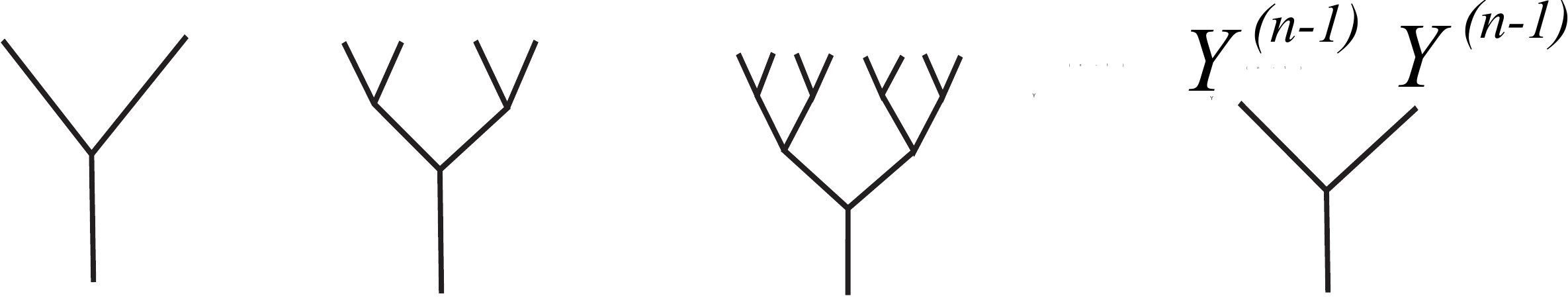}}
        \caption{The symmetric rooted-trees of height 1, 2, 3, and $n$}
        \label{symmetric-trees-fig}
\end{figure}

So an equivalent definition of a symmetric Whitney tower of height $n$ is that $t(\cW)$ only contains trees which are inner products $\langle Y^n,Y^n\rangle$ of height $n$ symmetric rooted trees.

We also have the following variation extending this definition to half-integers: If $t(\cW)$ only contains trees which are inner products $\langle Y^n,Y^{n-1}\rangle$, then $\cW$ is said to be of \emph{height $n.5$}.

\begin{thm}[\cite{COT}]
If a link $L\subset S^3$ bounds immersed disks into $B^4$ supporting $\cW$ of height $n+2$, then $L$ is \emph{$n$-solvable} in the sense of Cochran--Orr--Teichner.
\end{thm}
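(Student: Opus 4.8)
The plan is to translate the geometric data of a height $n+2$ symmetric Whitney tower $\cW$ on slice disks for $L$ into the algebraic filtration datum required by the Cochran--Orr--Teichner definition of $n$-solvability, namely a spin $4$-manifold $Z$ with $\partial Z = M_L$ (the zero-framed surgery on $L$) carrying a pair of dual Lagrangians $L_1,\ldots,L_k$ and $D_1,\ldots,D_k$ in $H_2(Z;\Z[\pi_1 Z/\pi_1 Z^{(n+1)}])$ with respect to the $\Z[\pi_1 Z/\pi_1 Z^{(n+1)}]$-valued intersection form, generated by immersed surfaces whose $\pi_1$-pushforwards lie in the $n$-th and $(n+1)$-st derived subgroups. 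First I would recall the standard construction of $Z$: take $B^4$, remove a tubular neighborhood of the immersed slice disks $A = A_1\cup\cdots\cup A_m$, and then do $0$-surgery (or rather, reglue appropriately) so that $\partial Z = M_L$; the second homology of $Z$ is then carried by the Whitney disks and surface sheets of the tower together with the meridional tori / dual spheres coming from the plumbings and the removed disk neighborhoods. The boundary is spin since $L$ is $0$-framed and the tower is framed (recall Definition~\ref{def:symmetric-w-tower} requires all Whitney disks framed), and $\omega(W)=0$ for every $W$ keeps the relative Euler classes trivial so the whole $4$-manifold is spin.

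Next I would set up the two dual Lagrangians. The key point is that a symmetric Whitney tower of height $n+2$ gives, for each pair of intersections paired at the bottom, a full ``tower of disks'' of combinatorial order $2(n+2)-2 = 2n+2$, and the associated trees $\langle Y^{n+2},Y^{n+2}\rangle$ have two rooted halves $Y^{n+2}$ each of order $n+1$. Geometrically, half of such a tree corresponds to an embedded (or immersed) surface in $Z$ built from the Whitney disks along one branch; its boundary is a curve that, pushed into $\pi_1 Z$, lies in $\pi_1(Z)^{(n+1)}$ precisely because traversing $n+1$ levels of Whitney-disk/sheet-changes corresponds to $n+1$-fold iterated commutators of meridians (this is the geometric manifestation of the relation between higher-order Whitney disks and the lower central / derived series, analogous to the Milnor-invariant story referenced in section~\ref{subsec:order-of-trees-disks-ints}). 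The surfaces $L_i$ come from the ``deeper half'' of each symmetric tree and the dual surfaces $D_i$ from the other half; their $\Z[\pi_1]$-intersection pairing is computed by the single transverse intersection point at the middle of the tree $\langle Y^{n+2},Y^{n+2}\rangle$, giving intersection number $\pm 1$ with the appropriate group-ring coefficient, hence the dual-Lagrangian (hyperbolic) structure. One must also check that the ambient pushforwards land in $\pi_1(Z)^{(n)}$ resp. $\pi_1(Z)^{(n+1)}$: this follows from counting how many sheet-changing arcs the tree-branch surface passes through relative to the surgery curves generating $\pi_1 Z$, and here the extra ``$+2$'' in the height is exactly what is needed to absorb the shift between the lower-central-series indexing natural to Whitney towers and the derived-series indexing in the COT definition, together with the fact that $M_L$ and $Z$ differ from $B^4$ by surgery which shifts $\pi_1$-data by one commutator level.

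The main obstacle I expect is the bookkeeping that matches the combinatorial order/height grading on trees to the derived-series depth in $\pi_1 Z$, and in particular verifying that the pushed-off surface branches indeed represent a \emph{basis} of a Lagrangian — i.e.\ that $H_2(Z;\Z[\pi_1 Z/\pi_1 Z^{(n+1)}])$ is spanned by these tower-surfaces and the meridional-torus classes and nothing else survives, and that the torus classes (which have trivial $\Z[\pi_1]$-self-intersection but may pair nontrivially with the tower surfaces) can be absorbed or split off hyperbolically. This requires a careful Mayer--Vietoris / handle-decomposition analysis of $Z$ built from the $4$-ball, the immersed-disk exteriors, and the Whitney-disk neighborhoods, keeping track of the twisted coefficients; the framedness hypothesis ($\omega\equiv 0$) and the symmetric condition (intersections only between Whitney disks of the same order) are precisely what make this computation clean, since they prevent ``cross-order'' intersection terms that would otherwise obstruct the Lagrangian splitting. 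Once the handle picture is in hand, the remaining verifications — that $Z$ is spin, that $\partial Z = M_L$, that the intersection form is as claimed — are essentially formal, so I would spend the bulk of the write-up on this homological identification and relegate the $\pi_1$-depth estimate to a lemma citing the relation between higher-order trees and iterated commutators of meridians established (for the disk-in-$B^4$ case) in section~\ref{sec:twisted-order-n-classification-arf-conj} and its references.
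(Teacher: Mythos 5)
The paper does not prove this statement — it is cited directly from Cochran--Orr--Teichner \cite{COT} — so there is no in-paper argument to compare against; I will compare your proposal with the argument in \cite{COT}. Your overall plan (produce a spin $4$-manifold $Z$ with $\partial Z = M_L$ and $H_1(M_L)\cong H_1(Z)$ from the tower data, read off a pair of dual $(n)$-Lagrangians from the two rooted halves $Y^{n+2}$ of each symmetric tree, and get the required $\pi_1$-depth from the iterated-commutator structure of those halves) has the right shape. But the argument in \cite{COT} does not go directly from the Whitney tower; it first converts the height $n+2$ Whitney tower into an \emph{embedded} grope of the same height in $B^4$ (their equivalence of the grope and Whitney-tower filtrations), and then builds the solution $4$-manifold from the complement of a regular neighborhood of that embedded grope. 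Your attempt to work directly with the tower is a genuinely different route, and it is precisely at the step you gloss over that it breaks.

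Concretely, "take $B^4$, remove a tubular neighborhood of the immersed slice disks $A$, and then reglue appropriately so that $\partial Z=M_L$" does not work as stated. The $A_i$ are immersed, not embedded, so they have no tubular neighborhood in the usual sense; and the complement of a regular neighborhood of the immersed image has extra boundary pieces from the plumbings (roughly an $S^1\times S^2$ at each double point) that are not present in $M_L$. This is exactly the problem that the grope conversion in \cite{COT} is designed to sidestep: the grope is an embedded $2$-complex, so its regular neighborhood is a genuine $4$-dimensional handlebody, the boundary is controlled, and the twisted-coefficient Mayer--Vietoris / handle computation that you postpone to the end is the place the argument actually lives. You would need either to carry out the Whitney-tower-to-grope conversion (at which point you are reproducing \cite{COT}) or to give a different well-defined construction of $Z$ from the immersed tower, and the latter is missing.

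A secondary issue is the explanation of the ``$+2$.'' Symmetric rooted trees $Y^k$ are the balanced binary trees, and as iterated commutators they sit naturally in the \emph{derived} series of the free group from the start; there is no lower-central-to-derived ``translation'' being absorbed, so the stated reason for the shift is not the right one. (The shift in \cite{COT} instead reflects the fact that stages of the grope are spent producing the $4$-manifold with boundary $M_L$ and the embedded Lagrangian surfaces, rather than contributing to the depth of the $\pi_1$-image.) You also assert that $Y^{n+2}$ has order $n+1$; if, as the ``symmetric'' (same-order-intersection) definition forces, $Y^k=(Y^{k-1},Y^{k-1})$ is the balanced tree, then $Y^{n+2}$ has $2^{n+1}-1$ trivalent vertices, and the order-to-height dictionary is exponential, not linear — so this number should not be fed directly into a derived-series depth count.
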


A knot bounds a height~$2$ Whitney tower if and only if it has vanishing Arf invariant; a knot bounds a height~$2.5$ Whitney tower if only if it is algebraically slice;
and in general signature invariants are known to provide upper bounds on the possible heights of Whitney towers bounded by knots \cite{COT}. 
There is an extensive literature related to the $n$-solvable filtration of classical concordance.

A complete height-raising obstruction theory for symmetric Whitney towers is not known. Necessary conditions for raising height can be formulated in terms of the vanishing of signatures, but algebraic invariants whose vanishing suffices for raising height have not been formulated in general.
The order-raising construction given in section~\ref{subsec:order-raising-proof-sketch} destroys any Whitney disk symmetry. 
First open case: Given a knot in $S^3$ bounding $\cW\subset B^4$ of height $2.5$, there are no known obstructions to the knot bounding a Whitney tower of height $3$ in $B^4$.

%
%


\subsection{Tree Orientations}\label{subsec:w-tower-tree-orientations}
Recall that our Whitney towers are oriented, with the underlying surface $A$ coming with a given orientation, and with fixed orientations chosen for the Whitney disks in $\cW$.  
With an eye towards later defining invariants in terms of $t(\cW)$ which will be independent of Whitney disk orientations, we introduce here two conventions from the Whitney tower literature which relate Whitney disk and tree orientations.  Via either of these two conventions, the choices of Whitney disk orientations will be seen in section~\ref{subsec:w-disk-orientation-choices-AS} to correspond to \emph{antisymmetry relations} among the trees, and will be quotiented out in the target of the invariants.
 
As per section~\ref{subsec:trees-for-w-disks-and-ints},
the rooted tree associated to a Whitney disk $W$ in $\cW$ can be 
mapped into $\cW$, with the trivalent vertex adjacent to the root contained in the interior of $W$, and each other trivalent vertex contained in the interior of a lower-order Whitney disk supporting $W$.
The two descending edges from each trivalent vertex determine a ``corner'' of the corresponding Whitney disk that does not contain the third ascending edge. For instance, in Figure~\ref{WdiskIJandIJKint-fig} the corner determined by the descending edges from the indicated trivalent vertex contains the negative intersection point paired by the corresponding Whitney disk.
And in the right side of Figure~\ref{higher-order-intersection-color-and-with-tree} the corners of the Whitney disks determined by the descending edges of the two trivalent vertices of the tree each contain the negative intersection paired by the Whitney disk. 

With this terminology we define the following conventions for aligning the vertex orientations on trees with the chosen orientations on Whitney disks:

\textbf{The negative corner convention:} The trees associated to all Whitney disks in $\cW$ are 
mapped into $\cW$ with the requirement that the corner of each Whitney disk determined by the two descending edges of each
trivalent vertex contains the \emph{negative} intersection point between the sheets paired by the corresponding Whitney disk.
Then the vertex orientations of each tree are taken to be induced from
the Whitney disk orientations.

\textbf{The positive corner convention:} The trees associated to all Whitney disks in $\cW$ are 
mapped into $\cW$ with the requirement that the corner of each Whitney disk determined by the two descending edges of each
trivalent vertex contains the \emph{positive} intersection point between the sheets paired by the corresponding Whitney disk.
Then the vertex orientations of each tree are taken to be induced from
the Whitney disk orientations.



\begin{figure}[h]
\centerline{\includegraphics[scale=.4]{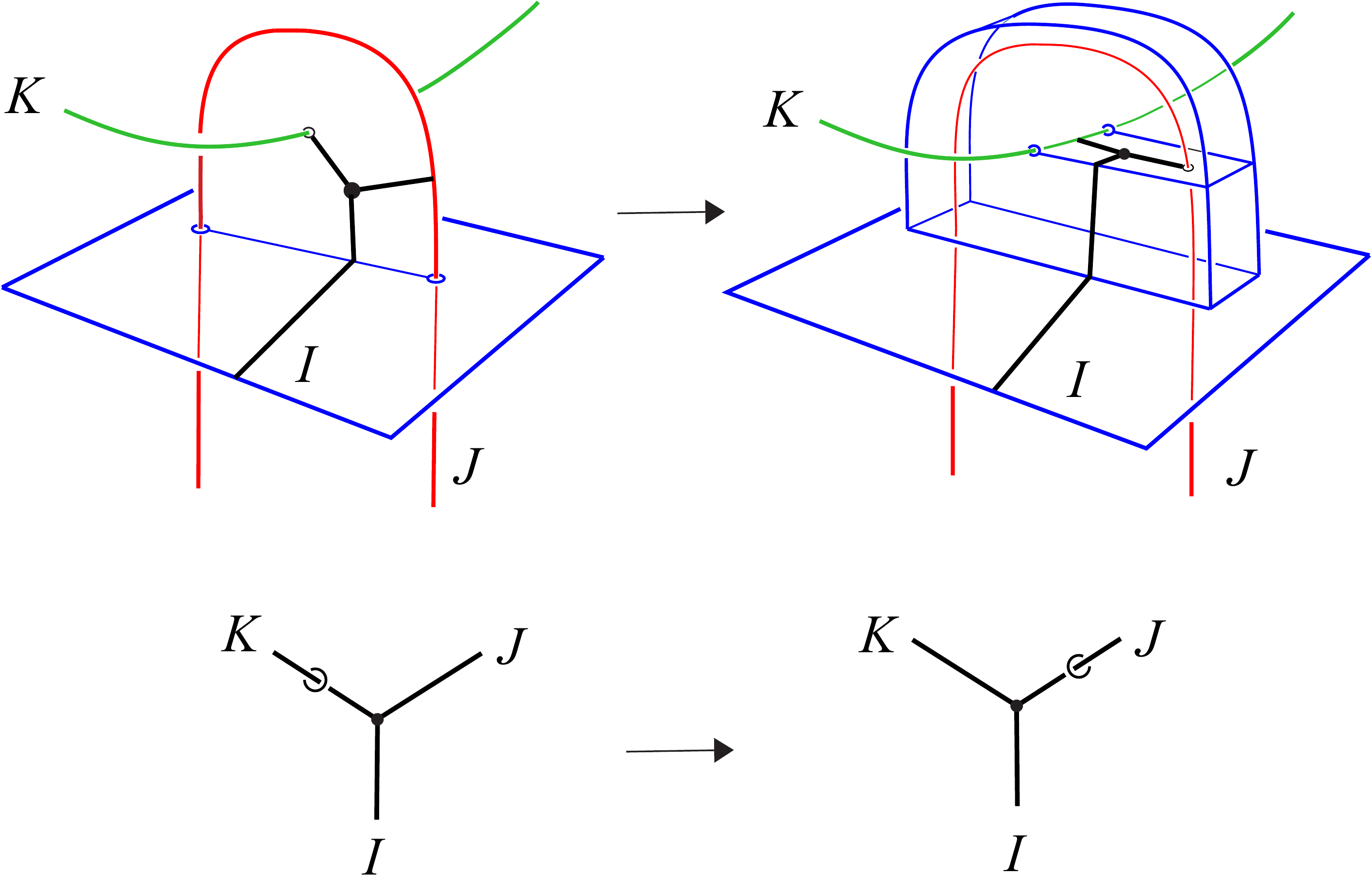}}
\caption{Top: A $W_{(I,J)}$-Whitney move on the $I$-sheet in a split Whitney tower creates a pair of intersections which can be paired by a `meridional' Whitney disk $W_{(K,I)}$ that intersects the $J$-sheet in a single point. Bottom: Indicating the unpaired intersection by a small circle linking its corresponding edge, the top construction realizes $\langle (I,J),K \rangle=\langle (K,I),J \rangle$.}
\label{fig:move-int-puncture-in-tree}
\end{figure}

\subsection{`Moving' unpaired intersections in their trees}\label{subsec:moving-unpaired-int-edge}
Again with an eye towards extracting invariants from $t(\cW)$, examination of our notation reveals that the tree $t_p:=\langle (I,J),K \rangle$
associated to
$p\in W_{(I,J)}\pitchfork W_K$ does not keep track of which edge of $t_p$ contains the unpaired intersection $p$, since for instance $\langle (I,J),K \rangle=\langle I,(J,K) \rangle=\langle (K,I),J \rangle$. It seems reasonable ask ``Why not also keep track of the edge in $t_p$ corresponding to $p$?'', but it turns out that robust information is carried by the shape of the tree rather than the intersection point itself. This is illustrated in 
Figure~\ref{fig:move-int-puncture-in-tree}, which shows the geometric realization of $\langle (I,J),K \rangle=\langle (K,I),J \rangle$ by a controlled local construction that does not create any new trees.
This construction preserves the signed tree of the unpaired intersection point, using either convention in section~\ref{subsec:w-tower-tree-orientations} (Exercise~\ref{ex:move-int-preserves-sign}).
By iterating this construction an un-paired intersection can be ``moved to any edge of its tree''.

\subsection{Geometric Jacobi Identity in four dimensions}\label{subsec:geo-4d-jacobi}
We close this section by showing the neccessity of certain relations in the target of any homotopy invariant of $A$ represented by $t(\cW)$ for $\cW$ supported by $A$. The proof of the following theorem provides a nice illustration of the material covered so far:
\begin{thm}[\cite{CST}]\label{thm:IHX}
There exist four $2$-spheres $A_1\cup A_2\cup A_3\cup A_4\imra S^4$ in the $4$-sphere supporting $\cW$ with intersection forest $t(\cW)$ equal to the three signed trees in Figure~\ref{Jacobi-identity-trees}.
\begin{figure}[h]
        \centerline{\includegraphics[width=90mm]{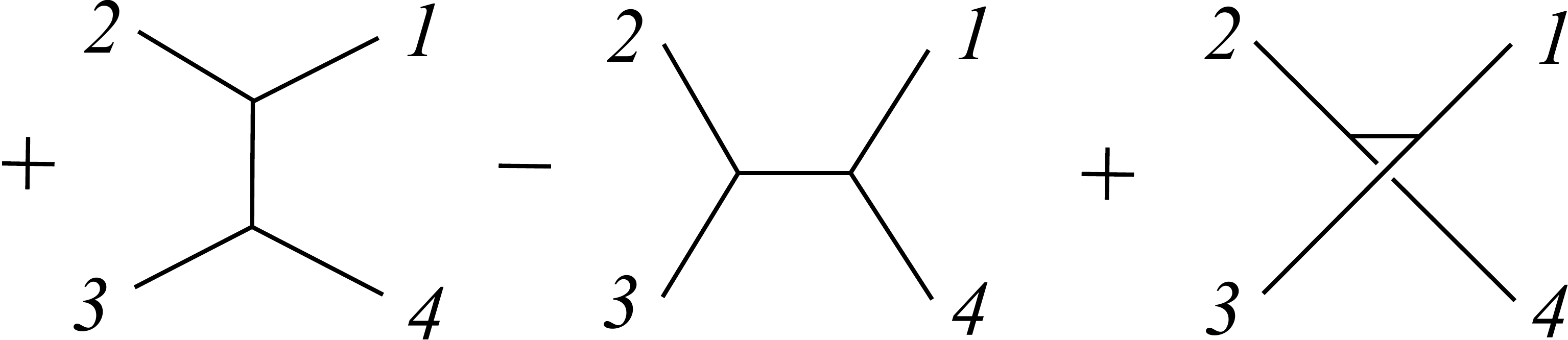}}
        \caption{}
        \label{Jacobi-identity-trees}
\end{figure}
\end{thm}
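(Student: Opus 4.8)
The plan is to give an explicit model construction, following the proof of the geometric Jacobi identity in \cite{CST}. It suffices to build the interesting part of the tower inside a $4$-ball and then close up: concretely, one constructs four properly immersed disks $D_1\cup D_2\cup D_3\cup D_4 \imra B^4$ supporting a Whitney tower $\cW$ whose only singularities are cancelling pairs $p^{\pm}_{ij}\in D_i\pitchfork D_j$ for $\{i,j\}\subset\{1,2,3\}$, paired by embedded framed order~$1$ Whitney disks $W_{(i,j)}$; cancelling pairs $r^{\pm}_{(i,j),k}\in W_{(i,j)}\pitchfork D_k$ for $(i,j,k)$ a cyclic permutation of $(1,2,3)$, paired by embedded framed order~$2$ Whitney disks $W_{((i,j),k)}$; and, for each cyclic $(i,j,k)$, a single transverse point $p_{(i,j),k}\in W_{((i,j),k)}\pitchfork D_4$ --- these being the only interior intersections of any Whitney disk --- with $\partial(D_1\cup\cdots\cup D_4)$ an unlink in $S^3$. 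Capping each $D_i$ off with the obvious disjointly embedded disk in a complementary $4$-ball then produces $A_i\imra S^4$ without changing $t(\cW)$, and by Definition~\ref{def:intersection forests} one has $t(\cW)=\sum_{(i,j,k)}\e_{p_{(i,j),k}}\cdot\langle((i,j),k),4\rangle$, which is exactly the multiset of trees of Figure~\ref{Jacobi-identity-trees} once the signs are verified.

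To build the tower in $B^4$ I would start from the cyclically symmetric version of the Borromean-rings Whitney tower of Example~\ref{eg:boro-rings}: three immersed disks $D_1,D_2,D_3$ meeting pairwise in cancelling pairs paired by $W_{(1,2)},W_{(2,3)},W_{(3,1)}$, drawn as a movie in $B^3\times I$ and arranged so that $W_{(i,j)}$ meets $D_k$ in a cancelling pair (rather than the single point of Example~\ref{eg:boro-rings}). This is the $4$-dimensional picture underlying the symmetry of the triple linking number. Up to homotopy in the complement of the rest of the tower, the three order~$2$ Whitney circles $\partial W_{((i,j),k)}$ pairing these cancelling pairs are the three conjugated iterated commutators appearing in the Hall--Witt identity $[[x,y^{-1}],z]^{y}\,[[y,z^{-1}],x]^{z}\,[[z,x^{-1}],y]^{x}=1$; introducing $D_4$ as a small disk positioned so that these three Whitney circles each link it once, with the cyclic signs dictated by the identity, and using the identity to push the product of the three commutators off $D_4$, yields the three embedded framed Whitney disks $W_{((i,j),k)}$ each meeting $D_4$ in exactly its single point $p_{(i,j),k}$. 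All the homotopies are built from finger moves and Whitney moves (section~\ref{sec:disks-in-B4}), hence are supported in small balls, create no twistings, and can be kept disjoint from everything they should not meet; in particular all six Whitney disks stay embedded and framed, no auxiliary intersections are produced, and the boundary stays an unlink. (This construction can also be viewed as the trace of a generic $1$-parameter family of $3$-dimensional configurations realizing the Jacobi relation for the triple linking number; note that one cannot simply quote Lemma~\ref{lem:realization-of-geometric-trees}, which produces disks in $B^4$ bounded by a link that need not bound a disjoint union of embedded disks, so that the $D_i$ need not close up into spheres without further intersections.)

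The step I expect to be the main obstacle is the orientation and sign bookkeeping. One must fix a single consistent choice of orientations of $D_1,\dots,D_4$ and of the six Whitney disks, apply the negative-corner (or positive-corner) convention of section~\ref{subsec:w-tower-tree-orientations}, and check that the resulting signs $\e_{p_{(i,j),k}}$ are exactly those making $t(\cW)$ the IHX relator of Figure~\ref{Jacobi-identity-trees} --- rather than, say, three trees with the same sign, which would make the statement vacuous. This requires tracking how the cyclic signs of the Hall--Witt identity interact both with the tree-orientation conventions and with the freedom to ``move'' an unpaired intersection around its tree (Figure~\ref{fig:move-int-puncture-in-tree} and Exercise~\ref{ex:move-int-preserves-sign}, together with $\langle(I,J),K\rangle=\langle(K,I),J\rangle$). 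Once the signs are pinned down, the remaining verifications --- locality of the homotopies, absence of extra intersections or twistings, and the capping-off into immersed $2$-spheres in $S^4$ --- are routine applications of the localization arguments of section~\ref{sec:disks-in-B4}.
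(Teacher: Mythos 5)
Your proposal takes a genuinely different route from the paper, and in its current form it has gaps that I do not think are routine.

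\textbf{What the paper does.} The paper's construction is asymmetric and makes $A_4$ central: one starts from four disjointly embedded spheres $A_1,A_2,A_3,A_4\imra B^4\subset S^4$, finger-moves $A_1,A_2,A_3$ into $A_4$, takes the inverse Whitney disks $W_{(3,4)},W_{(4,1)},W_{(2,4)}$, and then modifies a collar of $W_{(3,4)}$ to meet $A_2$ in a cancelling pair, which is paired by an order~$2$ disk $W_{(2,(3,4))}$ built from the old $W_{(2,4)}$; this meets $A_1$ once, giving the tree $\langle(2,(3,4)),1\rangle$. The other two terms are produced by running the same construction in nearby past and future slices. A key observation is that the three modified order~$1$ Whitney disk boundaries on $A_4$ form a Borromean configuration, which forces the construction to spread over time; this is why past and future slices are unavoidable and why the construction works. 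Starting from embedded $2$-spheres also sidesteps the closing-up issue you correctly flag.

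\textbf{What you do differently.} You put $A_4$ at the periphery: you want a cyclically symmetric tower on $D_1,D_2,D_3$ with order~$1$ disks $W_{(1,2)},W_{(2,3)},W_{(3,1)}$, order~$2$ disks $W_{((i,j),k)}$ for cyclic $(i,j,k)$, and $D_4$ meeting each order~$2$ disk once. The unrooted trees $\langle((i,j),k),4\rangle$ you would obtain do agree with the paper's modulo AS (note $\langle((2,3),1),4\rangle=-\langle(1,(2,3)),4\rangle$ by one AS move), so the shape is right, and your observation that Lemma~\ref{lem:realization-of-geometric-trees} does not produce closable disks is a genuinely useful remark. But the construction is underspecified in exactly the places that carry the content.

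\textbf{The gaps.} (1) The ``cyclically symmetric Borromean-rings tower'' does not yet exist in the paper or in your sketch. The Borromean rings have $\mu_{123}\neq 0$, so they cannot bound any tower in which all $W_{(i,j)}\cap D_k$ are paired; you must really start from the unlink and engineer the cyclic pattern by finger moves. Once you do, the inverse Whitney disks at each stage are small and local, so they are automatically disjoint from $D_4$. Nothing in the construction so far forces $W_{((i,j),k)}\cap D_4$ to be nonempty, let alone a single point of the right sign, so the heart of the theorem still has to be built by hand. (2) The Hall--Witt step is a heuristic, not a construction. Saying the Whitney circles ``link $D_4$ once'' and ``pushing the product of commutators off $D_4$'' does not explain how to choose a Whitney disk bounded by a fixed circle that meets $D_4$ in exactly one transverse point, nor why the three intersection signs end up being those of the Jacobi relator rather than, say, all three equal to the same term modulo AS (which would make $t(\cW)$ a multiple of one tree, and the statement false). (3) You acknowledge the sign verification is the real obstacle, and indeed nothing in your sketch pins them down; the paper gets the leftmost sign explicitly from the positive-corner convention and then gets the other two from the symmetric roles of past and future. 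I would encourage you either to follow the paper's $A_4$-centric construction (which avoids all of the above) or, if you want the cyclic picture, to write out the finger moves and Whitney-disk modifications explicitly and compute the three signs, rather than appealing to Hall--Witt.
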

This is a ``Jacobi Identity'' in the sense that taking the $4$-labeled univalent vertices as roots and identifying the resulting rooted trees with Lie brackets yields the three terms $[[1,2],3]-[1,[2,3]]+[[3,1],2]$, and this sum of trees must vanish in the target of any homotopy invariant represented by $t(\cW)$ since $\cW$ is supported by homotopically trivial $2$-spheres.


\begin{proof}[Proof of Theorem~\ref{thm:IHX}]
Start with disjoint embeddings $A_i:S^2\to B^4\subset S^4$, $i=1,2,3,4$.

Then do a single finger move on each of $A_1,A_2,A_3$ into $A_4$, yielding the left picture of Figure~\ref{IHX-1-fig}, in which $A_4$ appears as the horizontal plane of each picture. The Whitney disks $W_{(3,4)}$, $W_{(4,1)}$ and $W_{(2,4)}$ in the center picture of Figure~\ref{IHX-1-fig} are inverse to the finger moves (Exercise~\ref{ex:W-move-inverse-to-finger-move}).
\begin{figure}[h!]
        \centerline{\includegraphics[width=\textwidth]{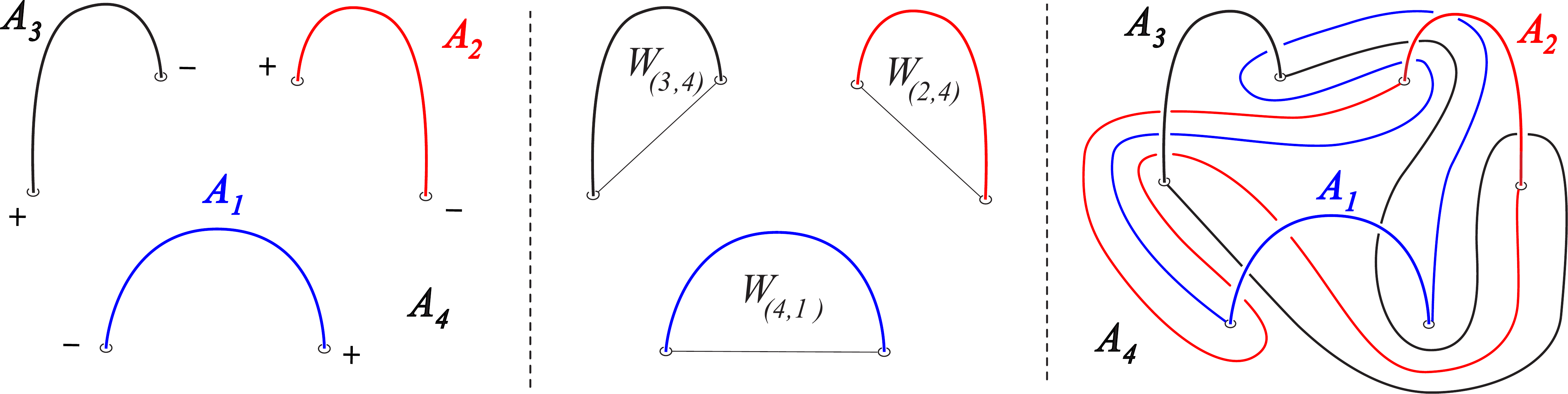}}
        \caption{}
        \label{IHX-1-fig}
\end{figure}

We will describe modifications of the order $1$ Whitney disks in the center of Figure~\ref{IHX-1-fig} that yield new disjointly embedded framed order $1$ Whitney disks which will have the boundaries shown in the right picture of Figure~\ref{IHX-1-fig}.
These modifications will create pairs of order $1$ intersections that admit disjointly embedded framed order $2$ Whitney disks which each have a single order $2$ intersection corresponding to a term of the Jacobi identity. The entire construction will be supported in the $4$-ball described by Figure~\ref{IHX-1-fig} as the present, and extending into past and future where the arcs of $A_1$, $A_2$ and $A_3$ extend as products.

First we will construct the left-most term of the Jacobi identity in Figure~\ref{Jacobi-identity-trees}, suppressing orientations for the moment. Start by changing a collar of $W_{(3,4)}$ as indicated in the left of Figure~\ref{IHX-3-4-fig}. Still referring to this new Whitney disk as $W_{(3,4)}$, we have created a new pair of order $1$ intersections $\{q,r\}=A_2\pitchfork W_{(3,4)}$, as shown in the figure. Since the collar of $W_{(3,4)}$ containing $q$ and $r$ is parallel to $A_4$, it follows that $q$ and $r$ have opposite signs, since the  intersections $A_2\pitchfork A_4$ created by the finger move of $A_2$ into $A_4$ have opposite signs. 
\begin{figure}[ht!]
        \centerline{\includegraphics[scale=.45]{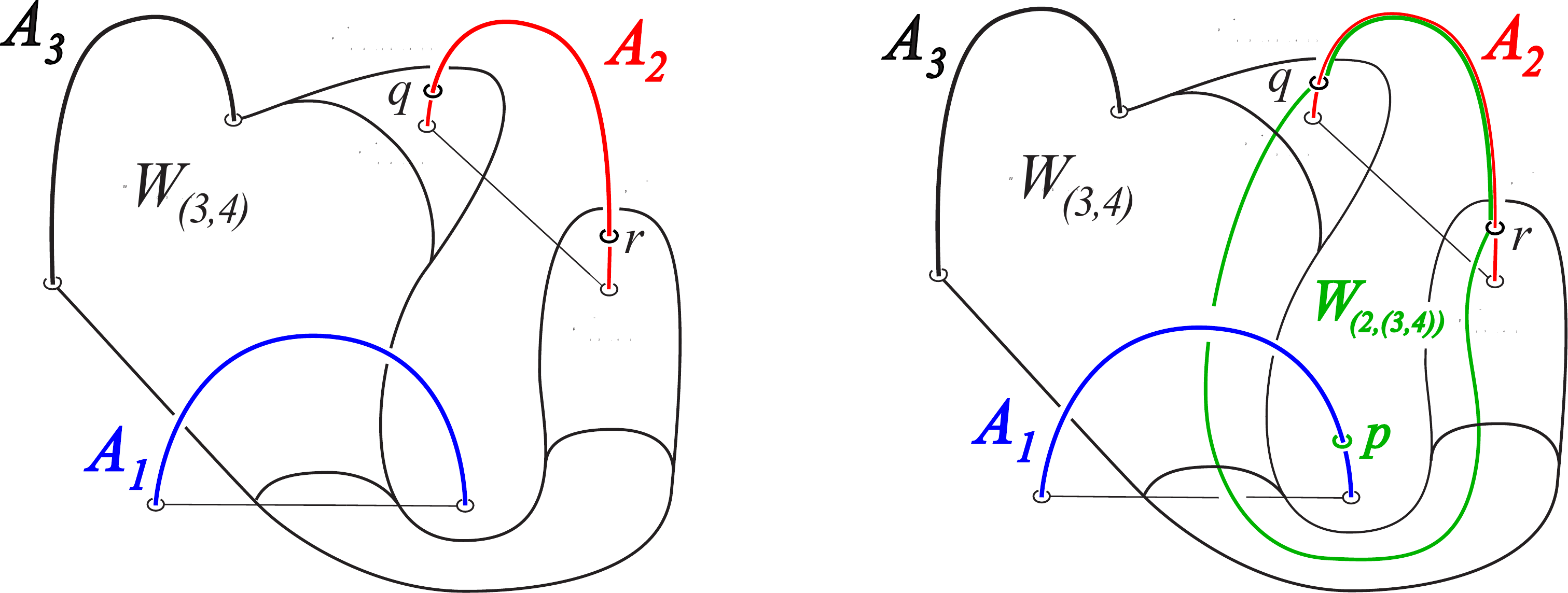}}
        \caption{}
        \label{IHX-3-4-fig}
\end{figure}
Next add an order 2 Whitney disk $W_{(2,(3,4))}$ pairing $q$ and $r$ as on the right of Figure~\ref{IHX-3-4-fig}, where $\partial W_{(2,(3,4))}$ is shown as green. Part of $W_{(2,(3,4))}$ is formed from the original order $1$ Whitney disk $W_{(2,4)}$ with a collar of the arc of $\partial W_{(2,4)}$ on $A_4$ removed, and the rest of $W_{(2,(3,4))}$ is parallel to $A_4$. This creates a single order $2$ intersection
$p=A_1\cap W_{(2,(3,4))}$, as shown in the figure.
This new order 1 $W_{(3,4)}$ and the order 2 $W_{(2,(3,4))}$ are each embedded, and completely contained in the present.

In the left of Figure~\ref{IHX-5-and-with-other-arcs-fig} the tree $t_p=^{\,\,3}_{\,\,4}> \!\!\!-\!\!\!<^{\scriptstyle 2}_{\scriptstyle 1}$ associated to $p=A_1\cap W_{(2,(3,4))}$ is included (but suppressed from view are the continuations of the edges changing sheets into the order~0 2-spheres).
If $W_{(3,4)}$ and $W_{(2,(3,4))}$ are oriented to agree with the orientation of $A_4$ where they are parallel to $A_4$, then
this embedding of $t_p$ conforms to the positive corner convention of section~\ref{subsec:w-tower-tree-orientations}, and the sign $\epsilon_p$ of $p$ is positive. 
So $\epsilon_p\cdot t_p$ is the left-most term in the Jacobi relator of Figure~\ref{Jacobi-identity-trees}.

\begin{figure}[h!]
        \centerline{\includegraphics[scale=.45]{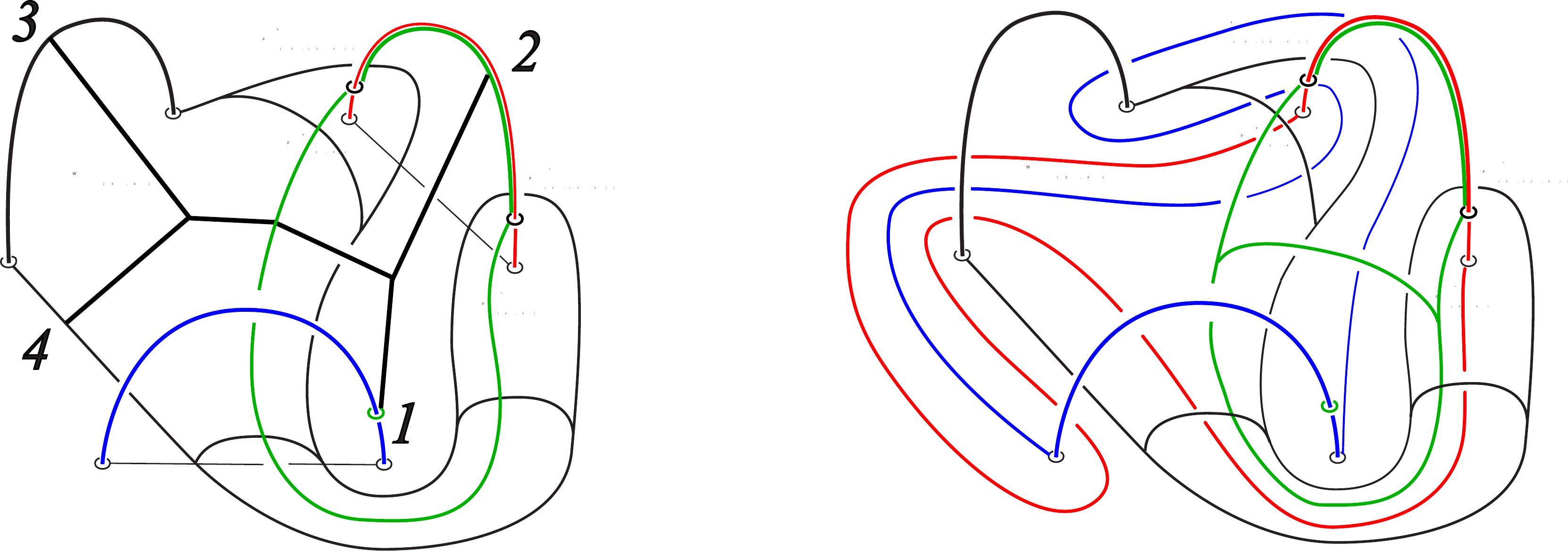}}
        \caption{}
        \label{IHX-5-and-with-other-arcs-fig}
\end{figure}
It remains to construct the other two terms in Figure~\ref{Jacobi-identity-trees}. The right side of Figure~\ref{IHX-5-and-with-other-arcs-fig} shows the just constructed $W_{(3,4)}$ and $W_{(2,(3,4))}$, and the red and blue boundaries of the new order 1 Whitney disks that we want to create. 
Observe that the parts of the red and blue boundaries that lie on $A_4$ extend to small embedded collars in the present that are disjoint from $W_{(3,4)}$ and $W_{(2,(3,4))}$ as well as the four 2-spheres. So by extending the inner collar boundary of red into the past, and the inner collar boundary of blue into future, the same construction that was just done in the present can be carried out in nearby past and future $B^3$-slices to yield the other two desired trees. See Exercise~\ref{exercise:finish-ixh}.  

Note that the construction necessarily extends into both past and future because it yields new disjointly embedded order 1 Whitney disks with boundaries as in the right of Figure~\ref{IHX-1-fig},
and this configuration forms the Borromean Rings which is not a slice link.
\end{proof}
%

\begin{cor}\label{cor:IHX}
For any Whitney tower $\cW$ on a properly immersed surface $A$ in a $4$--manifold, the local `IHX relation' of finite type theory (Figure~\ref{IHX-relation-fig}) is needed in the target of any invariant of $A$ represented by $t(\cW)$.
\begin{figure}[h]
        \centerline{\includegraphics[scale=.35]{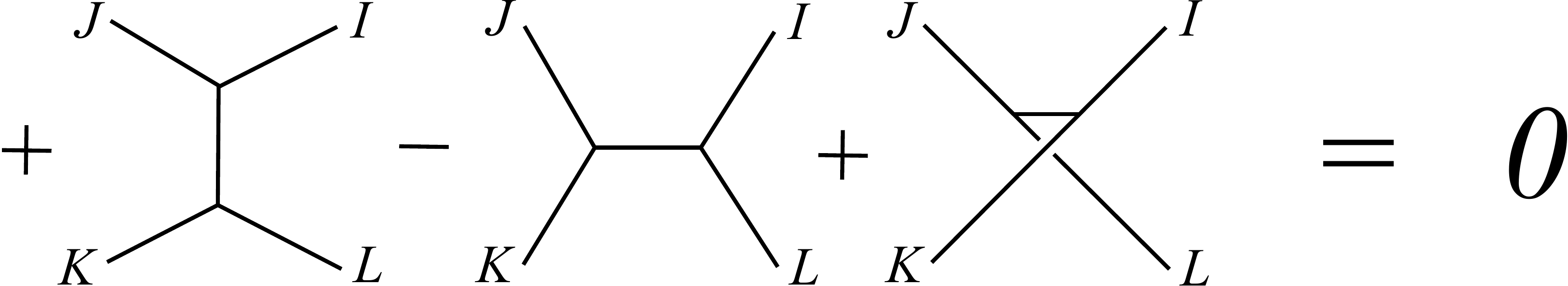}}
        \caption{The IHX relation.}
        \label{IHX-relation-fig}
\end{figure}
\end{cor}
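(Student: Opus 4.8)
The plan is to deduce the general local IHX relation from the single Jacobi identity of Theorem~\ref{thm:IHX} by a grafting argument. An invariant of the underlying surface represented by $t(\cW)$ is given by a fixed homomorphism $\rho$ from the free abelian group on trees to the target group; so it suffices to exhibit, for each choice of subtrees $I,J,K,L$, a Whitney tower whose intersection forest is \emph{exactly} the corresponding IHX relator of Figure~\ref{IHX-relation-fig} and whose underlying surface is homotopic (rel boundary) to an embedded surface. Since an embedded disjoint surface supports the tower consisting of itself, with empty intersection forest, applying $\rho$ and using well-definedness of the invariant then forces the IHX relator into $\ker\rho$ (compare Exercise~\ref{ex:no-trees-no-problem}).

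First I would build the ``models''. For a rooted tree $I$, inside a small $4$--ball I would assemble a collection of null-homotopic immersed disks together with a nested sequence of \emph{clean, framed, embedded} Whitney disks geometrically realizing $I$ in the sense of section~\ref{subsec:trees-for-w-disks-and-ints}: pair a cancelling pair of intersections by a clean framed embedded $W_{(i,j)}$, let $W_{(i,j)}$ meet a third sheet in a cancelling pair paired by a clean framed embedded $W_{((i,j),k)}$, and so on up to the top Whitney disk $W_I$. Every intersection in such a model is paired, so its intersection forest is empty, and since it is assembled from embedded sheets by finger moves its surface is homotopic rel boundary to an embedded one. I would build such models for $I,J,K,L$ in disjoint balls, tubing together sheets carrying the same univalent label so that labels are globally consistent.

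Next I would graft. Take the four $2$--spheres $A_1,A_2,A_3,A_4$ and the tower $\cW_0$ of Theorem~\ref{thm:IHX}, with $t(\cW_0)$ the three signed Jacobi trees of Figure~\ref{Jacobi-identity-trees}. Join $A_1$ to the top Whitney disk $W_I$ of the $I$--model by a thin framed tube, disjoint from all Whitney disks and from the other tubes, and do the same for $A_2\leftrightarrow W_J$, $A_3\leftrightarrow W_K$, $A_4\leftrightarrow W_L$. Tubing introduces no new intersections or twistings and is undone by a homotopy, so the resulting surface stays homotopic to an embedded one; on the other hand each leg of each Jacobi tree that formerly ran into the sphere $A_i$ now continues along its tube and down through the corresponding model, so that sheet-changing path reads off the rooted tree $I$ (resp.\ $J,K,L$). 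Hence $t(\cW_0)$ becomes precisely the IHX relator for the subtrees $I,J,K,L$, with signs inherited unchanged from Theorem~\ref{thm:IHX} — grafting along the roots does not affect $\epsilon_p$, cf.\ the ``moving'' construction of section~\ref{subsec:moving-unpaired-int-edge}, and the order-raising structure of the three terms comes from the order~$2$ Whitney disks of Theorem~\ref{thm:IHX}. (Equivalently, one can phrase the same construction relative to a given $\cW$ on a given $A$ by tubing the whole model-plus-Jacobi configuration into $A$ along null-homotopic arcs, producing $\cW'$ on $A'\simeq A$ with $t(\cW')=t(\cW)+(\text{IHX relator})$.)

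The main obstacle is the bookkeeping in the grafting step: verifying that each model genuinely contributes nothing to the intersection forest (all its interior intersections paired by clean framed Whitney disks, all twistings zero) and that the grafting tubes can be chosen framed and mutually disjoint so that the only effect on $t(\cW)$ is the advertised leg substitution — i.e.\ that no stray unpaired intersections or twistings leak out. Granting this, the corollary is immediate: well-definedness of the invariant equates $\rho$ of the empty forest with $\rho$ of the IHX relator, so the relator lies in $\ker\rho$, which is the assertion that the local IHX relation is needed.
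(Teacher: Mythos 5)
Your proposal is correct and matches the paper's proof: both tube the four Jacobi $2$-spheres from Theorem~\ref{thm:IHX} into clean framed Whitney disks realizing the rooted trees $I, J, K, L$ (the content of Exercise~\ref{ex:create-clean-W-for-any-tree}), after a connected sum with $S^4$ in the complement of $\cW$, so that the legs of the three Jacobi trees pick up the subtrees $I, J, K, L$. The only cosmetic difference is that the paper creates $W_I, W_J, W_K, W_L$ directly inside the given $\cW$ by finger moves on $A$ rather than building free-standing models in disjoint balls and then globalizing labels, which is why your parenthetical tubing-into-$A$ remark is really the essential form of the argument for the corollary as stated rather than an aside.
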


In this more general local relation the univalent vertices of the three trees represent arbitrary (fixed) sub-trees.
To see that this corollary follows from Theorem~\ref{thm:IHX}, observe that clean Whitney disks $W_I$, $W_J$, $W_K$ and $W_L$ corresponding to any given rooted trees $I$, $J$, $K$ and $L$ can be created by finger moves (Exercise~\ref{ex:create-clean-W-for-any-tree}). Then, after a connected sum with $S^4$ along a $4$--ball in the complement of $\cW$, tubing the $2$-spheres from the theorem into the interiors of these Whitney disks creates exactly the IHX relator in the corollary, since the tubes are supported near arcs which can be taken to be disjoint from the $2$-complex $\cW$.


\subsection{Section~\ref{sec:disks-in-B4} Exercises}

\subsubsection{Exercise:}
Visualize the $2$-component unlink in $S^3$ bounding the disjoint disks in $B^4$ shown in Figure~\ref{disjoint-sheets-fig}. 
\begin{figure}[h]
\includegraphics[scale=.6]{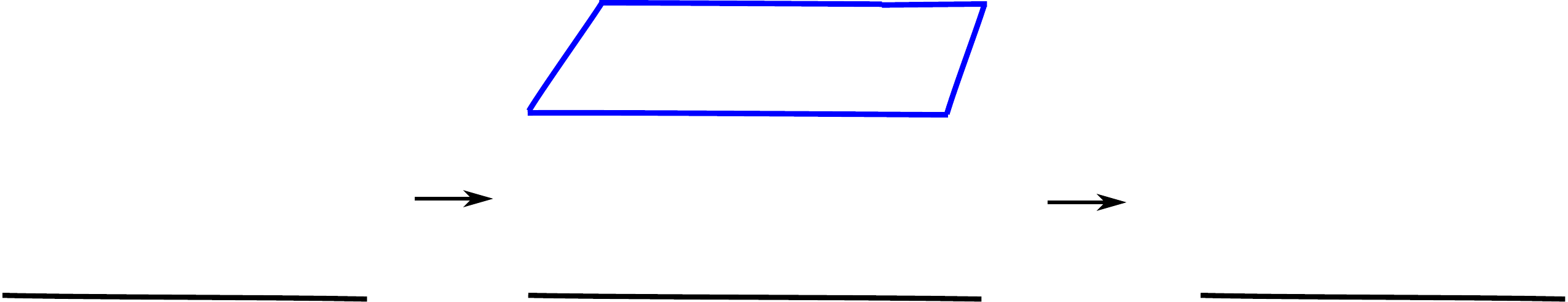}
\caption{Disjoint disks in $B^4=B^3\times I$}
\label{disjoint-sheets-fig}
\end{figure}

\subsubsection{Exercise:}\label{exercise:see-hopf-link-on-boundary}
Visualize the Hopf link $=$ $\partial A\cup\partial B\subset S^3=\partial (B^3\times I)$ in each of Figure~\ref{transverse-intersection-fig-1}
and  Figure~\ref{transverse-intersection-fig-2}. 

\subsubsection{Exercise:}\label{ex:finger-move-symmetric}
Starting with the bottom row of Figure~\ref{finger-move-movies-fig},
draw the finger move that pushes the horizontal blue sheet down into the black sheet. Observe that the results of the two finger moves are isotopic. 

\subsubsection{Exercise:}\label{ex:finger-move-ints-have-opposite-signs}
Check that the two intersections created by a finger move have opposite signs.
(Recall that the sign $\epsilon_p\in\{+,-\}$ of an intersection $p$ between oriented sheets $A$ and $B$ in an oriented $4$-manifold $X$ is defined to be $+$ (respectively, $-$) if the orientation of $X$ at $p$ agrees (respectively, disagrees) with
the concatenation of the orientations of $A$ and $B$ at $p$.)

\subsubsection{Exercise:}\label{ex:W-move-symmetric}
Check that Figure~\ref{W-move-other-sheet-fig} shows the result of a model Whitney move which adds the Whitney bubble to the black sheet instead of the blue sheet in Figure~\ref{fig:model-W-move}. (The two copies of the Whitney disk in the Whitney bubble are in grey).
Convince yourself that a model Whitney move is symmetric up to isotopy by finding an isotopy between Figure~\ref{W-move-other-sheet-fig} and the right side of Figure~\ref{fig:model-W-move}. 
\begin{figure}[h]
\includegraphics[width=\textwidth]{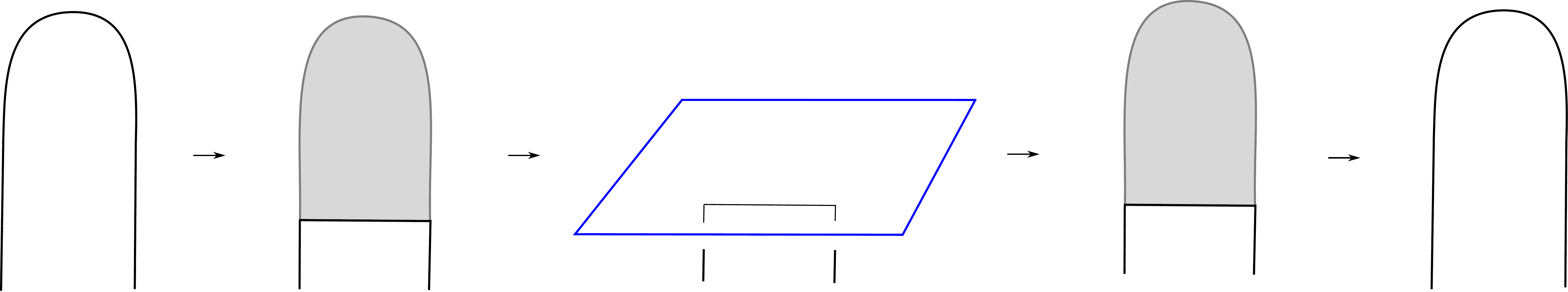}
\caption{}
\label{W-move-other-sheet-fig}
\end{figure}

\subsubsection{Exercise:}\label{ex:W-move-inverse-to-finger-move}
The oppositely-signed pair of intersections created by a finger move (Figure~\ref{finger-move-before-and-after-fig}) are contained in a local $4$-ball and admit a model Whitney disk (Figure~\ref{Whitney-disk-pic-and-movie}). Check that the result of doing a Whitney move on such a Whitney disk (Figure~\ref{fig:model-W-move}) is isotopic to not doing the finger move in the first place. We sometimes say that such a Whitney disk is ``inverse'' to the finger move.

\subsubsection{Exercise:}\label{ex:count-w-move-ints}
Describe the new intersections created by a $W$-Whitney move in terms of the interior intersections $W$ has with surfaces, the self-intersections of $W$, and the twisting $\omega(W)$.

\subsubsection{Exercise:}\label{exercise:boro-rings-on-boundary}
Visualize the Borromean Rings $\partial A\cup\partial B\cup \partial C\subset S^3=\partial (B^3\times I)$ in (both sides of) Figure~\ref{fig:W-disk-int-and-W-move-color}. HINT: See the Bing-double of one component of Exercise~\ref{exercise:see-hopf-link-on-boundary}.

\subsubsection{Exercise:}\label{exercise:bing-hopf-on-boundary}
Visualize the Bing-double of the Hopf link as the boundaries of the red, blue, green and yellow sheets in $S^3=\partial (B^3\times I)$ in the left side of Figure~\ref{higher-order-intersection-color-and-with-tree}. 


\subsubsection{Exercise:}\label{ex:no-trees-no-problem}
Show that if $\cW$ is a Whitney tower on $A$ such that $t(\cW)=\emptyset$,
then $A$ is regularly homotopic to an embedding.


\subsubsection{Exercise:}\label{ex:move-int-preserves-sign}
Check that the construction of Figure~\ref{fig:move-int-puncture-in-tree} 
preserves the sign of the unpaired intersection point using either the positive or negative convention in section~\ref{subsec:w-tower-tree-orientations}. You may assume that the embedded trees in the figure satisfy the orientation convention, and check that this implies that the sign of the unpaired intersections are the same on the left and right.

\subsubsection{Exercise:}\label{ex:create-clean-W-for-any-tree}
Let $\cW$ be a Whitney tower on a surface $A$, and let $I$ be any rooted tree.
Show that, without changing $t(\cW)$, a clean framed Whitney disk $W_I$ with associated tree $I$ can be created by performing finger moves.
(HINT: A finger move is supported near an arc, hence can be arranged to miss any other surface.)

\subsubsection{Exercise:}\label{ex:push-down}
Figure~\ref{fig:W-disk-int-and-push-down} shows how an intersection between a green sheet and the interior of a Whitney disk $W$ can be eliminated by a finger move, at the cost of creating an oppositely-signed pair of intersections between the green sheet and one of the sheets paired by $W$. This is called ``pushing down'' an intersection in $W$. Use pushing down to show that if $A$ supports an order~1 framed Whitney tower $\cW$, then $A$ supports an order~1 framed Whitney tower $\cW'$ whose Whitney disks are disjointly embedded.

\begin{figure}[h]
\includegraphics[scale=.6]{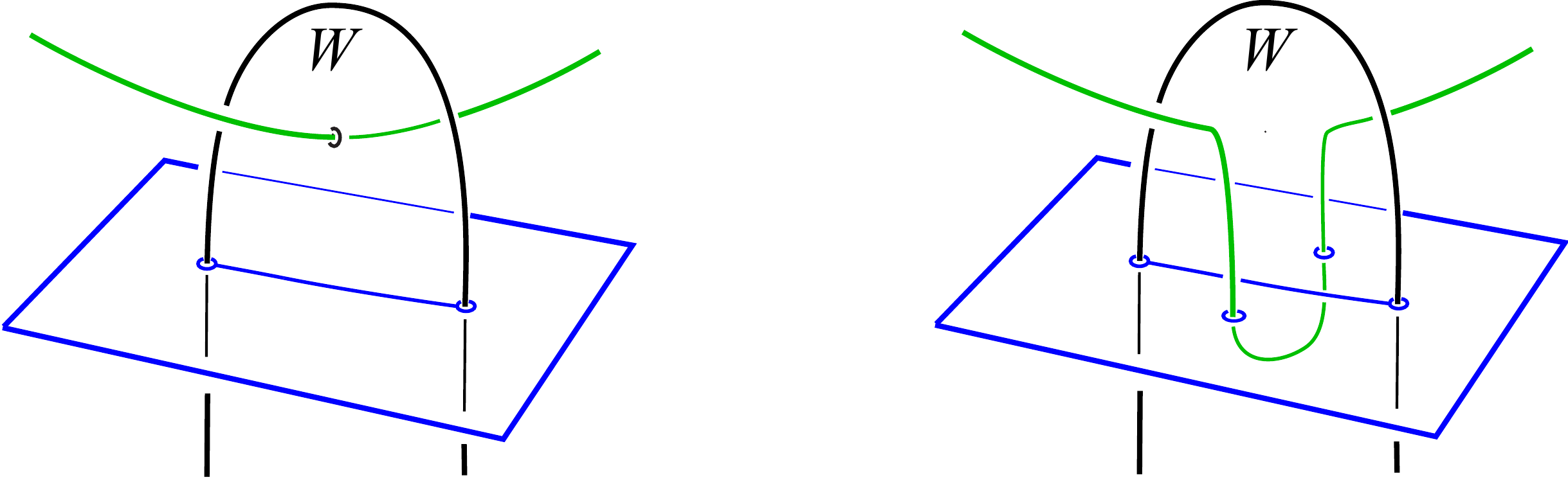}
\caption{`Pushing down' an intersection.
}
\label{fig:W-disk-int-and-push-down}
\end{figure}

\subsubsection{Exercise:}\label{ex:push-w-disk-boundary}
The green-blue intersections created by the finger move in Figure~\ref{fig:W-disk-int-and-push-down} admit a local Whitney disk $V$ which is inverse to the finger move (Exercise~\ref{ex:W-move-inverse-to-finger-move}), such that $\partial V$ intersects $\partial W$ transversely in the green sheet. Draw $V$ into the right side of Figure~\ref{fig:W-disk-int-and-push-down} (hanging down ``underneath'' the blue sheet). Observe that $V$ and $W$ can be made disjoint by extending to a collar of $V$ an isotopy of $\partial V$ that pushes $\partial V\pitchfork\partial W$ away across either blue-black intersection paired by $W$. This isotopy of $V$ creates an interior intersection between (the new) $V$ and the black sheet. Draw this new $V$, including its interior intersection with the black sheet, and check that this intersection between $V$ and black has the same order as the original intersection between $W$ and the green sheet.

\subsubsection{Exercise:}\label{ex:embedded-trees}
Check that the trees $t_p\subset \cW$ of section~\ref{subsec:trees-for-w-disks-and-ints} can always be arranged to be disjointly embedded in $\cW$, for any number of unpaired intersections in $\cW$. (Do not assume that $\cW$ is split (Figure~\ref{split-w-tower-with-trees-fig}), and don't forget the possibility of self-intersections in a Whitney disk.)

\subsubsection{Exercise:}\label{exercise:finish-ixh}
Complete the proof of Theorem~\ref{thm:IHX} by constructing the other two trees of Figure~\ref{Jacobi-identity-trees} analogously using past and future
(see \cite{CST} for the solution), and check that all the Whitney disks are framed.




\section{Order $n$ twisted Whitney towers in the $4$-ball}\label{sec:twisted-order-n-classification-arf-conj}

Recall from Definition~\ref{def:order-n-twisted-W-tower} that a Whitney tower $\cW$ is \emph{twisted of order $n$}
if 
every unpaired intersection $p$ in $\cW$ is of order~$\geq n$ 
and 
every twisted Whitney disk $W_J$ in $\cW$ is of order~$\geq \frac{n}{2}$;
or equivalently, if
every framed tree $t_p$ in the intersection forest $t(\cW)$ is of order~$\geq n$ 
and 
every twisted tree $J^\iinfty$ in $t(\cW)$ is of order~$\geq \frac{n}{2}$.

We say that a link $L\subset S^3=\partial B^4$ \emph{bounds} an order $n$ twisted Whitney tower $\cW$ if
$\cW\subset B^4$ is an order $n$ twisted Whitney tower whose order zero surfaces are immersed disks bounded by the components of $L$. 

Following \cite{CST1,CST2}, using algebraic results from \cite{CST3}, this section describes a classification of links in $S^3$ bounding order $n$ twisted Whitney towers:

Section~\ref{subsec:twisted-tree-groups} defines abelian groups $\cT^\iinfty_n$ generated by trees, with relations corresponding to controlled modifications of twisted Whitney towers.
These groups are the targets for intersection invariants $\tau_n^\iinfty(\cW):=[t(\cW)]\in \cT^\iinfty_n$ defined in section~\ref{subsec:twisted-order-n-invariant} which have the property
that
$L$ bounds an order $n$ twisted $\cW$ with $\tau_n^\iinfty(\cW)=0$ if and only if $L$ bounds an order $n+1$ twisted Whitney tower.
The identification of these order-raising obstruction-theoretic invariants with Milnor invariants and higher-order Arf invariants of the link on the boundary leads to a classification of order $n$ twisted Whitney towers in $B^4$ (Corollary~\ref{cor:mu-arf-classify-twisted}).

 
The higher-order Arf invariants appear for each $n\equiv 2$ mod $4$, and take values in specific finite $\Z/2\Z$-vector spaces (Definition~\ref{def:Arf-j}). 
In this setting the main open problem is to determine precisely the image of these higher-order Arf invariants, which the \emph{Higher-order Arf invariant Conjecture} states is maximal (Conjecture~\ref{conj:higher-order-arf}).

For links bounding order $n$ \emph{framed} Whitney towers (Definition~\ref{def:order-n-framed-W-tower}) there is an analogous classification that is more complicated to describe because, in addition to Milnor invariants and higher-order Arf invariants, it also involves \emph{higher-order Sato--Levine invariants} which represent obstructions to framing odd order twisted Whitney towers and correspond to certain projections of Milnor invariants.  
In \cite{CST1} the framed classification is derived from the twisted one described here. 

We also remark that Jae Choon Cha has shown that the higher-order Arf invariants measure the potential difference between the existence of twisted $\cW$ in $B^4$ versus rational homology $B^4$s.
Namely, $L\subset S^3$ has vanishing Milnor invariants through order $n$ if and only if $L$ bounds an order $n+1$ twisted $\cW$ in a \emph{rational homology $B^4$} (see \cite[Thm.C]{Cha2}).

Throughout this section the fixed index set $\{1,2,,\ldots,m\}$ is usually suppressed from notation.



\subsection{The order $n$ twisted tree groups}\label{subsec:twisted-tree-groups}
Recall 
our terminology and conventions for trees (sections~\ref{subsec:trees}--\ref{subsec:int-forest-def}, and \ref{subsec:order-of-trees-disks-ints}), including that \emph{order} is the number of trivalent vertices.

First we define framed tree groups:
\begin{defn}\label{def:untwisted-tree-groups}
Denote by
$\cT_n$ the free abelian group on order $n$ framed trees modulo the local \emph{antisymmetry} (AS) and \emph{Jacobi identity} (IHX) relations in Figure~\ref{fig:ASandIHXtree-relations}.
\end{defn}
\begin{figure}[h]
\centerline{\includegraphics[scale=.9]{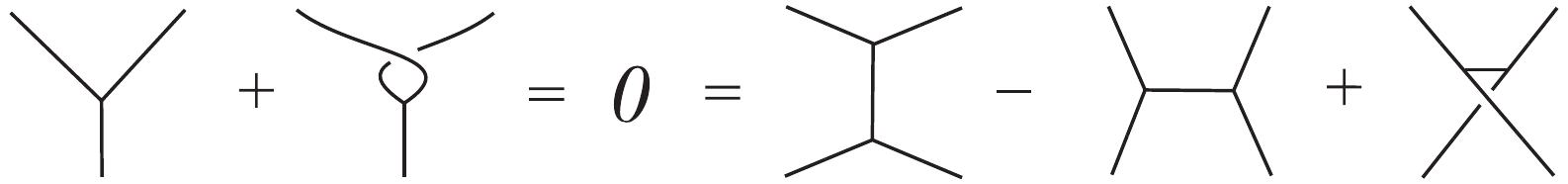}}
\caption{The AS (left) and IHX (right) local relations. Here univalent vertices represent arbitrary fixed sub-trees.}
         \label{fig:ASandIHXtree-relations}
\end{figure}


The target twisted tree groups $\cT^\iinfty_n$ for the intersection invariants $\tau_n^\iinfty(\cW)$ for order $n$ twisted Whitney towers $\cW\subset B^4$ bounded by $L\subset S^3$ will be defined separately for odd and even $n$.
After giving the definitions in terms of generating trees and relations, the geometric meaning of the relations will be discussed.


\begin{defn}
For each $j\geq 1$, the \emph{order $2j-1$ twisted tree group} $\cT^{\iinfty}_{2j-1}$ is the quotient of $\cT_{2j-1}$ by \em{boundary-twist} relations:
\[
\quad i\,-\!\!\!\!\!-\!\!\!<^{\,J}_{\,J}\,\,=\,0
\] 
where $J$ ranges over all order $j-1$ subtrees. 
\end{defn}


\begin{defn}
For each $j\geq 0$, the \emph{order $2j$ twisted tree group} $\cT^{\iinfty}_{2j}$ is the quotient of the free abelian group on framed trees of order $2j$ 
and $\iinfty$-trees of order $j$ by the following relations:
\begin{enumerate}
     \item \emph{AS} and \emph{IHX} relations on order $2j$ framed trees
   \item \emph{symmetry} relations: $(-J)^\iinfty = J^\iinfty$
  \item\label{item:twisted-IHX-relation-def} \emph{twisted IHX} relations: $I^\iinfty=H^\iinfty+X^\iinfty- \langle H,X\rangle $
   \item {\em interior-twist} relations: $2\cdot J^\iinfty=\langle J,J\rangle $
\end{enumerate}
\end{defn}

In item~(\ref{item:twisted-IHX-relation-def}) the three twisted trees differ locally as in the right of Figure~\ref{fig:ASandIHXtree-relations}.

See \cite[Sec.4.8]{CST4} for an interpretation of $J\mapsto J^\iinfty$ as a quadratic refinement of the $\cT_n$-valued intersection form on rooted trees (or on Whitney disks).



\subsection{Geometric meaning of the relations} \label{subsec:geometry-of-relations}
Both the odd and even order twisted tree groups contain 
the AS and IHX relations which apply to framed tree generators. 
We have already seen in Corollary~\ref{cor:IHX} the necessity of including the IHX relations  
in defining an invariant from $t(\cW)$ since IHX trees can be created locally.
Upon fixing the positive or negative corner convention (section~\ref{subsec:w-tower-tree-orientations}), the signs $\epsilon_p=\pm$ of the framed trees $\epsilon_p\cdot t_p$ in $t(\cW)$ only depend on the orientation of the underlying order $0$ surface modulo the antisymmetry relations (section~\ref{subsec:w-disk-orientation-choices-AS}).

In the odd order groups $\cT^{\iinfty}_{2j-1}$, which contain the obstructions to the existence of an order $2j$ twisted Whitney tower, the \emph{boundary-twist relations} correspond geometrically to the fact that 
performing a boundary twist (Figure~\ref{boundary-twist-fig}) on an order $j$ Whitney disk $W_{(i,J)}$ creates an order $2j-1$ intersection point
$p\in W_{(i,J)}\cap W_J$ with associated tree $t_p=i\,-\!\!\!\!\!-\!\!\!<^{\,J}_{\,J}$
and changes $\omega (W_{(i,J)})$ by $\pm1$. 
By Exercise~\ref{ex:create-clean-W-for-any-tree}, any number of clean framed $W_{(i,J)}$ can be created in any Whitney tower, so any number of $t_p=i\,-\!\!\!\!\!-\!\!\!\!<^{\,J}_{\,J}$
can be created by this construction. It follows from the obstruction theory (section~\ref{subsec:order-raising-proof-sketch}) that after arranging such trees into ``algebraically cancelling'' pairs, the corresponding unpaired intersections can be exchanged for ``geometrically cancelling'' intersections admitting Whitney disks.   
Since this can be done at the cost of only creating order $j$ twisted Whitney disks, which are allowed in an order $2j$ Whitney tower, the trees $i\,-\!\!\!\!\!-\!\!\!<^{\,J}_{\,J}$ do not represent obstructions.



\begin{figure}[ht!]
        \centerline{\includegraphics[scale=.5]{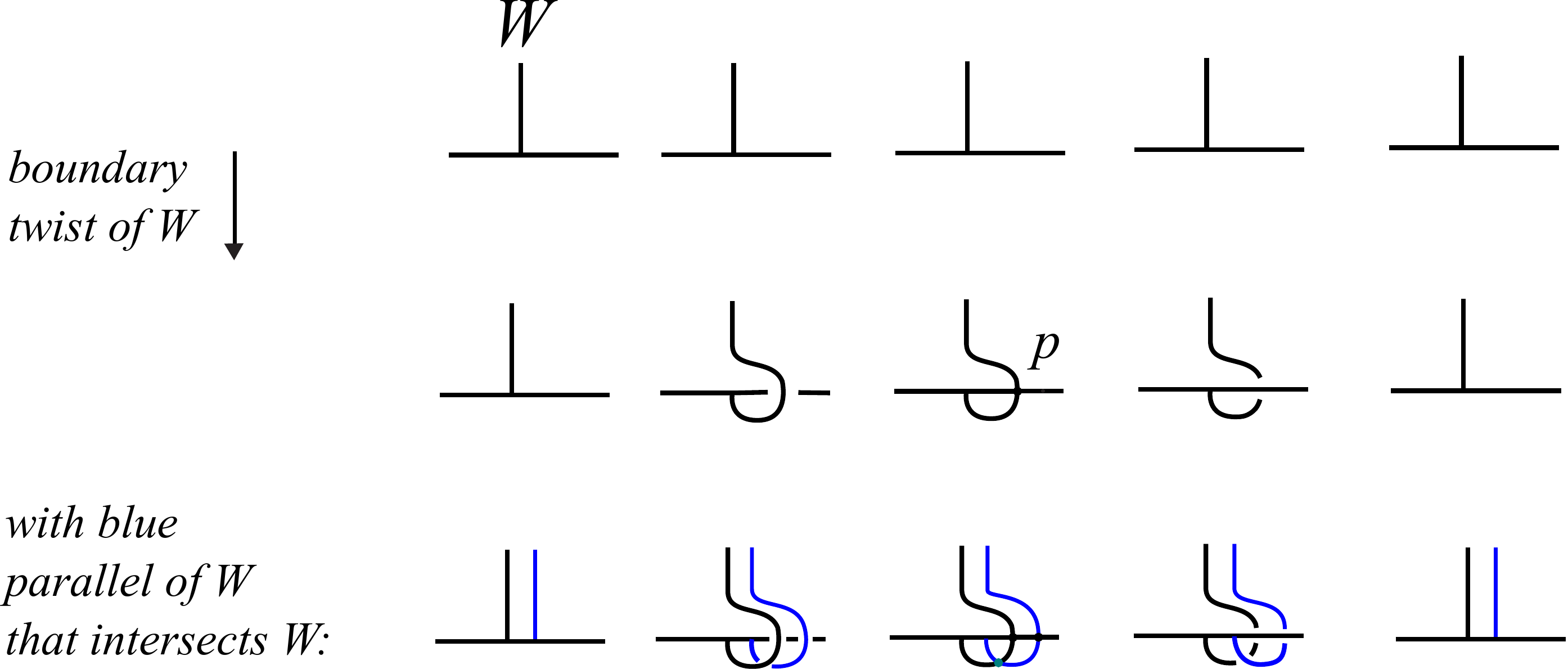}}
        \caption{`Side view' near a point in $\partial W$ of the boundary-twist operation on $W$ which changes $\omega(W)$ by $\pm 1$,
and creates a transverse intersection $p$ between $W$ and a sheet paired by $W$.}
        \label{boundary-twist-fig}
        \end{figure}

In the even order target groups $\cT^{\iinfty}_{2j}$:
The \emph{symmetry relation} corresponds to the fact that the twisting $\omega(W)$ (section~\ref{subsec:framed-w-disks}) is independent of the orientation of the Whitney disk $W$, with the minus sign denoting that the cyclic orderings at the trivalent vertices of $-J$ differ from those of $J$ at an odd number of vertices.
The \emph{twisted IHX relation} corresponds to the effect of performing a Whitney move in the presence of a twisted Whitney disk, as described in Lemma~\ref{lem:w-move-twistedIHX} and \cite[Lem.4.1]{CST1}. The \emph{interior-twist relation} corresponds to the fact that creating a 
$\pm1$ self-intersection
in a $W_J$ by a local cusp-homotopy \cite[Sec.1.6]{FQ} changes the twisting by $\mp 2$ (Figure~\ref{interior-twist-fig}). The result of such a cusp-homotopy is the same as the local cut-and-paste operation described in \cite[Sec.1.3]{FQ}.
For any $J$, a clean $W_J$ can be created by finger moves (Exercise~\ref{ex:create-clean-W-for-any-tree}), then a $\pm$-interior twist $W_J$ will change $t(\cW)$ by:
$$
\pm \langle J,J\rangle\quad\mp\quad 2\cdot J^\iinfty
$$

Thus, all the relations in $\cT^{\iinfty}_n$ can be realized by controlled modifications of Whitney towers altering their intersection forests, without changing the homotopy class of the underlying order $0$ surface.

\begin{figure}[h!]
        \centerline{\includegraphics[scale=.22]{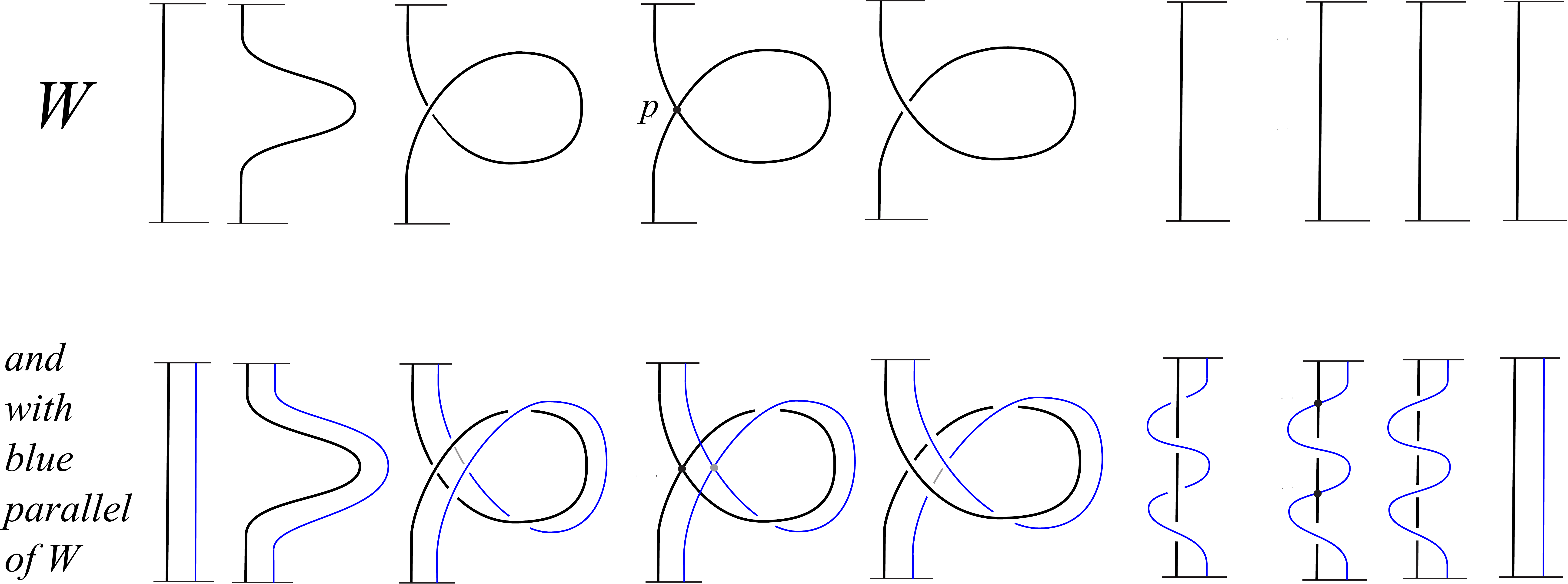}}
        \caption{After a $\pm$\emph{-interior twist} interior twist,
shown near an arc in $W$ that runs between the two sheets: $\omega(W)$ has changed by $\mp 2$, and
a new $p\in W\pitchfork W$ has been created.}
\label{interior-twist-fig}
        \end{figure}




%

\subsection{Intersection/obstruction theory for order $n$ twisted Whitney towers}\label{subsec:twisted-order-n-invariant}

\begin{defn}[Def.2.9 of \cite{CST1}]\label{def:twisted-order-n-invariant}
For an order $n$ twisted Whitney tower $\cW$, let $t_n(\cW)$ denote the sub-multiset $t_n(\cW)\subset t(\cW)$ consisting of all order $n$ framed trees and order $n/2$ twisted trees in $t(\cW)$. Define the \emph{order $n$ twisted intersection invariant}:
$$
\tau_n^\iinfty(\cW):=[t_n(\cW)]\in\cT_n^\iinfty
$$
\end{defn}
If $\cW$ is an order $n$ twisted Whitney tower, then the intersection forest $t(\cW)$ may apriori contain
framed trees of order $>n$ and $\iinfty$-trees of order $> n/2$, but in fact any such Whitney disks in $\cW$ can be deleted and/or modified yielding 
$t_n(\cW)=t(\cW)$ (see Exercise~\ref{ex:eliminate-higher-order-trees}).


%

\begin{thm}[Thm.1.9 of \cite{CST1}]\label{thm:twisted-order-raising}
A link $L\subset S^3$ bounds an order $n$ twisted $\cW\subset B^4$ with $\tau_n^\iinfty(\cW)=0\in\cT_n^\iinfty$ if and only if $L$ bounds an order $n+1$ twisted Whitney tower.
\end{thm}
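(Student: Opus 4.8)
The backward implication is formal: if $L$ bounds an order $n+1$ twisted Whitney tower $\cW$, then $\cW$ is \emph{a fortiori} an order $n$ twisted Whitney tower (framed trees of order $\geq n+1$ have order $\geq n$, and twisted trees of order $\geq (n+1)/2$ have order $\geq n/2$), and $t_n(\cW)=\emptyset$, since there are no order-exactly-$n$ framed trees and -- checking the two parities of $n$ -- no order-exactly-$n/2$ twisted trees. Hence $\tau_n^\iinfty(\cW)=[\emptyset]=0$, and this $\cW$ already witnesses one side.

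The forward implication is the order-raising theorem and is where all the content lies (see also section~\ref{subsec:order-raising-proof-sketch}). Suppose $L$ bounds an order $n$ twisted $\cW\subset B^4$ with $\tau_n^\iinfty(\cW)=0\in\cT_n^\iinfty$. First I would \emph{split} $\cW$ (section~\ref{subsec:split-w-towers}), so that each Whitney disk carries a single unpaired intersection or boundary arc, each twisted Whitney disk has $\omega=\pm1$, each unpaired intersection lies between framed Whitney disks or surface sheets, and the trees $t_p\subset\cW$ are disjointly embedded. By Exercise~\ref{ex:eliminate-higher-order-trees} I may then delete or modify any framed trees of order $>n$ and twisted trees of order $>n/2$, so that $t(\cW)=t_n(\cW)$. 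The goal is to modify $\cW$ rel $L$ until $t_n(\cW)=\emptyset$, creating along the way only framed trees of order $\geq n+1$ and twisted trees of order $\geq (n+1)/2$; such a tower is by definition an order $n+1$ twisted Whitney tower.

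Because $[t_n(\cW)]=0\in\cT_n^\iinfty$, the multiset $t_n(\cW)$ -- read as an element of the free abelian group on order $n$ framed trees and order $n/2$ twisted trees -- is a finite signed sum of the defining relators of $\cT_n^\iinfty$ (AS, IHX and boundary-twist relators for odd $n$; AS, IHX, symmetry, twisted IHX and interior-twist relators for even $n$). Each such relator can be realized geometrically with either sign by a controlled modification that fixes $L$ and creates no framed tree of order $<n$ nor twisted tree of order $<n/2$: IHX relators via Corollary~\ref{cor:IHX} (tubing the $2$-spheres of Theorem~\ref{thm:IHX} into clean Whitney disks built by finger moves, Exercise~\ref{ex:create-clean-W-for-any-tree}); AS and symmetry relators by re-choosing Whitney disk orientations; boundary-twist relators via the move of Figure~\ref{boundary-twist-fig}; twisted IHX relators via Lemma~\ref{lem:w-move-twistedIHX}; and interior-twist relators via the interior-twist move of Figure~\ref{interior-twist-fig} on a clean $W_J$. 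Performing the moves that subtract off each relator yields a tower $\cW'$ for which $t_n(\cW')$ vanishes as a formal sum: its order $n$ framed trees occur in oppositely-signed pairs with a common underlying unrooted tree, and its order $n/2$ twisted Whitney disks occur in pairs with opposite $\pm1$ twistings on a common oriented rooted tree. It then remains to cancel these pairs geometrically. A pair $p,q$ of oppositely-signed order $n$ framed unpaired intersections with $t_p=-t_q$ can be paired by an order $n+1$ Whitney disk $W$; arranging $W$ to be embedded and framed at the cost of only higher-order trees (one checks the orders), either $W$ is clean and simply adjoined to $\cW'$, or the Whitney move guided by $W$ eliminates $p,q$ while creating only intersections of order $>n$ (the interior intersections of the order-$(n+1)$ disk $W$ with other sheets already have order $>n$ by the tree grading). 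A cancelling pair of order $n/2$ twisted Whitney disks with opposite twistings is removed by tubing the relevant parallel copies together into a framed disk. Iterating over all pairs clears $t_n(\cW')$, producing the desired order $n+1$ twisted Whitney tower.

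I expect the main obstacle to be the geometric cancellation step, and specifically making it \emph{local and order-controlled}: a priori two algebraically-cancelling trees may be intertwined in $\cW'$ in a complicated way, so before any Whitney move is available one must use the splitting apparatus, the ``moving unpaired intersections in their trees'' construction of section~\ref{subsec:moving-unpaired-int-edge}, and possibly the geometric IHX of Theorem~\ref{thm:IHX} to bring the relevant sheets into a standard model -- all the while verifying that boundary-twisting a Whitney disk to frame it, tubing twisted disks together, and the Whitney moves themselves introduce only framed trees of order $\geq n+1$ and twisted trees of order $\geq (n+1)/2$. Carrying out the relation-realization step with the same vigilance is the companion difficulty, the twisted IHX relator being the most delicate because of the interplay between framings and the $\langle H,X\rangle$ term. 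This order-bookkeeping is precisely where the asymmetric definition of $\cT_n^\iinfty$ -- admitting twisted Whitney disks exactly in orders $\geq n/2$ -- is forced upon us.
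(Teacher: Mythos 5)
The high-level skeleton of your argument is right — backward direction is formal, forward direction goes split, eliminate higher-order trees, realize relators to get algebraic cancellation, then cancel geometrically — and your treatment of the first three stages matches the paper (section~\ref{subsec:order-raising-proof-sketch}). The gap is in the geometric-cancellation stage, and it is a real one, not just bookkeeping.

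You write that ``a pair $p,q$ of oppositely-signed order $n$ framed unpaired intersections with $t_p=-t_q$ can be paired by an order $n+1$ Whitney disk $W$.'' This does not follow. Having isomorphic oppositely-signed trees says only that $p\in W_I\pitchfork W_J$ and $q\in W_{I'}\pitchfork W_{J'}$ where $(I,J)$ and $(I',J')$ are the same \emph{abstract} bracket, not that $W_I=W_{I'}$ and $W_J=W_{J'}$ as actual Whitney disks in $\cW'$; $p$ and $q$ will generically lie on entirely different sheets, so there is no Whitney circle to cap off. This is exactly what forces the ``transfer move'' machinery. Your obstacle paragraph gestures at this (``bring the relevant sheets into a standard model'') but misidentifies both the tool and the reason the tool works. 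The paper's proof has a dedicated intermediate step that you omit: before attempting any transfer, one uses the Whitney-move IHX constructions (Lemma~\ref{lemma:w-move-IHX}, Lemma~\ref{lem:w-move-twistedIHX}) to convert every cancelling pair into \emph{simple} (right- or left-normed) trees, those in which every trivalent vertex is adjacent to a univalent vertex. That reduction is not cosmetic: the iterated transfer move (the higher-order analogue of the one in section~\ref{subsubsection:towards-geo-cancellation}) needs at each stage a connected order-$0$ sheet adjacent to the Whitney disk it is acting on, and a generic algebraically-cancelling pair supplies that connectivity only after being straightened into simple form; starting from an ``end'' of a simple tree is what guarantees the iteration terminates. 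Without this step your ``iterating over all pairs clears $t_n(\cW')$'' has no reason to converge. Incorporate the simplification-by-IHX step and replace the direct ``pair them by a Whitney disk'' claim with an iterated transfer move starting from an end of a simple tree, and the argument aligns with the one in \cite{CST1}.
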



\emph{Idea of proof:} For the ``only if'' direction, first realize relations by geometric constructions, as discussed in section~\ref{subsec:geometry-of-relations}, to arrange that all trees in $t_n(\cW)$ occur in oppositely-signed isomorphic pairs. Then use controlled maneuvers to arrange that all order $n$ intersections admit Whitney disks and all Whitney disks of order $\leq n/2$ are framed.
See section~\ref{subsec:order-raising-proof-sketch} for an outline of this proof, and \cite[Sec.4]{CST1} for details.
For the ``if'' direction see Exercise~\ref{ex:order-n+1-twisted-tower-has-vanishing-invariant}.

\subsection{Quick review of first non-vanishing Milnor invariants.}\label{subsec:intro-Milnor-review} 
Let $L\subset S^3$ be an $m$-component link with fundamental group $G=\pi_1(S^3\setminus L)$.
By \cite[Thm.4]{M2}, if the
longitudes of $L$ lie in the $(n+1)$-th term $G_{n+1}$ of the
lower central series of $G$, then a choice of meridians induces an isomorphism
$
\frac{G_{n+1}}{G_{n+2}}\cong\frac{F_{n+1}}{F_{n+2}}
$
where $F=F(m)$ is the free group on $\{x_1,x_2,\ldots,x_m\}$.

Let $\sL=\sL(m)$ denote the free Lie algebra (over $\Z$) on generators $\{X_1,X_2,\ldots,X_m\}$. It is $\N$-graded, $\sL=\oplus_n \sL_n$, where the degree~$n$ part $\sL_n$ is the
additive abelian group of length $n$ brackets, modulo Jacobi
identities and self-annihilation relations $[X,X]=0$.
The multiplicative abelian group $\frac{F_{n+1}}{F_{n+2}}$ of
length $n+1$ commutators is isomorphic to
$\sL_{n+1}$, with $x_i$ mapping to  $X_i$ and commutators mapping to Lie brackets.

In this setting, denote by $l_i$ the image of the $i$-th longitude in  $\sL_{n+1}$ under the above isomorphisms and
define the \emph{order $n$ Milnor invariant}
$\mu_n(L)$ by
$$
\mu_n(L):=\sum_{i=1}^m X_i \otimes l_i \in \sL_1 \otimes
\sL_{n+1}
$$
This definition of $\mu_n(L)$ is the first non-vanishing ``total'' Milnor invariant of order $n$, and corresponds to {\em all} Milnor invariants of \emph{length} $n+2$ in the original formulation of \cite{M1,M2}. 
The original $\bar{\mu}$-invariants are computed from the longitudes via the Magnus expansion as integers modulo indeterminacies coming from invariants of shorter length. Since we will only be concerned with first non-vanishing $\mu$-invariants, we do not use the ``bar'' notation $\bar{\mu}$.

It turns out that  $\mu_n(L)$ lies in the kernel $\sD_n$
of the bracket map $\sL_1 \otimes \sL_{n+1}\rightarrow \sL_{n+2}$ 
(eg.~by ``cyclic symmetry'' \cite{FT2}).


\subsection{The summation maps $\eta_n$}\label{subsec:eta-map}
The connection between $\tau^\iinfty_n(\cW)$ and $\mu_n(L)$ is via a homomorphism $\eta_n : \cT^\iinfty_n \to \sD_n$ which is most easily described by regarding rooted trees of order $n$ as elements of $\sL_{n+1}$ in the usual way:
For $v$ a univalent vertex of an order $n$ framed tree $t$, denote by $B_v(t)\in\sL_{n+1}$ the Lie bracket of generators $X_1,X_2,\ldots,X_m$ determined by the formal bracketing of indices
which is gotten by considering $v$ to be a root of $t$.

\begin{defn}\label{def:eta}
Denoting the label of a univalent vertex $v$ by $\ell(v)\in\{1,2,\ldots,m\}$, the
map $\eta_n:\cT^\iinfty_n\rightarrow \sL_1 \otimes \sL_{n+1}$
is defined on generators by
$$
\eta_n(t):=\sum_{v\in t} X_{\ell(v)}\otimes B_v(t)
\quad \, \,
\mbox{and}
\quad \, \,
\eta_n(J^\iinfty):= \frac{1}{2}\,\eta_n(\langle J,J \rangle)
$$
The first sum is over all univalent vertices $v$ of $t$, and the second expression lies in $\sL_1 \otimes \sL_{n+1}$ 
because the coefficients of $\eta_n(\langle J,J \rangle)$ are even. Here $J$ is a rooted tree of order $j$ for $n=2j$.
\end{defn}

Examples of $\eta_n$ for $n=1,2$:
\[
 \begin{array}{lll}
\eta_1\!\left( {\scriptstyle 1}-\!\!\!\!\!-\!\!\!<^{\,\,3}_{\,\,2}\,\right)  &= \quad  X_1\otimes\,-\!\!\!\!\!-\!\!\!<^{\,\,3}_{\,\,2}\,\,\ + \quad X_2\otimes \,{\scriptstyle 1}\!-\!\!\!\!\!-\!\!\!<^{\,\,3}_{\,\,}\,\,\  + \quad X_3\otimes \, {\scriptstyle 1}\!-\!\!\!\!\!-\!\!\!<^{\,\,}_{\,\,2}\,  
\\
&= \quad  X_1\otimes\,[X_2,X_3]+ X_2\otimes [X_3,X_1]  +  X_3\otimes [X_1,X_2].
\end{array}
\]
And,
 \[
\begin{array}{llc}
\eta_2\!\left( {\scriptstyle \iinfty}-\!\!\!\!\!-\!\!\!<^{\,\,2}_{\,\,1}\,\right)  &= \frac{1}{2}\,\eta_2\!\left(^{\,\,1}_{\,\,2}>\!\!\!-\!\!\!\!\!-\!\!\!<^{\,\,2}_{\,\,1}\right) \\ 
&= X_1\,\otimes _{\,\,2}\!>\!\!\!-\!\!\!\!\!-\!\!\!<^{\,\,2}_{\,\,1} \,+\,  X_2\,\otimes  ^{\,\,1}\!>\!\!\!-\!\!\!\!\!-\!\!\!<^{\,\,2}_{\,\,1}   
\\
&= X_1\otimes\,[X_2,[X_1,X_2]]+ X_2\otimes [[X_1,X_2],X_1]. 
\end{array}
\]


%
%

The image of $\eta_n$ is equal to the bracket kernel $\sD_n<\sL_1 \otimes \sL_{n+1}$, by \cite[Lem.32]{CST2}.

\begin{thm}[\cite{CST1}]\label{thm:Milnor invariant}
If $L$ bounds a twisted Whitney tower $\cW$ of order $n$, then the order $q$ Milnor invariants $\mu_q(L)$ vanish for $q<n$, and
\[
\mu_n(L) = \eta_n \circ\tau^\iinfty_n(\cW) \in \sD_n
\]
\end{thm}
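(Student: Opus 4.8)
\textbf{Plan of proof for Theorem~\ref{thm:Milnor invariant}.} The strategy is to compute $\mu_n(L)$ directly from a twisted Whitney tower $\cW$ of order $n$ on immersed disks $D_i\subset B^4$ bounded by $L$, by reading off the longitudes of $L$ as elements of $F_{n+1}/F_{n+2}\cong\sL_{n+1}$. First I would establish the vanishing statement: if $\cW$ has order $n$, then in particular it is a Whitney tower of every order $q<n$, and since the target groups $\cT^\iinfty_q$ for $q<n$ receive the invariants $\tau^\iinfty_q(\cW)$, we may apply Theorem~\ref{thm:twisted-order-raising} repeatedly (downward) to see that a sub-Whitney-tower argument forces $\mu_q(L)=0$ for $q<n$ — but the cleaner route is to prove the formula $\mu_n(L)=\eta_n\circ\tau^\iinfty_n(\cW)$ for all orders uniformly and note that order $n$ forces $\mu_q=0$ for $q<n$ because there are no trees of order $<n$ in $t(\cW)$.

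The main computation proceeds by locating each longitude $\ell_i$ of $L$ as a loop in $S^3\setminus L$ and pushing it into $B^4\setminus \cup_i D_i$, then expressing it in terms of the meridians. After splitting $\cW$ (section~\ref{subsec:split-w-towers}), every singularity of $\cW$ is contained in the regular neighborhood of an embedded tree $t_p$ or a twisted disk $W_J$, and each Whitney disk is either clean-and-framed or carries exactly one ``problem''. The key geometric input is that a meridian of $D_i$, when guided along the Whitney disks of $\cW$, gets written as an iterated commutator of meridians whose bracketing pattern is exactly the rooted tree obtained by rooting the associated tree at its $i$-labelled leaf; the twistings contribute via the relation that $\omega(W_J)$-many copies of a self-parallel push-off of $W_J$ appear, each contributing the bracket $[B(J),B(J)]$-pattern, which is precisely the geometric origin of the $\eta_n(J^\iinfty)=\tfrac12\eta_n(\langle J,J\rangle)$ clause. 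Summing the contributions of all unpaired order $n$ intersections and all order $n/2$ twisted disks — i.e.\ summing over $t_n(\cW)$ — and assembling $\sum_i X_i\otimes\ell_i$, one obtains exactly $\sum_{v}X_{\ell(v)}\otimes B_v(t)$ over the relevant trees, which is the definition of $\eta_n$ applied to $[t_n(\cW)]=\tau^\iinfty_n(\cW)$. One must check that contributions from framed trees of order $>n$ and $\iinfty$-trees of order $>n/2$ land in $F_{n+2}$ and hence vanish in $\sL_{n+1}$, which is why only the order $n$ part $t_n(\cW)$ survives; this uses Exercise~\ref{ex:eliminate-higher-order-trees} or a direct degree count.

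I would then check that the formula is well-defined, i.e.\ independent of the choices made in building $\cW$: this is automatic once Theorem~\ref{thm:twisted-order-raising} and the observation that $\eta_n$ is defined on $\cT^\iinfty_n$ (not just on multisets of trees) are in hand, since $\tau^\iinfty_n(\cW)\in\cT^\iinfty_n$ is the intrinsic object and all the relations in $\cT^\iinfty_n$ — AS, IHX, boundary-twist, symmetry, twisted IHX, interior-twist — are respected by $\eta_n$ (which must be verified, but this is the content of \cite[Lem.32]{CST2} and is cited). The hard part is the central Magnus-expansion/Milnor-invariant computation: carefully tracking how commutators of meridians are generated as one travels through the layers of a (split) Whitney tower, and matching the bracketing combinatorics to the tree combinatorics, including getting all signs right via the positive/negative corner conventions of section~\ref{subsec:w-tower-tree-orientations} and handling the twisted disks correctly (the factor $\tfrac12$ and the appearance of $\langle J,J\rangle$). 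This is exactly where one invokes \cite{CST1}; in this expository setting I would present the statement, give the geometric picture for low orders ($n=1$ recovers the classical fact that $\mu_1(L)$ is the triple linking number, visibly matching $\eta_1$ of a Y-tree), and refer to \cite{CST1,CST2} for the full inductive argument.
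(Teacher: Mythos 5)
Your proposal correctly identifies the central geometric mechanism — that a meridian to a Whitney disk is a commutator of meridians to the two sheets it pairs — and the overall plan of summing contributions over $t_n(\cW)$ to match the definition of $\eta_n$ is the right shape. But the plan as written has a genuine gap: it does not invoke Dwyer's theorem.

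The issue is that $\mu_n(L)$ is defined from the longitudes in $\pi_1(S^3\setminus L)$, while all of your commutator computation happens in $\pi_1(B^4\setminus\cW)$ (and note it must be the complement of the whole tower $\cW$, not just the disks $\cup_i D_i$ as you write, since the Whitney disks are what carry the commutator structure). To conclude anything about $\mu_n(L)$, you need the inclusion $S^3\setminus L\hookrightarrow B^4\setminus\cW$ to induce an isomorphism on the $(n+1)$st lower central quotients of $\pi_1$, and this is exactly what Dwyer's theorem supplies. Without it, a commutator identity verified downstairs gives no information upstairs. The paper's proof sketch foregrounds precisely this point. Dwyer's theorem also resolves the vanishing statement cleanly, and in fact your proposed ``cleaner route'' to vanishing — applying the formula at each $q<n$ — is circular as stated, because $\mu_q(L)$ is only defined once all lower-order Milnor invariants have already been shown to vanish. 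Dwyer's isomorphism places the longitudes directly in $G_{n+1}$, so $\mu_q(L)$ is defined and zero for $q<n$ with no further argument. Finally, a small point: the phrase ``each contributing the bracket $[B(J),B(J)]$-pattern'' cannot be meant literally, since $[Y,Y]=0$; the twisting of $W_J$ contributes the tree $\langle J,J\rangle$, whose leaves are double-counted by symmetry when computing $\eta_n$, and it is this double-counting that produces and then cancels the factor $\tfrac12$ in $\eta_n(J^\iinfty)$.
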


\emph{Idea of proof:} The existence of the order $n$ twisted $\cW$ implies, via Dwyer's Theorem, that the inclusion $S^3\setminus L\to B^4\setminus \cW$ induces an isomorphism on the $(n+1)$th lower central quotients of $\pi_1$, so the longitudes of $L$ can be computed in $B^4\setminus \cW$.  
It turns out that the corresponding iterated commutators are displayed exactly according to $\eta_n \circ\tau^\iinfty_n(\cW)$, with the key observation being that a meridian to a Whitney disk is a commutator of meridians to the sheets paired by the Whitney disk. See \cite[Sec.4]{CST2} or \cite[Thm.3.1]{Cha2}.

\subsection{The order $n$ twisted Whitney tower filtration on links}

Recall that a link $L\subset S^3=\partial B^4$ \emph{bounds} an order $n$ twisted Whitney tower $\cW$ if
$\cW\subset B^4$ is an order $n$ twisted Whitney tower whose order $0$ surfaces are immersed disks bounded by the components of $L$.

We say that links $L_0$ and $L_1$ in $S^3$ are \emph{twisted Whitney tower concordant of order $n$} if $L_0\subset S^3\times\{0\}$ and $L_1\subset S^3\times\{1\}$ cobound a collection $A\imra S^3\times[0,1]$ of immersed annuli such that $A$ supports an order $n$ twisted Whitney tower (with $A$ inducing the reversed orientation on $L_1$).


Denote by $\W^\iinfty_n$ the set of links in $S^3$ bounding order $n$ twisted Whitney towers in $B^4$
modulo the equivalence relation of order $n+1$ twisted Whitney tower concordance.

The twisted ``order-raising'' Theorem~\ref{thm:twisted-order-raising} implies the following essential criterion for links to represent 
equal elements in $\W^\iinfty_n$:

\begin{cor}[{\cite[Cor.3.3]{CST1}}]\label{cor:tau=w-concordance}
Links $L_0$ and $L_1$ represent the same element of $\W^\iinfty_n$
if and only if there exist order $n$ twisted Whitney towers $\cW_i$ in $B^4$ with $\partial\cW_i=L_i$ and $\tau^\iinfty_n(\cW_0)=\tau^\iinfty_n(\cW_1)\in\cT^\iinfty_n$.
\end{cor}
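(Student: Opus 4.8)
The plan is to derive the corollary from the order-raising Theorem~\ref{thm:twisted-order-raising}, handling the two implications separately.

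For the ``only if'' direction, suppose $[L_0]=[L_1]$ in $\W^\iinfty_n$, so that $L_0$ and $L_1$ cobound immersed annuli $A\imra S^3\times I$ supporting an order $n+1$ twisted Whitney tower $V$, with $A$ inducing the reversed orientation on $L_1$. First I would choose any order $n$ twisted Whitney tower $\cW_0\subset B^4$ bounded by $L_0$, which exists since $L_0$ represents a class in $\W^\iinfty_n$. Gluing $B^4$ to $S^3\times I$ along $S^3\times\{0\}=\partial B^4$, and using $B^4\cup_{S^3}(S^3\times I)\cong B^4$, the union $\cW_1:=\cW_0\cup V$ becomes an order $n$ twisted Whitney tower in $B^4$ bounded by $L_1$. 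Since every framed tree of $t(V)$ has order $\geq n+1$ and every $\iinfty$-tree of $t(V)$ has order $\geq(n+1)/2$, and the strict inequalities $n+1>n$ and $(n+1)/2>n/2$ hold whether $n$ is even or odd, the contribution of $V$ to the degree-$n$ part vanishes: $t_n(\cW_1)=t_n(\cW_0)$, hence $\tau_n^\iinfty(\cW_1)=\tau_n^\iinfty(\cW_0)$.

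For the ``if'' direction, suppose $\cW_0$ on $L_0$ and $\cW_1$ on $L_1$ are order $n$ twisted Whitney towers in $B^4$ with $\tau_n^\iinfty(\cW_0)=\tau_n^\iinfty(\cW_1)$. First I would form the mirror-reverse: reflecting $B^4$ and reversing the string orientations of $L_1$ turns $\cW_1$ into an order $n$ twisted Whitney tower $\overline{\cW}_1$ on the mirror-reverse link $\overline{L}_1$, and because orientation reversal flips every intersection sign $\epsilon_p$ and every twisting $\omega(W_J)$, one has $\tau_n^\iinfty(\overline{\cW}_1)=-\tau_n^\iinfty(\cW_1)$. Next I would put $\cW_0$ and $\overline{\cW}_1$ into disjoint $4$-balls inside a boundary-connected-sum $B^4$ and band them together along $m$ bands in $S^3$ joining the $i$-th component of $L_0$ to the $i$-th component of $\overline{L}_1$; since the bands are supported near arcs they create no new intersections or twistings, so the resulting order $n$ twisted Whitney tower $\cW$ on the band sum $L:=L_0\#\overline{L}_1$ satisfies $t(\cW)=t(\cW_0)\sqcup t(\overline{\cW}_1)$ and therefore $\tau_n^\iinfty(\cW)=\tau_n^\iinfty(\cW_0)+\tau_n^\iinfty(\overline{\cW}_1)=0\in\cT_n^\iinfty$. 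Now Theorem~\ref{thm:twisted-order-raising} applied to $\cW$ shows that $L$ bounds an order $n+1$ twisted Whitney tower in $B^4$, i.e. $L$ is order $n+1$ twisted Whitney tower concordant to the unlink; since $\W^\iinfty_n$ is an abelian group under band sum with the mirror-reverse inducing inversion \cite{CST1}, this reads $[L_0]-[L_1]=0$, and hence $[L_0]=[L_1]$.

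The hard part is entirely in the ``if'' direction, and it is bookkeeping rather than a new idea. One must check (a) that reflecting a twisted Whitney tower negates its intersection forest, which amounts to recalling that both the sign of a transverse double point and the relative Euler number $\omega(W)$ change sign under reversal of the ambient orientation; and (b) the two ``standard'' inputs used at the end: that $\W^\iinfty_n$ is a group under band sum with the mirror-reverse as inverse, and that a link bounds an order $n+1$ twisted Whitney tower in $B^4$ precisely when it is order $n+1$ twisted Whitney tower concordant to the unlink. Both of (b) are established by the usual ``push the $4$-ball into a collar'' and doubling maneuvers and are part of the framework of \cite{CST1}; alternatively, one can bypass the group language by using the clean slice disks for $L_1\#\overline{L}_1$ to splice the order $n+1$ twisted Whitney tower on $L$ directly into an order $n+1$ twisted Whitney tower concordance from $L_0$ to $L_1$, which is the step that genuinely uses Theorem~\ref{thm:twisted-order-raising}.
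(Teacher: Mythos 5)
Your ``only if'' direction is essentially the paper's, with the roles of $L_0$ and $L_1$ interchanged: you start from a tower on $L_0$ and extend it forward through the order $n+1$ concordance, the paper starts from a tower on $L_1$ and extends backward; both correctly observe that the added piece contributes nothing in order $n$.

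Your ``if'' direction takes a genuinely different route and runs into a circularity problem. The paper never touches band sums or the group structure of $\W^\iinfty_n$: it builds $S^3\times I$ as the interior connected sum $B^4_0\#B^4_1$ (along balls in the complements of the towers), tubes the order-$0$ disks of $\cW_0$ and $\cW_1$ into properly immersed annuli $A$, keeps all the Whitney disks, and notes that the orientation reversal on $B^4_1$ gives $\tau^\iinfty_n(\cV)=\tau^\iinfty_n(\cW_0)-\tau^\iinfty_n(\cW_1)=0$; the order-raising theorem then produces the required order $n+1$ concordance directly. You instead produce $L:=L_0\#_\beta\overline L_1$ bounding an order $n$ twisted tower in $B^4$ with vanishing $\tau^\iinfty_n$, raise its order to get $[L]=0\in\W^\iinfty_n$, and then invoke ``$\W^\iinfty_n$ is a group under band sum with mirror-reverse as inverse'' to read off $[L_0]=[L_1]$. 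That last step is where the circularity sits: in this paper's development---and in \cite{CST1} itself, where the relevant numbering is the same---well-definedness of band sum on $\W^\iinfty_n$ is Lemma~\ref{lem:link-sum-well-defined}, which appears \emph{after} Corollary~\ref{cor:tau=w-concordance} and whose proof (Exercise~\ref{ex:link-band-sum-well-defined}) invokes the corollary to show independence of the choice of bands $\beta$. So the group structure is not available to you at this point. Your fallback---``splice with the ribbon disks for $L_1\#\overline L_1$ to build the concordance directly''---would dodge the circularity, but as written it is a single sentence standing in for the real content of the argument, and once you actually carry it out it collapses to the paper's $B^4_0\#B^4_1$ tubing construction. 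I would drop the band-sum detour and go straight to the direct construction of the concordance in $S^3\times I$.
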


\begin{proof}
If $L_0$ and $L_1$ are equal in $\W^\iinfty_n$ then they cobound $A$ supporting an order $n+1$ twisted Whitney tower $\cV$ in $S^3\times I$, and any order $n$ twisted Whitney tower $\cW_1$ in $B^4$ bounded by $L_1$ can be extended by $\cV$ to 
form an order $n$ twisted Whitney tower $\cW_0$ in $B^4$ bounded by $L_0$, with 
$\tau^\iinfty_n(\cW_0)=\tau^\iinfty_n(\cW_1)\in\cT^\iinfty_n$
since $\tau^\iinfty_n(\cV)$ vanishes. 

Conversely, suppose that $L_0$ and $L_1$ bound order $n$ twisted Whitney towers $\cW_0$ and $\cW_1$ in $4$--balls $B_0^4$ and $B_1^4$, with 
$\tau^\iinfty_n(\cW_0)=\tau^\iinfty_n(\cW_1)$. Then constructing $S^3\times I$
as the connected sum $B_0^4\# B_1^4$ (along balls in the complements of $\cW_0$ and $\cW_1$), and tubing together the corresponding order zero disks of $\cW_0$ and $\cW_1$, and taking the union of the Whitney disks in $\cW_0$ and 
$\cW_1$, yields a collection $A$ of properly immersed annuli connecting $L_0$ and $L_1$ and supporting an order $n$ twisted Whitney tower $\cV$.  Since the orientation of the ambient $4$--manifold has been reversed for one of the original Whitney towers, say $\cW_1$, which results in a global sign change for 
$\tau^\iinfty_n(\cW_1)$, it follows that $\cV$ has vanishing order $n$ intersection invariant:
$$
\tau^\iinfty_n(\cV)=\tau^\iinfty_n(\cW_0)-\tau^\iinfty_n(\cW_1)=\tau^\iinfty_n(\cW_0)-\tau^\iinfty_n(\cW_0)=0\in\cT^\iinfty_n
$$
So by Theorem~\ref{thm:twisted-order-raising}, $A$ is homotopic (rel $\partial$) to $A'$ supporting
an order $n+1$ twisted Whitney tower, and hence $L_0$ and $L_1$ are equal in $\W^\iinfty_n$.
\end{proof} 

The \emph{band sum} $L\#_\beta L'\subset S^3$
of oriented $m$-component links $L$ and $L'$ along bands $\beta$ is defined as follows: Form $S^3$ as the connected sum of $3$--spheres containing $L$ and $L'$ along balls in the link complements.  Let
$\beta$ be a collection of disjointly embedded oriented bands joining like-indexed link components such that the band orientations are compatible with the link orientations. Take the usual connected sum of each pair of components along the corresponding band. Although it is well-known that the concordance class of $L\#_\beta L'$ depends in general on $\beta$, it turns out that the image of 
$L\#_\beta L'$ in $\W^\iinfty_n$ does not depend on $\beta$:

\begin{lem}[\cite{CST1}]\label{lem:link-sum-well-defined}   
For links $L$ and $L'$ representing elements of $\W^\iinfty_n$, any band sum $L\#_\beta L'$ represents an element of 
$\W^\iinfty_n$ which only depends on the equivalence classes of $L$ and $L'$ in $\W^\iinfty_n$.
\end{lem}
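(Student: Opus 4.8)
The plan is to reduce everything to the order-raising machinery via Corollary~\ref{cor:tau=w-concordance}, so that the statement becomes a purely local assertion about how the intersection invariant behaves under band summing. First I would fix once and for all order $n$ twisted Whitney towers $\cW\subset B^4$ and $\cW'\subset B^4$ with $\partial\cW=L$, $\partial\cW'=L'$, and (invoking Theorem~\ref{thm:twisted-order-raising} as needed) arrange that these exist. Given a band collection $\beta$, I would build an order $n$ twisted Whitney tower $\cW\#_\beta\cW'\subset B^4$ whose boundary is $L\#_\beta L'$: form $B^4$ as a boundary connected sum of two $4$--balls along a ball in the complements of $\cW$ and $\cW'$, then band together the order $0$ immersed disks using the bands $\beta$ (pushed slightly into $B^4$), and take the union of all higher-order Whitney disks. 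Since the bands can be taken disjoint from the higher-order parts of both towers and are embedded, this produces a legitimate twisted Whitney tower, and since no new intersections or twistings are created, $t(\cW\#_\beta\cW')=t(\cW)\sqcup t(\cW')$ as multisets, hence $\tau^\iinfty_n(\cW\#_\beta\cW')=\tau^\iinfty_n(\cW)+\tau^\iinfty_n(\cW')\in\cT^\iinfty_n$.

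Next I would handle the two independence claims. The claim that $L\#_\beta L'$ represents an element of $\W^\iinfty_n$ is immediate from the construction above. For independence of $\beta$, suppose $\beta$ and $\beta'$ are two admissible band collections; then by the computation above $\tau^\iinfty_n(\cW\#_\beta\cW')=\tau^\iinfty_n(\cW)+\tau^\iinfty_n(\cW')=\tau^\iinfty_n(\cW\#_{\beta'}\cW')$, so by Corollary~\ref{cor:tau=w-concordance} the links $L\#_\beta L'$ and $L\#_{\beta'}L'$ are equal in $\W^\iinfty_n$. For independence of the choice of representatives, suppose $L_0,L_1$ are equal in $\W^\iinfty_n$, and similarly $L_0',L_1'$; pick order $n$ twisted Whitney towers realizing these and note (again by Corollary~\ref{cor:tau=w-concordance}) that one may choose $\cW_0,\cW_1$ with $\tau^\iinfty_n(\cW_0)=\tau^\iinfty_n(\cW_1)$ and $\cW_0',\cW_1'$ with $\tau^\iinfty_n(\cW_0')=\tau^\iinfty_n(\cW_1')$. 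Then $\tau^\iinfty_n(\cW_0\#_\beta\cW_0')=\tau^\iinfty_n(\cW_0)+\tau^\iinfty_n(\cW_0')=\tau^\iinfty_n(\cW_1)+\tau^\iinfty_n(\cW_1')=\tau^\iinfty_n(\cW_1\#_{\beta'}\cW_1')$ for any admissible $\beta,\beta'$, so Corollary~\ref{cor:tau=w-concordance} again gives that the two band sums agree in $\W^\iinfty_n$.

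The main obstacle I expect is the first paragraph: verifying carefully that banding the order $0$ disks genuinely yields a Whitney tower of the same order with additively combining intersection forest, i.e.\ that the bands $\beta$ can be isotoped (rel their endpoints on $L\sqcup L'$) into $B^4$ so as to be embedded, mutually disjoint, and disjoint from all the higher-order Whitney disks, boundary arcs, and from the interiors of the order $0$ disks except along the prescribed gluing. One must also check that the band orientations being compatible with the link orientations is exactly what is needed for the resulting surface to be an oriented immersed annulus/disk collection with no spurious new order $0$ self-intersections, and that pushing the bands into $B^4$ from a collar of $S^3$ keeps them in the complement of the (compactly supported, higher-order) singularities of $\cW$ and $\cW'$ — this is where the ``connected sum along balls in the complements'' language does its work. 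Once that bookkeeping is in place, everything else is a formal consequence of the order-raising theorem as packaged in Corollary~\ref{cor:tau=w-concordance}. I would also remark that this construction visibly makes $\W^\iinfty_n$ into an abelian group with addition induced by band sum, the inverse of $L$ being its orientation-reverse mirror (since reversing the ambient orientation negates $\tau^\iinfty_n$), which is presumably the point of the lemma and worth stating as a corollary.
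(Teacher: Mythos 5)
Your proof is correct and follows essentially the same route as the paper: the construction of $\cW\#_\beta\cW'$ with $t(\cW\#_\beta\cW')=t(\cW)\sqcup t(\cW')$ in your first paragraph is exactly the content of Lemma~\ref{lem:exists-tower-sum}, and the independence arguments are then precisely the applications of Corollary~\ref{cor:tau=w-concordance} that the paper intends (and that appear in Lemma~3.6 of~\cite{CST1}). The bookkeeping concerns you flag at the end are the right ones to worry about, and your handling of them (bands pushed into $B^4$ from a collar, connected sum along balls in the complements) is how it is done.
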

See Lemma~3.6 of \cite{CST1} or Exercise~\ref{ex:link-band-sum-well-defined} for a proof of Lemma~\ref{lem:link-sum-well-defined} using the following:


\begin{lem}[\cite{CST1}]\label{lem:exists-tower-sum}
If $L$ and $L'$ bound order $n$ twisted Whitney towers $\cW$ and $\cW'$ in $B^4$, then for any bands $\beta$ there exists an order $n$ twisted
Whitney tower $\cW^\#\subset B^4$ bounded by $L\#_\beta L'$, such that 
$t(\cW^\#)=t(\cW)+ t(\cW')$.
\end{lem}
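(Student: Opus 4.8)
The plan is to realize the band sum by a boundary-connected-sum construction in which the two given Whitney towers are left completely undisturbed, with all band operations confined to a collar of $\partial B^4$ where $\cW$ and $\cW'$ look like trivial products.

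First I would form $B^4$ as the boundary connected sum $B^4_0\natural B^4_1$, glued along $3$--balls in $\partial B^4_0$ and $\partial B^4_1$ chosen in the complements of $L$ and $L'$ (as in the definition of $L\#_\beta L'$), oriented so that the orientations of $B^4_0$ and $B^4_1$ extend to a global orientation. Then $\partial B^4=S^3_0\#S^3_1$ contains $L\sqcup L'$ disjointly, and $\cW\subset B^4_0$ and $\cW'\subset B^4_1$ sit disjointly inside $B^4$; in particular no intersections occur between a sheet of $\cW$ and a sheet of $\cW'$.

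Next I would perform the band sums near the boundary. Since $\cW\cup\cW'$ meets $\partial B^4$ precisely in $L\cup L'$, for sufficiently small $\e$ the collar $S^3\times[0,\e]\subset B^4$ meets $\cW\cup\cW'$ only in the embedded, mutually disjoint product collars $L_i\times[0,\e]$ and $L'_i\times[0,\e]$ of the order~$0$ disks; in particular every Whitney disk of $\cW$ and of $\cW'$, and the interior of every order~$0$ disk, lies outside this collar. Each band $\beta_i$ is an embedded rectangle in $S^3_0\#S^3_1$ with one side an arc of $L_i$, the opposite side an arc of $L'_i$, interior disjoint from $L\cup L'$, and with the $\beta_i$ pairwise disjoint. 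I would push the interior of each $\beta_i$ slightly into this collar, keeping its two attaching arcs fixed on $\partial D_i$ and $\partial D'_i$, to obtain a properly embedded band $\hat\beta_i$ oriented compatibly with $D_i$ and $D'_i$, with the $\hat\beta_i$ pairwise disjoint and disjoint from the interiors of all surfaces. Band-summing $D_i$ and $D'_i$ along $\hat\beta_i$ produces an immersed disk $D^\#_i$ with $\partial D^\#_i=L_i\#_{\beta_i}L'_i$, so that $A^\#:=\cup_i D^\#_i$ is a properly immersed collection of $m$ disks bounded by $L\#_\beta L'$.

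Finally I would verify that $\cW^\#$, built from $A^\#$ by adjoining all the Whitney disks of $\cW$ and then all those of $\cW'$ (each of which still pairs intersections among the $D^\#_i$ and lower Whitney disks, since the band sums only alter the $D_i$ near their boundaries), is an order $n$ twisted Whitney tower with $t(\cW^\#)=t(\cW)+t(\cW')$. The Whitney disks are literally unchanged, so their boundaries remain pairwise disjointly embedded; and because each $\hat\beta_i$ is embedded, disjoint from the other bands, and disjoint from all surface interiors, the only intersections among the $D^\#_i$ are those inherited from inside $B^4_0$ and $B^4_1$, namely $D^\#_i\pitchfork D^\#_j=(D_i\pitchfork D_j)\sqcup(D'_i\pitchfork D'_j)$ together with the analogous equality for self-intersections. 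Since $D^\#_i$ carries the label $i$, matching both $D_i$ and $D'_i$, and is oriented to agree with $D_i$ (respectively $D'_i$) near each inherited intersection, every intersection point keeps its signed labeled tree and the twistings of the unchanged Whitney disks are unchanged; hence $t(\cW^\#)=t(\cW)+t(\cW')$, which is an order $n$ twisted intersection forest because $t(\cW)$ and $t(\cW')$ are. The only point requiring genuine care here is the confinement of the bands $\hat\beta_i$ to a collar of $\partial B^4$ disjoint from every Whitney disk (including the deepest higher-order ones) and from one another while still attaching to the prescribed arcs; once this is arranged the remaining verification is pure bookkeeping, and this confinement is precisely what makes $t(\cW^\#)$ independent of $\beta$, the input used in Lemma~\ref{lem:link-sum-well-defined}.
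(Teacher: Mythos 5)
Your proof is correct and follows the same boundary-connected-sum strategy that the paper (via \cite[Lem.~3.7]{CST1}) uses: glue $B^4_0\natural B^4_1$ along $3$--balls in the link complements so the two towers sit disjointly, band the order~$0$ disks together in a collar of the boundary where everything else is absent, and observe that the Whitney disks and all intersections are literally unchanged. One small imprecision worth tightening: as you describe $\hat\beta_i$, its attaching arcs lie on $\partial D_i,\partial D'_i\subset\partial B^4$ and, after the band sum, become interior points of $D^\#_i$ that still sit on $\partial B^4$, so $D^\#_i$ is not yet a proper immersion; you should also push the attaching arcs (together with small collars of them in $D_i$ and $D'_i$) slightly into $\operatorname{int}(B^4)$, rel the new boundary $L_i\#_{\beta_i}L'_i$. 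This push is supported in the same small collar of $\partial B^4$ away from all Whitney disks and interior intersections, so it changes nothing in $t(\cW^\#)$ and the rest of your argument goes through verbatim.
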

See Lemma~3.7 of \cite{CST1}, or Exercise~\ref{ex:exists-tower-for-band-sum}, for a proof.


\subsection{Definition of the realization maps}\label{subsec:realization-maps}
We define ``realization'' maps $R^\iinfty_n:\cT^\iinfty_n\to \W^\iinfty_n$ for all $n$ as follows: Given any group element $g\in\cT^\iinfty_n$, by Lemma~\ref{lem:realization-of-geometric-trees} just below there exists an $m$-component link $L\subset S^3$ bounding
an order $n$ twisted Whitney tower $\cW\subset B^4$ such that $\tau^\iinfty_n(\cW)=g\in\cT^\iinfty_n$.
Define $R^\iinfty_n(g)$ to be the class determined by $L$ in $\W^\iinfty_n$.
This is well-defined (does not depend on the choice of such $L$) by Corollary~\ref{cor:tau=w-concordance}. 

\begin{lem}[\cite{CST1}]\label{lem:realization-of-geometric-trees}
For any multiset $\sum_p\ \epsilon_p \cdot  t_p \,\, + \sum_J\ \omega (W_J) \cdot  J^\iinfty$ there exists a link $L$ bounding a Whitney tower $\cW$
with intersection forest $t(\cW)= \sum_p\ \epsilon_p \cdot  t_p \,\, + \sum_J \ \omega (W_J) \cdot  J^\iinfty$. 
\end{lem}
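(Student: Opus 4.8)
The plan is to build $\cW$ additively, one signed tree at a time, using the band-sum machinery. By Lemma~\ref{lem:exists-tower-sum}, if links $L$ and $L'$ bound order $0$ twisted Whitney towers $\cW$ and $\cW'$ with $t(\cW^\#)=t(\cW)+t(\cW')$ for a band sum along any bands, then it suffices to realize each individual summand $\epsilon_p\cdot t_p$ and each $\omega(W_J)\cdot J^\iinfty$ by its own link-plus-Whitney-tower, and then band-sum all of these together. So the problem reduces to two atomic realization statements: (i) for a single framed tree $t$ of order $n$ with sign $\epsilon$, there is a link $L_t$ bounding $\cW_t$ with $t(\cW_t)=\epsilon\cdot t$; and (ii) for a single rooted tree $J$ of order $j$ with coefficient $\omega\in\Z$, there is a link $L_J$ bounding $\cW_J$ with $t(\cW_J)=\omega\cdot J^\iinfty$. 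A subtlety to flag: the band sum is taken over a connected sum of $3$-spheres, and one must check that the order-$0$ surfaces (immersed disks) can be tubed together compatibly so that the Whitney disks of the summands remain disjointly embedded near their boundaries and contribute no new intersections; this is exactly the content invoked in the proof of Corollary~\ref{cor:tau=w-concordance} and in Lemma~\ref{lem:exists-tower-sum}, so I would cite those rather than redo it.

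For the atomic framed case (i), I would proceed by induction on the order $n$ of the tree $t$, mimicking and generalizing the Borromean-rings example~\ref{eg:boro-rings} and the IHX construction in the proof of Theorem~\ref{thm:IHX}. The base case $n=0$ is the Hopf link: $t=\langle i,j\rangle$ is realized by a two-component link whose components have linking number $\pm1$, bounding order-$0$ disks in $B^4$ meeting in a single point $p$ with $t_p=\langle i,j\rangle$ and sign $\epsilon$ (reverse a component's orientation to flip the sign). For the inductive step, write $t=\langle(I,J),K\rangle$ and realize the rooted subtrees $W_I$, $W_J$ by the construction of Exercise~\ref{ex:create-clean-W-for-any-tree} (clean framed Whitney disks for any rooted tree, produced by finger moves), then do one more finger move to create an intersection pair between $W_I$ and $W_J$, pair it by $W_{(I,J)}$, and arrange a single transverse intersection of $W_{(I,J)}$ with $W_K$; the associated tree is $\langle(I,J),K\rangle=t$, and orientations can be adjusted per the corner conventions of section~\ref{subsec:w-tower-tree-orientations} to get the prescribed sign. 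The key point is that all of this is supported near arcs and disks and hence can be done in $B^4$ with the order-$0$ surfaces being disks (so the boundary is a genuine link in $S^3$); any extra intersections that appear can be pushed down (Exercise~\ref{ex:push-down}) or absorbed, but since we are free to let $\cW$ have lower-order Whitney disks — Lemma~\ref{lem:realization-of-geometric-trees} does not demand $\cW$ be order $n$ — the bookkeeping is lenient.

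For the atomic twisted case (ii), the Figure-8 example~\ref{eg:Fig8-int-forest} is the model: a twisted Whitney disk of order $0$ with $\omega=\pm1$ is realized by the Figure-8 knot bounding an immersed disk with a clean $\pm1$-twisted Whitney disk. For a general rooted tree $J$ of order $j$, first produce a clean framed Whitney disk $W_J$ by finger moves (Exercise~\ref{ex:create-clean-W-for-any-tree}), then perform a local interior-twist operation (Figure~\ref{interior-twist-fig}) or, more directly, adjust the Whitney section so that $\omega(W_J)=1$ while keeping $W_J$ clean — this is exactly the boundary-twist/interior-twist bookkeeping of section~\ref{subsec:geometry-of-relations}, done in reverse to install rather than remove a twist. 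Iterating gives $\omega(W_J)=\omega$ for any $\omega\in\Z$ (negative by twisting the other way), with $t(\cW_J)=\omega\cdot J^\iinfty$ and no framed trees, all over a single unknotted-ish order-$0$ disk so the boundary is a knot/link in $S^3$. Finally, band-sum the finitely many atomic pieces together via Lemma~\ref{lem:exists-tower-sum} to get the full $\cW$ with $t(\cW)=\sum_p\epsilon_p\cdot t_p+\sum_J\omega(W_J)\cdot J^\iinfty$.

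I expect the main obstacle to be the verification that the atomic constructions genuinely live in $B^4$ with order-$0$ surfaces that are disks (as opposed to higher-genus surfaces or surfaces in a more complicated $4$-manifold), and that stacking finger moves to build $W_I$ and $W_J$ independently does not force unwanted intersections among the order-$0$ disks themselves — one wants the two rooted-tree constructions to be supported in disjoint balls except along a single guiding arc where the new intersection pair is created. This is a controlled-embedding argument in the spirit of Exercise~\ref{ex:create-clean-W-for-any-tree} and the proof of Theorem~\ref{thm:IHX}, and the payoff of getting it right is that the inductive construction of (i) closes up cleanly; everything else is routine orientation and sign bookkeeping against the conventions of section~\ref{subsec:w-tower-tree-orientations}.
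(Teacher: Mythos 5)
Your reduction to single-tree realizations via band sums and Lemma~\ref{lem:exists-tower-sum} matches the paper's first step, and your instinct to ``mimic the Borromean-rings example'' points in the right direction (that example \emph{is} a Bing-double of the Hopf link). But the inductive step you actually write out for a single framed tree has a genuine gap. You propose to build clean Whitney disks $W_I$, $W_J$, $W_K$ by the finger moves of Exercise~\ref{ex:create-clean-W-for-any-tree} and then ``arrange a single transverse intersection of $W_{(I,J)}$ with $W_K$.'' This cannot work: finger moves and Whitney moves are local, create and destroy intersections only in oppositely-signed pairs, and do not change the boundary link. In particular, the clean local Whitney disk $W_{(I,J)}$ inverse to a finger move lives in a small $4$-ball disjoint from $W_K$, and no regular-homotopy move will give you a \emph{single} unpaired intersection between them. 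The obstruction is not merely technical: by Theorem~\ref{thm:Milnor invariant}, if $L$ bounds $\cW$ with $t(\cW)$ a single order-$n$ tree $t$, then $\mu_n(L)=\eta_n(t)$, which is generically nonzero; so the boundary link must be a nontrivial link, and the single unpaired higher-order intersection has to be carried by the linking on the boundary. The paper's route, Cochran's iterated Bing-doubling of the Hopf link followed by banding components to match univalent labels (Figure~\ref{fig:Cochran-R2-map}), does exactly this: the Hopf link supplies the single order-$0$ intersection, and each Bing-double raises the order of that single unpaired intersection by growing the corresponding edge of the tree, changing the boundary link as it goes.

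A similar gap appears in your atomic twisted case. You propose to ``adjust the Whitney section so that $\omega(W_J)=1$ while keeping $W_J$ clean,'' invoking boundary-twist or interior-twist ``in reverse.'' But a boundary-twist on a clean framed $W_J$ changes $\omega(W_J)$ by $\pm 1$ at the cost of creating an unpaired intersection $p\in W_J\pitchfork W_{J'}$ with tree $i\,-\!\!\!\!\!-\!\!\!<^{\,J'}_{\,J'}$ (Figure~\ref{boundary-twist-fig}), and an interior twist changes $\omega$ only by $\pm 2$ while creating a new self-intersection (Figure~\ref{interior-twist-fig}). There is no local move that increments the twisting of a clean Whitney disk by $\pm 1$ without side-effects, and indeed the Figure-8 example (Figure~\ref{Fig8knot-Wdisk-Whitney-section-fig}) shows the $+1$-twist coming from the Whitehead clasp in the boundary knot, not from a local modification. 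The paper again encodes the twist in the boundary link: start from the $\omega$-twisted Bing-double of the unknot (Figure~\ref{fig:twisted-tree-R2-map}), then apply untwisted Bing-doubles to reach the desired tree shape, then band to get the desired labels.
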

The proof of Lemma~\ref{lem:realization-of-geometric-trees} follows Tim Cochran's technique of ``Bing-doubling along a tree'' to realize individual trees, and then uses band sums of links via Lemma~\ref{lem:exists-tower-sum}
to realize sums (multiset unions) of trees
(see Lemma~3.8 of \cite{CST1} or Exercise~\ref{ex:realize-trees}). 

For instance, to compute the image of the framed tree 
$t=\langle (1,2),(3,1)\rangle=\,^{1}_{2}>\!\!\!\!-\!\!\!-\!\!\!\!\!-\!\!\!\!<^{\,\,1}_{\,\,3}$ under $R^\iinfty_2$, one Bing-doubles each component of the Hopf link and then bands together two components as in Figure~\ref{fig:Cochran-R2-map} to get
a link $L$ bounding a Whitney tower $\cW$ with $\tau^\iinfty_2(\cW)=t$.
Any framed tree can be realized analogously by applying iterated Bing-doubling to the Hopf link to get the desired tree shape, and then banding components to get the desired univalent labels.

\begin{figure}[h]
\centerline{\includegraphics[scale=.35]{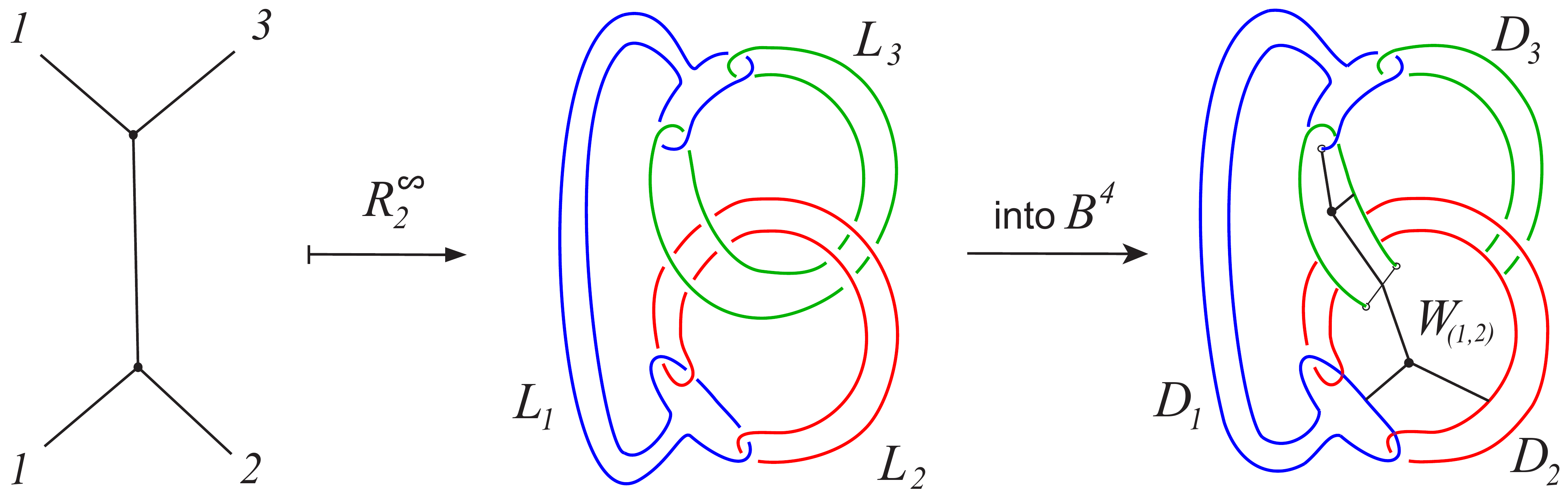}}
\caption{}\label{fig:Cochran-R2-map}
\end{figure}


To compute the image of the twisted tree $(2,1)^\iinfty=\,^{2}_{1}>\!\!\!\!-\!\!\!-\!\!\!\!\!-\,\iinfty$ under $R^\iinfty_2$, one applies a twisted Bing-double to the unknot as in Figure~\ref{fig:twisted-tree-R2-map} to get
$L$ bounding a Whitney tower $\cW$ with $\tau^\iinfty_2(\cW)=(2,1)^\iinfty$.
Any twisted tree $J^\iinfty$ with twisting coefficient $\omega\in\Z$ can be realized similarly by starting with a single $\omega$-twisted Bing-doubling of the unknot and then applying iterated (untwisted) Bing-doublings to get the desired tree shape, and then banding components to get the desired univalent labels.

\begin{figure}[h]
\centerline{\includegraphics[scale=.35]{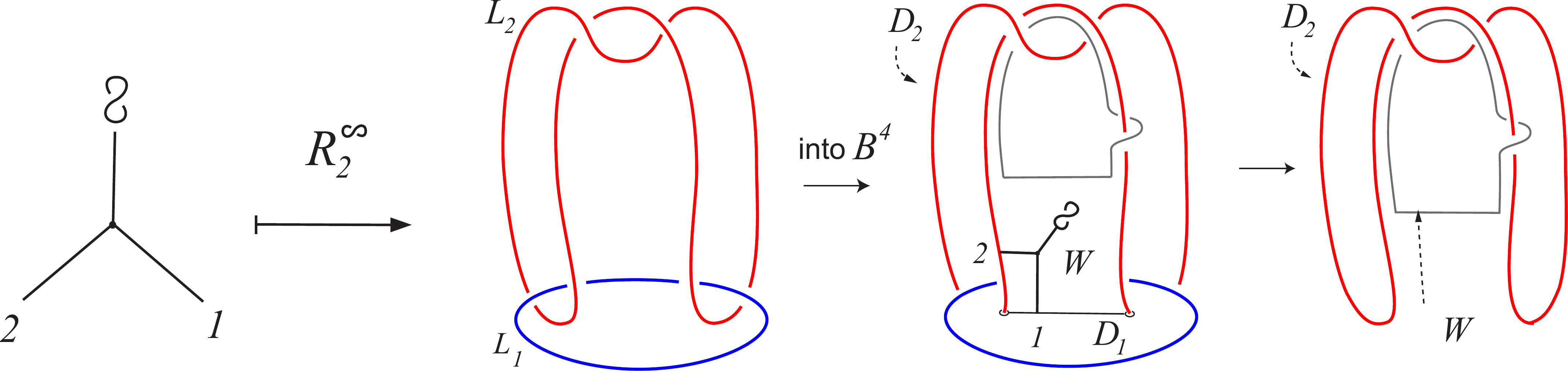}}
\caption{}\label{fig:twisted-tree-R2-map}
\end{figure}

In combination with Theorem~\ref{thm:Milnor invariant},
the following result will play a key role in classifying order $n$ twisted Whitney towers in the $4$--ball:
\begin{thm}[\cite{CST1}]\label{thm:realization-surjection}
The realization maps are epimorphisms $R^\iinfty_n:\cT^\iinfty_n\sra \W^\iinfty_n$.
\end{thm}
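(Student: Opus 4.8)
The plan is to show that every class in $\W^\iinfty_n$ is hit by $R^\iinfty_n$. The key point is that the definition of $R^\iinfty_n$ (via Lemma~\ref{lem:realization-of-geometric-trees} and Corollary~\ref{cor:tau=w-concordance}) already builds in the fact that any \emph{intersection forest} is geometrically realizable; what remains is to see that the \emph{invariant values} $\tau^\iinfty_n(\cW)$ exhaust $\cT^\iinfty_n$ with enough flexibility, and to connect this to arbitrary elements of $\W^\iinfty_n$. So the first step is to observe that surjectivity is essentially a restatement of two facts already in hand: (i) given $g\in\cT^\iinfty_n$, Lemma~\ref{lem:realization-of-geometric-trees} produces a link $L_g$ bounding an order $n$ twisted $\cW$ with $t(\cW)$ equal to a chosen multiset representing $g$, hence with $\tau^\iinfty_n(\cW)=[t_n(\cW)]=g\in\cT^\iinfty_n$; and (ii) by construction $R^\iinfty_n(g)=[L_g]\in\W^\iinfty_n$. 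Thus I only need: every element of $\W^\iinfty_n$ arises as $[L]$ for some $L$ bounding an order $n$ twisted Whitney tower $\cW$, and $[L]$ depends only on $\tau^\iinfty_n(\cW)$.

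The second step handles both of these. By definition, a class in $\W^\iinfty_n$ is represented by some link $L\subset S^3$ that bounds \emph{some} order $n$ twisted Whitney tower $\cW\subset B^4$; set $g:=\tau^\iinfty_n(\cW)\in\cT^\iinfty_n$. Then $R^\iinfty_n(g)=[L_g]$, where $L_g$ is the link produced by Lemma~\ref{lem:realization-of-geometric-trees} applied to a multiset representative of $g$; this $L_g$ bounds $\cW_g$ with $\tau^\iinfty_n(\cW_g)=g=\tau^\iinfty_n(\cW)$. Now Corollary~\ref{cor:tau=w-concordance} applies directly: since $L$ and $L_g$ both bound order $n$ twisted Whitney towers realizing the same value in $\cT^\iinfty_n$, they represent the same element of $\W^\iinfty_n$. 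Hence $R^\iinfty_n(g)=[L_g]=[L]$, so $[L]$ is in the image. Since $[L]$ was an arbitrary element of $\W^\iinfty_n$, the map $R^\iinfty_n$ is onto.

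One bookkeeping point to address in the write-up: one must check that the multiset produced to represent $g\in\cT^\iinfty_n$ genuinely gives an order $n$ twisted Whitney tower — i.e.\ that $g$ can be represented purely by order $n$ framed trees and order $n/2$ twisted trees, with no lower-order trees. This is immediate from how the generators of $\cT^\iinfty_n$ are defined (the free abelian group is on order $n$ framed trees and, for even $n=2j$, order $j$ twisted trees), so any group element is a $\Z$-linear combination of exactly these generators; feeding that multiset (with signs/coefficients realized by reversing orientations and by stacking as in the $\omega$-twisted Bing-doubling of Figure~\ref{fig:twisted-tree-R2-map}) into Lemma~\ref{lem:realization-of-geometric-trees} yields $\cW$ with $t(\cW)=t_n(\cW)$ representing $g$.

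The main obstacle is not in the logical skeleton — which is just ``realize the forest, then invoke Corollary~\ref{cor:tau=w-concordance}'' — but in being careful that \emph{well-definedness} of $R^\iinfty_n$ and \emph{surjectivity} are not circular: $R^\iinfty_n$ is well-defined because of Corollary~\ref{cor:tau=w-concordance}, and surjectivity uses the same corollary again, but applied to a different pair of links. I would make sure the exposition clearly separates the two uses. A secondary point worth a sentence is that Lemma~\ref{lem:realization-of-geometric-trees} realizes the \emph{exact} intersection forest as a multiset, so after passing to $\cT^\iinfty_n$ one genuinely gets $\tau^\iinfty_n=g$ on the nose, not merely up to the relations; combined with Lemma~\ref{lem:exists-tower-sum} (band sums add intersection forests) this is what lets arbitrary $\Z$-combinations of generators be realized. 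No step here is technically deep given the cited results; the content is entirely in correctly assembling Lemmas~\ref{lem:realization-of-geometric-trees} and~\ref{lem:exists-tower-sum} with Corollary~\ref{cor:tau=w-concordance}.
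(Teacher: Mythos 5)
Your surjectivity argument coincides with the paper's: given $[L]\in\W^\iinfty_n$, pick any order $n$ twisted $\cW$ bounded by $L$, set $g=\tau^\iinfty_n(\cW)$, realize $g$ by a link $L'=R^\iinfty_n(g)$ bounding a tower with the same invariant, and conclude $[L]=[L']$ via Corollary~\ref{cor:tau=w-concordance}. The one thing to add: \emph{epimorphism} also asserts that $R^\iinfty_n$ is a homomorphism, which the paper dispatches in a sentence — Lemma~\ref{lem:link-sum-well-defined} makes band sum a well-defined commutative monoid operation on $\W^\iinfty_n$ with the unlink as identity, and Lemma~\ref{lem:exists-tower-sum} shows $R^\iinfty_n$ is additive — so state this explicitly to match the claim.
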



\begin{proof} From Lemma~\ref{lem:link-sum-well-defined} the band sum of links gives a well-defined operation in $\W^\iinfty_n$ which is clearly associative and commutative, with the $m$-component unlink representing an identity element.  The realization maps are homomorphisms by Lemma~\ref{lem:exists-tower-sum} and surjectivity is proven as follows: Given any link $L$ representing an element of $\W^\iinfty_n$, choose a twisted Whitney tower $\cW$ of order $n$ with boundary $L$ and compute $\tau:=\tau^\iinfty_n(\cW)$. Then take $L':= R^\iinfty_n(\tau)$, a link that's in the image of $R^\iinfty_n$ and for which we know a Whitney tower $\cW'$ with boundary $L'$ and $\tau^\iinfty(\cW') = \tau$.
By Corollary~\ref{cor:tau=w-concordance} it follows that $L$ and $L'$ represent the same element in $\W^\iinfty_n$.
\end{proof}


%
%

%
%
\subsection{Computing the order $n$ twisted Whitney tower filtration}\label{subsec:computing-twisted-filtration}

From Theorem~\ref{thm:Milnor invariant} and Theorem~\ref{thm:realization-surjection} we have the following commutative triangle diagram~\ref{diagram:triangle} of epimorphisms:
\begin{equation}\label{diagram:triangle}
    \xymatrix{
\cT^\iinfty_n \ar@{->>}[r]^{R_n^\iinfty} \ar@{->>}[rd]_{\eta_n} & \W^\iinfty_n \ar@{->>}[d]^{\mu_n}\\
& \sD_{n}
}\tag{$\bigtriangledown$}
\end{equation}
  

The following algebraic result is a consequence of the proof \cite{CST3} of 
a combinatorial conjecture of J. Levine:
\begin{thm}[\cite{CST1}] \label{thm:twisted-three-quarters-classification} 
The maps $\eta_n:\cT^\iinfty_n \to \sD_n$ are isomorphisms for $n\equiv 0,1,3\,\mod 4$.
\end{thm}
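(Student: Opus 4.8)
The plan is to deduce Theorem~\ref{thm:twisted-three-quarters-classification} from the triangle~\eqref{diagram:triangle} together with the combinatorial input from \cite{CST3}. Recall that $\eta_n$ is already known to be surjective onto $\sD_n$ by \cite[Lem.32]{CST2} (stated just before Theorem~\ref{thm:Milnor invariant}), so the entire content is \emph{injectivity} of $\eta_n$ in the residues $n\equiv 0,1,3\bmod 4$. Equivalently, since $R_n^\iinfty$ is an epimorphism and $\eta_n=\mu_n\circ R_n^\iinfty$, it suffices to show that $\cT^\iinfty_n$ and $\sD_n$ have the same size/rank in these residues and that $\eta_n$ kills nothing; so the real task is an upper bound on $\cT^\iinfty_n$ matching the known lower bound coming from $\sD_n$.

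First I would set up the comparison with J.~Levine's conjectural picture. Levine defined maps relating the ``tree groups'' to the kernel $\sD_n$ of the bracketing map $\sL_1\otimes\sL_{n+1}\to\sL_{n+2}$, and conjectured these were isomorphisms after suitable stabilization; the paper \cite{CST3} proves (a form of) this conjecture. Concretely, one has a short exact sequence relating $\cT_n$ (the framed tree group, AS + IHX only) to $\sD_n$ and to a ``quasi-Lie'' correction term built from $\sL_{n/2}$ or from symmetric squares; the twisted relations (symmetry, twisted IHX, interior-twist) in $\cT^\iinfty_n$ are precisely engineered so that passing from $\cT_n$ to $\cT^\iinfty_n$ quotients out exactly that correction term in even degrees, and the boundary-twist relations do the analogous job in odd degrees. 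So the plan is: (i) identify $\cT^\iinfty_n$ with the cokernel/quotient appearing in Levine's setup via an explicit generators-and-relations comparison; (ii) invoke \cite{CST3} to conclude that this quotient maps isomorphically to $\sD_n$ exactly when $n\equiv 0,1,3\bmod 4$ (the residue $n\equiv 2\bmod 4$ being where the higher-order Arf invariants obstruct injectivity, and is deliberately excluded here).

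The key steps, in order: (1) Reduce the theorem to injectivity of $\eta_n$, using surjectivity already in hand. (2) For odd $n=2j-1$: recall that $\cT^\iinfty_{2j-1}=\cT_{2j-1}/(\text{boundary-twist})$ and that $\eta_{2j-1}$ on $\cT_{2j-1}$ has a computable kernel; show the boundary-twist relators $i\edge\!\!<^J_J$ span exactly that kernel. This uses the identity $\eta_{2j-1}(i\edge\!\!<^J_J)$ lying in the image of the symmetrization $X_i\otimes[\,\cdot\,,[\,\cdot\,,\cdot\,]]$-type terms that cancel, plus a rank count. (3) For even $n=2j$: this is the hard case. Here one must show that the combined relations (AS, IHX on framed trees; symmetry, twisted IHX, interior-twist on $\iinfty$-trees) cut $\cT^\iinfty_{2j}$ down to exactly $\sD_{2j}$. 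The interior-twist relation $2\cdot J^\iinfty=\langle J,J\rangle$ is the crucial one: it forces the $\iinfty$-part to inject (after the symmetry relation) into a half-lattice inside the framed trees, and the mismatch between this half-lattice and $\sD_{2j}$ is governed by whether $\sL_{j}$ or a related module has $2$-torsion phenomena — which is exactly what the Levine conjecture / \cite{CST3} controls, and is trivial precisely for $n\equiv 0 \bmod 4$ (i.e.\ $j$ even) versus obstructed for $n\equiv 2\bmod 4$ (i.e.\ $j$ odd). (4) Assemble: in all three residues the quotient equals $\sD_n$, so $\eta_n$ is injective, hence an isomorphism.

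The main obstacle I expect is step (3), the even-degree case, and specifically keeping careful track of $2$-torsion and the factor-of-$2$ in the interior-twist relation. The content is genuinely the combinatorial theorem of \cite{CST3}: one has to match the author's presentation of $\cT^\iinfty_{2j}$ with the cokernel of Levine's ``doubling'' map and verify no extra relations or extra generators slip in. Everything else — the odd case, the reduction to injectivity, and the bookkeeping of $\eta_n$ on generators — is routine linear algebra over $\Z$ once the dictionary with \cite{CST3} is fixed. So the proof is essentially: \emph{cite \cite{CST3}, then check the translation}, with the translation in even degrees being the only place real care is needed.
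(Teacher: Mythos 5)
Your approach is the same as the paper's: the paper provides no proof of this theorem, only the remark that the result ``is a consequence of the proof \cite{CST3} of a combinatorial conjecture of J.~Levine,'' and your plan is precisely to cite \cite{CST3} and verify the translation, using the surjectivity of $\eta_n$ (via \cite[Lem.32]{CST2}) to reduce to injectivity. One caveat worth flagging: in even degree $\cT^\iinfty_{2j}$ is \emph{not} a quotient of $\cT_{2j}$ --- the order-$j$ $\iinfty$-trees are extra generators, not further relations --- so the phrase ``quotients out exactly that correction term'' is the wrong picture; the correct comparison is with the quasi-Lie version of the bracket kernel treated in \cite{CST3}, with the residual $n\equiv 2\bmod 4$ discrepancy appearing exactly as in Proposition~\ref{prop:kerEta4j-2}.
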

%
From this theorem we immediately get:
\begin{cor}

For $n\equiv 0,1,3\,\mod 4$, the Milnor invariants
$\mu_n\colon \W^\iinfty_n\to \sD_n$  and the twisted realization maps 
$R^\iinfty_n : \cT^\iinfty_n \to\W^\iinfty_n$ are \emph{isomorphisms}.

\end{cor}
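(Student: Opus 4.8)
The plan is to obtain this corollary as a purely formal consequence of the commutative triangle~\ref{diagram:triangle} of epimorphisms together with the algebraic input Theorem~\ref{thm:twisted-three-quarters-classification}. Recall from the discussion preceding the diagram that we already know three things: $R_n^\iinfty\colon\cT^\iinfty_n\sra\W^\iinfty_n$ is an epimorphism (Theorem~\ref{thm:realization-surjection}), $\mu_n\colon\W^\iinfty_n\sra\sD_n$ is an epimorphism (a consequence of Theorem~\ref{thm:Milnor invariant} together with surjectivity of $R_n^\iinfty$), and these fit into the factorization $\eta_n=\mu_n\circ R_n^\iinfty$ recorded in~\ref{diagram:triangle}.

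The first step is to upgrade the knowledge of $\eta_n$: by Theorem~\ref{thm:twisted-three-quarters-classification}, for $n\equiv 0,1,3\bmod 4$ the map $\eta_n\colon\cT^\iinfty_n\to\sD_n$ is in fact an \emph{isomorphism}, so in particular it is injective. The second step is the diagram chase: since $\eta_n=\mu_n\circ R_n^\iinfty$ is injective, its left factor $R_n^\iinfty$ must already be injective; combined with the surjectivity supplied by Theorem~\ref{thm:realization-surjection}, this shows $R_n^\iinfty$ is an isomorphism. The final step is then immediate: $\mu_n=\eta_n\circ(R_n^\iinfty)^{-1}$ is a composition of two isomorphisms, hence an isomorphism, which completes the proof.

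I do not expect a genuine obstacle at this level, since all of the substantive work has been carried out upstream: the hard content is the resolution of J.~Levine's combinatorial conjecture that underlies Theorem~\ref{thm:twisted-three-quarters-classification} (cited to \cite{CST3}), and the geometric realization and naturality statements behind Theorems~\ref{thm:realization-surjection} and~\ref{thm:Milnor invariant}. The only point that genuinely must be in place for the argument to go through is that the triangle~\ref{diagram:triangle} really commutes, i.e.\ that $\eta_n=\mu_n\circ R_n^\iinfty$ holds on the nose; this is exactly what makes the ``injective composite forces injective first factor'' step legitimate, and it is already asserted as part of diagram~\ref{diagram:triangle}. So the write-up will be short: one sentence recalling the three epimorphisms and the factorization, one invocation of Theorem~\ref{thm:twisted-three-quarters-classification}, and the two-line chase above.
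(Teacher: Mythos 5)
Your proposal is correct and is exactly the argument the paper intends: the paper states the corollary follows ``immediately'' from Theorem~\ref{thm:twisted-three-quarters-classification} in the presence of the commutative triangle~\ref{diagram:triangle}, and your diagram chase (injectivity of $\eta_n=\mu_n\circ R_n^\iinfty$ forces injectivity of $R_n^\iinfty$, then both maps are isomorphisms) is that immediate deduction spelled out.
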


By \cite{O}, $\sD_n$ is a free abelian group of known rank for all $n$,  
so we have a complete computation of $\W^\iinfty_n\cong\sD_n\cong\cT^\iinfty_n$ in three quarters of the cases.

\begin{rem}\label{rem:tau-well-defined-in-3-4-of-cases}
It also follows that in these orders the order $n$ twisted intersection invariant
$\tau^\iinfty_n(\cW)\in\cT^\iinfty_n$ only depends on the concordance class of 
$L=\partial\cW$. In particular, $\tau^\iinfty_n(\cW)\in\cT^\iinfty_n$ is independent of the choice of Whitney tower $\cW$ on the (unique) homotopy class rel $\partial$ of the order $0$ immersed disks bounded by $L$.
\end{rem}

%

Towards understanding the remaining cases $n\equiv 2\mod 4$, 
we have another consequence of \cite{CST3} which was derived as Corollary~6.6 of \cite{CST1}:
\begin{prop}[\cite{CST1}]\label{prop:kerEta4j-2}
For rooted trees $J$ of order $j-1$, the map $1\otimes J \mapsto \iinfty-\!\!\!\!-\!\!-\!\!\!<^{\,J}_{\,J}\,\,\in\cT^\iinfty_{4j-2}$ 
induces an isomorphism: 
$$\Z_2 \otimes \sL_j\cong\Ker\{\eta_{4j-2}:\cT_{4j-2}^\iinfty\to\sD_{4j-2}\}$$
\end{prop}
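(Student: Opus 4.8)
The plan is to show that the composite map $\Z_2\otimes\sL_j\to\cT_{4j-2}^\iinfty\to\sD_{4j-2}$ is trivial, that the proposed map is well-defined and surjective onto $\Ker\eta_{4j-2}$, and finally that it is injective. First I would verify that the assignment $1\otimes J\mapsto \iinfty-\!\!-\!\!<^{J}_{J}$ descends from the free abelian group on order-$(j-1)$ rooted trees to $\Z_2\otimes\sL_j$: the antisymmetry and Jacobi (IHX) relations defining $\sL_j$ are matched on the target side by the symmetry relation $(-J)^\iinfty=J^\iinfty$ together with the twisted-IHX relation $I^\iinfty=H^\iinfty+X^\iinfty-\langle H,X\rangle$ (which, when the three $\iinfty$-trees are the ``doubled'' trees $\iinfty-\!\!-\!\!<^{J}_{J}$ etc., collapses modulo framed IHX to the ordinary IHX on $J$), and the $\Z_2$-coefficient is forced because the interior-twist relation $2\cdot J^\iinfty=\langle J,J\rangle$ shows that $2\cdot(\iinfty-\!\!-\!\!<^{J}_{J})$ is a framed tree, hence lies in the ``framed part'' where $\eta_{4j-2}$ is already understood. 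That the image lands in $\Ker\eta_{4j-2}$ is the easy computation $\eta_{4j-2}(\iinfty-\!\!-\!\!<^{J}_{J}) = \tfrac12\,\eta_{4j-2}(\langle (J,J),\, \text{(trivial)}\rangle)$ — more precisely $\eta$ of a boundary-twist tree — which one checks vanishes in $\sD_{4j-2}$ directly from Definition~\ref{def:eta}, since the two univalent-vertex contributions from the doubled $J$-leaves cancel by antisymmetry in $\sL_{4j-1}$ and the remaining contribution is a bracket of the form $[X_i,[\cdots]]$ that dies in $\sD$.

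The substantive input is the algebraic structure of $\cT_{4j-2}^\iinfty$ coming from \cite{CST3}. I would invoke the short exact sequence structure already implicit in the even-order definitions: $\cT_{4j-2}^\iinfty$ sits in an extension whose ``framed quotient'' is $\cT_{4j-2}$ and whose ``twisted part'' is generated by the $J^\iinfty$ with $J$ of order $j-1$, subject to symmetry and interior-twist relations, i.e. the twisted part is a quotient of $\Z_2\otimes\sL_j$ modulo the twisted-IHX relations (the latter, after passing to $\Z_2$, are precisely the IHX relations for $\sL_j$, so the twisted part is exactly $\Z_2\otimes\sL_j$, or a quotient thereof). Running the snake lemma / five-term exact sequence against the analogous filtration of $\sD_{4j-2}$ and $\sL_{4j-1}$, and feeding in Theorem~\ref{thm:twisted-three-quarters-classification}'s neighboring cases ($\eta$ an isomorphism in orders $\equiv 0,1,3\bmod 4$) together with the explicit computation in \cite{CST3} of the relevant Lie-algebra cokernels, pins down $\Ker\eta_{4j-2}$ as a subquotient isomorphic to $\Z_2\otimes\sL_j$. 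The map constructed in the first paragraph then provides an explicit splitting realizing this identification, which upgrades ``abstractly isomorphic'' to ``the displayed map is an isomorphism.''

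The main obstacle, and where I expect to spend almost all the real work, is the injectivity: showing that no nonzero element of $\Z_2\otimes\sL_j$ maps to $0$ in $\cT_{4j-2}^\iinfty$. Surjectivity onto the kernel is comparatively soft once one knows (from \cite{CST3}) the rank of $\Ker\eta_{4j-2}$ — one just counts. But injectivity requires understanding all relations among the $\iinfty$-trees in $\cT_{4j-2}^\iinfty$, and a priori the twisted-IHX relations could interact with the framed AS/IHX relations (via the $-\langle H,X\rangle$ term) to produce extra collapsing. The way I would control this is to use the retraction $\cT_{4j-2}^\iinfty\to \cT_{4j-2}^\iinfty/\cT_{4j-2}$ onto the twisted part, check that under this retraction the twisted-IHX relation becomes exactly the $\Z_2$-IHX relation for $\sL_j$ (the $\langle H,X\rangle$ term being framed, hence killed), so that the twisted part is a \emph{quotient} of $\Z_2\otimes\sL_j$ by at most these relations — and then match dimensions using Levine's conjecture as proved in \cite{CST3} to conclude the quotient map $\Z_2\otimes\sL_j\twoheadrightarrow(\text{twisted part})$ is an isomorphism. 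Equivalently, and perhaps more cleanly, one deduces the statement as a formal consequence of the already-cited Corollary~6.6 of \cite{CST1}; here I would simply reconstruct that deduction from Theorem~\ref{thm:twisted-three-quarters-classification} plus the structure of the even-order groups, treating the rank count from \cite{O} and \cite{CST3} as a black box.
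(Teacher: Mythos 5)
The paper does not prove this proposition; it records it as a quotation of Corollary~6.6 of \cite{CST1}, which in turn rests on the tree-homology computation of \cite{CST3}, so there is no internal proof here to compare against. Your verification that the image lands in $\Ker\eta_{4j-2}$ is correct and is the right ``easy'' step: the univalent vertices of $\langle(J,J),(J,J)\rangle$ come in pairs exchanged by swapping the two $J$-subtrees on one side of the central edge; that swap reverses the cyclic orientation at one trivalent vertex, so the paired contributions $X_{\ell(v)}\otimes B_v$ cancel and $\eta_{4j-2}\bigl((J,J)^\iinfty\bigr)=\tfrac12\,\eta_{4j-2}\bigl(\langle(J,J),(J,J)\rangle\bigr)=0$. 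There is no ``remaining contribution'' once this pairing is observed, so drop that clause. You also rightly identify that injectivity and surjectivity must be imported from the rank computations of \cite{O} and \cite{CST3}, which matches what \cite{CST1} does.

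The well-definedness argument, however, has genuine gaps. To factor through $\Z_2\otimes\sL_j$ you must show $2\cdot(J,J)^\iinfty=0$ in $\cT^\iinfty_{4j-2}$, and the observation that $2\cdot(J,J)^\iinfty=\langle(J,J),(J,J)\rangle$ is a framed tree does not do this — ``lies in the framed part where $\eta$ is understood'' is not the same as ``equals zero.'' What you actually need is that $\langle(J,J),(J,J)\rangle$ already vanishes in $\cT_{4j-2}$: apply the framed IHX relation at the central edge; since all four dangling subtrees are copies of $J$, the three trees of the relator are mutually isomorphic, and combining that with the $2$-torsion coming from the AS relation at one end forces the tree to vanish regardless of the IHX signs. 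Next, the claim that the twisted IHX relation ``collapses modulo framed IHX to the ordinary IHX on $J$'' for the doubled trees is unsubstantiated as stated: twisted IHX is a local relation that rewires a single trivalent vertex, so it relates $(I_J,I_J)^\iinfty$ to $(H_J,I_J)^\iinfty$ and $(X_J,I_J)^\iinfty$, not directly to $(H_J,H_J)^\iinfty$ and $(X_J,X_J)^\iinfty$; getting to the fully doubled trees requires iterating on both copies and controlling the framed cross-terms that accumulate, which is precisely where the combinatorial input of \cite{CST3} is doing the real work. Finally, your description of the ``twisted part'' $\cT^\iinfty_{4j-2}/\cT_{4j-2}$ as generated by $J^\iinfty$ with $J$ of order $j-1$, and hence as a quotient of $\Z_2\otimes\sL_j$, is incorrect: the $\iinfty$-tree generators in order $4j-2$ are indexed by rooted trees of order $2j-1$, so that quotient is much larger than $\Z_2\otimes\sL_j$, and $\Ker\eta_{4j-2}$ sits inside it as the image of the diagonal map $J\mapsto(J,J)^\iinfty$, not as the whole thing.
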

%
%

%

So $\Z_2 \otimes \sL_j$ is also an upper bound on
$\sK^\iinfty_{4j-2}:=\Ker\{\mu_{4j-2}: \W^\iinfty_{4j-2} \sra \sD_{4j-2}\}$.
Denoting by $\alpha^\iinfty_j: \Z_2 \otimes \sL_j
\sra \sK^\iinfty_{4j-2}$ the epimorphism induced by $R^\iinfty_{4j-2}$, we have the following extension of the above triangle diagram~\ref{diagram:triangle} in these orders:

$$
\xymatrix{
\langle 1\otimes J\rangle\ar@{<->}[dr]\ar@{=}[r]&\Z_2 \otimes \sL_j\ar@{->>}[rr]^{\alpha^\iinfty_j}\ar@{>->}[dr]&&\sK^\iinfty_{4j-2}\ar@{>->}[d]\\
& \langle \iinfty-\!\!\!\!-\!\!-\!\!\!<^{\,J}_{\,J} \rangle\ar@{>->}[r]&\cT^\iinfty_{4j-2} \ar@{->>}[r]^{R_{4j-2}^\iinfty} \ar@{->>}[rd]_{\eta_{4j-2}}&\W^\iinfty_{4j-2} \ar@{->>}[d]^{\mu_{4j-2}}\\&&& \sD_{4j-2}&
}
$$

By inverting the induced isomorphism $\overline{\alpha^\iinfty_{j}}$ on $(\mathbb Z_2\otimes {\sL}_{j})/\Ker \alpha^\iinfty_{j}$, we get the following definition:
\begin{defn}\label{def:Arf-j}  
The \emph{higher-order Arf invariants} $\Arf_{j}$ are defined by
$$
\Arf_{j}:=(\overline{\alpha^\iinfty_{j}})^{-1}:\sK^\iinfty_{4j-2}\to(\mathbb Z_2\otimes {\sf L}_{j})/\Ker \alpha^\iinfty_{j}
$$
\end{defn}
As a corollary we get the computation of $\W^\iinfty_n$ for all $n$, and a characterization of links bounding order $n$ twisted Whitney towers:
\begin{cor}[\cite{CST1}]\label{cor:mu-arf-classify-twisted} 
The abelian groups ${\sf W}^\iinfty_{n}$ are classified by Milnor invariants $\mu_n$ and, in addition, higher-order Arf invariants $\Arf_j$ for $n=4j-2$.

In particular, a link bounds an order $n+1$ twisted $\cW$ if and only if its Milnor invariants and higher-order Arf invariants vanish up to order $n$.
\end{cor}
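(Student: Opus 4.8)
The plan is to read both assertions off the group extensions already assembled in the triangle diagram~\ref{diagram:triangle} and in the extended diagram preceding Definition~\ref{def:Arf-j}, together with the order-raising Theorem~\ref{thm:twisted-order-raising}. The first step is to record the structure of each $\W^\iinfty_n$: in every order the Milnor surjection fits into a short exact sequence $0\to\sK^\iinfty_n\to\W^\iinfty_n\xrightarrow{\mu_n}\sD_n\to 0$ with $\sK^\iinfty_n:=\Ker\mu_n$. For $n\equiv 0,1,3\,\mod 4$, the factorization $\eta_n=\mu_n\circ R^\iinfty_n$ of diagram~\ref{diagram:triangle} together with Theorem~\ref{thm:twisted-three-quarters-classification} forces both $R^\iinfty_n$ and $\mu_n$ to be isomorphisms ($\eta_n$ injective makes the epimorphism $R^\iinfty_n$ bijective, and then $\mu_n=\eta_n\circ(R^\iinfty_n)^{-1}$), so $\sK^\iinfty_n=0$ and $\mu_n$ alone is a complete invariant. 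For $n=4j-2$, the extended diagram pins down $\sK^\iinfty_{4j-2}$: restricting the surjection $R^\iinfty_{4j-2}$ to $\Ker\eta_{4j-2}\cong\Z_2\otimes\sL_j$ (Proposition~\ref{prop:kerEta4j-2}) yields the epimorphism $\alpha^\iinfty_j:\Z_2\otimes\sL_j\twoheadrightarrow\sK^\iinfty_{4j-2}$ (onto because $\mu_{4j-2}\circ R^\iinfty_{4j-2}=\eta_{4j-2}$), which descends to the isomorphism $\overline{\alpha^\iinfty_j}$ whose inverse is by Definition~\ref{def:Arf-j} exactly $\Arf_j$. Hence in these orders $\W^\iinfty_n$ is classified by the pair $(\mu_n,\Arf_j)$ in the precise sense that two classes $w,w'$ coincide iff $\mu_n(w)=\mu_n(w')$ and $\Arf_j(w-w')=0$ (the latter defined since then $w-w'\in\sK^\iinfty_{4j-2}$); invoking \cite{O} for the rank of $\sD_n$ makes the group explicit. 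This proves the classification statement.

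For the ``in particular'' clause I would induct on $n$, showing that $L$ bounds an order $n+1$ twisted Whitney tower iff every $\mu_q(L)$ with $q\leq n$ and every $\Arf_j(L)$ with $4j-2\leq n$ vanishes --- with the understanding, supplied by the induction, that each condition in this list is asserted only once the preceding ones hold (so that, for instance, $\Arf_j(L)$ is legitimately defined, its lower Milnor invariants already being zero). The base case ($n=-1$) is that every link bounds an order $0$ twisted tower, matching the empty list of conditions. For the inductive step the essential geometric reduction is: \emph{once $L$ already bounds an order $n$ twisted Whitney tower}, it bounds an order $n+1$ one if and only if $[L]=0\in\W^\iinfty_n$. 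The ``only if'' is immediate, since an order $n+1$ twisted tower $\cW$ on $L$ has empty sub-multiset $t_n(\cW)$, whence $\tau^\iinfty_n(\cW)=0$ and $[L]=R^\iinfty_n(0)=0$ using that $R^\iinfty_n$ is a homomorphism. For the ``if'' direction one writes $[L]=[U]$ with $U$ the unlink --- the identity of $\W^\iinfty_n$, bounding disjoint disks with vanishing invariant --- and applies Corollary~\ref{cor:tau=w-concordance} to produce an order $n$ twisted tower $\cW_0$ on $L$ with $\tau^\iinfty_n(\cW_0)=0$, whereupon Theorem~\ref{thm:twisted-order-raising} yields the order $n+1$ twisted tower. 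Feeding this equivalence into the inductive hypothesis ``$L$ bounds an order $n$ twisted tower $\iff$ its Milnor and Arf invariants vanish through order $n-1$'', the condition $[L]=0\in\W^\iinfty_n$ contributes exactly $\mu_n(L)=0$ and, when $n=4j-2$, additionally $\Arf_j(L)=0$ by the classification above, closing the induction.

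Once the algebraic inputs \cite{CST3,O} are granted, the step I expect to be the main obstacle is the ``if'' direction just indicated: one must resist deducing $\tau^\iinfty_n(\cW)=0$ directly from $[L]=0$ for the Whitney tower at hand, since $R^\iinfty_n$ is not known to be injective when $n\equiv 2\,\mod 4$ and $\tau^\iinfty_n(\cW)$ could be a nonzero element of $\Ker R^\iinfty_n$ --- instead one must pass through $\W^\iinfty_n$ and use Corollary~\ref{cor:tau=w-concordance} to \emph{replace} $\cW$ by a tower with vanishing order $n$ intersection invariant before applying Theorem~\ref{thm:twisted-order-raising}. The remaining care is organizational, namely threading the inductive dependencies so that each higher-order Arf invariant is defined at the moment it is used; the rest is bookkeeping with the two exact-sequence diagrams.
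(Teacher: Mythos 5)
Your argument is correct and is essentially the proof the paper intends: the classification reads directly off the two displayed diagrams together with Theorem~\ref{thm:twisted-three-quarters-classification} and Proposition~\ref{prop:kerEta4j-2}, and the ``in particular'' clause follows by the induction you describe via Theorem~\ref{thm:twisted-order-raising} and Corollary~\ref{cor:tau=w-concordance}. One small remark: producing $\cW_0$ on $L$ with $\tau^\iinfty_n(\cW_0)=0$ uses the slightly stronger form of Corollary~\ref{cor:tau=w-concordance} visible in its proof (a \emph{given} order~$n$ twisted tower on one side, here the disjoint embedded disks on the unlink, can be matched by one on the other with the same $\tau^\iinfty_n$ by stacking the order $n+1$ concordance tower on top), not just the literal existential statement --- which you evidently appreciate, since you correctly flag deducing $\tau^\iinfty_n(\cW)=0$ directly from $[L]=0$ as the trap.
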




\subsection{The higher-order Arf invariant Conjecture}\label{subsec:higher-arf-conjecture}
%
%
%
%


In the case $j=1$ of Definition~\ref{def:Arf-j}, $\Ker \alpha^\iinfty_{1}$ is trivial and we have $\Arf_{1}:\sK^\iinfty_{2} \overset{\cong}{\to} (\Z_2\otimes \sL_1) \cong (\Z_2)^m$,
given by classical Arf invariants of the link components \cite[Lem.10]{CST2}. 

The \emph{higher-order Arf invariant conjecture} states that $\Ker \alpha^\iinfty_{j}$ is trivial for all $j$:
\begin{conj}\label{conj:higher-order-arf}
 $\Arf_{j}:\sK^\iinfty_{4j-2}\to\mathbb Z_2\otimes {\sf L}_{j}$ are isomorphisms for all $j$.
\end{conj}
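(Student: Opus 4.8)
Here is a possible line of attack on Conjecture~\ref{conj:higher-order-arf}.

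The epimorphism $\alpha^\iinfty_j:\Z_2\otimes\sL_j\sra\sK^\iinfty_{4j-2}$ is already established --- it is the map induced by $R^\iinfty_{4j-2}$ on the kernel identified in Proposition~\ref{prop:kerEta4j-2} --- so the entire content of the conjecture is the \emph{injectivity} of $\alpha^\iinfty_j$. Unwinding the definitions, this says: for every nonzero $\xi\in\Z_2\otimes\sL_j$, the link $L_\xi\subset S^3$ produced by applying the realization recipe of section~\ref{subsec:realization-maps} to a sum of twisted trees $\iinfty-\!\!\!\!-\!\!-\!\!\!<^{\,J}_{\,J}$ (with $J$ of order $j-1$) representing $\xi$ must be \emph{nontrivial} in $\W^\iinfty_{4j-2}$; equivalently, $L_\xi$ must not bound an order $4j-1$ twisted Whitney tower in $B^4$. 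By Lemma~\ref{lem:realization-of-geometric-trees} and Lemma~\ref{lem:exists-tower-sum} these links are completely explicit: each is built from a single $\pm1$-twisted Bing double of an unknot by iterated untwisted Bing doublings arranged into the tree-shape $J$, followed by band sums installing the univalent labels. The plan, then, is to manufacture for each $j$ a family of $\Z_2$-valued invariants of links bounding order $4j-2$ twisted Whitney towers which are (i) invariant under order $4j-1$ twisted Whitney tower concordance, hence descend to $\W^\iinfty_{4j-2}$, and (ii) jointly nonzero on the classes $\alpha^\iinfty_j(\xi)$, $\xi\neq0$.

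The case $j=1$ is the template. There $J$ is a single labelled edge, $L_\xi$ is a disjoint union of figure-eight--type knots (one twisted Bing double of the unknot with its two components banded together), and the classical Arf invariant suffices: it is a concordance invariant, hence automatically an order $3$ twisted Whitney tower concordance invariant, and $\mathrm{Arf}$ of the figure-eight knot is $1$ (cf.\ \cite[Lem.10]{CST2}). The crux for $j\geq 2$ is that this ``automatic'' half of (i) is not available: a genuine higher-order Arf invariant must be proved invariant under an order $4j-1$ twisted Whitney tower concordance, not merely under ordinary concordance, and --- at present --- no construction of such an invariant independent of the formal inverse of Definition~\ref{def:Arf-j} is known.

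Two complementary sources for candidate invariants, each with a natural route to checking (i), look most promising. First, $L^2$-type $\rho$-invariants in the tradition of Cochran--Orr--Teichner \cite{COT} and Cochran--Harvey--Leidy: by Cha's theorem quoted at the start of this section (\cite[Thm.C]{Cha2}), $\sK^\iinfty_{4j-2}$ is exactly the obstruction to promoting an order $4j-1$ twisted Whitney tower in a rational homology $4$--ball to one in $B^4$, and the $\rho$-invariants of the lower central series quotients of the link group --- which the tower controls via Dwyer's theorem, as in the proof sketch of Theorem~\ref{thm:Milnor invariant} --- are tailored to detect precisely that distinction; their invariance under the passage to an order $4j-1$ tower should follow from an $L^2$-signature computation on the cobordism the tower supplies. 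Second, finite-type invariants reduced mod $2$: after the identifications of section~\ref{subsec:eta-map} the twisted trees $\iinfty-\!\!\!\!-\!\!-\!\!\!<^{\,J}_{\,J}$ are a $\Z_2$-class invisible to $\mu_{4j-2}=\eta_{4j-2}\circ\tau^\iinfty_{4j-2}$ but a candidate target for a Vassiliev-type invariant of the appropriate degree over $\Z_2$ (for $j=1$ this is $\mathrm{Arf}=v_2\bmod 2$); here invariance under an order $4j-1$ move would follow from matching the invariant with the right weight system and applying the elementary-move analysis of section~\ref{subsec:geometry-of-relations}. Given such a candidate $I$, the last step is a Bing-doubling reduction: establish a cabling formula for $I$ under the iterated (twisted) Bing-doubling construction --- paralleling the way $\eta_n$ governs Milnor invariants of Bing doubles --- so that $I(L_\xi)$ is computed from the seed $\mathrm{Arf}(\text{figure-eight})=1$, reducing every $j$ to the base case; naturality under band sums (Lemma~\ref{lem:exists-tower-sum}) and under relabeling then reduces (ii) to verifying a single ``generic'' $\xi$, with the rest of $\Z_2\otimes\sL_j$ following from the module structure.

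The main obstacle is, unambiguously, the construction and invariance of $I$ for $j\geq 2$: it is not merely a matter of writing a formula, but of proving that formula unchanged under a single order $4j-1$ twisted Whitney move --- the very order at which $\tau^\iinfty$ stops detecting the obstruction, so that precisely there the subtlety of the twisted-tree relations (interior-twist, twisted IHX, boundary-twist) is concentrated. A secondary obstacle is bookkeeping inside $\Z_2\otimes\sL_j$: a priori several distinct-looking twisted trees $\iinfty-\!\!\!\!-\!\!-\!\!\!<^{\,J}_{\,J}$ may already coincide there, so ``enough invariants'' is a finite but genuinely nontrivial list, whose size rests on the Levine-conjecture computation \cite{CST3} behind Proposition~\ref{prop:kerEta4j-2} together with Witt's rank formula for $\sL_j$ and \cite{O}. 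Should the conjecture instead fail for some $j$, the same links $L_\xi$ are the test objects: one would then seek an explicit order $4j-1$ twisted Whitney tower on a nonzero combination, necessarily exploiting a geometric cancellation invisible at the level of $t(\cW)$.
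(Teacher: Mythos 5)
This statement is a \emph{conjecture}, not a theorem: the paper explicitly leaves it open, noting that it is verified only for $j=1$ (via classical Arf invariants of the link components, \cite[Lem.10]{CST2}) and that ``it remains an open problem whether $\Arf_k$ is non-trivial for any $k>1$.'' There is therefore no proof in the paper against which your attempt can be checked, and your proposal does not claim to be one --- it is a research program, which is the appropriate response to the task.

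That said, your analysis tracks the paper's own framing of the problem closely and accurately. You correctly reduce the conjecture to the injectivity of $\alpha^\iinfty_j$, correctly identify the explicit test links $L_\xi$ produced by twisted Bing-doubling and banding (Lemma~\ref{lem:realization-of-geometric-trees} and Figure~\ref{fig:bing-fig-8}), and correctly locate the crux: any candidate invariant must be shown invariant under an order $4j-1$ twisted Whitney tower concordance, not merely under ordinary concordance, and no such invariant is currently known for $j\geq 2$. Your two candidate sources --- $L^2$-type $\rho$-invariants in the Cochran--Orr--Teichner tradition, anchored to Cha's \cite[Thm.C]{Cha2} rational--versus--integral distinction, and $\Z_2$-reduced finite-type invariants --- both match directions the paper itself raises in sections~\ref{subsec:arf-questions-reformulations}--\ref{subsec:arf-transfinite-mu} (see Questions~\ref{question:arf-finite-type}, \ref{question:embedding-calculus}, \ref{question:transfinite}, and Conjectures~\ref{conj:framed-arf-degree-6}--\ref{conj:no-spheres-with-framed-arf-trees}). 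Your proposed cabling/Bing-doubling reduction to the $j=1$ seed is a sensible organizing principle, though it would itself need to be established and is not automatic.

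One caution: your phrase ``reducing every $j$ to the base case'' overstates what such a cabling formula could deliver on its own. Even granting naturality under band sums and Bing-doubling, distinct generators $\iinfty-\!\!\!\!-\!\!-\!\!\!<^{\,J}_{\,J}$ of $\Z_2\otimes\sL_j$ are genuinely different homomorphism targets, and showing nontriviality on one class does not propagate to the others without additional equivariance that the realization maps do not obviously supply --- you acknowledge this in your ``secondary obstacle,'' but the primary reduction step as written leans on it. The honest summary is that your proposal correctly identifies where the difficulty is concentrated (invariance at exactly the order where $\tau^\iinfty$ stops seeing the obstruction, where the twisted-tree relations live) without resolving it; since the conjecture is open, that is as far as anyone has gotten.
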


Assuming this conjecture the classification in these orders would be described by the following diagram:

$$
\xymatrix{
\Z_2 \otimes \sL_j\ar@{>->}[dr]&&\sK^\iinfty_{4j-2}\ar@{>-->>}[ll]_{\Arf_{j}}\ar@{>->}[d]\\
& \cT^\iinfty_{4j-2} \ar@{->>}[r]^{R_{4j-2}^\iinfty} \ar@{->>}[rd]_{\eta_{4j-2}}&\W^\iinfty_{4j-2} \ar@{->>}[d]^{\mu_{4j-2}}\\& &\sD_{4j-2}
}
$$

Conjecture~\ref{conj:higher-order-arf} would imply $\W_n^\iinfty\xrightarrow{\tau_n^\iinfty}\cT^\iinfty_n$ is an isomorphism for all $n$.
That is, the intersection invariants $\tau_n^\iinfty$ taking values in $\cT^\iinfty_n$ are independent of Whitney tower choices and characterize links bounding order $n$ twisted Whitney towers.


\begin{figure}[h]
\centerline{\includegraphics[scale=.4]{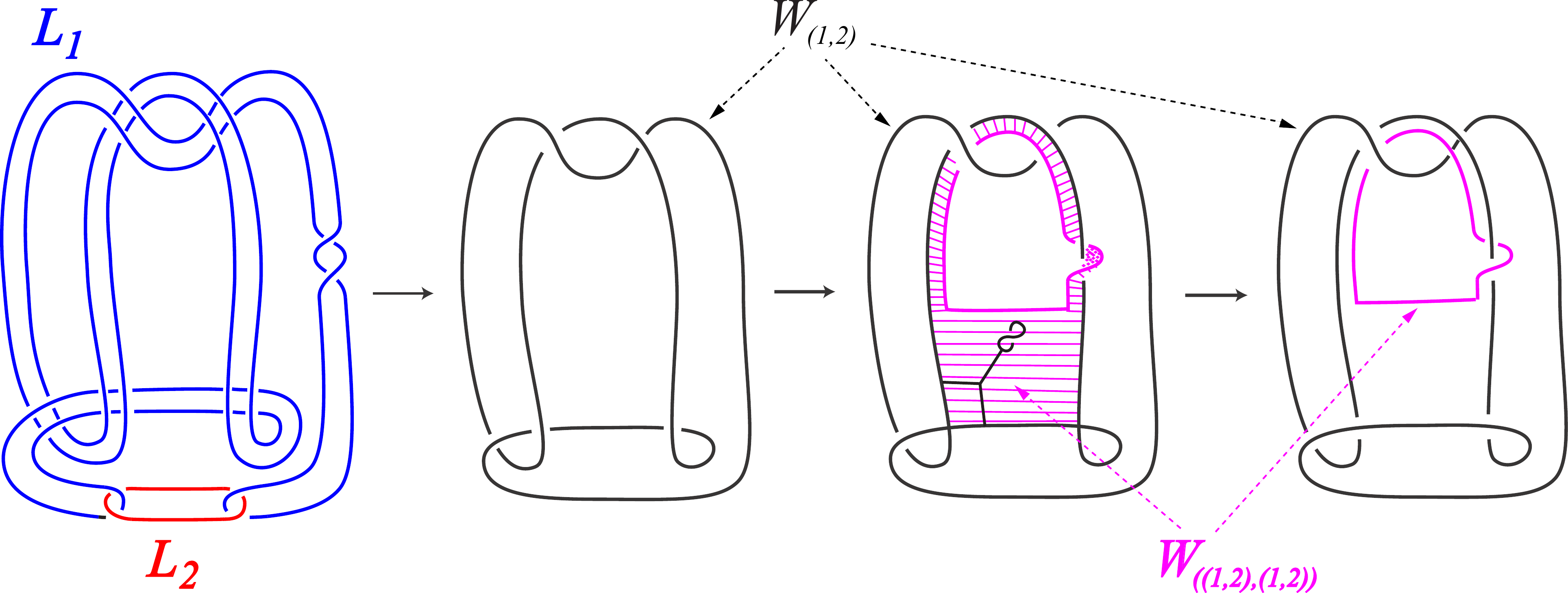}}
\caption{The Bing double of the Figure-8 knot bounds an order~6 twisted Whitney tower $D_1\cup D_2\cup W_{(1,2)}\cup W_{((1,2),(1,2))}\subset B^4$, with $\omega(W_{((1,2),(1,2))})=1$.} \label{fig:bing-fig-8}
\end{figure}
\subsection{Problems, Questions and re-formulations}\label{subsec:arf-questions-reformulations}

As just mentioned in section~\ref{subsec:higher-arf-conjecture}, Conjecture~\ref{conj:higher-order-arf} is true for $k=1$, with $\Arf_1$ given by the classical Arf invariants of the link components \cite{CST2}, but it remains an open problem whether $\Arf_k$ is non-trivial for any $k>1$. 
The links $R^\iinfty_{4k-2}( \iinfty-\!\!\!\!\!-\!\!\!<^{\,J}_{\,J}\,\,)$ realizing the image of $\Arf_{k}$ can all be constructed as internal band sums of iterated Bing doubles of knots having non-trivial classical Arf invariant (\cite[Lem.13]{CST2}), see Figure~\ref{fig:bing-fig-8}. Such links are known not to be \emph{slice}
by work of J.C. Cha \cite{Cha}, providing evidence in support of Conjecture~\ref{conj:higher-order-arf}.

So the fundamental open problem in this setting is: 

\begin{prob}
Determine the precise image of $\Arf_j\leq\Z_2\otimes\sL_j$ for $j\geq 2$.
\end{prob}

The following specific lowest order open question is already interesting: 
\begin{question}\label{question:does-bing-fig-8-bound-order-7-twisted}
Does the Bing double of the Figure-8 knot $R^\iinfty_{6}( \iinfty-\!\!\!\!\!-\!\!\!<^{\,(1,2)}_{\,(1,2)}\,\,)\in\sW^\iinfty_6$ bound an order~7 twisted Whitney tower?
\end{question}
This importance of this lowest order open question is magnified by the following fact \cite[Prop.14]{CST2}:
\begin{prop}[\cite{CST2}]
\emph{If} the Bing double of the Figure-8 knot \emph{does} bound an order~7 twisted Whitney tower, then $\Arf_j$ are \emph{trivial} for all $j\geq 2$.
\end{prop}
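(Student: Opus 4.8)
The plan is to leverage the band-sum structure of the links realizing the images of the higher-order Arf invariants, together with the well-definedness of the twisted Whitney tower filtration under band sums (Lemma~\ref{lem:link-sum-well-defined} and Lemma~\ref{lem:exists-tower-sum}). As noted in section~\ref{subsec:arf-questions-reformulations}, every link $R^\iinfty_{4k-2}(\iinfty-\!\!\!\!\!-\!\!\!<^{\,J}_{\,J})$ in the image of $\Arf_k$ can be built as an internal band sum of iterated Bing doubles of a single knot having non-trivial classical Arf invariant, and the prototype of such a knot is the Figure-8 knot (see Figure~\ref{fig:bing-fig-8} and \cite[Lem.13]{CST2}). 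So the first step is to make precise this structural statement: for any $k\geq 2$ and any rooted tree $J$ of order $k-1$, the generator $\iinfty-\!\!\!\!\!-\!\!\!<^{\,J}_{\,J}\in\cT^\iinfty_{4k-2}$ is realized (via $R^\iinfty_{4k-2}$) by a link obtained from iterated Bing doubling of the Figure-8 knot followed by internal band sums, and this construction is "functorial" in the sense that it is compatible with the band-sum operation on the $\W^\iinfty$-filtration.

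Second, I would use the hypothesis: the Bing double of the Figure-8 knot bounds an order~$7$ twisted Whitney tower in $B^4$, i.e.~$R^\iinfty_6(\iinfty-\!\!\!\!\!-\!\!\!<^{\,(1,2)}_{\,(1,2)})=0\in\W^\iinfty_6$. The key point is that iterated Bing doubling "along a tree" of a knot $\mathcal{K}$ takes a bounding order~$n$ twisted Whitney tower on $\mathcal{K}$ and produces a bounding order~$(n+2r)$ twisted Whitney tower after $r$ Bing-doubling steps — this is exactly Cochran's technique underlying Lemma~\ref{lem:realization-of-geometric-trees}. Applied to the hypothesis, a single Bing double already pushes the Figure-8's order~$2$ twisted tower $(1,1)^\iinfty$ up; the assumption that the \emph{doubled} knot bounds order~$7$ means this pushed-up tower is \emph{trivial} in $\W^\iinfty_6$, not merely that it has a representative of that order. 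Iterating the Bing-doubling operation $(k-1)$ more times then shows that the corresponding generator $\iinfty-\!\!\!\!\!-\!\!\!<^{\,J}_{\,J}$ maps to zero in $\W^\iinfty_{4k-2}$ for the appropriate tree shapes $J$ of order $k-1$; here one needs the compatibility of Bing-doubling with the filtration to conclude triviality (not just existence of a higher-order representative), which is where the hypothesis is essential rather than automatic.

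Third, I would internal-band-sum to reach arbitrary rooted trees $J$ of order $k-1$ with arbitrary univalent labels and to span all of $\Z_2\otimes\sL_k$. Since band sums are well-defined on $\W^\iinfty_{4k-2}$ (Lemma~\ref{lem:link-sum-well-defined}) and correspond to addition of intersection forests (Lemma~\ref{lem:exists-tower-sum}), the vanishing of each generating class $\iinfty-\!\!\!\!-\!\!-\!\!\!<^{\,J}_{\,J}$ in $\W^\iinfty_{4k-2}$ propagates: the subgroup $\sK^\iinfty_{4k-2}=\Ker\{\mu_{4k-2}\}$ is, by Proposition~\ref{prop:kerEta4j-2} and the surjection $\alpha^\iinfty_k$, generated by the images of these classes, so if all of them vanish then $\sK^\iinfty_{4k-2}=0$, i.e.~$\Arf_k$ is trivial. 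This must be carried out uniformly in $k\geq 2$, so the induction on $k$ (number of Bing-doubling steps above the base Figure-8 case) is the organizing principle.

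The main obstacle I expect is the second step: showing that the \emph{single} hypothesis about the Bing double of the Figure-8 knot propagates to \emph{all} orders $4k-2$ and all tree shapes $J$. The subtlety is that Bing-doubling a link that bounds an order~$n$ twisted Whitney tower does not by itself give control of the resulting intersection invariant in the higher filtration quotient — one must verify that the order-raising from the hypothesis is preserved, which requires understanding how the twisted tower on the Figure-8 complement interacts with the Bing-doubling satellite construction at the level of Whitney towers (not just of the boundary link). Concretely, I would need a lemma of the form: if $\mathcal{K}$ bounds an order~$(2j+1)$ twisted Whitney tower, then the $J$-Bing-double of $\mathcal{K}$ (for $J$ of order $r$) bounds an order~$(2j+2r+1)$ twisted Whitney tower, refining the realization construction of Lemma~\ref{lem:realization-of-geometric-trees} from "bounds order~$n$" to "bounds order~$n$ and the obstruction vanishes." This refinement, combined with the reformulation $\tau^\iinfty_n$ being a concordance (indeed filtration) invariant in the relevant cases, is the technical heart; the rest is bookkeeping with band sums and the already-established surjectivity of $\alpha^\iinfty_k$.
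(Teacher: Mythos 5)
Your outline correctly identifies the relevant structural ingredients (iterated Bing doubles of the Figure-8 knot as realizing links, and band-sum compatibility via Lemmas~\ref{lem:link-sum-well-defined} and~\ref{lem:exists-tower-sum}), but the lemma you single out as the ``technical heart'' is false, and the gap it leaves is the whole content of the proposition. Bing-doubling the $i$-th component of a link effects the substitution $i\mapsto(i_a,i_b)$ on the univalent labels of every tree in the intersection forest, so only trees containing at least one univalent vertex labelled $i$ have their order raised; a tree whose labels all lie on the \emph{other} components is unaffected. Hence it is simply not the case that Bing-doubling one component of a link that is order-$(n{+}1)$ twisted Whitney tower concordant to the unlink yields a link with the analogous property at order $n{+}5$. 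Concretely, an order-$7$ twisted Whitney tower concordance from $\mathrm{BD}(\mathrm{Fig8})$ to the unlink can contain framed trees of order exactly $7$ all of whose labels equal $2$; after Bing-doubling component $1$ these trees persist at order $7$, so the resulting concordance witnesses only $\W^\iinfty_6$-triviality, far short of the required $\W^\iinfty_{10}$-triviality (order $11 = 4\cdot 3 -1$) already at $j=3$. Your formula ``order $2j{+}1\mapsto$ order $2j{+}2r{+}1$'' is also stated for a \emph{knot} $\mathcal{K}$ bounding an odd order, which never applies to the Figure-8 knot itself (it bounds order $2$), and once rephrased for the $2$-component link $\mathrm{BD}(\mathrm{Fig8})$ the label-imbalance problem above defeats it at the first iteration.

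The mechanism used in \cite{CST2}, which the paper points to through the exercises at the end of Section~\ref{sec:twisted-order-n-classification-arf-conj} (in particular Exercise~\ref{ex:bing-double-knot-with-non-trivial-arf} and the final, unlabelled exercise referencing \cite[Prop.14]{CST2}), is a \emph{sphere gadget} rather than a Bing-doubling filtration lemma. One glues the explicit order-$6$ twisted Whitney tower of Figure~\ref{fig:bing-fig-8} (intersection forest $((1,2),(1,2))^\iinfty$) to the hypothetical order-$7$ tower along $\mathrm{BD}(\mathrm{Fig8})\subset S^3=\partial B^4$, producing a pair of null-homotopic $2$-spheres in $S^4$ supporting a Whitney tower whose intersection forest contains $((1,2),(1,2))^\iinfty$. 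These spheres then play exactly the role of the Jacobi spheres in the proof of Corollary~\ref{cor:IHX}: one connect-sums $S^4$ into $B^4$ away from a given split Whitney tower and tubes the two spheres into the interiors of clean Whitney disks $W_{J_1}$, $W_{J_2}$ created by finger moves, thereby inserting an algebraically cancelling copy of $(J,J)^\iinfty$ with $J=(J_1,J_2)$ into the intersection forest. This tubing device, not an order-preservation statement for Bing doubling of links, is what lets the single hypothesis at $j=2$ propagate to all $j\geq 2$; your band-sum and surjectivity considerations in step three then enter roughly as you indicate. To fix your proposal you would need to replace the second step entirely with (and verify the details of) this sphere construction.
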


Conjecture~\ref{conj:higher-order-arf} predicts a negative answer to Question~\ref{question:does-bing-fig-8-bound-order-7-twisted}, which can be can be phrased as the following restriction on the possible twisted Whitney towers on $2$--spheres in the $4$--ball:
\begin{conj}
There does not exist $A:S^2\cup S^2\imra B^4$ supporting $\cW$ with $$t(\cW)=\iinfty-\!\!\!\!\!-\!\!\!<^{\,(1,2)}_{\,(1,2)} \quad\mbox{(possibly + higher-order trees)}.
$$
\end{conj}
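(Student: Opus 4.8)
This conjecture is equivalent to the non-triviality of the higher-order Arf invariant $\Arf_2$, so a complete proof is not currently within reach; the plan below makes that equivalence precise and then outlines how one would attack $\Arf_2\neq 0$.

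\textbf{Step 1: reduction to the Bing double of the figure-eight knot.} First I would make precise the equivalence, asserted in the text, with Question~\ref{question:does-bing-fig-8-bound-order-7-twisted}. Write $g:=[\,((1,2),(1,2))^\iinfty\,]\in\cT^\iinfty_6$ for the class of the twisted tree in the conjecture, and let $D$ be the Bing double of the figure-eight knot, so $D=R^\iinfty_6(g)$. The claim is that $A\colon S^2\cup S^2\imra B^4$ supporting $\cW$ with $t(\cW)=((1,2),(1,2))^\iinfty$ (plus order $>6$ trees) exists if and only if $D$ bounds an order $7$ twisted Whitney tower in $B^4$. For the ``only if'' direction --- the one actually used in the proof --- given such $A$ and $\cW$, pick a point $p_i$ on $A_i$ away from all self-intersections and Whitney-disk boundary arcs, pick disjoint embedded arcs from the $p_i$ to distinct points of $\partial B^4$ meeting $A\cup\cW$ only at $p_i$, and tube $A_i$ along these arcs to tiny meridional circles in $S^3$; this turns $A_i$ into a properly immersed disk bounded by an unknotted component of a $2$--component unlink $U$ and leaves $t(\cW)$ unchanged, so (after discarding the order $>6$ trees via Exercise~\ref{ex:eliminate-higher-order-trees}) $U$ bounds an order $6$ twisted tower with $\tau^\iinfty_6=g$. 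Since $D=R^\iinfty_6(g)$ also does, Corollary~\ref{cor:tau=w-concordance} gives $[D]=[U]=0$ in $\W^\iinfty_6$, whence $D$ bounds an order $7$ twisted tower by Theorem~\ref{thm:twisted-order-raising}. (The converse comes from gluing two copies of $B^4$ along $S^3$, inserting the tower realizing $R^\iinfty_6(g)$ in one and a putative order $7$ tower on $D$ with reversed orientation in the other, capping the order $0$ disks into immersed $2$--spheres in $S^4$, and pushing the resulting $2$--complex off a generic point into $B^4$.) By Proposition~\ref{prop:kerEta4j-2}, $g$ generates $\Ker\eta_6\cong\Z_2\otimes\sL_2$, so this reduces the conjecture to $\Ker\alpha^\iinfty_2=0$, equivalently to $\Arf_2$ being non-trivial, equivalently to $D$ \emph{not} bounding an order $7$ twisted tower.

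\textbf{Step 2: obstructing the order $7$ twisted tower.} Suppose $D$ bounds an order $7$ twisted Whitney tower $\cV\subset B^4$; one must produce a contradiction. Milnor invariants cannot detect it, since $\mu_6(D)=\eta_6(g)=0$; moreover by \cite[Thm.C]{Cha2} the link $D$ already bounds an order $7$ twisted tower in a rational homology $4$--ball, so any successful obstruction must see the difference between $B^4$ and rational homology balls --- it should therefore be a Casson--Gordon- or metabelian-$\rho$-type invariant computed over prime-power covers, in the spirit of Cha's proof \cite{Cha} that precisely these links are not slice. Concretely I would: (i) analyze $\pi_1(B^4\setminus\cV)$ via Dwyer's theorem, as in the proof of Theorem~\ref{thm:Milnor invariant}, to locate $D$ in a definite term $\cF_{(k)}$ of the Cochran--Orr--Teichner solvable filtration, carefully tracking the loss caused by twisted Whitney disks and by the absence of Whitney-disk symmetry; and (ii) show that the iterated (amenable) $\rho$-invariant of $D$ which records the classical Arf invariant of the figure-eight knot through the Bing-doubling operation is already non-trivial on $\cF_{(k)}$, not merely on slice links.

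\textbf{Main obstacle.} The crux --- and the reason even the case $j=2$ is open --- is the gap in Step 2: Cha's obstructions are designed for the full slice-disk exterior and its deep derived series, whereas an order $7$ twisted tower yields a $4$--manifold with only shallow control of $\pi_1$ and no Whitney-disk symmetry (the order-raising construction of section~\ref{subsec:order-raising-proof-sketch} destroys symmetry). One would have either to refine Cha's bookkeeping so that his obstruction descends that low in the solvable filtration, or --- more plausibly --- to construct a genuinely new invariant of order $n$ twisted Whitney tower concordance detecting $\sK^\iinfty_{4j-2}$ directly; I expect the latter to require a new idea. A realistic intermediate target is to rule out the order $7$ tower under an added hypothesis (e.g.\ that it may be taken symmetric, or with controlled $\pi_1$), where the $\rho$-invariant machinery applies more directly. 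The remaining issues --- the cleanup of the ``plus higher-order trees'' and the bookkeeping of twisting coefficients and framings in the constructions of Step 1 --- are routine by comparison.
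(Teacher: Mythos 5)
The statement you have been asked to prove is not a theorem of the paper but an open conjecture: the paper states it only as a reformulation of Conjecture~\ref{conj:higher-order-arf} in the lowest open case, observes (without proof) that it is equivalent to a negative answer to Question~\ref{question:does-bing-fig-8-bound-order-7-twisted}, and cites Cha's non-sliceness result as evidence. There is therefore no paper proof to compare against, and your writeup is right to say so up front.

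Your Step~1 is a careful and correct filling-in of the equivalence the paper merely asserts. The tubing of the spheres $A_i$ out to a $2$-component unlink $U$ leaving $t(\cW)$ unchanged is exactly the standard move; Exercise~\ref{ex:eliminate-higher-order-trees} handles the ``plus higher-order trees''; then Corollary~\ref{cor:tau=w-concordance} identifies $[D]=[U]=0$ in $\W^\iinfty_6$ and Theorem~\ref{thm:twisted-order-raising} hands back an order~$7$ twisted tower on $D$. The converse (glue, cap to spheres in $S^4$, push into $B^4$) is likewise correct, and the passage from ``$D$ bounds no order~$7$ twisted tower'' to ``$\Ker\alpha^\iinfty_2=0$'' is accurate in the $m=2$ case via Proposition~\ref{prop:kerEta4j-2}, with the bridge to general $m$ and general $j$ being \cite[Prop.14]{CST2}, which you reference implicitly. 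One small precision worth noting: the conjecture as written is the $m=2$, $j=2$ instance, so Step~1 shows it is equivalent to $\alpha^\iinfty_2$ being injective on $\Z_2\otimes\sL_2(2)\cong\Z_2$; it is \cite[Prop.14]{CST2} that then propagates this to the triviality of all $\Arf_j$, $j\geq 2$.

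Step~2 is the open part, and you correctly identify the central obstruction: Cha's theorem \cite[Thm.C]{Cha2} shows $D$ already bounds an order~$7$ twisted tower in a rational homology $4$-ball, so any successful obstruction must be of Casson--Gordon/$\rho$-invariant type; yet those invariants are calibrated to depth in the solvable filtration that an order~$7$ \emph{twisted} tower need not supply, and the order-raising moves of section~\ref{subsec:order-raising-proof-sketch} destroy the Whitney-disk symmetry that feeds the $n$-solvable filtration. This is precisely where the problem is stuck. Your proposal is therefore an accurate and well-organized account of the reduction and of the state of the art, but it is not (and does not claim to be) a proof of the conjecture.
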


\subsection{Higher-order Arf invariant Conjecture
and Finite Type invariants}\label{subsec:finite-type-arf}

Habegger and Massbaum \cite{HM} have shown that Milnor invariants are the only \emph{rational} finite type concordance invariants of (string) links.
The classical Arf invariant of a knot is known to be determined by the first non-trivial finite type isotopy invariant, and as stated above, $\Arf_1$ corresponds to the classical Arf invariants of the link components. So it is natural to ask:

\begin{question}\label{question:arf-finite-type}
Are the $\Arf_{j}$ for $j>1$ also determined by finite type isotopy invariants?
\end{question}

For the first interesting case in this setting, a negative answer to Question~\ref{question:does-bing-fig-8-bound-order-7-twisted} can be formulated as:
\begin{conj}\label{conj:framed-arf-degree-6}
The sum of trees in Figure~\ref{fig:arf-framed-trees} represents a non-trivial finite type concordance invariant of 2-component string links
(first-non-vanishing, $\Z/2\Z$-coefficients).
\end{conj}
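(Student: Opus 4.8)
The plan is to recast the statement in the language of weight systems and then reduce non-triviality to an evaluation on the Bing double of the Figure-8 knot.

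\textbf{Setting up the weight-system picture.} First I would invoke the standard dictionary identifying first-non-vanishing degree-$6$ finite type concordance invariants of $2$-component string links with $\Z/2\Z$ coefficients with weight systems on order-$6$ trees modulo the relations that clasper/Whitney-tower calculus forces on a \emph{concordance} invariant: AS, IHX, and the ``boundary'' relations that distinguish concordance from isotopy. The content of sections~\ref{subsec:twisted-tree-groups}--\ref{subsec:geometry-of-relations} together with Theorems~\ref{thm:twisted-order-raising} and~\ref{thm:realization-surjection} is precisely that these are the relations generating $\cT^\iinfty_6$ --- the crucial extra one being the interior-twist relation $2\,J^\iinfty=\langle J,J\rangle$, which is what produces $\Z/2\Z$ classes invisible to the rational (Milnor) theory, exactly as the classical Arf weight system is invisible rationally in degree $2$. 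So the first concrete task is to check that the linear combination of trees in Figure~\ref{fig:arf-framed-trees} represents a non-zero class in $\Ker\{\eta_6\}$; by Proposition~\ref{prop:kerEta4j-2} this kernel is $\Z_2\otimes\sL_2$, which for two components is a single $\Z/2\Z$ generated by the twisted tree $\iinfty-\!\!\!\!\!-\!\!\!<^{\,(1,2)}_{\,(1,2)}$, so the figure should display the framed avatar of this generator (its image under $2\,J^\iinfty=\langle J,J\rangle$ together with whatever correction terms are needed to make the functional vanish on ribbon/boundary diagrams).

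\textbf{Producing an actual invariant.} Given the weight system, I would exhibit an honest invariant by invoking the identification (in this range of the filtration) of the twisted Whitney-tower filtration with the Goussarov--Habiro--Vassiliev filtration: the $\Arf_2$-component of $\tau^\iinfty_6$ is a well-defined function on $\sK^\iinfty_6$ by Definition~\ref{def:Arf-j}, hence a finite type concordance invariant, and its leading weight system is precisely the tree combination of Figure~\ref{fig:arf-framed-trees}. Alternatively one could construct it by hand from a mod-$2$ reduction of the tree part of the Kontsevich/Aarhus integral of string links together with the Habegger--Masbaum machinery, exploiting the extra room at the symmetric trees; the Whitney-tower route is cleaner and uses only results already assembled in this section, so I would carry that one out.

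\textbf{Non-triviality --- the main obstacle.} It remains to produce a $2$-component string link on which the invariant is non-zero mod $2$; the canonical candidate is the string-link Bing double of the Figure-8 knot, $R^\iinfty_6(\iinfty-\!\!\!\!\!-\!\!\!<^{\,(1,2)}_{\,(1,2)})$. Showing the value is non-zero is \emph{equivalent} to a negative answer to Question~\ref{question:does-bing-fig-8-bound-order-7-twisted}, and this is exactly where present technology stops: Cha's work shows such links are not slice, which only obstructs membership in a far deeper stage of the filtration. To close the gap I would try either to sharpen Cha's Cheeger--Gromov / $L^2$- and Casson--Gordon-type obstructions so that they detect the degree-$6$ twisted Whitney-tower stage with $\Z/2\Z$ coefficients, or to build an ``iterated Arf'' obstruction --- the Proposition following Question~\ref{question:does-bing-fig-8-bound-order-7-twisted} shows that if this single link bounds an order-$7$ tower then \emph{all} higher Arf invariants vanish, which strongly suggests the required obstruction is some self-composition of the classical Arf weight system that survives mod $2$. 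I expect this step to be genuinely hard --- it is, in essence, the Higher-order Arf Conjecture in its first open case --- while steps~1 and~2 should be essentially formal consequences of the machinery above.
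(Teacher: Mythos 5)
This statement is an open conjecture, not a theorem: the paper provides no proof, explicitly states it ``appears to lie at the frontier of current understanding,'' and exhibits it as equivalent to a negative answer to Question~\ref{question:does-bing-fig-8-bound-order-7-twisted}. So there is no argument in the paper to compare against; your honest conclusion that step~3 ``is, in essence, the Higher-order Arf Conjecture in its first open case'' is exactly correct, and you do not (and from the material given, could not) close that gap.

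A few inaccuracies appear in the parts you call ``essentially formal.'' The trees of Figure~\ref{fig:arf-framed-trees} are order-$5$ framed trees, not order-$6$: the paper states that degree-$6$ finite type invariants correspond to order-$5$ intersection invariants in the framed Whitney tower grading, and the one-order shift between the twisted filtration (where the Arf obstruction sits at order $6$) and the framed filtration is the point of the surrounding passage. Those framed trees are obtained from the twisted generator $((1,2),(1,2))^\iinfty$ by twisted IHX moves and boundary twists (Exercise~\ref{ex:framed-arf-trees-from-twisted}, via Lemma~\ref{lem:w-move-twistedIHX}), not ``its image under $2J^\iinfty=\langle J,J\rangle$'' as you suggest --- the interior-twist relation produces a tree of the wrong order. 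Finally, your step~2 conflates well-definedness with non-triviality: Definition~\ref{def:Arf-j} formally gives a map out of $\sK^\iinfty_6$, but if $\sK^\iinfty_6=0$ --- which is precisely what an affirmative answer to Question~\ref{question:does-bing-fig-8-bound-order-7-twisted} would force --- that ``invariant'' is vacuous. In this degree the question of whether the weight system extends to a non-trivial concordance invariant and the question of whether the realized link is non-zero in $\W^\iinfty_6$ are not separable subproblems; they are one and the same open problem.
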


\begin{figure}[h]
\centerline{\includegraphics[scale=.45]{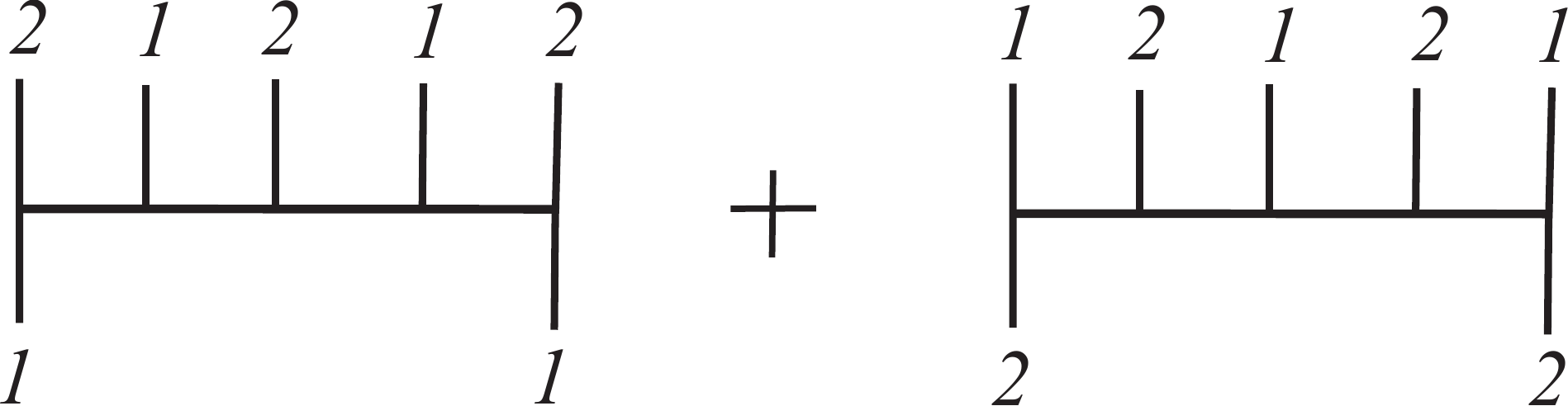}}
\caption{}\label{fig:arf-framed-trees}
\end{figure}

The invariant of Conjecture~\ref{conj:framed-arf-degree-6} would be finite type \emph{degree}~6, 
and all finite type concordance invariants of string links in degrees $\leq 5$ have been characterized by J-B. Meilhan and A. Yasuhara in \cite{MY}. 
So this conjecture appears to lie at the frontier of current understanding in this setting.

Degree~6 finite type invariants are related to order~5 intersection invariants in the setting of \emph{framed} Whitney towers \cite{CST,ST2},
and the sum of trees in Figure~\ref{fig:arf-framed-trees} (modulo higher-order trees) can be gotten by applying twisted IHX moves and boundary twists to a Whitney tower $\cW$ with $t(\cW)=\iinfty-\!\!\!\!\!-\!\!\!\!<^{\,(1,2)}_{\,(1,2)}$ bounded by any link $R^\iinfty_{6}( \iinfty-\!\!\!\!\!-\!\!\!\!<^{\,(1,2)}_{\,(1,2)}\,\,)\in\sW^\iinfty_6$, see Exercise~\ref{ex:framed-arf-trees-from-twisted}. This illustrates how the higher-order Arf invariants shift down one order in the \emph{framed} order $n$ Whitney tower filtration \cite{CST1}. Conjecture~\ref{conj:framed-arf-degree-6} corresponds in this setting to: 
\begin{conj}\label{conj:bing-arf-doesnt-bound-framed-order-6}
The Bing double of any knot with non-trivial classical Arf invariant does not bound an order $6$ \emph{framed} Whitney tower.
\end{conj}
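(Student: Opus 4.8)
The natural route is through the twisted machinery of Section~\ref{sec:twisted-order-n-classification-arf-conj}, reducing the statement to the non-triviality of the second higher-order Arf invariant $\Arf_2$. Fix a knot $K$ with $\Arf(K)=1$ and let $B(K)$ denote its Bing double. Applying Cochran's tree Bing-doubling (the realization technique behind Lemma~\ref{lem:realization-of-geometric-trees}), $B(K)$ bounds an order~$6$ twisted Whitney tower $\cW\subset B^4$ whose intersection invariant $\tau^\iinfty_6(\cW)$ is the class $\alpha^\iinfty_2(1\otimes(1,2))\in\sK^\iinfty_6$, and this class depends only on $\Arf(K)\bmod 2$ (cf.~\cite[Lem.13]{CST2}); in particular every such $B(K)$ represents the same element of $\W^\iinfty_6$, so by \cite[Prop.14]{CST2} it is enough to treat $K$ equal to the Figure-$8$ knot. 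Using the framed/twisted filtration comparison of \cite{CST1} --- concretely the boundary-twist and twisted-IHX manipulations of Exercise~\ref{ex:framed-arf-trees-from-twisted} that trade the order~$6$ framed tree data of Figure~\ref{fig:arf-framed-trees} against the twisted tree $(1,2)^\iinfty$ --- I would first establish that $B(K)$ bounds an order~$6$ framed Whitney tower if and only if $B(K)$ bounds an order~$7$ twisted Whitney tower, which by Corollary~\ref{cor:mu-arf-classify-twisted} (using that all Milnor invariants of $B(K)$, and $\Arf_1$, vanish) holds if and only if $\Arf_2(B(K))=0$. Everything then reduces to showing $1\otimes(1,2)\notin\Ker\alpha^\iinfty_2$.

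For that, the plan is to refine a known concordance obstruction. Since $\mu_q(B(K))=0$ for all $q$, no \emph{rational} finite-type concordance invariant detects this class (Habegger--Masbaum \cite{HM}), and all finite-type concordance invariants of string links in degrees $\le 5$ are known \cite{MY} and vanish here, so one needs a genuinely $\Z/2\Z$-valued degree~$6$ obstruction --- this is precisely the content of Conjecture~\ref{conj:framed-arf-degree-6}. The only obstructions presently known to be non-trivial on these links are the solvable/metabelian covering-space invariants used by J.C.~Cha \cite{Cha} to prove that internal band sums of iterated Bing doubles of Arf-nontrivial knots are not slice --- amenable and $L^2$-type signature defects evaluated on appropriate iterated branched covers. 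The hard core of the argument would be to upgrade such an obstruction from ``not slice'' to ``does not bound an order~$7$ twisted Whitney tower'': an order~$7$ twisted tower $\cV$ on $B(K)$ forces, via Dwyer's theorem (as in the proof of Theorem~\ref{thm:Milnor invariant}), very strong control over the lower central series and the low solvable quotients of $\pi_1(S^3\setminus B(K))$ and of the exterior $B^4\setminus\cV$, and one would try to show that Cha's signature defect is already non-zero on \emph{every} link which is order~$7$ twisted Whitney tower concordant to the unlink, so that its non-vanishing on $B(K)$ forces $\Arf_2(B(K))\ne 0$.

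I expect the main obstacle to be exactly this last step: it \emph{is} the Higher-order Arf invariant Conjecture (Conjecture~\ref{conj:higher-order-arf}) in its first non-classical case $j=2$, and no invariant is presently known that is non-zero on $B(K)$ yet vanishes on everything bounding an order~$7$ twisted Whitney tower. Moreover, by Proposition~\ref{prop:kerEta4j-2} together with \cite[Prop.14]{CST2}, a single successful instance here would pin down \emph{all} the higher $\Arf_j$ at once, so the difficulty is intrinsic rather than a matter of bookkeeping. A secondary obstacle is making the framed/twisted comparison of the first paragraph fully rigorous: one must verify that the sum of trees in Figure~\ref{fig:arf-framed-trees} genuinely survives the order~$6$ framed relations (AS, IHX, and their interaction with boundary twists), i.e.\ that no order~$6$ framed Whitney tower on $B(K)$ can be converted into a twisted order~$7$ tower; this should follow from the algebra of \cite{CST3, CST1} but needs to be carried out with care.
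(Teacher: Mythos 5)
The statement you were asked to prove is Conjecture~\ref{conj:bing-arf-doesnt-bound-framed-order-6}; the paper does not prove it, but instead presents it as one of several equivalent reformulations of the first non-classical case $j=2$ of the Higher-order Arf invariant Conjecture~\ref{conj:higher-order-arf}. So there is no ``paper's own proof'' to compare against, and it would be a mistake to expect your proposal to close the argument. What you have written is, in fact, a correct reconstruction of the web of equivalences the paper itself lays out: the order-shift between the order~$6$ framed statement and the order~$7$ twisted statement (via boundary twists and the Whitney-move twisted IHX relation, Exercise~\ref{ex:framed-arf-trees-from-twisted} and Lemma~\ref{lem:w-move-twistedIHX}), the reduction to the Figure-$8$ knot via \cite[Prop.14]{CST2}, the identification of the obstruction with the twisted tree $((1,2),(1,2))^\iinfty$ realized by Bing-doubling, and the final reduction to showing $1\otimes(1,2)\notin\Ker\alpha^\iinfty_2$, i.e.\ that $\Arf_2$ is non-trivial. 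You correctly flag that this last step is precisely Conjecture~\ref{conj:higher-order-arf} at $j=2$, and that no presently known invariant is simultaneously non-zero on $B(K)$ and provably zero on every link bounding an order~$7$ twisted Whitney tower. Promoting Cha's covering-space signature defects from ``not slice'' to ``does not bound an order~$7$ twisted tower'' is indeed the natural thing to try, but, as you say, no one knows how to do it --- so what you have is a correct problem analysis, not a proof, and the paper offers none either.

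One small imprecision worth fixing: you write $\tau^\iinfty_6(\cW)=\alpha^\iinfty_2(1\otimes(1,2))\in\sK^\iinfty_6$, but $\tau^\iinfty_6(\cW)$ lives in $\cT^\iinfty_6$, not in $\W^\iinfty_6$. What you mean is that $\tau^\iinfty_6(\cW)$ is represented by the twisted tree $((1,2),(1,2))^\iinfty$, whose image under the realization map $R^\iinfty_6$ is $\alpha^\iinfty_2(1\otimes(1,2))\in\sK^\iinfty_6\subset\W^\iinfty_6$. This distinction matters here: for $n\equiv 2\bmod 4$ the invariant $\tau^\iinfty_n(\cW)\in\cT^\iinfty_n$ is not known to depend only on the concordance class of $\partial\cW$ (Remark~\ref{rem:tau-well-defined-in-3-4-of-cases} covers only $n\equiv 0,1,3\bmod 4$), whereas the image in $\W^\iinfty_n$ is well-defined by construction --- and whether the map $\W^\iinfty_n\to\cT^\iinfty_n$ is even well-defined is exactly what the conjecture would settle.
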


By Exercise~\ref{ex:no-spheres-with-framed-arf-trees}, Conjecture~\ref{conj:bing-arf-doesnt-bound-framed-order-6} can also be phrased as a restriction on Whitney towers supported by 2--spheres in 4--space:
\begin{conj}\label{conj:no-spheres-with-framed-arf-trees}
There does not exist $A:S^2\cup S^2\imra B^4$ supporting $\cW$ with $t(\cW)$ equal to the trees in Figure~\ref{fig:arf-framed-trees}, possibly plus higher-order trees.
\end{conj}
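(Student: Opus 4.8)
\subsection*{Proof proposal}

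The plan is to prove the logically equivalent Conjecture~\ref{conj:bing-arf-doesnt-bound-framed-order-6} and then transport the conclusion back along Exercise~\ref{ex:no-spheres-with-framed-arf-trees}; so the goal becomes: the Bing double $\Bing(K)$ of a knot $K\subset S^3$ with $\mathrm{Arf}(K)\neq 0$ does not bound an order~$6$ \emph{framed} Whitney tower in $B^4$. I would not attack the geometry directly but instead isolate the obstruction-theoretic content. For a Bing double all Milnor invariants and all higher-order Sato--Levine invariants vanish, so by the framed classification of \cite{CST1} (derived from Corollary~\ref{cor:mu-arf-classify-twisted}) the only thing that can detect the class of $\Bing(K)$ in the order~$6$ framed filtration is the higher-order Arf invariant $\Arf_2$, appearing here one order below its twisted home as explained after Conjecture~\ref{conj:bing-arf-doesnt-bound-framed-order-6}. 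Equivalently, the entire problem reduces to showing $\Ker\alpha^\iinfty_2$ is trivial, i.e.\ that the generator $\iinfty-\!\!\!\!\!-\!\!\!<^{\,(1,2)}_{\,(1,2)}\in\cT^\iinfty_6$ has nonzero image in $\W^\iinfty_6$ under $R^\iinfty_6$.

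Next I would pass to the finite-type formulation of Conjecture~\ref{conj:framed-arf-degree-6}: exhibit a $\z$-valued degree~$6$ finite-type concordance invariant $v$ of $2$-component string links whose leading symbol is the sum of trees in Figure~\ref{fig:arf-framed-trees}. The point to stress is that well-definedness of $v$ as a concordance invariant is \emph{not} the difficulty: it is inherited from the order~$5$ framed intersection theory of \cite{CST,ST2} together with the relations already imposed in $\cT^\iinfty_6$ (every twisted-IHX, boundary-twist, interior-twist and Whitney move is realized geometrically, as in section~\ref{subsec:geometry-of-relations}), and all finite-type concordance invariants of string links of degree $\leq 5$ are pinned down by \cite{MY}, so $v$ sits just past the known range. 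The substantive step is \textbf{nontriviality}: evaluate $v$ on the Bing double of the Figure-8 knot. For this I would take an assumed order~$6$ framed tower $\cW$ bounded by this link, convert it (via \cite{CST1}) into an order~$7$ twisted tower, and extract from the complement $B^4\setminus A$ a secondary invariant insensitive to Milnor data and to the Cochran--Orr--Teichner signature package. The natural candidate is a $\z$-refinement of Jae Choon Cha's amenable $L^2$-signature / $\mathrm{Spin}$-bordism obstructions \cite{Cha}, used not to obstruct sliceness (which, as the excerpt notes, is only supporting evidence) but \emph{relative to} the fact, due to Cha \cite{Cha2}, that such a tower genuinely does exist over a rational homology $4$-ball: the desired obstruction must therefore vanish after rationalizing the ambient manifold, hence be a true $2$-torsion class, and I would try to realize it as a $\mathrm{Pin}^-$ or $\mathrm{Spin}$ bordism class of a $\z$-cover of $B^4\setminus A$, truncated according to the Whitney-tower grading, that survives all of the above relations.

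To return to the literal statement about $A:S^2\cup S^2\imra B^4$: if such $A$ supported $\cW$ with $t(\cW)$ as in Figure~\ref{fig:arf-framed-trees} (possibly plus higher-order trees), then Exercise~\ref{ex:no-spheres-with-framed-arf-trees} manufactures from $\cW$ an order~$6$ framed Whitney tower bounded by $\Bing(K)$ for a suitable knot $K$ with $\mathrm{Arf}(K)\neq 0$, contradicting the nontriviality of $v$. The $2$-sphere hypothesis enters only to guarantee the order~$0$ surfaces are homotopically (indeed isotopically) trivial, closing off any escape through $\pi_1$ or $\pi_2$ of the ambient manifold.

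The main obstacle is precisely the nontriviality step, which is why the statement is a conjecture rather than a theorem: every concordance invariant presently available — Milnor invariants by Theorem~\ref{thm:Milnor invariant}, the Cochran--Orr--Teichner signatures, and even the non-sliceness obstructions of \cite{Cha} — is known to vanish on $\W^\iinfty_6$, so a successful proof requires producing a genuinely new invariant that is stable under all the Whitney-move, twisted-IHX, interior-twist and boundary-twist modifications encoded in $\cT^\iinfty_6$ yet separates $\Bing(\text{Figure-}8)$ from the unlink. As the excerpt records, the situation is really a sharp dichotomy with Question~\ref{question:does-bing-fig-8-bound-order-7-twisted}: either one finds such a $\z$ bordism-type invariant, or one builds an order~$7$ twisted Whitney tower on the Bing double of the Figure-8 knot (which would force all $\Arf_j$, $j\geq 2$, to be trivial). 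The hard part is that no current technique is known to decide which horn holds.
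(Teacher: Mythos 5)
You have correctly identified that the statement is a \emph{conjecture}, and the paper offers no proof of it — there is nothing in the source to compare your argument against. Your navigation of the logical reductions is accurate: Conjecture~\ref{conj:no-spheres-with-framed-arf-trees} is equivalent to Conjecture~\ref{conj:bing-arf-doesnt-bound-framed-order-6} via Exercise~\ref{ex:no-spheres-with-framed-arf-trees}, which in turn is equivalent to a negative answer to Question~\ref{question:does-bing-fig-8-bound-order-7-twisted} and to the nontriviality of $\Arf_2$; the framed-order-$6$ formulation arises by the order-shift from the twisted to the framed filtration, and $\mu$-invariants and higher-order Sato--Levine invariants indeed vanish on Bing doubles. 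You also correctly record the only solid evidence available, namely Cha's non-sliceness results, and correctly note that by Cha's rational-homology-ball theorem the hypothetical obstruction must be invisible rationally, hence genuinely $2$-torsion. The one thing to flag is that the paragraph proposing a $\mathrm{Pin}^-$/$\mathrm{Spin}$ bordism refinement of Cha's amenable $L^2$-signatures is pure speculation with no construction attached — it names a shape for an invariant without showing existence, well-definedness under the relations in $\cT^\iinfty_6$, or the crucial nonvanishing on $\Bing(\text{Figure-}8)$ — and you acknowledge exactly this. Since the paper itself has no proof, the honest conclusion you reach (the nontriviality step is open and is \emph{why} this is a conjecture) is the correct one, and there is no gap to report beyond the obvious fact that the conjecture remains unproved.
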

We remark that each of the two trees in Figure~\ref{fig:arf-framed-trees} \emph{individually} represents a non-trivial \emph{higher-order Sato--Levine invariant} which is determined by a non-trivial order $6$ $\mu$-invariant \cite{CST1}, so neither of these trees can appear by itself (plus higher-order trees) as $t(\cW)$ for $\cW$ on 2-spheres in $B^4$ (compare Exercise~\ref{ex:t(W)-on-S2-in-B4-restricted}).

Recent work of Danica Kosanovi\'{c} \cite{Danica1} includes progress towards showing that the \emph{embedding calculus} of Goodwillie--Weiss \cite{GW,Weiss} 
determines universal finite type invariants for knots over the integers, as conjectured in \cite{BCSS}. 
Kosanovi\'{c} works with certain $2$-complexes called \emph{capped gropes} which are very closely related to Whitney towers \cite{CST1,S1}, and one could hope that, via a generalization of \cite{Danica1} to links, the homotopy-theoretic techniques of the embedding calculus might be able to detect the higher-order Arf invariants:
\begin{question}\label{question:embedding-calculus}
Can the $\Arf_{j}$ be detected by the embedding calculus?
\end{question}

\subsection{Higher-order Arf invariant conjecture and transfinite Milnor invariants}\label{subsec:arf-transfinite-mu}
In \cite{Cha-Orr} Jae Choon Cha and Kent Orr defined certain transfinite invariants of $3$--manifolds which can be interpreted as providing a generalization of Milnor invariants. Their invariants include certain finite-length $\theta_k$-invariants which can take values in finite abelian groups, and in \cite[Sec.14(9)]{Cha-Orr} they ask:
\begin{question}\label{question:transfinite}
Are the higher-order Arf invariants related to the $\theta_k$-invariants?
\end{question}


%
%
%
%
%

\subsection{Section~\ref{sec:twisted-order-n-classification-arf-conj} Exercises}

\subsubsection{Exercise}\label{ex:eliminate-higher-order-trees}
If $\cW$ is an order $n$ twisted Whitney tower, then the intersection forest $t(\cW)$ may contain
framed trees of order $>n$ and $\iinfty$-trees of order $>n/2$ in addition to those representing $\tau_n^\iinfty(\cW)\in\cT_n^\iinfty$.
Show that by deleting Whitney disks of order $>n$, boundary-twisting, and pushing-down intersections (Figure~\ref{fig:W-disk-int-and-push-down}), these higher-order
trees in $t(\cW)$ can be eliminated while preserving the twisted order~$n$ of the resulting Whitney tower.
(See discussion in \cite{CST1}, section~4.1 `Notation and Conventions'.)

%

\subsubsection{Exercise}\label{ex:order-n+1-twisted-tower-has-vanishing-invariant}
If $\cW$ an order $n+1$ twisted Whitney tower, observe that by definition 
$\cW$ is also an order $n$ twisted Whitney tower, and check that $\tau^\iinfty_n(\cW)=0\in\cT^\iinfty_n$.

\subsubsection{Exercise}\label{ex:exists-tower-for-band-sum}
Show that if $L$ and $L'$ bound order $n$ twisted Whitney towers $\cW$ and $\cW'$ in $B^4$, then for any $\beta$ there exists an order $n$ twisted
Whitney tower $\cW^\#\subset B^4$ bounded by $L\#_\beta L'$, such that 
$t(\cW^\#)=t(\cW)+ t(\cW')$.
(Lemma~3.7 of \cite{CST1}.)

\subsubsection{Exercise}\label{ex:link-band-sum-well-defined} 
Use the previous exercise to show that for links $L$ and $L'$ representing elements of $\W^\iinfty_n$, any band sum $L\#_\beta L'$ represents an element of 
$\W^\iinfty_n$ which only depends on the equivalence classes of $L$ and $L'$ in $\W^\iinfty_n$. (Lemma~3.6 of \cite{CST1}.)

\subsubsection{Exercise}\label{ex:realize-trees}
Given any framed tree $\langle I,J\rangle$, construct a link $L\subset S^3$ bounding $\cW\subset B^4$ with
$t(\cW)=\langle I,J\rangle$. HINT: Apply Bing-doubling as needed to the Hopf link, as in Figure~\ref{fig:Cochran-R2-map}.
(See Lemma~3.8 of \cite{CST1}.)

\subsubsection{Exercise}\label{ex:realize-twisted-trees}
Given any integer $n$ and any rooted tree $J$ of positive order, construct a link $L\subset S^3$ bounding $\cW\subset B^4$
with $t(\cW)=n\cdot J^\iinfty$.
HINT: Start with the $n$-twisted Bing double of the unknot (see Figure~\ref{fig:twisted-tree-R2-map} for the case $n=1$), then apply untwisted iterated Bing-doubling as needed.
(See Lemma~3.8 of \cite{CST1}.)

\subsubsection{Exercise}\label{ex:tau-of-bing-hopf-not-zero} 
From Exercise~\ref{exercise:bing-hopf-on-boundary}, the Bing-double of the Hopf link bounds an order $2$ twisted Whitney tower $\cW$ as in Figure~\ref{higher-order-intersection-color-and-with-tree}.
Show that the Bing-double of the Hopf link does not bound an order $3$ twisted Whitney tower by checking that the order $2$ Milnor invariant $\mu_2=\eta_2\circ\tau^\iinfty_2(\cW)\in\sL_1 \otimes\sL_{3}$ is non-zero. Conclude that the Bing-double of the Hopf link also does not bound an order $3$ framed Whitney tower (since for any $n$ an order $n$ framed Whitney tower is also an order $n$ twisted Whitney tower by definition).

\subsubsection{Exercise}\label{ex:compute-mu-previous-order-2-examples} 
Compute the order $2$ Milnor invariants $\mu_2=\eta_2\circ\tau^\iinfty_2(\cW)$ for the links in Figures~\ref{fig:Cochran-R2-map}~and~\ref{fig:twisted-tree-R2-map}.

\subsubsection{Exercise}\label{ex:order-zero-twisted}
Order $0$ twisted tree groups and Milnor invariants:

Check from the definitions in section~\ref{subsec:intro-Milnor-review} that for $i\neq j$ the coefficient of 
$X_i\otimes X_j$ in $\mu_0(L)$ is the linking number of $L_i$ and $L_j$, which via the well-known computation of linking numbers by counting signed intersections
between the properly immersed disks $D_i$ and $D_j$ bounded by $L_i$ and $L_j$ is also equal to the coefficient 
of $X_i\otimes X_j$ in $\eta_0(\tau_0^\iinfty(\cW))$.

Although Milnor invariants are not usually defined for knots, for framed links it is natural to consider the 
framing $f_i$ of $L_i$ as an order $0$ (length $2$) integer Milnor invariant, and the coefficient of $X_i\otimes X_i$ in $\mu_0(L)$ is exactly $f_i$ when this framing is used to determine the $i$th longitude. To see that the coefficient 
in $\eta_0(\tau_0^\iinfty(\cW))$ of
$X_i\otimes X_i$ is also equal to $f_i$, let $d_i$ denote the number of positive self
intersections of $D_i$ minus the number of negative self-intersections  of $D_i$. Then
the relative Euler number of $D_i$ with respect to the framing $f_i$ on $L_i=\partial D_i$ is equal to $f_i-2d_i$
(see e.g.~Figure~19 of \cite{CST1} and accompanying discussion), and the terms of 
$\tau_0^\iinfty(\cW)$ which contribute via $\eta_0$ to the coefficient of $X_i\otimes X_i$
are exactly 
$
(d_i)\cdot \,i\,-\!\!\!-\!\!\!-\,i+(f_i-2d_i) \cdot\iinfty \,-\!\!\!-\!\!\!-\,i,
$
which get sent by $\eta_0$ to
$(f_i)\cdot X_i\otimes X_i$.

\subsubsection{Exercise}\label{ex:t(W)-on-S2-in-B4-restricted}
Show that if $g\in\cT_n^\iinfty$ is such that $\eta_n(g)\neq 0\in\sD_n$, 
then there does not exist any twisted Whitney tower tower $\cW$ on $2$--spheres in $B^4$ such that $t(\cW)$ represents $g\in\cT_n^\iinfty$.
(HINT: Otherwise tubing the spheres into disks bounded by an unlink would ``create'' non-trivial $\mu$-invariants.)

 \subsubsection{Exercise}\label{ex:one-term-of-arf-trees-on-spheres} 
Check that $\eta_6((((1,2),1),2)^\iinfty)\neq 0\in \sL_1 \otimes\sL_{6}$.
Conclude from Exercise~\ref{ex:t(W)-on-S2-in-B4-restricted} that there does not exist $A:S^2\cup S^2\imra B^4$ supporting a Whitney tower $\cW$ with $t(\cW)=(((1,2),1),2)^\iinfty$, perhaps plus trees of order $>6$.

\subsubsection{Exercise}\label{ex:framed-arf-trees-from-twisted}
Use the Whitney move twisted IHX relation of Lemma~\ref{lem:w-move-twistedIHX} and boundary-twisting to get the two trees in Figure~\ref{fig:arf-framed-trees} (plus higher-order trees) from the single tree $((1,2),(1,2))^\iinfty$.

%

%
\subsubsection{Exercise}
Check that $\eta_2\circ\tau^\iinfty_2(\cW)$ vanishes for the Bing double of Figure-8 knot.

\subsubsection{Exercise}\label{ex:bing-double-knot-with-non-trivial-arf}
If a knot $K$ bounds a Whitney tower $\cW$ with $t(\cW)=(1,1)^\iinfty$, show that the Bing-double of $K$ bounds a Whitney tower $\cV$ with
$t(\cV)=((1,2),(1,2))^\iinfty$.

\subsubsection{Exercise}\label{ex:no-spheres-with-framed-arf-trees}
Check that Conjecture~\ref{conj:bing-arf-doesnt-bound-framed-order-6} and Conjecture~\ref{conj:no-spheres-with-framed-arf-trees} are equivalent.

\subsubsection{Exercise}
Show that if there exists a pair of 2-spheres in $S^4$ supporting a Whitney tower $\cW$ such that $t(\cW)=((1,2),(1,2))^\iinfty$, then all higher order Arf invariants $\Arf_{j}$ for $j\geq 2$ vanish on all links. (See \cite[Prop.14]{CST2}.)



\section{Whitney towers on $2$-spheres in $4$-manifolds}\label{sec:2-spheres-in-4-manifolds}

In this section we consider Whitney towers on $A=A_1,A_2,\ldots,A_m\imra X$, where each $A_i$ is a $2$--sphere, and $X$ is a $4$--manifold. All manifolds are oriented and based. Each $A_i$ is equipped with a \emph{whisker}, ie.~an arc running between the basepoint of $A_i$ and the basepoint of $X$.

We will assume that $A$ is generically immersed, and each sphere $A_i$ has the same number of positive self-intersections as negative self-intersections. This can always be arranged by performing \emph{cusp homotopies} (\cite[Chap.1]{FQ}, same as the interior twist operation of Figure~\ref{interior-twist-fig}) and is a natural assumption in the setting of Whitney towers since it is satisfied as soon as $A$ supports a (framed or twisted) Whitney tower of positive order. Regular homotopy classes of such immersions are in one-to-one correspondence with homotopy classes, see eg.~\cite[Thm.1.2]{PRT}. Up to isotopy, a regular homotopy is a sequence of finger moves and Whitney moves, 
so homotopy invariance of invariants defined from the intersection forests of Whitney towers can be checked combinatorially (eg.~section~\ref{subsubsec:htpy-invariance-tau1}).

The classical intersection invariant $\lambda(A_i,A_j)$ and self-intersection invariant $\mu(A_i)$ are recalled in section~\ref{subsec:classical-mu-lambda}, and we formulate these invariants in the language of Whitney towers as order~0 invariants which give the complete obstruction to $A$ supporting an order~1 framed Whitney tower in section~\ref{subsec:order-0-lambda-mu}.
After explaining how the order~0 intersection pairing gives the complete obstruction to ``pulling apart'' a pair of spheres, ie.~making them disjoint by a homotopy (section~\ref{subsec:pulling-apart-pairs}), edge decorations in $\pi_1X$ for order~1 trees are introduced in section~\ref{subsec:decorated-order-1-trees} and the order~1 non-repeating invariant generalizing the classical intersection pairing is defined in the setting of pulling apart triples of components of $A$ (section~\ref{subsec:lambda1} and Theorem~\ref{thm:lambda1-vanishes}).
In section~\ref{subsec:higher-order-lambda} we briefly touch on possible higher-order non-repeating invariants and a general obstruction theory for pulling apart multiple components.

The order~1 generalization of the classical self-intersection invariant is defined in section~\ref{subsec:tau-1}. Here twistings on Whitney disks are relevant, and the vanishing of the invariant is equivalent to the existence of an order~2 framed Whitney tower on $A$  (Theorem~\ref{thm:tau1-vanishes}).

Throughout this section,
relevant open questions, problems and conjectures are included in the discussions.
Proofs of the main Theorems~~\ref{thm:lambda1-vanishes} and~\ref{thm:tau1-vanishes} are given in Section~\ref{sec:appendix}.


\subsection{Classical intersection form}\label{subsec:classical-mu-lambda}

The first obstructions to making the components of $A$ pairwise disjointly embedded by a homotopy
are the \emph{intersection invariants} $\lambda (A_i,A_j)$, which take values in the integral fundamental group ring $\Z[\pi_1X]$,
and the \emph{self-intersection invariants} $\mu (A_i)$,
which take values in a quotient of $\Z[\pi_1X]$.
These invariants are defined as
follows. Associate to
each transverse intersection point $p\in
A_i\pitchfork A_j$ the element
$g_p\in\pi_1X$ determined by a loop through $A_i$ and $A_j$ which
changes sheets at $p$.
More precisely, such a \emph{sheet-changing loop} runs along the whisker for $A_i$, then along any choice of path in $A_i$ to $p$, then along any choice of path in $A_j$ to the whisker of $A_j$, then along the whisker for $A_j$. Sheet-changing loops are required to avoid all singularities of $A_i$
and $A_j$ other than $p$, so $g_p$ does not depend on the choices of paths in $A_i$ and $A_j$ between $p$ and their whiskers because the domains of $A_i$ and $A_j$ are simply connected. 
See Figure~\ref{fig:double-point-elements}.
\begin{figure}[h]
\centerline{\includegraphics[scale=.4]{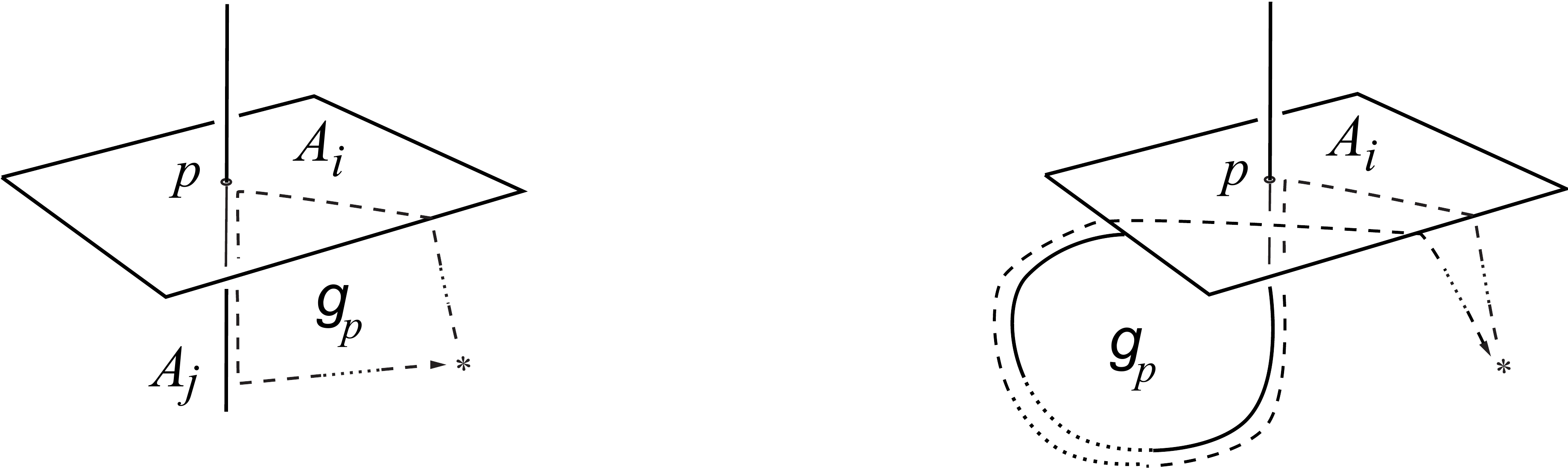}}
\caption{$g_p\in\pi_1X$ for $p\in A_i\pitchfork A_j$ with $i\neq j$ (left), and for $i=j$ (right).}
         \label{fig:double-point-elements}
\end{figure}

Summing over all such intersection points,
with the usual notion of the sign $\epsilon_p\in\{+,-\}$, defines:
$$
\lambda(A_i,A_j):=\sum_{p\in A_i\pitchfork A_j}\epsilon_p\cdot
g_p\in\Z[\pi_1X]
$$
and
$$
\mu(A_i):=\sum_{p\in A_i\pitchfork A_i}\epsilon_p\cdot
g_p\in\frac{\Z[\pi_1X]}{\{g-g^{-1}\}}.
$$

The relations in $\Z[\pi_1X]$ induced by $g= g^{-1}\in\pi_1X$ account for choices of orientations on the
sheet-changing loops through the self-intersections of $A_i$. 

Up to isotopy, a regular homotopy of $A$ is a sequence of finger moves and Whitney moves, each of which either introduces or eliminates oppositely-signed intersections having the same group element.
So $\lambda$ and $\mu$ are invariant under regular homotopy, eg.~\cite[Sec.4]{PRT}.

%



\subsection{Order $0$ invariants}\label{subsec:order-0-lambda-mu}

Here we slightly reformulate the classical invariants $\lambda$ and $\mu$ in the language of Whitney towers, with an eye towards higher-order generalizations.


The union $A=A_1\cup A_2\cup\cdots\cup A_m\subset X$ is by Definition~\ref{def:order-n-framed-W-tower} a framed Whitney tower of order~0.
To each order~$0$ intersection $p\in A_i\pitchfork A_j$ is associated the order~0 tree $t_p=\langle i,j\rangle$, as in section~\ref{subsec:trees-for-w-disks-and-ints}, and we think of $t_p\subset A_i\cup A_k$ as an embedded sheet-changing edge near $p$ with one univalent vertex in $A_i$ and the other univalent vertex in $A_k$. 
For each such $t_p$, choose a path in $A_i$ from the basepoint of $A_i$ to the $i$-labelled univalent vertex of $t_p$,
and a path in $A_j$ from the $j$-labelled univalent vertex of $t_p$ to the basepoint of $A_j$.
The union of $t_p$ (oriented from $i$ to $j$) together with these paths and the whiskers on $A_i$ and $A_j$,
defines a sheet-changing loop representing $g_p\in\pi_1X$, just as in section~\ref{subsec:classical-mu-lambda}.
We call the tree $t_p$, together with the label $g_p$ on its edge and an orientation of the edge from $i$ to $j$, a \emph{decorated tree} for $p$.  


Let $\cT_0$ denote the quotient of the free abelian group on order~0 decorated trees by the following OR \emph{orientation} relation:
$$
\mbox{ OR: }\quad\quad i\,\longrightarrow\!\!\!\!\overset{\,\,g_p}{-}\!\!\!-\!\!\!-\,j=i\,-\!\!\!-\!\!\!-\!\!\!\!\overset{g^{-1}_p}{-}\!\!\!\!\!\longleftarrow\,j
$$

Now the classical invariants $\lambda_0:=\lambda$ and $\mu_0:=\mu$ can be expressed as a single order $0$ invariant $\tau_0(A)$ represented by the intersection forest $t(A)$:

$$
\tau_0(A):=\sum_{p\in A_i\pitchfork A_j} \epsilon_p\cdot\,i\,\longrightarrow\!\!\!\!\overset{\,\,g_p}{-}\!\!\!-\!\!\!-\,j\in\cT_0
$$

%

In the language of Whitney towers we have:
\begin{thm}\label{thm:tau-0-vanishes}
$\tau_0(A)=0$ if and only if $A$ supports an order $1$ framed Whitney tower.
\end{thm}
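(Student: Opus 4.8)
The plan is to unwind both directions directly from the definitions, since $\tau_0$ is nothing but an order~$0$ repackaging of $\lambda$ and $\mu$ and the statement is the order~$0$ instance of the general order-raising philosophy. Recall first, from Definition~\ref{def:order-n-framed-W-tower}, that $A$ supports an order~$1$ framed Whitney tower $\cW$ exactly when every order~$0$ intersection $p\in A_i\pitchfork A_j$ of $A$ is paired by a Whitney disk in $\cW$ and every Whitney disk in $\cW$ is framed; interior intersections of those Whitney disks with the $A_k$ or with one another have order~$\geq 1$ and so impose no further constraint.

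For the implication that the existence of such a tower forces $\tau_0(A)=0$: given an order~$1$ framed $\cW$, its order~$1$ Whitney disks $W_{(i,j)}$ partition the order~$0$ intersections of $A$ into pairs $\{p,q\}$ with $\epsilon_p=-\epsilon_q$. I would check that for such a pair the product of the two sheet-changing loops representing $g_p$ and $g_q$ is, after sliding the connecting paths across the simply-connected domains of $A_i$ and $A_j$, conjugate in $\pi_1 X$ to the class of the Whitney circle $\partial W_{(i,j)}=a\cup b$, which is trivial since it bounds $W_{(i,j)}$; hence $g_p=g_q\in\pi_1 X$. Therefore $\epsilon_p\cdot t_p+\epsilon_q\cdot t_q=0$ in $\cT_0$, with the OR relation accounting for the orientation ambiguity of the sheet-changing loop in the self-intersection case. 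Summing over all the $W_{(i,j)}$ in $\cW$ yields $\tau_0(A)=0$; the higher-order Whitney disks in $\cW$ play no role since they pair no order~$0$ points.

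Conversely, to construct a tower from the hypothesis $\tau_0(A)=0\in\cT_0$: since $\cT_0$ is the free abelian group on order~$0$ decorated trees modulo only the OR relation, this hypothesis says precisely that the order~$0$ intersection points of $A$ can be grouped into pairs $\{p,q\}$ with $\epsilon_p=-\epsilon_q$, lying among the same pair of components, and with $g_p=g_q\in\pi_1 X$ (for self-intersections, the OR relation $g=g^{-1}$ lets one choose the pairing of the two local sheets so that the resulting ambient loop is trivial). For each such pair I would then choose disjoint embedded arcs $a\subset A_i$ and $b\subset A_j$ from $p$ to $q$, missing all other singularities --- possible since the domains are simply connected, with $a\cup b$ embedded and sheet-changing when $i=j$ arranged by general position --- observe that the Whitney circle $a\cup b$ is null-homotopic in $X$ because $g_p=g_q$, hence bounds a disk $W$ mapped with interior into the interior of $X$, i.e.\ a Whitney disk pairing $p$ and $q$, and then make the union of all the Whitney circles disjointly embedded by a controlled isotopy (Exercise~\ref{ex:push-w-disk-boundary}). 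Finally, whenever $\omega(W)=k\neq 0$, I would perform $|k|$ boundary twists (Figure~\ref{boundary-twist-fig}): each changes $\omega(W)$ by $\pm 1$ and creates a single new intersection of $W$ with a sheet it pairs, which is an order~$1$ intersection and hence allowed. The union of $A$ with the resulting framed Whitney disks is an order~$1$ framed Whitney tower.

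I expect the main obstacle to be not any one hard step but the orientation bookkeeping: reconciling the signs $\epsilon_p$ and the group elements $g_p$ --- the latter depending on a choice of ordering of the two local sheets at $p$ --- with the OR relation defining $\cT_0$, especially for self-intersections where the sheet-changing loop is only defined up to inversion, and confirming that the boundary-twisting step really lands in the order~$1$ \emph{framed} category rather than merely the order~$1$ twisted one (it does, because the created intersections have order~$1$). These points are routine but are where the argument must be written with care.
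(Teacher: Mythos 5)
Your proof is correct and follows exactly the approach the paper sketches (pair order-zero intersections using the Whitney disk boundaries and simple connectivity of the sphere domains, and conversely read off cancelling pairs from $\tau_0(A)=0$, form null-homotopic Whitney circles, cap with immersed disks, and frame by boundary twists). One small imprecision: it is $g_p\,g_q^{-1}$, not the product $g_p\,g_q$, that is homotopic to a conjugate of the Whitney circle, but this makes no difference once the OR relation and the sheet-choice issue for self-intersections are in hand, as you correctly note.
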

Recall from Definition~\ref{def:order-n-framed-W-tower} that $A$ supporting an order~1 framed Whitney tower $\cW$ means that all intersections of $A$ are paired by framed order~1 Whitney disks in $\cW$.
The idea of the ``only if'' direction of the proof is that all intersections can be arranged in oppositely-signed pairs having the same group element, after orienting sheet-changing loops appropriately. Then null-homotopic Whitney circles can be constructed from the pairs of sheet-changing loops. These Whitney circles bound immersed Whitney disks which can be made framed by boundary-twisting (Figure~\ref{boundary-twist-fig}). 
See Exercise~\ref{ex:vanishing-tau-0-gives-order-1-w-tower}. For the ``if'' direction see Exercise~\ref{ex:order-1-w-tower-implies-tau-0-vanishes}.   
See also \cite[Lem.4.3]{PRT}.

%
%
%

It can always be arranged that the Whitney disks in an order~1 framed Whitney tower are disjointly embedded (Exercise~\ref{ex:push-down}), but they will in general have interior intersections with $A$ which
obstruct using them to guide Whitney moves homotoping $A$ to an embedding.

We remark that for half-dimensional spheres $A:S_1^d,S_2^d,\ldots,S_m^d\imra X^{2d}$ in a $2d$-dimensional manifold with $d>2$ the invariants analogous to $\lambda$ and $\mu$ give the complete obstruction to embedding $A$ in $X^{2d}$, because the interiors of Whitney disks on $A$ will have interiors disjoint from $A$ by general position. 

Notice that $\tau_0(A)$ splits into a direct sum of \emph{non-repeating} and \emph{repeating} invariants $\tau_0(A)=\sum_{i\neq j}\lambda_0(A_i,A_j)\oplus\sum_i\tau_0(A_i)$.
Before generalizing the full order~0 invariant $\tau_0(A)$ to an order~1 invariant $\tau_1(A)$, we will first consider the intermediate problem of generalizing the non-repeating summands $\lambda_0(A):=\sum_{i\neq j}\lambda_0(A_i,A_j)$ to order~1 invariants $\lambda_1(A_i,A_j,A_k)$ for triples with distinct $i,j,k$, and discuss the relationship to ``pulling apart'' triples of components (making them pairwise disjoint by a homotopy).


\subsection{Pulling apart pairs of spheres}\label{subsec:pulling-apart-pairs}

\begin{figure}[h]
\centerline{\includegraphics[scale=.4]{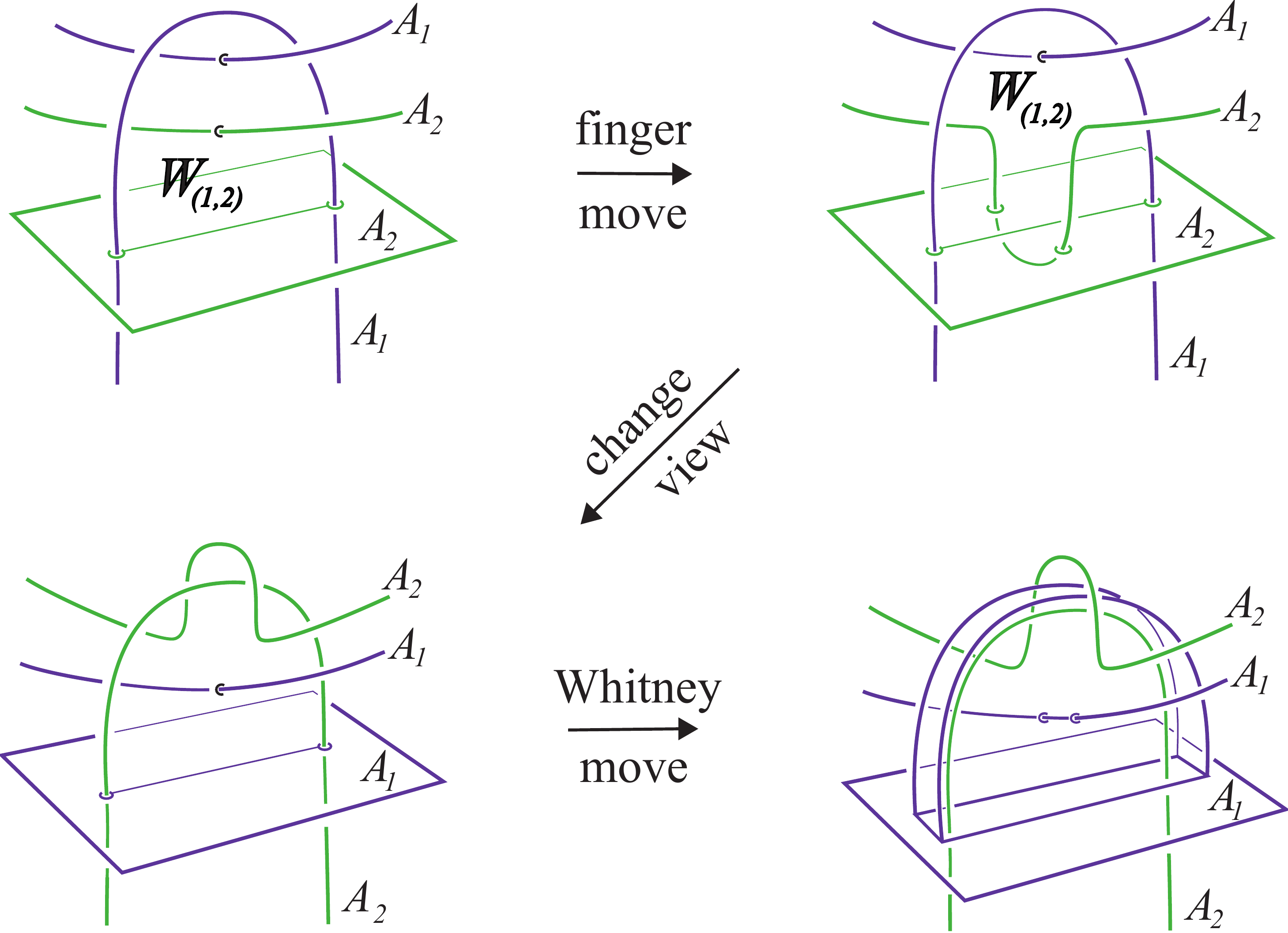}}
\caption{}
         \label{fig:pull-apart-A1-A2}
\end{figure}

The vanishing of $\lambda_0(A_1,A_2)=0\in\Z[\pi_1X]$ implies the existence of Whitney disks pairing $A_1\pitchfork A_2$ (Exercise~\ref{ex:lambda-0-vanishes-gives-w-disks}), and the union of $A_1\cup A_2$ together with a collection of such Whitney disks forms an \emph{order $1$ non-repeating Whitney tower }$\cW$, cf. Definition~\ref{def:non-rep-tower}. As an illustration of the proof of Theorem~\ref{thm:pull-apart} in this easiest case, Figure~\ref{fig:pull-apart-A1-A2} shows how the existence of such a $\cW$ leads to a homotopy that makes $A_1$ and $A_2$ disjoint:  Finger moves as in the top of Figure~\ref{fig:pull-apart-A1-A2} make $A_2$ disjoint from the interiors of all the Whitney disks, at the cost of only creating self-intersections in $A_2$. 
Now doing all the Whitney moves on $A_1$ makes $A_1\cap A_2=\emptyset$ at the cost of only creating self-intersections in $A_1$ as in the bottom of Figure~\ref{fig:pull-apart-A1-A2}. Any self-intersections and intersections among the Whitney disks will only lead to the creation of more self-intersections in $A_1$ upon doing the Whitney moves, hence such intersections have been suppressed from view in Figure~\ref{fig:pull-apart-A1-A2}.

The procedure of Figure~\ref{fig:pull-apart-A1-A2} appears to fail in the presence of a third sphere $A_3$, as it is not clear how to eliminate any $W_{(1,2)}\cap A_3$ without creating more intersections between $A_3$ and $A_1$ or $A_2$.
In order to generalize $\lambda_0(A_1,A_2)$ to an order~1 invariant $\lambda_1(A_1,A_2,A_3)$ which ``counts'' such order~1 intersections $W_{(i,j)}\cap A_k$ and gives the complete obstruction to pulling apart triples of 2-spheres, we next introduce edge decorations for order~1 trees.

\subsection{Decorated trees for order~1 intersections}\label{subsec:decorated-order-1-trees}
Let $W_{(i,j)}$ be a Whitney disk for a pair of intersections in $A_i\pitchfork A_j$.
To each intersection $p\in W_{(i,j)}\pitchfork A_k$ we associate a \emph{decorated} order~1 tree $t_p$ which is gotten by labeling each edge of the usual tree $\langle(i,j),k\rangle$ from section~\ref{subsec:trees-for-w-disks-and-ints} by an element of $\pi_1X$ as in Figure~\ref{fig:Y-w-disk-labeled-with-tree}. This requires a choice of whisker running from the trivalent vertex of each tree to the basepoint of $X$, and the group elements are determined by loops formed using sheet-changing edges oriented towards the Whitney disk followed by the chosen whisker emanating from the trivalent vertex. More precisely, taking $t_p$ to be embedded in $A_i\cup A_j\cup A_k\cup W_{(i,j)}$ as in Figure~\ref{fig:Y-w-disk-labeled-with-tree}, each edge is a sheet-changing embedded arc which can be oriented to change sheets into the Whitney disk, and each univalent vertex can be connected by a path in its order~0 surface to the surface basepoint. Together with the whiskers on $A_i$, $A_j$, $A_k$ and the trivalent vertex we get three oriented loops determining the corresponding edge decorations $a,b,c\in\pi_1X$. 


\begin{figure}[h]
\centerline{\includegraphics[scale=.5]{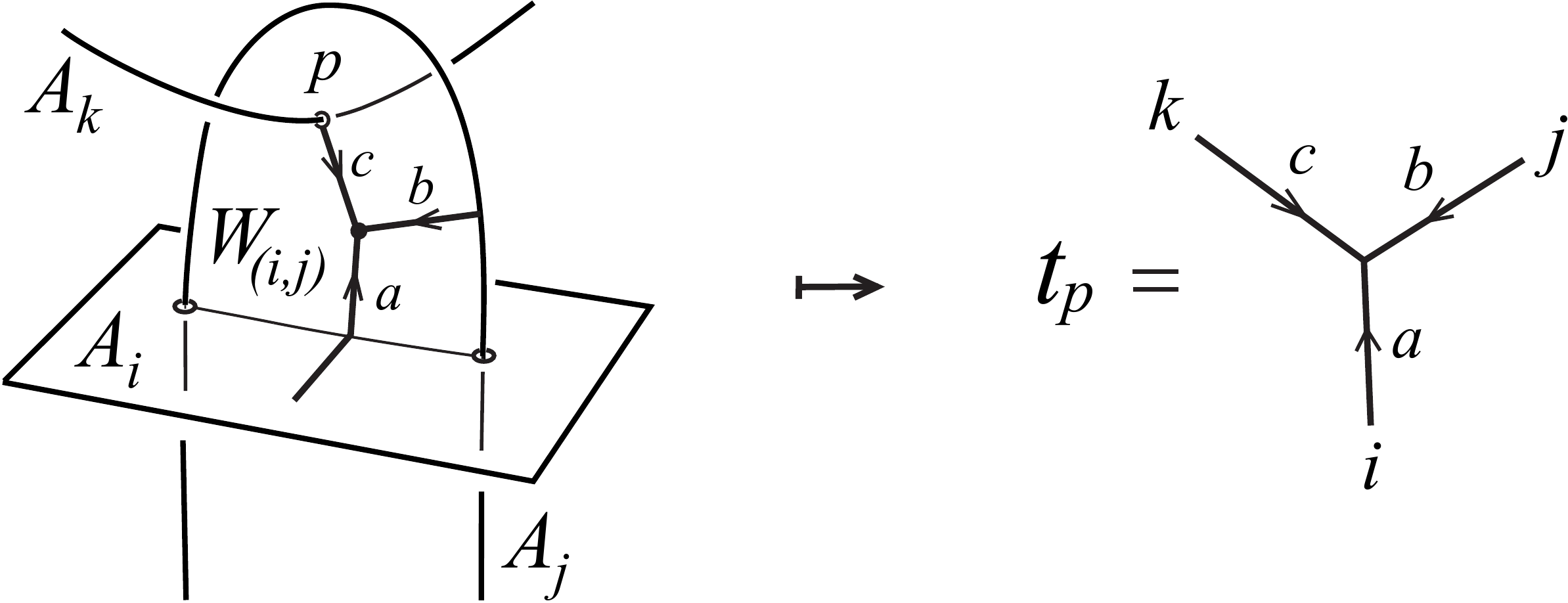}}
\caption{Edge decorations $a,b,c\in\pi_1 X$.}
         \label{fig:Y-w-disk-labeled-with-tree}
\end{figure}

\subsection{The order 1 non-repeating intersection invariant $\lambda_1$}\label{subsec:lambda1}

Consider a triple of immersed spheres $A=A_1,A_2,A_3\imra X$ supporting an \emph{order~1 non-repeating Whitney tower} $\cW$, ie.~all intersections $A_i\pitchfork A_j$ for distinct $i,j\in\{1,2,3\}$ are paired by Whitney disks in $\cW$.
The existence of such an order~1 non-repeating Whitney tower $\cW$ is equivalent to $A$ having pairwise vanishing 
$\lambda_0(A_i,A_j)=0\in\Z[\pi_1X]$ for $i\neq j$ (Exercise~\ref{ex:pairwise-lambda-0-vanishes-gives-order-1-nonrep-tower}), which we can succinctly express as $\lambda_0(A)=0$.

Recall from section~\ref{subsubsec:non-repeating-w-towers} that if $t_p$ associated to $p\in\cW$ 
has univalent vertices labelled distinctly by $1$, $2$ and $3$, then $t_p$ is called a \emph{non-repeating tree} and $p$ is called a \emph{non-repeating intersection}.

Denote by $\Lambda_1:=\Lambda_1(\pi_1X)$ the quotient of the free abelian group on order~1 decorated non-repeating trees
by the AS \emph{antisymmetry} and HOL \emph{holonomy} relations of Figure~\ref{fig:Relations-inward-arrows}. 

\begin{figure}[h]
\centerline{\includegraphics[scale=.7]{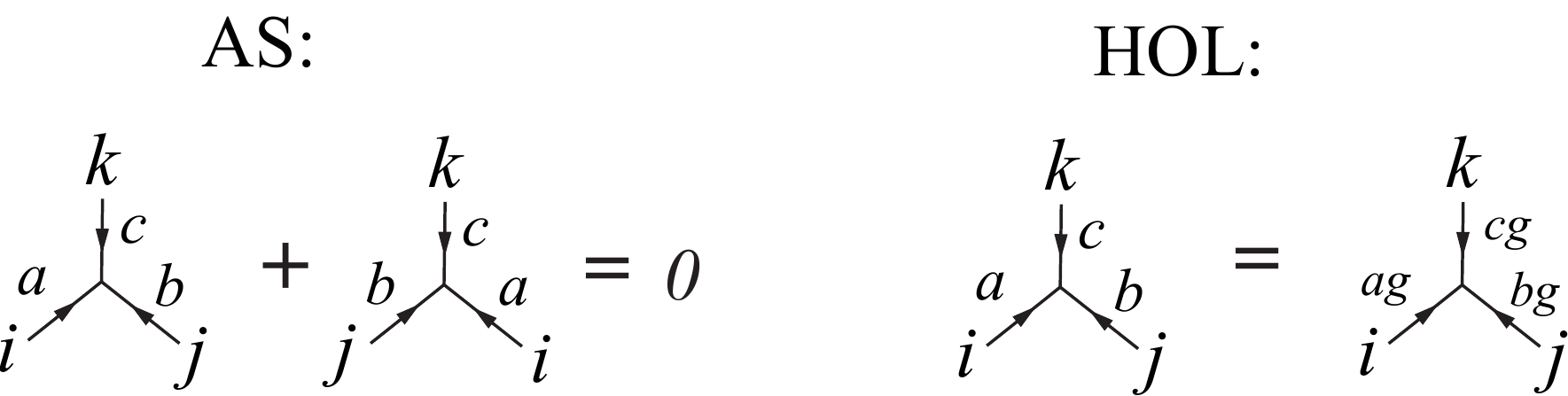}}
\caption{The \emph{Antisymmetry} and \emph{Holonomy} relations; $a,b,c,g\in\pi_1X$.}
         \label{fig:Relations-inward-arrows}
\end{figure}

For $A\imra X$ 
supporting an order~1 non-repeating Whitney tower $\cW$, we
define the \emph{order~1 non-repeating intersection invariant} $\lambda_1(A)$ in the quotient of $\Lambda_1$ by the INT \emph{intersection} relations shown in Figure~\ref{fig:INT-relation}:
$$
\lambda_1(A):=\sum \epsilon_p\cdot t_p \in\Lambda_1/\mbox{INT}
$$
where the sum is over all order~1 non-repeating intersections $p$ in $\cW$.

\begin{figure}[h]
\centerline{\includegraphics[scale=.4]{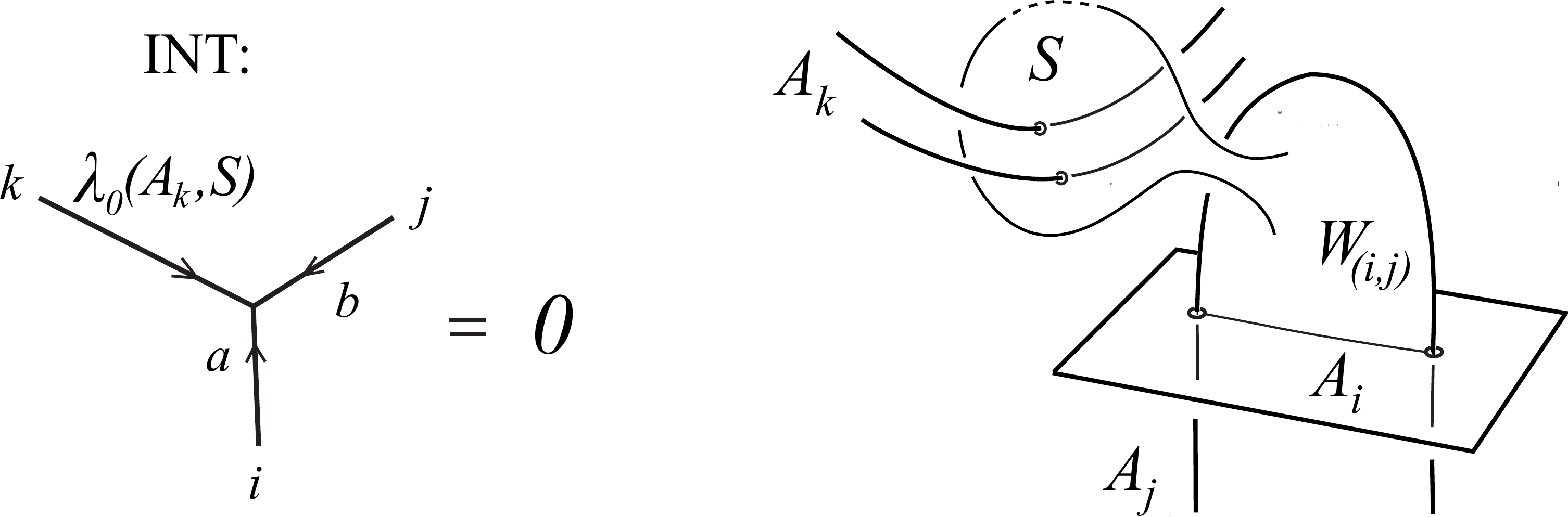}}
\caption{The INT \emph{intersection} relations in the target of $\lambda_1$, with $S:S^2\imra X$ varying over generators for $\pi_2(X)$ as a $\pi_1X$-module.}
         \label{fig:INT-relation}
\end{figure}

We next discuss how the AS \emph{antisymmetry}, HOL \emph{holonomy} and INT \emph{intersection} relations in the target of $\lambda_1$ account for indeterminacies due to choices in the construction of the (based, oriented) order $1$ non-repeating Whitney tower $\cW$:

The AS relations (Figure~\ref{fig:Relations-inward-arrows} left) account for the choice of orientations on the Whitney disks via a fixed convention choice for the induced vertex orientation of the trees (section~\ref{subsec:w-tower-tree-orientations}) so that signs of the order~1 intersections are well-defined (section~\ref{subsec:w-disk-orientation-choices-AS}). (In this non-repeating order~1 setting, AS relations could in fact be avoided by using the cyclic ordering of the distinct labels to prescribe orientations on all the Whitney disks in $\cW$, cf.~Exercise~\ref{ex:lambda1-cyclic-order}.) 

The HOL relations (Figure~\ref{fig:Relations-inward-arrows} right) account for the choices of whiskers on the trivalent vertices, since changing such a whisker corresponds to simultaneous right multiplication on the three group elements decorating the edges of the tree. 
We remark that by the splitting operation (section~\ref{subsec:split-w-towers}) it can be arranged that each Whitney disk contains exactly one tree, so that in a split Whitney tower choosing a whisker for each trivalent vertex is the same as choosing a whisker for each Whitney disk.




The INT relations (Figure~\ref{fig:INT-relation} left) account for choices of the interiors of Whitney disks in $\cW$, and depend on $A$ and $\pi_2X$ via the order $0$ intersection pairing $\lambda_0$. To clarify notation: The terms of $\lambda_0(A_k,S)$ decorating the edge of the tree in the left of Figure~\ref{fig:INT-relation} denote a linear combination of signed trees (section~\ref{subsubsec:tau1-relations}). Here $S$ is a $2$-sphere that has been tubed into the Whitney disk $W_{(i,j)}$, and $\lambda_0(A_k,S)$ is computed using a whisker for $S$ given by the tube together with a whisker for the trivalent vertex of the tree $(i,j)$ in $W_{(i,j)}$.
Note that the illustration of $S$ on the right side of Figure~\ref{fig:INT-relation} is schematic, as suggested by the dotted subarc.

The following characterization of $\lambda_1(A_1,A_2,A_3)$ shows in particular that it does not depend on the choice of order~1 non-repeating Whitney tower: 
\begin{thm}\label{thm:lambda1-vanishes}
$\lambda_1(A_1,A_2,A_3)$ only depends on the homotopy classes of the $A_i$, and the following three statements are equivalent:
\begin{enumerate}
\item\label{thm-item:lambda1-vanishes}
$\lambda_1(A_1,A_2,A_3)$ vanishes.

\item\label{thm-item:lambda1-vanishes-order-2}
$A_1\cup A_2\cup A_3$ admits an order $2$ non-repeating Whitney tower.

\item\label{thm-item:lambda1-vanishes-pull-apart}
$A_1,A_2,A_3$ can be made pairwise disjoint by a homotopy.

\end{enumerate}
\end{thm}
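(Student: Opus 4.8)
The plan is to establish the cycle of implications $(\ref{thm-item:lambda1-vanishes}) \Leftrightarrow (\ref{thm-item:lambda1-vanishes-order-2})$ first, then $(\ref{thm-item:lambda1-vanishes-order-2}) \Rightarrow (\ref{thm-item:lambda1-vanishes-pull-apart})$, and finally close the loop with $(\ref{thm-item:lambda1-vanishes-pull-apart}) \Rightarrow (\ref{thm-item:lambda1-vanishes})$; homotopy invariance of $\lambda_1$ will be a byproduct of the final implication combined with the well-definedness already built into the target group $\Lambda_1/\mathrm{INT}$.

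For $(\ref{thm-item:lambda1-vanishes}) \Leftrightarrow (\ref{thm-item:lambda1-vanishes-order-2})$, the forward direction is an order-raising argument in the spirit of Theorem~\ref{thm:twisted-order-raising}, but one order lower and with group-element decorations: given $\lambda_1(A)=0$ in $\Lambda_1/\mathrm{INT}$, first realize the INT relations geometrically by tubing the appropriate $\pi_2 X$-spheres into the order~1 Whitney disks (this is exactly the geometric content of Figure~\ref{fig:INT-relation}), then realize the HOL relations by re-choosing whiskers and the AS relations by re-orienting Whitney disks, so that the order~1 non-repeating intersections now occur in oppositely-signed pairs $\{p, p'\}$ with $t_p = t_{p'}$ as decorated trees. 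For each such pair the two sheet-changing loops into $W_{(i,j)}$ through $p$ and $p'$ differ by a null-homotopic loop in $A_k$, so one can build a null-homotopic Whitney circle in $W_{(i,j)} \cup A_k$ pairing $p$ and $p'$; this bounds an immersed order~2 Whitney disk $W_{((i,j),k)}$, and since $i,j,k$ are distinct and we only need non-repeating information, any twisting or interior self-intersections of these order~2 disks are irrelevant (they contribute only repeating trees), so $A_1 \cup A_2 \cup A_3$ supports an order~2 non-repeating Whitney tower. The converse $(\ref{thm-item:lambda1-vanishes-order-2}) \Rightarrow (\ref{thm-item:lambda1-vanishes})$ is the observation that in an order~2 non-repeating Whitney tower there are simply no order~1 non-repeating unpaired intersections left, so the defining sum for $\lambda_1$ is empty.

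For $(\ref{thm-item:lambda1-vanishes-order-2}) \Rightarrow (\ref{thm-item:lambda1-vanishes-pull-apart})$, this is the case $m=3$ of Theorem~\ref{thm:pull-apart}: an order~$m-1 = 2$ non-repeating Whitney tower on $A_1 \cup A_2 \cup A_3$ implies the $A_i$ can be made pairwise disjoint by a homotopy. One could either simply cite Theorem~\ref{thm:pull-apart} directly, or spell out the low-order argument as a finger-move/Whitney-move dance generalizing Figure~\ref{fig:pull-apart-A1-A2}: push the order~0 surfaces off the interiors of the order~2 Whitney disks and then off the order~1 Whitney disks by finger moves that only create self-intersections and repeating intersections, and finish by doing the order~1 Whitney moves. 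Finally, $(\ref{thm-item:lambda1-vanishes-pull-apart}) \Rightarrow (\ref{thm-item:lambda1-vanishes})$ is immediate: if the $A_i$ are pairwise disjoint, then $\lambda_0(A_i,A_j) = 0$ for $i \neq j$, so they support the trivial (no Whitney disks needed, or rather the empty order~1 non-repeating Whitney tower) configuration, whence $\lambda_1(A) = 0$; homotopy invariance then follows because $(\ref{thm-item:lambda1-vanishes-pull-apart})$ is manifestly a homotopy-invariant condition and, more carefully, because any two order~1 non-repeating Whitney towers on homotopic $A_i$ are related by a sequence of moves whose effect on the summed decorated trees is absorbed by the AS, HOL and INT relations.

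\textbf{The main obstacle} I anticipate is the forward direction $(\ref{thm-item:lambda1-vanishes}) \Rightarrow (\ref{thm-item:lambda1-vanishes-order-2})$, specifically bookkeeping the $\pi_1 X$-decorations through the geometric realization of the relations. Unlike the simply-connected or $B^4$ settings of Section~\ref{sec:twisted-order-n-classification-arf-conj}, here one must verify that tubing in a $\pi_2 X$-sphere $S$ along a path produces precisely the decorated tree $\lambda_0(A_k, S) \cdot \langle(i,j),k\rangle$ with the group elements computed relative to a whisker built from the tube, and that re-choosing whiskers at trivalent vertices has exactly the effect of the HOL relation of Figure~\ref{fig:Relations-inward-arrows} — i.e., simultaneous right multiplication on all three edge labels. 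Getting these decorations to match the algebraically-defined relations in $\Lambda_1/\mathrm{INT}$ on the nose, with correct signs and orientations under whichever corner convention of section~\ref{subsec:w-tower-tree-orientations} is in force, is where the real work lies; the rest of the argument is a routine adaptation of the pull-apart and order-raising machinery already developed.
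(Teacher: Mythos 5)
Your outline follows essentially the same route as the paper — pull-apart for $(\ref{thm-item:lambda1-vanishes-order-2})\Leftrightarrow(\ref{thm-item:lambda1-vanishes-pull-apart})$, order-raising for $(\ref{thm-item:lambda1-vanishes})\Rightarrow(\ref{thm-item:lambda1-vanishes-order-2})$ — but the logical ordering is backwards in a way that leaves a real gap. You treat well-definedness of $\lambda_1$ (independence of the choice of order~1 non-repeating Whitney tower) and homotopy invariance as a ``byproduct'' of the cycle of implications, yet both $(\ref{thm-item:lambda1-vanishes-order-2})\Rightarrow(\ref{thm-item:lambda1-vanishes})$ and $(\ref{thm-item:lambda1-vanishes-pull-apart})\Rightarrow(\ref{thm-item:lambda1-vanishes})$ already quietly invoke them: you compute $\lambda_1$ using one convenient Whitney tower (the order~2 one, or the trivial one on the homotoped disjoint representative) and conclude it vanishes, which only determines $\lambda_1(A)$ if you already know the value is independent of that choice. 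Saying this is ``built into the target group'' doesn't make it so — it is exactly what must be proved that the AS, HOL and INT relators account for every choice of Whitney disk boundary, interior, and pairing of intersections, and that finger/Whitney moves during a regular homotopy leave the class unchanged. The paper establishes this first, by the case analysis of the appendix (adapted from the $\tau_1$ argument in sections~\ref{subsubsec:w-disk-interiors}--\ref{subsubsec:htpy-invariance-tau1}), and only then does the cycle of implications even make sense.

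There is a second, more local gap in your order-raising step for $(\ref{thm-item:lambda1-vanishes})\Rightarrow(\ref{thm-item:lambda1-vanishes-order-2})$: after arranging algebraically cancelling pairs $p,p'$ with $t_p=t_{p'}$, you immediately build a Whitney circle ``in $W_{(i,j)}\cup A_k$'' joining them, but in general $p$ and $p'$ lie in \emph{different} order~1 Whitney disks of the same type, so no such circle exists directly. This is precisely where the paper's transfer move (section~\ref{subsubsection:towards-geo-cancellation}) is needed: one pushes $p$ out of its Whitney disk into $A$ and across into the other Whitney disk, creating a geometrically cancelling pair at the cost of controlled new order~1 and order~2 intersections which are themselves paired off. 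Without this, the geometric cancellation step fails except in the special case where both intersections of each cancelling pair happen to lie on a common Whitney disk.
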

Theorem~\ref{thm:lambda1-vanishes} can be proved using arguments detailed in Section~\ref{sec:appendix} (see Exercise~\ref{ex:derive-non-repeating-order-1-thm-from-tau-1-proof}), with the equivalence of statements~(\ref{thm-item:lambda1-vanishes-order-2}) and~(\ref{thm-item:lambda1-vanishes-pull-apart}) being a special case of Theorem~\ref{thm:pull-apart} (see Exercise~\ref{ex:order-2-pull-apart-triple} for a proof in this case).

In the case that $X$ is simply connected $\lambda_1(A_1,A_2,A_3)$ reduces to the Matsumoto triple \cite{Ma} and the implication 
(\ref{thm-item:lambda1-vanishes}) $\Rightarrow$ (\ref{thm-item:lambda1-vanishes-pull-apart}) was shown by Yamasaki \cite{Y}.

For more than three components one similarly defines the order~1 non-repeating intersection invariant $\lambda_1(A)$ of $A=A_1,A_2,\ldots,A_m$ by counting, modulo the AS, HOL and INT relations, the distinctly-labeled decorated order~1 trees in the intersection forest of any order~1 non-repeating Whitney tower on $A$.
Then $\lambda_1(A)$ is a homotopy invariant of $A$ which vanishes if and only if $A$ supports an order~2 non-repeating Whitney tower.

Note that the OR relations of section~\ref{subsec:order-0-lambda-mu} are not needed in the order~1 setting because we are counting order~1 trees so edges can always be taken to be oriented towards the trivalent vertex. 

\subsection{Pulling apart $m$-tuples of spheres}\label{subsec:higher-order-lambda}
As described in detail in \cite{ST3}, the above discussion gives the framework for a complete obstruction theory for \emph{pulling apart $m$ $2$-spheres}, ie.~making the spheres pairwise disjoint by a homotopy: 
Theorem~\ref{thm:pull-apart} says that the existence of an order $m-1$ non-repeating Whitney tower on $A_1\cup A_2\cup\cdots\cup A_m$ suffices to pull the $A_i$ apart. And given an order~$n$ non-repeating Whitney tower $\cW$ supported by $A=A_1,A_2,\ldots,A_m$, the intersection forest $t(\cW)$ represents an obstruction to the existence of an order~$(n+1)$ non-repeating Whitney tower. Denoting by $\Lambda_n(m)$
the abelian group generated by order $n$ trees with distinctly labeled univalent vertices from $\{1,2,\ldots,m\}$ and edges decorated by elements of $\pi_1X$, modulo OR, AS, HOL and IHX relations, where for $n\geq 2$ the IHX relation is as in \ref{cor:IHX} but with distinct univalent labels and with edge decorations, this obstruction lives in a quotient of $\Lambda_n(m)$ by order $n$ \emph{intersection relations} $\mbox{INT}_n(A)$ which correspond to changes in $t(\cW)$ coming from tubing Whitney disk interiors into $2$-spheres. 
Since all the relations correspond to geometric manipulations of $\cW$, the vanishing of the obstruction implies the existence of an order~$(n+1)$ non-repeating Whitney tower via the same order-raising maneuvers as in Section~\ref{subsec:order-raising-proof-sketch}, which are all homogeneous in the univalent labels.

There are two main challenges here: One is concisely formulating the $\mbox{INT}_n(A)$ intersection relations, which for $n\geq 2$ can be non-linear.
And the other is showing that no additional relations other than OR, AS, HOL, IHX and $\mbox{INT}_n(A)$ are needed to ensure that $\lambda_n(A)$ does not depend on the choice of $\cW$. This second issue essentially amounts to showing that $\lambda_n(A)$ does not depend on the choices of Whitney disk boundaries. Evidence that the AS, HOL and IHX relations suffice to give independence of Whitney disk boundaries comes from Theorem~8 of \cite{ST3} which states that $\lambda_n(A)\in\Lambda_n(m)$ does not depend on the choice of $\cW$ in the setting that the components $A_i$ are properly immersed disks into the $4$-ball, where $\mbox{INT}_n(A)$ relations are trivial (and OR is not relevant).

By \cite[Lem.19]{ST3}, $\Lambda_n(m)$ is isomorphic to the direct sum of $\binom{m}{n+2}n!$-many copies of the integral group ring $\Z[\pi_1X^{(n+1)}]$ of the 
$(n+1)$-fold cartesian product $\pi_1X^{(n+1)}=\pi_1X\times\pi_1X\times\cdots\times\pi_1X$.
Note that $\Lambda_n(m)$ is trivial for $n\geq m-1$ since an order $n$ unitrivalent tree has $n+2$ univalent vertices.
The absence of torsion in
$\Lambda_n(m)$ is in alignment with the fact that the torsion subgroup (which is only $2$-torsion) of the tree group $\cT_n$  of Definition~\ref{def:untwisted-tree-groups} (repeating labels allowed) corresponds to the obstructions to converting twisted Whitney towers to framed Whitney towers in the $4$--ball \cite{CST3}.  Such obstructions are not relevant in the non-repeating setting since
the boundary-twisting operation (Figure~\ref{boundary-twist-fig})
can be used to eliminate twisted Whitney disks
at the cost of only creating intersections whose trees do not have distinctly-labeled univalent vertices.

See \cite{ST3} for further discussion of pulling apart $m$ components, including examples where $\lambda_n(A)$ is well-defined.

The case of pulling apart $4$-tuples of spheres in a simply connected $4$--manifold is already interesting, and only partially understood.
In \cite[Sec.8]{ST3} the order~2 invariant $\lambda_2(A)$ for $A=A_1,A_2,A_3,A_4$ supporting an order~2 non-repeating tower $\cW$ in a simply connected $X$ is examined in detail, including a precise formulation of the $\mbox{INT}_2(A)$ relations which account for choices of Whitney disk interiors. 

The following is Conjecture~29 of \cite{ST3}:
\begin{conj}\label{conj:lambda2-well-defined}
For any $A=A_1,A_2,A_3,A_4$ supporting an order~2 non-repeating tower $\cW$ in $X$, the components of $A$ can be pulled apart if and only if $\lambda_2 (A)\in\Lambda_2/\INT_2(A)$ as defined in \cite[Sec.8]{ST3} vanishes.
\end{conj}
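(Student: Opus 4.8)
The plan is to reduce the conjecture to a single well-definedness statement for $\lambda_2$ and then combine it with Theorem~\ref{thm:pull-apart} and the non-repeating order-raising machinery. The skeleton is this: the condition that $A_1,A_2,A_3,A_4$ can be pulled apart is equivalent to $A$ supporting an order~$3$ non-repeating Whitney tower — the forward direction is immediate, since pairwise disjoint $2$--spheres have no non-repeating intersections and so (with no added Whitney disks) form a non-repeating Whitney tower of every order, and the reverse direction is the $m=4$, $n=m-1=3$ case of Theorem~\ref{thm:pull-apart}. An order~$3$ non-repeating tower is in particular an order~$2$ one whose intersection forest contains no order~$2$ non-repeating trees, so its order~$2$ invariant vanishes; hence, \emph{provided} $\lambda_2(A)$ is independent of the choice of order~$2$ non-repeating tower, ``can be pulled apart'' forces $\lambda_2(A)=0$. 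The one remaining implication is $\lambda_2(A)=0\Rightarrow A$ supports an order~$3$ non-repeating tower. So the conjecture reduces to showing that $\lambda_2(A)$ is a homotopy invariant independent of $\cW$; the order-raising implication then follows from the standard scheme, which I sketch next.

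For the order-raising implication I would follow the template of section~\ref{subsec:order-raising-proof-sketch}, \cite[Sec.4]{CST1} and \cite{ST3}. Given an order~$2$ non-repeating $\cW$ with $[t_2(\cW)]=0\in\Lambda_2/\INT_2(A)$, realize each defining relation of $\Lambda_2/\INT_2(A)$ by a controlled modification of $\cW$ that fixes the homotopy class of $A$: AS relations by reorienting Whitney disks, IHX relations by the geometric Jacobi construction of Theorem~\ref{thm:IHX} and Corollary~\ref{cor:IHX}, and $\INT_2(A)$ relations by tubing Whitney-disk interiors into spheres generating $\pi_2X$ (the OR and HOL relations are vacuous because $X$ is simply connected). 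After these moves the order~$2$ non-repeating part of $t(\cW)$ occurs in oppositely-signed isomorphic pairs, which are then exchanged for geometrically cancelling pairs bounding order~$2$ Whitney disks; since all these maneuvers are homogeneous in the univalent labels, they preserve the non-repeating condition and produce an order~$3$ non-repeating tower. The homotopy invariance of $\lambda_2$, granting $\cW$-independence, is packaged as in the proof of Corollary~\ref{cor:tau=w-concordance}, by joining two towers on homotopic surfaces along the trace of the homotopy in $X\times I$, which carries an order~$3$ non-repeating ``cobordism tower'' built from the finger moves and Whitney moves of the homotopy.

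So the entire content comes down to $\cW$-independence: any two order~$2$ non-repeating Whitney towers with the same underlying surface $A$ should have order~$2$ intersection forests differing only by the relations defining $\Lambda_2/\INT_2(A)$. The strategy is to connect two such towers by a finite sequence of elementary moves and track the change in $t_2(\cW)$: changing a Whitney-disk interior contributes precisely an $\INT_2(A)$ relation, essentially by the construction of $\INT_2(A)$ in \cite[Sec.8]{ST3}; reorienting a Whitney disk contributes an AS relation; changing a whisker contributes a (here vacuous) HOL relation; and the delicate case is sliding and re-pairing the Whitney circles themselves. When the $A_i$ are properly immersed disks in $B^4$ — where the $\INT_n(A)$ relations are trivial and the ambient group is again simply connected — the fact that this Whitney-boundary indeterminacy changes the forest only through IHX and AS is precisely Theorem~8 of \cite{ST3}, and the plan is to upgrade that argument to $2$--spheres in a simply connected $X$.

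The main obstacle I anticipate is exactly the interplay between the Whitney-circle indeterminacy and the $\INT_2(A)$ relations. Unlike the $B^4$ case these relations are nontrivial and, as the excerpt notes, can be \emph{non-linear} in the order~$0$ intersection data of $A$ against the $\pi_2X$-spheres, so the boundary-change bookkeeping of \cite[Thm.8]{ST3} — which there lives in the group $\Lambda_2(4)\cong\Z^2$ — must now be carried out in the quotient $\Lambda_2/\INT_2(A)$. One must verify both that no relations beyond AS, IHX and $\INT_2(A)$ are forced by varying the Whitney disks, and that the $\INT_2(A)$ relations are exactly compatible with the IHX and AS identities used to absorb boundary changes, so that $\Lambda_2/\INT_2(A)$ neither collapses further nor fails to receive $t_2(\cW)$ as a well-defined element. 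Establishing this compatibility, with the nonlinear $\INT_2(A)$ relations intertwined with the Whitney-circle moves, is where I expect the real work to lie, and is presumably the reason the statement remains a conjecture.
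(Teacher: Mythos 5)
The statement is presented in the paper as an open conjecture (it is Conjecture~29 of \cite{ST3}), so there is no internal proof to compare against; your ``proposal'' is best read as a roadmap, and as such it is sound.

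Your reduction is correct. Via Theorem~\ref{thm:pull-apart} with $m=4$, pulling the $A_i$ apart is equivalent to $A$ supporting an order~$3$ non-repeating Whitney tower, and since a non-repeating order~$3$ tree would need five distinct univalent labels drawn from four components there are none, so $\Lambda_3(4)=0$ and an order~$3$ non-repeating tower is exactly an order~$2$ one with no unpaired order~$2$ non-repeating intersections. Granting well-definedness of $\lambda_2(A)$, both implications then follow as you say: pulled-apart~$\Rightarrow\lambda_2=0$ by computing with the order-$3$ tower, and $\lambda_2=0\Rightarrow$ order-$3$ tower by the non-repeating order-raising template, which is homogeneous in univalent labels and realizes every defining relation geometrically.

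The genuine gap is exactly where you locate it, and you are candid that you do not close it: you have not shown that $\lambda_2(A)$ is independent of the choice of order-$2$ non-repeating Whitney tower $\cW$. This is precisely the second of the two challenges flagged in section~\ref{subsec:higher-order-lambda}: verifying that the choices in $\cW$---above all the choices of Whitney-circle boundaries and of re-pairings---impose no relations beyond AS, HOL, IHX and $\INT_2(A)$, and that the now-nontrivial (and potentially non-linear) $\INT_2(A)$ relations interact consistently with the IHX and AS moves which absorb boundary indeterminacy in the $B^4$ case \cite[Thm.8]{ST3}. That $B^4$ argument does not transplant directly because there $\INT_n(A)$ is trivial, and the number-theoretic subtleties referenced via \cite{KN} in section~\ref{subsec:higher-order-lambda} are one symptom of an interaction that is not yet understood. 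In short: the proposal correctly identifies the missing lemma as the entire content of the conjecture but does not supply it, which is consistent with the statement's status in the paper as an open problem rather than a theorem.
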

As explained in \cite[Sec.8.3.6]{ST3}, the computation of the image of the INT$_2(A)$ relations in $\Lambda_2(4)\cong\Z\oplus\Z$ leads to some interesting number theory \cite{KN}.



\subsection{The order 1 self-intersection invariant $\tau_1$}\label{subsec:tau-1}
Having considered non-repeating order~1 invariants which generalize the classical order~0 intersection pairing and fit into an obstruction theory for pulling apart $m$-tuples of spheres, we next describe the order~1 generalization of the classical self-intersection invariant which fits into an obstruction theory for the existence of order~$n$ framed Whitney towers.
 
For simplicity we restrict attention to the case of a single immersed 2-sphere $A:S^2\imra X^4$ with 
$\tau_0(A)=0$ (or in classical notation $\mu_0 (A)=0$). The vanishing of $\tau_0(A)$ means that $A$ admits an order~$1$ framed Whitney tower $\cW$, and we want to ``count'' the decorated trees $t_p$ associated (as in section~\ref{subsec:decorated-order-1-trees}) to the order~1 intersections $p$ in $\cW$ to define an order~1 invariant $\tau_1(A)$ which does not depend on the choice of $\cW$ and gives the complete obstruction to $A$ bounding an order~2 framed Whitney tower.

Denote by $\tilde{\cT_1}:=\tilde{\cT_1}(\pi_1X)$ the quotient of the free abelian group on decorated order~1 trees by the above AS, HOL relations (Figure~\ref{fig:Relations-inward-arrows}) and also the new
FR \emph{framing relations} shown in Figure~\ref{fig:FR-relation-single-sphere}. The necessity of these FR relations is illustrated (schematically) on the right of Figure~\ref{fig:FR-relation-single-sphere}, which shows how performing opposite boundary-twists (Figure~\ref{boundary-twist-fig}) on a framed Whitney disk $W$ along different arcs of $\partial W$ yields a new framed Whitney disk that has two interior intersections with $A$ corresponding trees equal to the terms of an FR relation. Univalent labels are dropped from our trees since we are now working with just a single component. So the previously defined AS and HOL relations are now expressed in $\tilde{\cT_1}$ by the same equations but without univalent labels. 

\begin{figure}[h]
\centerline{\includegraphics[scale=.5]{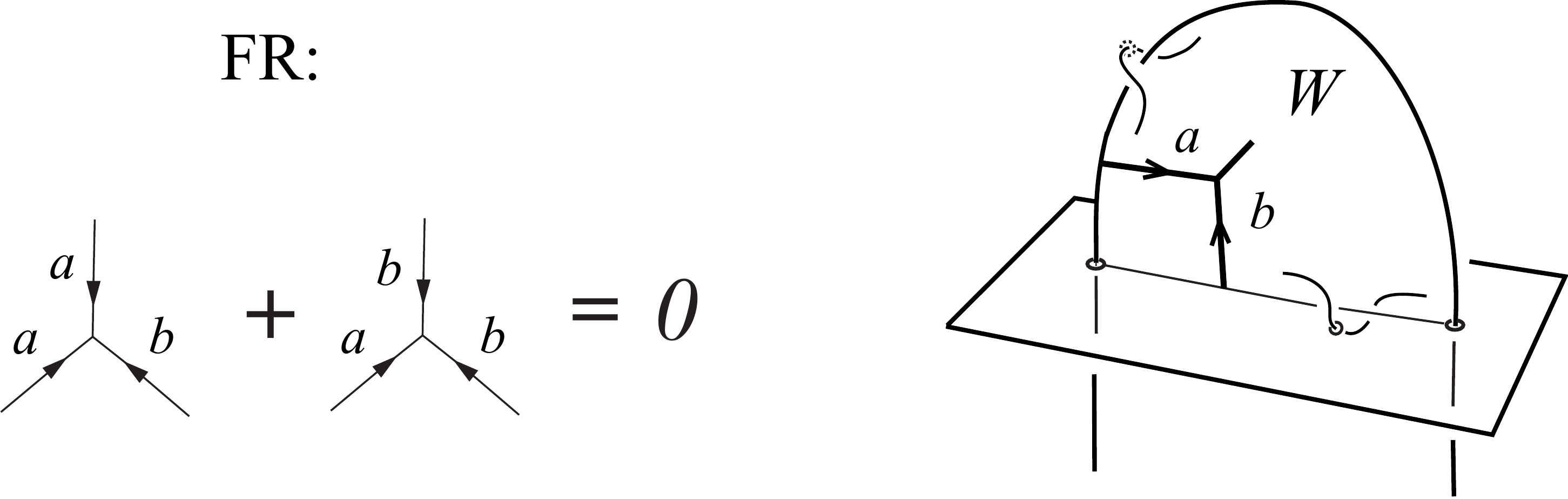}}
\caption{The FR framing relations; $a,b\in\pi_1X$}
         \label{fig:FR-relation-single-sphere}
\end{figure}

We also need to refine the INT relations to account for the effect that tubing into a sphere can have on Whitney disk twistings. 
For instance, if a framed Whitney disk is tubed into a sphere $S$ whose normal bundle has non-trivial 2nd Stiefel-Whitney number $\omega_2(S)$, then a framed Whitney disk can be recovered by boundary-twisting. Figure~\ref{fig:INT-relation-tau} shows these refined INT relations in the target of $\tau_1$, with the edge decoration $\omega_2(S)$ identified with $0$ or $1$ in $\Z[\pi_1X]$ depending on whether it is trivial or not. This is explained in complete detail in section~\ref{subsec:tau1-well-defined}, where the full definition of the INT relations is given, which also allows for $S$ to be an immersed $\RP^2$ in certain cases depending on the elements $a,b$.   
\begin{figure}[h]
\centerline{\includegraphics[scale=.4]{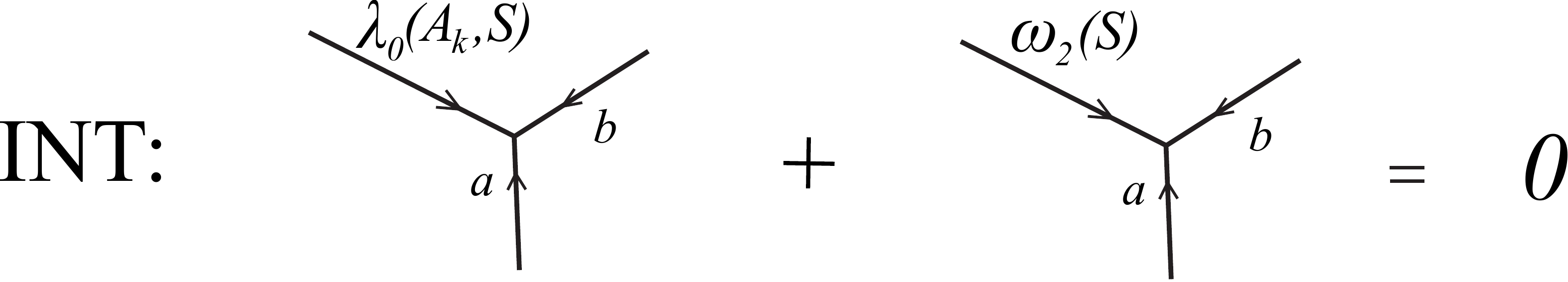}}
\caption{The INT \emph{intersection} relations in the target of $\tau_1$; with $a,b\in\pi_1X$, and $S:S^2\imra X$ varying over generators for $\pi_2(X)$ as a $\pi_1X$-module.}
         \label{fig:INT-relation-tau}
\end{figure}

So for $A$ with vanishing $\tau_0(A)$, choose an order~1 framed Whitney tower $\cW$ on $A$ and define the \emph{order~1 framed intersection invariant}:
$$
\tau_1(A):=\sum\epsilon_p\cdot t_p\in\tilde{\cT_1}/\mbox{INT}
$$
where the sum is over all order~1 intersections $p$ in $\cW$.

The following result of \cite{ST1} shows that $\tau_1(A)$ is an obstruction to homotoping $A$ to an embedding, and in particular that it does not depend on the choice of $\cW$:
\begin{thm}\label{thm:tau1-vanishes}

$\tau_1(A)$ only depends on the homotopy class of $A$, and the following four statements are equivalent:

\begin{enumerate}

\item\label{thm-item:tau1-vanishes}
$\tau_1(A)$ vanishes.

\item\label{thm-item:A-admits-order2-tower}
$A$ admits an order 2 framed Whitney tower (cf.~Definition~\ref{def:order-n-framed-W-tower}).

\item\label{thm-item:A-admits-height1-tower}
$A$ admits a height 1 Whitney tower (cf.~Definition~\ref{def:height}).

\item\label{thm-item:A-embeds-stably}
$A$ is \emph{stably homotopic to an embedding}.

\end{enumerate}
\end{thm}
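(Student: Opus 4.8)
The plan is to establish first that $\tau_1(A)$ is a well-defined homotopy invariant (independent of the chosen order~$1$ framed Whitney tower $\cW$), and then to prove the cyclic chain of implications $(1)\Rightarrow(2)\Rightarrow(3)\Rightarrow(4)\Rightarrow(1)$. Of these, I expect the well-definedness analysis — in particular pinning down exactly the $\pi_2X$-dependent INT relations — together with the order-raising implication $(1)\Rightarrow(2)$ to be the main work; the implications $(2)\Leftrightarrow(3)$ and $(2)\Rightarrow(4)$ are more geometric and should go through by familiar maneuvers. This is the content of \cite{ST1}.

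\emph{Well-definedness and homotopy invariance.} Since $\tau_0(A)=0$, Theorem~\ref{thm:tau-0-vanishes} provides an order~$1$ framed Whitney tower on $A$, so $\tau_1(A)$ is defined. In a generic regular homotopy of $A$ each finger move and Whitney move is supported in a ball meeting $A$ in two disks, and after it the new cancelling pair of order~$0$ self-intersections may be paired by a clean framed order~$1$ Whitney disk; since a clean framed Whitney disk contributes no trees to the intersection forest, $t_1(\cW)$ is unchanged by such moves (a long finger move is a short one composed with an isotopy, which carries the whole tower along). It therefore remains to show that $[t_1(\cW)]\in\tilde\cT_1/\INT$ does not depend on the choices of Whitney disk interiors, Whitney disk boundaries, whiskers, orientations and boundary framings; equivalently, that any two order~$1$ framed Whitney towers on a fixed $A$ are connected by elementary modifications of their Whitney disks each of which alters $t_1(\cW)$ only by an AS, HOL, FR or INT relator. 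The AS and HOL relators of Figure~\ref{fig:Relations-inward-arrows} absorb the orientation and whisker choices, the FR relators of Figure~\ref{fig:FR-relation-single-sphere} absorb the framing ambiguity produced by opposite boundary twists, and the INT relators of Figure~\ref{fig:INT-relation-tau} absorb the effect of tubing a Whitney disk interior into an element of $\pi_2X$. The delicate part is the bookkeeping behind the INT relations: one must track how such a tubing changes the Whitney disk's twisting through the $2$nd Stiefel--Whitney number of the sphere, and allow for immersed $\RP^2$'s in the cases where the relevant edge decorations permit it, as carried out in section~\ref{subsec:tau1-well-defined}. I expect this step to be the main obstacle.

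\emph{$(1)\Rightarrow(2)$, and $(2)\Leftrightarrow(3)$.} Assume $\tau_1(A)=0$ for some order~$1$ framed $\cW$. Then the order~$1$ part $t_1(\cW)$ of the intersection forest is a sum of AS, HOL, FR and INT relators; realizing each relator by the corresponding controlled modification of $\cW$ (boundary twists, interior twists, and tubing into $2$-spheres, as in section~\ref{subsec:geometry-of-relations} and Figures~\ref{boundary-twist-fig}, \ref{interior-twist-fig}, \ref{fig:INT-relation-tau}) arranges that the order~$1$ intersections occur in oppositely-signed pairs with isomorphic decorated trees. Then, exactly as in the order-raising scheme of section~\ref{subsec:order-raising-proof-sketch} and the proof of Theorem~\ref{thm:twisted-order-raising}, each such cancelling pair is exchanged for intersections of order $\ge 2$ bounding framed order~$2$ Whitney disks, with all order~$1$ Whitney disks kept framed (the boundary twists used are at order $\ge 1$ and their effect is already accounted for by the FR relations). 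For $(2)\Leftrightarrow(3)$: a height~$1$ Whitney tower is in particular a framed Whitney tower all of whose unpaired intersections have order $\ge 2$, hence satisfies~(2); conversely a framed order~$2$ Whitney tower on a $2$-sphere can be put in symmetric form by pushing down (Exercise~\ref{ex:push-down}) together with the standard low-order conversion between framed and symmetric Whitney towers (cf.~\cite{CST1}), giving~(3).

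\emph{$(2)\Rightarrow(4)$ and $(4)\Rightarrow(1)$.} Given a framed order~$2$ Whitney tower $\cW$ on $A$, push down (Exercise~\ref{ex:push-down}) so that the order~$1$ Whitney disks $W_i$ are disjointly embedded; they are framed because $\cW$ has no twisted disks, and the framed order~$2$ Whitney disks $V_j$ pair all the order~$1$ intersections $W_i\pitchfork A$. For each interior intersection of a $V_j$ with another surface of the configuration, introduce a small $S^2\times S^2$ summand meeting only the two sheets through that point and apply the Norman trick, routing the other sheet over a sphere of the summand dual to $V_j$; this removes the intersection at the cost of a fresh sphere disjoint from every $V$, so processing the $V_j$ one at a time terminates with all $V_j$ clean and framed. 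The order~$2$ Whitney moves guided by the clean $V_j$ then remove all $W_i\pitchfork A$ without creating new intersections, after which the order~$1$ Whitney moves guided by the now-clean framed $W_i$ embed $A$ in $X\#k(S^2\times S^2)$; since all these moves are homotopies, $A$ is stably homotopic to an embedding. Finally, for $(4)\Rightarrow(1)$: pick any order~$1$ framed $\cW$ on $A$ in $X$ and regard it in $X':=X\#k(S^2\times S^2)$, where by hypothesis $A$ is homotopic to an embedding; by the homotopy invariance just established, applied in $X'$, $[t_1(\cW)]=\tau_1(A)$ computed in $X'$ equals the intersection invariant of an embedded sphere, namely $0$. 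The stabilizing summands can be taken disjoint from $A$ with trivial normal data, so the $\pi_2$-classes they contribute yield only trivial INT relators; hence the target groups $\tilde\cT_1(\pi_1X)/\INT$ for $X$ and for $X'$ coincide and $\tau_1(A)=0$ already in $X$. This closes the cycle and, being independent of $\cW$, re-confirms that $\tau_1$ is well-defined and a homotopy invariant.
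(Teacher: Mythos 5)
Your plan for well-definedness, homotopy invariance, and the equivalence $(1)\Leftrightarrow(2)$ follows essentially the same route as the paper's own proof (sections~\ref{subsec:tau1-well-defined} and~\ref{subsec:tau1-vanishes-equals-order-2-w-tower}): show each choice of Whitney disk data changes $t_1(\cW)$ by an AS/HOL/FR/INT relator, with the $\RP^2$-INT case handling choices of sheets being the delicate step (which you correctly flag), then realize relators geometrically and perform transfer moves to get geometric cancellation. Where you genuinely depart is $(2)\Leftrightarrow(3)$ and $(2)\Leftrightarrow(4)$: the paper just cites \cite[Thm.2]{ST1} and \cite[Cor.1]{S3}, whereas you attempt original sketches. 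The directions $(3)\Rightarrow(2)$ and $(4)\Rightarrow(1)$ are fine — your observation that the stabilizing summands contribute only trivial INT relators is correct and is the key point in $(4)\Rightarrow(1)$.

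However, your $(2)\Rightarrow(4)$ argument has a genuine gap. You remove an interior intersection $p\in V_j\pitchfork C$ by ``routing the other sheet over a sphere of the summand dual to $V_j$'', i.e.\ by a Norman trick that tubes $C$ to a factor sphere. This changes the homotopy class of $C$ by $\pm$ that factor sphere. When $C$ is a Whitney disk this is harmless (disks carry no $\pi_2$ class), but when $C=A$ — and an order~$2$ Whitney tower can certainly have intersections $V_j\pitchfork A$ of order~$3$ — the construction produces an embedded sphere that is \emph{not} homotopic to $A$ in $X\#k(S^2\times S^2)$, so you have not shown $A$ is stably homotopic to an embedding. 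The standard fix is to tube $V_j$ (not $A$) along $A$ into a sphere dual to $A$, or to first push all $V_j\pitchfork A$ intersections down into lower-order sheets (adding new local clean Whitney disks) before invoking the Norman trick; either way the orientation and class of $A$ must be preserved. A similar, smaller issue affects your $(2)\Rightarrow(3)$ step: doing the $V_j$ Whitney moves to kill $W_i\pitchfork A$ converts each $V_j\pitchfork A$ into a fresh $W_i\pitchfork A$, so these must be handled first, and you should verify that the resulting iteration terminates — ``the standard low-order conversion'' needs to be unpacked.
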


Here $A$ being \emph{stably homotopic to an embedding} means that $A$ is homotopic to an embedding in the connected sum $X\#^nS^2\times S^2$ of $X$ with some number $n$ of copies of $S^2\times S^2$.


See section~\ref{subsec:tau1-well-defined} for proof that $\tau_1(A)$ is a well-defined homotopy invariant.
The equivalence of statements~(\ref{thm-item:lambda1-vanishes}) and (\ref{thm-item:A-admits-order2-tower})
is shown in section~\ref{subsec:tau1-vanishes-equals-order-2-w-tower}.

See \cite[Thm.2]{ST1} for 
the equivalence of statements~(\ref{thm-item:lambda1-vanishes}) and (\ref{thm-item:A-admits-height1-tower});
and \cite[Cor.1]{S3}
for the equivalence of statements~(\ref{thm-item:lambda1-vanishes}) and (\ref{thm-item:A-embeds-stably}).


\subsection{Examples and questions}\label{subsec:tau1-examples-and-questions}

If $X$ is simply-connected then the target $\tilde{\cT_1}/\mbox{INT}$ of $\tau_1(A)$ is $\Z/2\Z$ or $0$, depending on whether $A$ is spherically characteristic or not.
Here $A$ is \emph{spherically characteristic} if $\lambda_0(A,S)\equiv\lambda_0(S,S)$ mod $2$ for all $S\in\pi_2X$.
In this simply-connected case $\tilde{\cT_1}$ is generated by the order~1 tree with all trivial edge decorations, which is $2$-torsion by the AS relations, and is equal to zero by the INT relations if $A$ is not spherically characteristic.

For example, $3\CP^1\imra\CP^2$ is spherically characteristic, and Figure~\ref{fig:3CP1} shows the computation of $\tau_1(3\CP^1)=1\neq 0\in\tilde{\cT_1}/\mbox{INT}(3\CP^1)\cong\Z/2\Z$, so $3\CP^1$ is not homotopic to an embedding (compare \cite{KM}).

\begin{figure}[h]
\includegraphics[scale=.28]{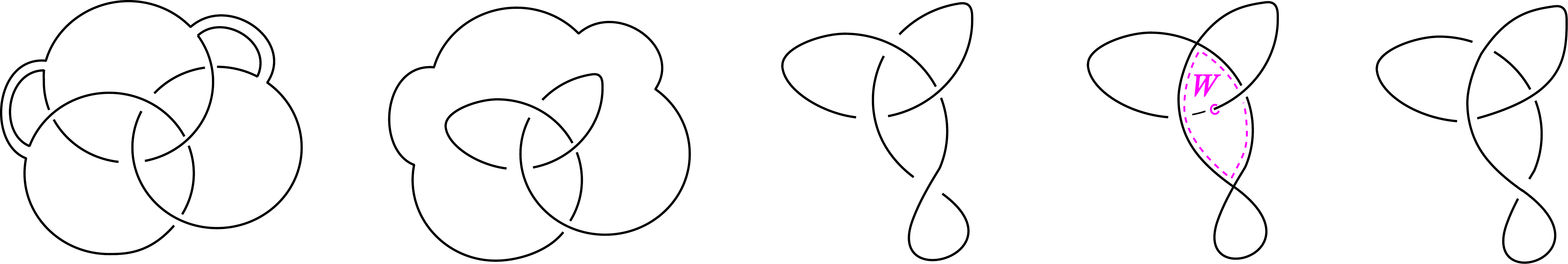}
\caption{Left: The boundary circle in $S^3$ of the sum of three parallel copies of the $+1$-framed 2-handle in a standard handle-body description of $\CP^2$. Moving to the right: The track into the $0$-handle $B^4$ of a null-homotopy of this circle describes the rest of $3\CP^1$, with two self-intersections admitting the framed Whitney disk $W$ that has a single transverse intersection with $3\CP^1$.}\label{fig:3CP1}
\end{figure}

For $X$ not necessarily simply-connected, taking the quotient of $\tilde{\cT_1}$ by $\pi_1X\to1$ yields $\tau_1(A)\in \Z/2\Z \mbox{ or } 0$ depending on whether $A$ is spherically characteristic or not, and in this quotient target $\tau_1$ reduces to the spherical \emph{Kervaire--Milnor} invariant $\km$ of
\cite{FQ,Stong}.

Even after trivializing all $\pi_1X$-decorations $\tau_1=\km$ sees global information in \emph{closed} 4-manifolds:

\begin{thm}[Freedman--Kirby, Kervaire--Milnor, Stong]\label{thm:tau1-signature}
Suppose $X$ is a smooth closed $4$-manifold, and $H_2(X;\Z/2\Z)$ is spherical.
If $A:S^2\imra X$ is characteristic and $\mu_0 A=0$, then
$$
(\pi_1X\to1):\quad\tau_1(A)\quad\mapsto\quad\frac{A\cdot A-\textrm{signature} (X)}{8}\quad\mbox{ \textrm{mod} }2
$$
\end{thm}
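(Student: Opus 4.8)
The plan is to realize both sides of the displayed formula as $\Z/2$--valued homotopy invariants of $A$ that are additive under connected sum, and then to whittle the general pair $(X,A)$ down to a short list of model pairs on which the equality can be checked by hand. Both quantities lie in $\Z/2$: the left because the target of $\tau_1$, pushed forward by $\pi_1X\to 1$, is $\Z/2$ (a characteristic $A$ is in particular spherically characteristic), and the right because $A$ being characteristic forces $A\cdot A\equiv\textrm{signature}(X)\bmod 8$ by van der Blij's lemma, so $\tfrac{A\cdot A-\textrm{signature}(X)}{8}\in\Z$. The left side depends only on the homotopy class of $A$ by Theorem~\ref{thm:tau1-vanishes}, the right side manifestly so. For additivity under the connected sum $(X_1,A_1)\#(X_2,A_2):=(X_1\#X_2,\,A_1\#A_2)$, where the tube forming $A_1\#A_2$ runs along a clean arc through the connecting neck: signature and self-intersection number are additive, and the disjoint union of order~$1$ framed Whitney towers $\cW_i$ on $A_i\subset X_i$ is an order~$1$ framed Whitney tower on $A_1\#A_2$ (the clean tube needs no Whitney disks), so the order~$1$ intersection count, hence $\km$, is additive. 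Write $\delta(X,A)$ for the difference of the two sides; the goal is $\delta\equiv 0$.

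First I would arrange that $X$ is simply connected: surgering out a generating set of $\pi_1X$ produces a closed $X'$, still with $H_2(X';\Z/2)$ spherical, in which $A$ (made disjoint from the surgery circles, generic in dimension four) is still a characteristic sphere with $\mu_0=0$; the Whitney tower and its intersection count are unaffected and all $\pi_1$--decorations are killed anyway, while $\textrm{signature}(X')=\textrm{signature}(X)$ by Novikov additivity, so $\delta(X',A)=\delta(X,A)$. If the intersection form of $X$ is even I would then connect-sum with the model $(\overline{\CP^2},\overline{\CP^1})$; this changes neither side since $\delta(\overline{\CP^2},\overline{\CP^1})=0$ (an embedded $\overline{\CP^1}$ has $\km=0$ and $\overline{\CP^1}\cdot\overline{\CP^1}=\textrm{signature}(\overline{\CP^2})=-1$), and it makes the ambient form odd. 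So I may assume $X$ is simply connected with odd intersection form. Stabilizing by copies of $S^2\times S^2$ (again harmless for $\delta$, pushing $A$ off the new summands) and applying Wall's stable diffeomorphism classification together with his stable realization of form isometries—using that the automorphism group of an indefinite odd unimodular form acts transitively on characteristic vectors of a fixed square and that an $S^2\times S^2$--summand contributes $0$ to any characteristic vector—brings $(X,A)$ to a connected sum
\[
\#_i\,(\CP^2,(2k_i+1)\CP^1)\ \ \#\ \ \#_j\,(\overline{\CP^2},(2l_j+1)\overline{\CP^1})\ \ \#\ \ (\#(S^2\times S^2),\ \textrm{trivial sphere}).
\]
By additivity of $\delta$ the theorem now reduces to the two model families $\delta(\CP^2,(2k+1)\CP^1)=0$ and its orientation-reverse, which are equivalent by symmetry.

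For the model $(\CP^2,(2k+1)\CP^1)$ we have $A\cdot A=(2k+1)^2$ and $\textrm{signature}=1$, so the right side equals $\tfrac{(2k+1)^2-1}{8}=\binom{k+1}{2}\bmod 2$, and it remains to show
\[
\km\big((2k+1)\CP^1\subset\CP^2\big)\ \equiv\ \binom{k+1}{2}\pmod 2 .
\]
I would prove this by induction on $k$. The base cases $k=0$ (the embedded hyperplane, with $\km=0$) and $k=1$ (the computation $\km(3\CP^1)=1$ recorded in Figure~\ref{fig:3CP1}) are in hand. For the inductive step, pass from an immersed representative of class $(2k-1)h$ with $\mu_0=0$ carrying an order~$1$ framed Whitney tower $\cW$ realizing $\km((2k-1)\CP^1)$, to a representative of class $(2k+1)h=(2k-1)h+2h$ by tubing on two parallel copies of the hyperplane. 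Each parallel hyperplane meets the old surface in $2k-1$ points and meets the other in a single point; after cleaning up with finger and Whitney moves and compensating the framing defects with boundary twists, I would track the resulting change in the order~$1$ intersection count with the surface and verify that its parity changes by exactly $k$, matching $\binom{k+1}{2}-\binom{k}{2}=k$.

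The hard part is precisely this last inductive computation. No purely formal reduction can finish the proof, since characteristic vectors are rigid and the model pairs genuinely form an infinite family; indeed, the identity applied to a nullhomotopic characteristic sphere in an even closed simply connected $X$ already yields $\textrm{signature}(X)\equiv 0\bmod 16$, so the $(2k+1)\CP^1$ computation must in effect contain Rokhlin's theorem. Getting the bookkeeping right—controlling how the order~$1$ intersections with $A$ proliferate through the Whitney moves that re-embed and re-frame the tubed Whitney disks, and checking the net parity change is $k$ and not, say, $0$ or $\binom{2k-1}{2}$—is the crux. (Alternatively one may resolve the double points of $A$ to an embedded characteristic surface $F$ with $[F]=[A]$ and invoke the Guillou--Marin congruence $\textrm{signature}(X)-F\cdot F\equiv 8\,\Arf(q_F)\bmod 16$ together with the identification $\km(A)=\Arf(q_F)$; this trades the inductive computation for the equivalent and equally substantial input of Rokhlin's theorem, as in the Freedman--Kirby and Kervaire--Milnor treatments.)
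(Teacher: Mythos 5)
The paper itself gives no proof of this theorem; it only refers the reader to the end of \cite{Stong}, which in turn rests on Rokhlin's theorem through the arguments of Freedman--Kirby and Kervaire--Milnor. Your reduction framework is sound and follows the same shape as those classical proofs: both sides are $\Z/2$-valued homotopy invariants of $A$, additive under connected sum of pairs; surgery kills $\pi_1$ while preserving signature (cobordism invariance), the characteristic condition, and sphericity of $H_2$; and after stabilizing by $S^2\times S^2$, Wall's stable diffeomorphism and realization theorems combined with transitivity of the automorphism group of an odd indefinite unimodular form on characteristic vectors of fixed square bring the pair to a connected sum of the models $(\CP^2,(2k+1)\CP^1)$ and $(\overline{\CP^2},(2l+1)\overline{\CP^1})$. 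The arithmetic identity $\tfrac{(2k+1)^2-1}{8}=\binom{k+1}{2}$ and the two base cases $k=0,1$ are also correct.

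There is, however, a genuine gap, which you flag yourself: the inductive step
$\km\bigl((2k+1)\CP^1\bigr)\equiv\binom{k+1}{2}\bmod 2$
is stated but not proved, and it carries essentially all the content of the theorem --- as you observe, applying the statement to a nullhomotopic characteristic sphere in an even closed simply connected $X$ recovers $\sigma(X)\equiv 0\bmod 16$, so nothing short of a Rokhlin-type input can finish it. The tubing you sketch is not obviously the clean vehicle: attaching two parallel hyperplanes to an immersed representative $A$ of $(2k-1)h$ with $\mu_0(A)=0$ introduces $4k-1$ new double points all of one sign, so $\mu_0$ of the result is $4k-1\neq 0$ and must first be repaired by cusp homotopies before any order~$1$ framed Whitney tower exists; the interaction of those cusps with the boundary twists and re-framings is exactly where the hard parity count lives, and the proposal does not track it. The alternative route you mention through the Guillou--Marin congruence also rests on the lemma $\km(A)=\Arf(q_F)$ for an embedded characteristic resolution $F$ of $A$, which is itself a substantial step not argued here. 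Either route is carried out in the references (this is what Stong does), so the strategy is the right one; but as written the proposal reduces the theorem to the crux without establishing it.
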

See the end of \cite{Stong} for a proof.


On the other hand, for $\pi_1X$ non-trivial the cardinality $|\tilde{\cT_1}(\pi_1X)|$ can be large. For example, if
$\pi_1X$ is left-orderable and the INT relations are trivial then $\tilde{\cT_1}(\pi_1X)$ is isomorphic to $\Z^\infty\oplus(\Z/2\Z)^\infty$ (see \cite[Prop.2.3.1]{S3}).

Let $\Gamma$ be a finitely presented group, and let $g$ be any element of $\tilde{\cT_1}(\Gamma)$. 
Then one can find a $4$-manifold $X$ with non-empty boundary such that $\pi_1X=\Gamma$, and $A:S^2\imra X$ with INT($A$) trivial and $\tau_1(A)=g$ (Exercise~\ref{Ex:tau1-realization-borro-rings}).


But finding examples of non-trivial $\tau_1(A)$ in \emph{closed} $4$-manifolds that depend on non-trivial edge decorations appears to be difficult, and no such examples are currently known.
Let $(1,1,1)$ denote the order~1 tree with all three edges decorated by the trivial element $1\in\pi_1X$.
Then $(1,1,1)$ is $2$-torsion by AS (or FR) relations, and generates an order~2 subgroup of $\tilde{\cT_1}(\pi_1X)$, for any $\pi_1X$.
We have the following open ``realization problem'' for closed $X$:

\begin{prob}\label{prob:realize-tau1-non-trivial-edges}
Find an example of $A\imra X$ such that the following three statements all hold:
\begin{enumerate}
\item $X$ is closed, and 
\item $|\tilde{\cT_1}(\pi_1X)/\emph{INT}(A)|\geq 3$, and
\item $\tau(A)$ is not contained in the subgroup generated by $[(1,1,1)]\in\tilde{\cT_1}(\pi_1X)/\emph{INT}(A)$.
\end{enumerate}
\end{prob}

Poincare duality contributes to the difficulty of Problem~\ref{prob:realize-tau1-non-trivial-edges} since if $\lambda_0(A,S)=1$ for some $S\in\pi_2X$, then $|\tilde{\cT_1}(\pi_1X)/\mbox{INT}(A)|\leq 2$ by the INT relations and $\tau_1$ reduces to $\km$.

An easier to state, but possibly more difficult, open realization problem is: Find an example of $A\imra X$ such that $\km(A)=0$ but $\tau_1(A)\neq 0$. 

In light of Theorem~\ref{thm:tau1-signature} and Problem~\ref{prob:realize-tau1-non-trivial-edges} one is led to the question:
\begin{question}
What global information is carried by non-trivial $\pi_1$-decorations in $\tau_1$ in closed 4-manifolds?
\end{question}

In comparison with the classifications of order~$n$ framed and twisted Whitney towers on 2-disks in the $4$-ball (\cite{CST2} and Section~\ref{sec:twisted-order-n-classification-arf-conj}), the higher-order invariants invariants for $2$-spheres in $4$-manifolds have barely begun to be defined, even in simply connected $4$-manifolds. Defining an order~2 framed invariant is already an interesting challenge:
\begin{prob}
For $A:S^2\imra X$ with vanishing $\tau_1(A)$, formulate and prove invariance of an order~2 framed invariant $\tau_2(A)$ whose vanishing is equivalent to $A$ admitting a framed order~3 Whitney tower.
\end{prob}
In addition to the issues that arise as in Conjecture~\ref{conj:lambda2-well-defined}, here one has to also keep track of Whitney disk twistings.




%
%
%

%
%
%
%
%
%
%
%
%
%
%
%
%
%
%
%
%
%
%
%


\subsection{Section~\ref{sec:2-spheres-in-4-manifolds} Exercises}\label{subsec:exercises-2-spheres}

\subsubsection{Exercise:}\label{ex:lambda-0-vanishes-gives-w-disks}
Suppose $A_i$ and $A_j$ are oriented immersed 2-spheres in an oriented 4-manifold $X$, and $p$ and $q$ are oppositely signed intersections in $A_i\pitchfork A_j$ with equal associated group elements $g_p=g_q\in\pi_1X$. Check that the union of a path in $A_i$ from $p$ to $q$ with a path in $A_j$ from $q$ to $p$ is a null-homotopic loop in $X$. Conclude that $p$ and $q$ admit an immersed Whitney disk.

\subsubsection{Exercise:}\label{ex:pairwise-lambda-0-vanishes-gives-order-1-nonrep-tower}
For a collection of immersed 2-spheres $A=A_1,A_2,\ldots,A_m\imra X^4$, the vanishing of $\lambda_0(A)$ (section~\ref{subsec:order-0-lambda-mu}) is equivalent to the pairwise vanishing of the classical intersection pairing $\lambda_0(A_i,A_j)\in\Z[\pi_1X]$ for $i\neq j$ (section~\ref{subsec:classical-mu-lambda}).
Show that $\lambda_0(A)$ vanishes if and only if $A$ supports an order~$1$ non-repeating Whitney tower.


\subsubsection{Exercise:}\label{ex:make-twisted-w-disk-framed}
Show that any twisted Whitney disk can be converted to a framed Whitey disk having the same boundary by applying the boundary-twisting operation of section~\ref{subsec:geometry-of-relations}.

\subsubsection{Exercise:}\label{ex:order-1-w-tower-implies-tau-0-vanishes}
Show that $\tau_0(A)$ vanishes if $A$ supports an order~1 framed Whitney tower.

\subsubsection{Exercise:}\label{ex:vanishing-tau-0-gives-order-1-w-tower}
Show that $A$ supports an order~$1$ framed Whitney tower if $\tau_0(A)$ vanishes.


\subsubsection{Exercise:}\label{ex:order-2-pull-apart-triple}
If $A_1,A_2,A_3$ support an order $2$ non-repeating Whitney tower, show that $A_1,A_2,A_3$ can be made pairwise disjoint by a homotopy.

\subsubsection{Exercise:}\label{ex:geo-cancelling-implies-alg-cancelling}
If order~1 intersections $p,q\in W_{(i,j)}\pitchfork A_k$ are paired by an order~2 Whitney disk $W_{((i,j),k)}$, then $p$ and $q$ have opposite signs by the definition of Whitney disks.
Check that the order~1 decorated trees $t_p$ and $t_q$ associated to such $p$ and $q$ are equal for appropriate choices of trivalent whiskers.

\subsubsection{Exercise:}\label{ex:lambda1-cyclic-order}

Show that in the target of $\lambda_1(A_1,A_2,A_3)$ of section~\ref{subsec:lambda1} the AS relations could be avoided by using the cyclic ordering of the distinct labels to prescribe orientations on all the Whitney disks in $\cW$ (via a choice of positive or negative corner convention cf. section~\ref{subsec:w-tower-tree-orientations}).

\subsubsection{Exercise:}\label{Ex:tau1-realization-borro-rings}
Let $\Gamma$ be a finitely presented group, and let $t$ be any order~1 decorated tree representing an element $[t]\in\tilde{\cT_1}(\Gamma)$. 
Find a $4$-manifold $X$ with non-empty boundary such that $\pi_1X=\Gamma$, and $A:S^2\imra X$ with INT($A$) trivial and $\tau_1(A)=[t]$.

%

\subsubsection{Exercise:}\label{Ex:tau1-realization-2}
Generalizing the previous exercise, let $\Gamma$ be a finitely presented group, and let $g$ be any element of $\tilde{\cT_1}(\Gamma)$. 
Find a $4$-manifold $X$ with non-empty boundary such that $\pi_1X=\Gamma$, and $A:S^2\imra X$ with INT($A$) trivial and $\tau_1(A)=g$.


\subsubsection{Exercise:}\label{Ex:tau1-realization-3}
Generalizing the previous exercise, let $\Gamma$ be a finitely presented group, let $g$ be any element of $\tilde{\cT_1}(\Gamma)$, and let 
$\{z_1,z_2,\ldots,z_n\}$ be any collection $z_i\in\Z[\Gamma]$. 
Find a $4$-manifold $X$ with non-empty boundary such that $\pi_1X=\Gamma$, and $A:S^2\imra X$ with INT($A$) determined by $\lambda_0(A,S_i)=z_i$ for $S_i$ generating $\pi_2X$, and $\tau_1(A)=g$.

\section{Appendix}\label{sec:appendix}

Section~\ref{subsec:tau1-well-defined} gives a detailed proof of the homotopy invariance
of the order~1 invariant $\tau_1(A)$ from section~\ref{subsec:tau-1}, and section~\ref{subsec:tau1-vanishes-equals-order-2-w-tower} proves that $\tau_1(A)$ is the complete obstruction to $A$ supporting an order~2 framed Whitney tower.
This proves the first two statements of Theorem~\ref{thm:tau1-vanishes}, and simpler versions of the analogous arguments prove the first two statements of Theorem~\ref{thm:lambda1-vanishes} (Exercise~\ref{ex:derive-non-repeating-order-1-thm-from-tau-1-proof}).

The splitting of Whitney towers illustrated in Figure~\ref{split-w-tower-with-trees-fig} is extended to twisted Whitney towers in
section~\ref{subsec:split-w-towers}.

Whitney move versions of the IHX and twisted IHX relations are described in detail in section~\ref{subsec:w-move-twistedIHX}.
These constructions are essential to the framed, twisted, and non-repeating order-raising obstruction theories. A proof of the twisted order-raising Theorem~\ref{thm:twisted-order-raising} is sketched in
section~\ref{subsec:order-raising-proof-sketch}.

%
%




\subsection{Homotopy invariance of $\tau_1(A)$}\label{subsec:tau1-well-defined}
Here we show that for $A:S^2\imra X^4$ the order~1 framed intersection invariant $\tau_1(A)$ of Theorem~\ref{thm:tau1-vanishes} in section~\ref{subsec:tau-1}
only depends on the homotopy class of $A$.

The definition of $\tau_1(A)$ requires that $A$ supports an order~1 framed Whitney tower $\cW$, the existence of which is equivalent to $A$ having vanishing order~0 self-intersection invariant $\tau_0(A)=0$ by Theorem~\ref{thm:tau-0-vanishes}. As recalled and clarified below, $\tau_1(A)$ is determined by the intersection forest $t(\cW)$, which in this setting is a multiset of signed decorated order~1 trees, one for each transverse intersection between $A$ and a Whitney disk in $\cW$.
The main challenge is to show that the element represented by $t(\cW)$ in the target of $\tau_1(A)$ does not depend on the choices of oriented Whitney 
disks in $\cW$. Then invariance under homotopies of $A$ will be shown in section~\ref{subsubsec:htpy-invariance-tau1}.

%
%
%
%
%
%
%
%
%
%
%
%
%
%
%
%
%
%

\subsubsection{Order~1 tree notation}\label{subsubsec:Y-tree-notation}
We introduce the following streamlined notation for the order~1 decorated trees of section~\ref{subsec:decorated-order-1-trees} in our current $1$-component setting: Since now the univalent labels are not relevant, a tree is determined by a cyclic ordering of the elements of $\pi_1X$ decorating the edges. By appropriately choosing the whisker on the trivalent vertex, one edge decoration on each tree can be normalized to the trivial element $1\in\pi_1X$. So for $a,b\in\pi_1X$ we will use the notation $(a,b)$ to denote the decorated order~1 tree with one edge decorated by $1$ and the other two edges decorated by $a$ and $b$ respectively, with the vertex orientation given by the cyclic ordering $1\to a\to b$.

As before, any edge decorations in $\Z[\pi_1X]$ are understood to be linear combinations of trees.


\subsubsection{Relations in target of $\tau_1$}\label{subsubsec:tau1-relations}
Recall from section~\ref{subsec:tau-1} that $\tilde{\cT_1}:=\tilde{\cT_1}(\pi_1X)$ denotes the abelian group on decorated order~1 trees modulo the AS \emph{antisymmetry}, HOL \emph{holonomy} and FR \emph{framing} relations (see just below), and that $\tau_1(A)$ takes values in $\tilde{\cT_1}$ modulo
the INT \emph{intersection} relations. Since these last relations depend on $A$ we sometimes denote them by INT$(A)$ for emphasis.

The relations in the target of $\tau_1(A)$ expressed in $\Z[\pi_1X]\times\Z[\pi_1X]$ via the streamlined notation are:

\begin{align*}
\mbox{AS:}\quad\quad&(a,b)=-(b,a)\\
\mbox{HOL:}\quad\quad&(a,b)=(ba^{-1},a^{-1})=(b^{-1},ab^{-1})\\
\mbox{FR:}\quad\quad&(1,a)+(a,a)=0\\
\mbox{INT:}\quad\quad&(a,\lambda_0(A, S) +\omega_2(S)\cdot 1 )=0
\end{align*}
Here $a,b\in\pi_1X$, and $S$ ranges over generators for $\pi_2X$ (as a module over $\pi_1X$).

The AS and HOL relations imply that the decorated trees $(a,1)$ and $(a,a)$ are $2$-torsion elements, which by FR are equal. This will be used in several places below.

Terms are expanded linearly in the INT relations, so 
$$
(a,\lambda_0(A, S) +\omega_2(S)\cdot 1 )=(a,\lambda_0(A, S))+ (a,\omega_2(S)\cdot 1 ), 
$$
and
$$
(a,\lambda_0(A, S))=(a,\sum\epsilon_p\cdot g_p):=\sum\epsilon_p\cdot(a,g_p) 
$$
 for $\lambda_0(A, S)=\sum_{p\in A\pitchfork S}\epsilon_p\cdot g_p$.
 
For the case $a^2=1\in\pi_1X$ in the INT relations, we allow $S$ to be an immersed $\RP^2$ representing $a$, ie.~$a\in\pi_1X$ is the image of $\pi_1\RP^2$.
When $S$ is an $\RP^2$ the order $0$ intersection pairing $\lambda_0(A,S)$ is only well-defined up to right multiplication by $a$ and change of sign, but the AS and HOL relations make $(a,\lambda_0(A, S))$ well-defined in $\tilde{\cT_1}$. This will be discussed in detail in section~\ref{subsubsec:RP2-INT} where this form of the INT relation is used in the proof that $\tau_1(A)$ is well defined.

\subsubsection{Definition of $\tau_1(A)$}\label{subsubsec:tau1-def}

For $A$ satisfying $\tau_0(A)=0$, choose any order~1 framed Whitney tower $\cW$ on $A$ and define:
$$
\tau_1(A):=\sum \epsilon_p\cdot t_p\in\tilde{\cT_1}/\mbox{INT}(A)
$$
where the sum is over all transverse intersections $p$ between $A$ and the Whitney disks in $\cW$.
We use the \emph{positive corner} convention from section~\ref{subsec:w-tower-tree-orientations} for inducing cyclic orientations at the trivalent vertices from the Whitney disk orientations.

We remark that by eliminating all order~2 intersections using the pushing-down operation (Exercise~\ref{ex:push-down}) it may be arranged that the intersection forest $t(\cW)$ (section~\ref{subsec:int-forest-def}) is equal to the sum defining $\tau_1(A)$, ie.~$\tau_1(A)=[t(\cW)]\in\tilde{\cT_1}/\mbox{INT}(A)$. In the following this will be assumed for clarity, although it is not necessary.

\subsubsection{Independence of Whitney disk interiors:}\label{subsubsec:w-disk-interiors}
Let
$W$ be a Whitney disk in $\cW$. If $V$
is another framed Whitney disk with same boundary $\gamma$ as $W$, then the union of
$W$ and $V$ is a map of a $2$--sphere, which might not be smooth
along $\gamma$. We want to use this sphere to express, via an INT relation, the change in $\tau_1(A)$ due to replacing $W$ by $V$. It is not obvious what intersections with $A$ might be created by perturbing the sphere to be smooth and transverse to $A$, so we will proceed carefully.

Denote by $\alpha$ and $\beta$ the two closed subarcs of $\alpha\cup\beta=\gamma:=\partial W=\partial V$ that run between the self-intersections $p$ and $q$ of $A$ that are paired by $W$ and $V$.

A small collar of $\alpha$ in $W$ determines a $1$-dimensional subbundle $w_\alpha$ of $\nu_XA|_\alpha$, the normal $D^2$-bundle to $A$ restricted to $\alpha$. Similarly, a small collar of $\alpha$ in $V$ determines a $1$-dimensional subbundle $v_\alpha$ of $\nu_XA|_\alpha$. Note that at each of the endpoints $p$ and $q$ of $\alpha$ there is a canonical $1$-dimensional subspace of $\nu_XA|_\alpha$ given by a short subarc of $\beta$. We may assume that $w_\alpha=v_\alpha$ near $p$ and $q$ by a small isotopy, so there is a relative rotation number $m\in\Z$ of $v_\alpha$ with respect to $w_\alpha$. A boundary-twist (Figure~\ref{boundary-twist-fig}) on $V$ changes $m$ by $\pm1$, so by performing $m$-many boundary-twists on $V$ we can get a new Whitney disk $V'$ such that the corresponding $1$-dimensional subbundle $v'_\alpha$ of $\nu_XA|_\alpha$ has zero relative rotation number with respect to $w_\alpha$, and hence it can be arranged by a small isotopy of $V'$ near $\alpha$ that $v'_\alpha=w_\alpha$ everywhere along $\alpha$. 

Applying the same discussion and boundary-twisting $n$ times along $\beta$, we can also arrange that a small collar $v'_\beta$ of $\beta$ in $V'$ satisfies 
$v'_\beta=w_\beta\subset\nu_XA|_\beta$ everywhere along $\beta$.


Now the unions $w:=w_\alpha\cup w_\beta$ and $v':=v'_\alpha\cup v'_\beta$ are collars of $\gamma$ in $W$ and $V'$ which are equal $w=v'$.

Removing these collars and gluing we get a map of a $2$-sphere $S:=(W\setminus w)\cup (V'\setminus v')$ which is smooth except along the gluing circle $\gamma':=\partial (W\setminus w)=\partial (V'\setminus v')$. Since $\gamma'$ is disjoint from $A$, perturbing $S$ to be smoothly immersed will not create any new intersections with $A$. (Note that we do not need to control self-intersections of $S$ because they do not correspond to contributions to $\tau_1$.)
The intersections between $S$ and $A$ are transverse and consist of $A\pitchfork W$ and $A\pitchfork V'$, where $A\pitchfork V'$ consists of $A\pitchfork V$ together with the $(m+n)$-many intersections of $A\pitchfork V'$ which were created by the boundary-twists on $V$. These boundary-twist intersections correspond to the trees $m(1,a)+n(a,a)$, where $m$ twists were done along the arc $\alpha\subset\gamma$ whose corresponding edge decoration has been normalized to $1$, and $n$ twists were done along the other arc $\beta\subset\gamma$ whose corresponding edge decoration is $a$ (Figure~\ref{fig:FR-relation-single-sphere}, with $b=1$). Taking the orientation of $S$ to be induced by the orientation of $V'$ and the opposite orientation of $W$, we have that the difference in $\tau_1(A)$ due to replacing $W$ by $V$ is $(a,\lambda_0(A,S))+m(1,a)+n(a,a)$. It just remains to check that $\omega_2(S)\equiv m+n$ mod $2$, so that this difference is an INT relation.

To compute $\omega_2(S)$ as a mod 2 self-intersection number, observe that $S$ is homotopic to $S'=W\cup V'$ by a homotopy supported near the common collars $w=v'$ that were deleted to form $S$ (Exercise~\ref{ex:independence-of-interior-htpy}). 
Now we can assume that $S'$ has been perturbed to be smooth along $\gamma$ without worrying about creating intersections with $A$ since we will only be counting the mod 2 intersections between $S'$ and a normal push-off $S''$.
Form $S''$ by taking any Whitney parallel push-off of $\gamma$ and extending over $W$ and $V'$ by Whitney sections to get the two hemispheres of $S''$, and count the number of intersections in $S'\pitchfork S''$: The number of intersections between $W$ and its push-off must be even, since $W$ is framed. 
The number of intersections between $V'$ and its push-off must equal $m+n$ mod 2, since $V'$ was created by $m+n$ boundary-twists on the framed $V$.
Each transverse intersection between $W$ and $V'$ will contribute two intersections between their push-offs.

So we have $\omega_2(S)\equiv m+n$ mod $2$,
and it follows that the
change in $\tau_1(\cW)$ resulting from replacing $W$ by $V$ is
exactly described by the INT($A$) relation $(a,\lambda_0(A,S))+m(1,a)+n(a,a)=(a,\lambda_0(A, S) +\omega_2(S)\cdot 1 )$.

\subsubsection{Independence of Whitney disk boundaries:}\label{subsubsec:w-disk-boundaries} 
To show that
$\tau_1(\cW)$ does not depend on the choices of boundaries of the
Whitney disks, for fixed pairings of the self-intersections
in $A$, it is convenient to temporarily weaken the definition of an order~1
framed Whitney tower by allowing transverse intersections among the
boundaries of the Whitney disks in $\cW$ (following \cite[Sec.10.8]{FQ} and \cite{ST1}). 
The definition of $\tau_1$ is extended to such Whitney
towers by assigning trees to the boundary intersections between
Whitney disks in the following way.



The Whitney disk orientations induce orientations on their boundaries via the usual convention
that $\overrightarrow{\partial W}$ together with a second inward
pointing vector give the orientation of $W$. 
We will use the
notation $\partial_+W$ to indicate the boundary arc of $W$ that
is oriented towards the positive self-intersection of $A$ paired by $W$, and
$\partial_-W$ for the boundary arc of $W$ that is oriented
towards the negative self-intersection paired by $W$.

Let $p\in
\partial_{\epsilon}W\cap\partial_{\delta}V$, for $ \epsilon,\delta\in\{+,-\}$, be a point
such that the ordered pair of tangent vectors
$(\overrightarrow{\partial_{\epsilon}
W},\overrightarrow{\partial_{\delta} V})_p$ is equal
to the orientation of $A$ at $p$.
Choose whiskers on the trivalent vertices of the rooted trees associated to $W$ and $V$ so that the edges dual to $\partial_{\epsilon}W$ and
$\partial_{\delta}V$ are each decorated by
the trivial element $1\in\pi_1X$. This determines elements
$a$ and $b$ decorating the other edges of the two respective trees. See Figure~\ref{W-disk-boundary-int-fig}.
Define the signed tree $\epsilon_p\cdot t_p$
associated to such a $p$ by:
\begin{equation}\label{eq:boundary-int-tree}
\epsilon_p\cdot t_p:=-\epsilon \delta \cdot(a^\epsilon,b^\delta)
\end{equation}
where $\epsilon_p,\epsilon,\delta\in\{+,-\}\widehat{=}\{+1,-1\}$. The reason for the minus sign in Equation~(\ref{eq:boundary-int-tree}) will be made clear in Figure~\ref{fig:cancelling-boundary-push} below. 

\begin{figure}[h]
\centerline{\includegraphics[scale=.35]{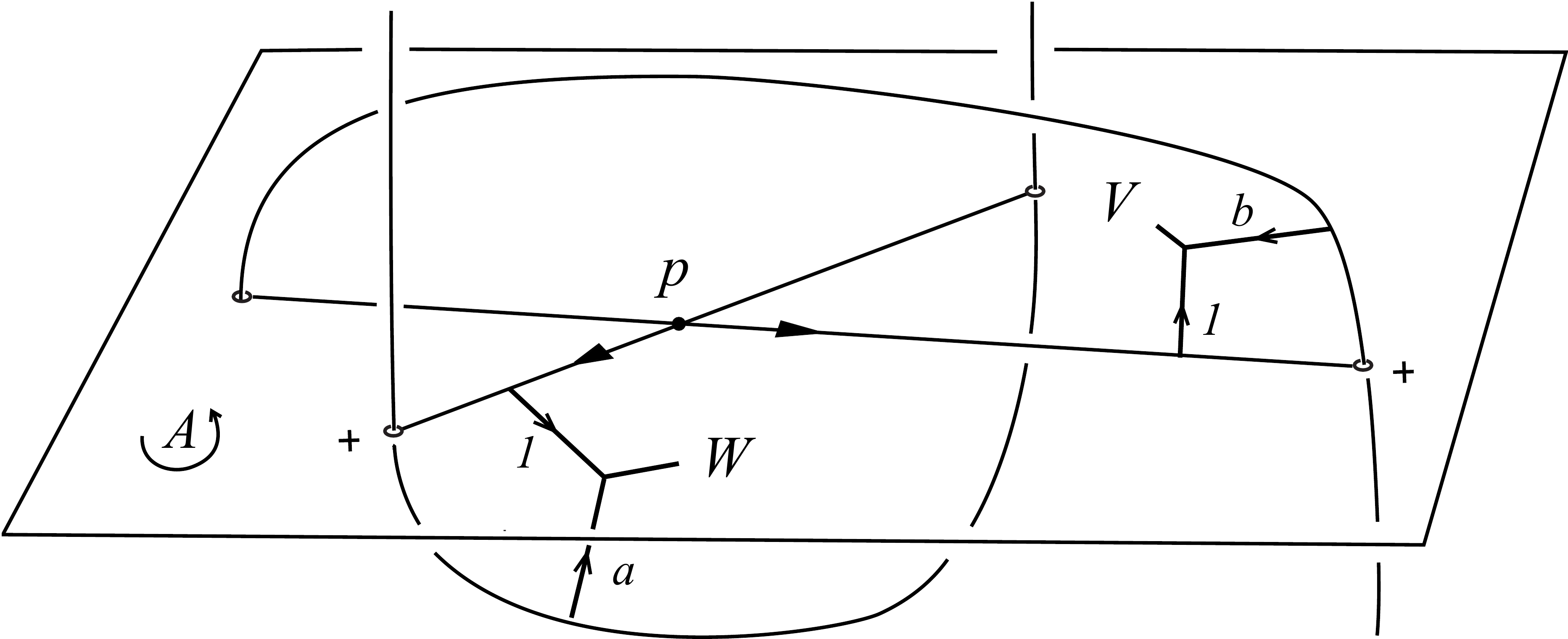}}
         \caption{The case $\epsilon=+=\delta$ in Equation~(\ref{eq:boundary-int-tree}): The illustrated intersection point
         $p\in\partial_{\epsilon}W \cap\partial_{\delta}V$
         between Whitney disk boundaries is assigned the signed tree $\epsilon_p\cdot t_p=-\epsilon\delta(a^\epsilon,b^\delta)=-(a,b)$.}
         \label{W-disk-boundary-int-fig}
\end{figure}

One can check that this definition of $t_p$ does not depend on the
choices made (Exercise~\ref{ex:boundary-int-def-well-defined}). The extended version of $\tau_1(A)$ is defined by
including all such $t_p$ in the sum. Since all boundary intersections
can be eliminated by finger moves which create interior
intersections having the exact same trees (Figure~\ref{W-disk-boundary-int-push-off-fig}), this extended definition can always be reduced to the
original one.
\begin{figure}[h]
\centerline{\includegraphics[scale=.35]{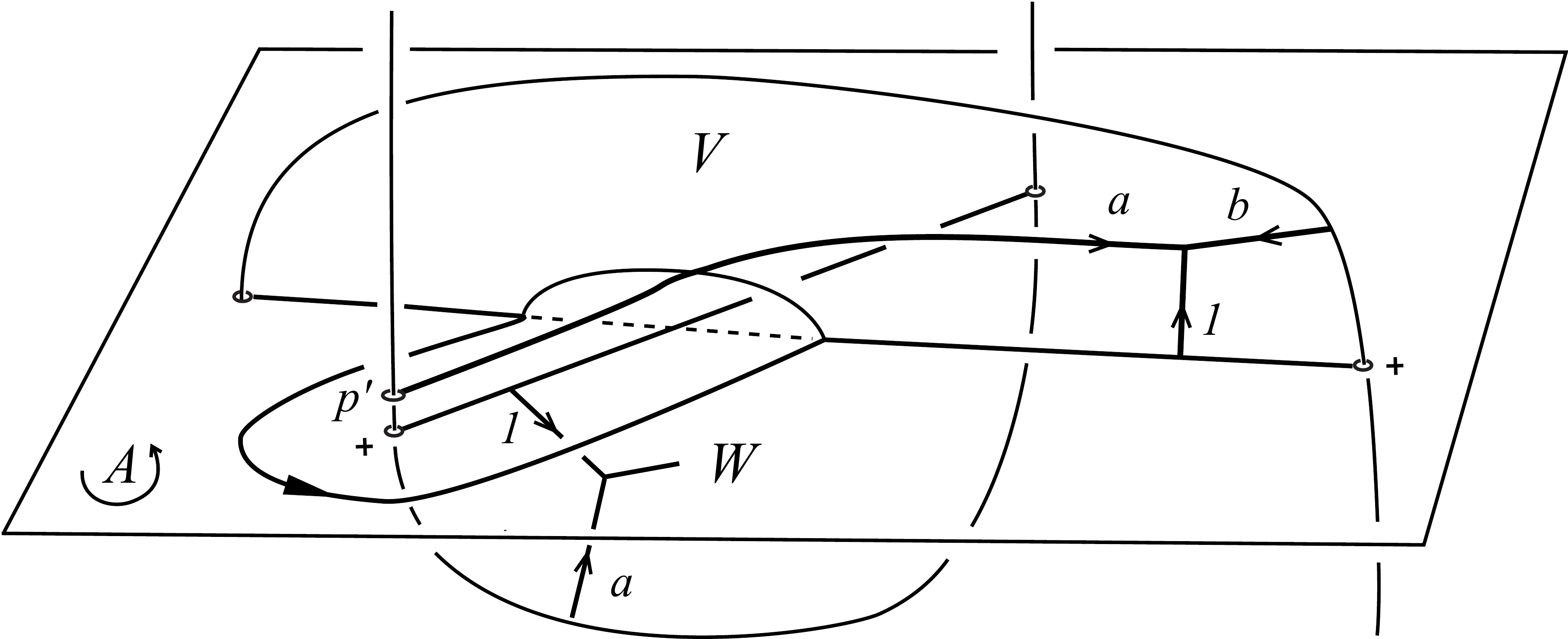}}
         \caption{Eliminating
         $p\in\partial_+W \cap\partial_+V$ from Figure~\ref{W-disk-boundary-int-fig} by pushing across the positive self-intersection paired by $W$
          creates a positive intersection $p'\in A\pitchfork V$ with signed tree $\epsilon_{p'}\cdot t_{p'}=+(b,a)=-(a,b)=\epsilon_p\cdot t_p$.}
         \label{W-disk-boundary-int-push-off-fig}
\end{figure}

Properly interpreted, the formula assigning $t_p$ to
$p\in\partial W\cap\partial V$ also works when $W=V$. For instance, for $p\in\partial_-W\cap\partial_+W$
such that the orientation of $A$ is equal to
$(\overrightarrow{\partial_{-} W},\overrightarrow{\partial_{+}
W})_p$, then $\epsilon_p\cdot t_p=-(a,a^{-1})$, where the
rooted tree associated to $W$ has decorations $1$ and $a$ on the edges
dual to the $\partial_+W$ and $\partial_-W$ boundary arcs respectively. (Exercise~\ref{ex:boundary-int-def-well-defined}.) 

The proof of independence of Whitney disk boundaries now goes as
follows. For a fixed choice of pairings of self-intersections induced by a given collection of Whitney disks, any
other configuration of Whitney disk boundaries can be achieved by a
regular homotopy of (collars of) the given Whitney disk boundaries,
fixing the self-intersection points of $A$ (Clarification: we mean here
that this regular homotopy is induced by a regular homotopy of the
inverse images of the Whitney disk boundaries in the domain of
$A$, and extends to a regular homotopy of collars of the Whitney
disks in $X$). During such a homotopy, $\tau_1$ does not change
since boundary intersections come and go in canceling pairs, or
accompanied by a canceling interior intersection when pushing
over a self-intersection point of $A$, see Figure~\ref{fig:cancelling-boundary-push} for one case and Exercise~\ref{ex:cancelling-boundary-push-cases} for the others.
(This step uses the fact that the domain of $A$ is simply connected.)
\begin{figure}[h]
\centerline{\includegraphics[scale=.35]{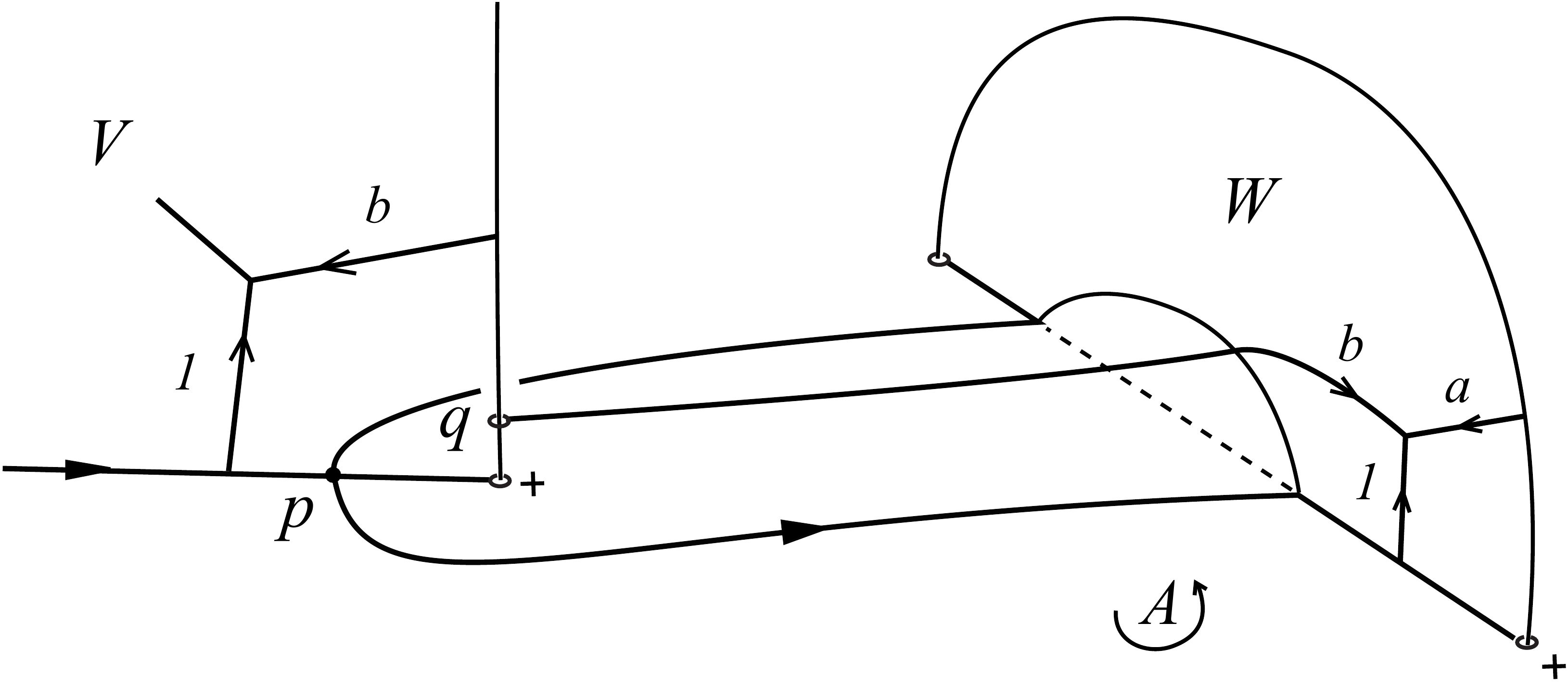}}
         \caption{Pushing $\partial_+W$ into $\partial_+V$ across the positive self-intersection of $A$ paired by $V$ creates
         $p\in\partial_+W \cap\partial_+V$ and $q\in A\pitchfork W$ with algebraically cancelling signed trees $\epsilon_{p}\cdot t_{p}=-(a,b)=-\epsilon_q\cdot t_q$.}
         \label{fig:cancelling-boundary-push}
\end{figure}

\subsubsection{Independence of pairings of self-intersections:}\label{subsubsec:pairings} 
Let $W$ be a Whitney disk in $\cW$ pairing self-intersections $p$ and $q$, and let $W'$ be another Whitney disk in $\cW$ pairing self-intersections $p'$ and $q'$, such that all these self-intersections determine the same element of $\pi_1X$.
Then there exists a framed Whitney disk $V'$ pairing $p'$ and $q$ such that $\partial V'$ is disjoint from all singularities in $\cW$ (including Whitney disk boundaries, cf.~Figure~\ref{W-disk-boundary-int-push-off-fig}). Now a framed Whitney disk $V$ pairing $p$ and $q'$ can be constructed from $W$ and $W'$ (minus small corners) and a parallel of $V'$ (with the orientation reversed) as illustrated in Figure~\ref{pairing-choice-change-fig}. 

Replacing $W$ and $W'$ with $V$ and $V'$ in $\cW$ does not change $\tau_1(A)$ since the 
order~1 decorated trees corresponding to the intersections $A\pitchfork V'$ cancel with those corresponding to the oppositely-signed parallel copy of $V'$ in $V$
(Exercise~\ref{ex:pairing-choice}). Any choice of pairings can be achieved by iterating this construction.

\begin{figure}[h]
\centerline{\includegraphics[scale=.25]{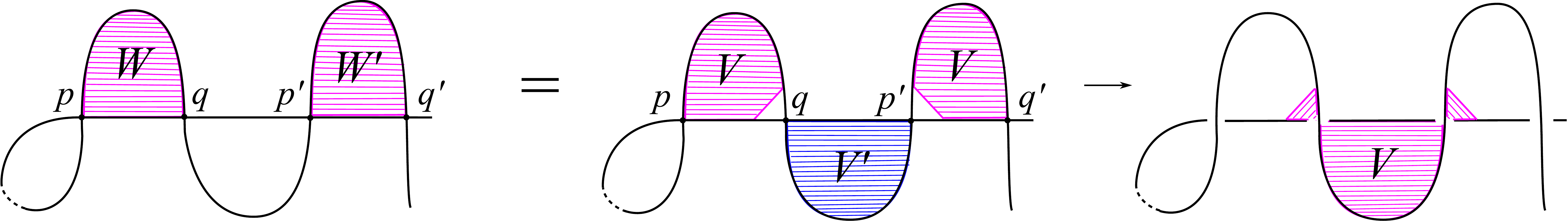}}
         \caption{Transverse intersections between Whitney disks and $A$ are not shown. This picture is otherwise accurate up to diffeomorphism since $W$, $W'$ and $V'$ can be assumed to be framed and embedded. The dotted sub-arc of the doublepoint loop for $p$ is understood to extend outside the $4$--ball of the figure.}
         \label{pairing-choice-change-fig}
\end{figure}

\subsubsection{The INT relation for $\RP^2$s}\label{subsubsec:RP2-INT}
For use in the next step of the proof, we clarify here the INT relation $(a,\lambda_0(A, R) +\omega_2(R)\cdot 1 )=0\in\tilde{\cT}_1$ for the case where $a^2=1$ and $R:\RP^2\imra X$.

The pairing $\lambda_0(A,R)$ sums the signed group elements $\epsilon_p\cdot g_p$ associated to each $p\in A\pitchfork R$ as in section~\ref{subsec:classical-mu-lambda}, except that because $\RP^2$ is neither simply connected nor orientable now both $\epsilon_p$ and $g_p$ depend on the choice of sheet-changing loop through $p$. Let $d$ be a sheet-changing loop through $p$, and let $d_R$ be the path $d\cap R$. Then $g_p=[d]\in\pi_1X$ is the group element associated to $p$ using $d$, and by definition the sign $\epsilon_p$ is gotten by transporting a local orientation of $R$ from the basepoint of $R$ back along $d_R$ to $p$, where it is paired with the orientation of $A$ at $p$ for comparison with the orientation of $X$. Any different choice of $d_R$ changes $g_p$ by right multiplication by $a^n$ and changes $\epsilon_p$ by multiplication by $(-1)^n$ (Exercise~\ref{ex:RP2-lambda0-indeterminacy}), so $\lambda_0(A,R)\in\Z[\pi_1X]$ is only well-defined up to the relations $g=-ga$, for all $g\in\pi_1X$.

In the setting of the INT relations, the local orientation of $R$ comes from the orientation of the Whitney disk that $R$ has been tubed into,
and we have $(a,g)=-(a,ga)$ by the HOL and AS relations. So $(a,\lambda_0(A, R))$ is well-defined in $\tilde{\cT}_1$.

As for $2$-spheres, the second Stiefel--Whitney number $\omega_2(R)\in\Z_2$ is computed as $|R\pitchfork R'|$ mod $2$, where $R'$ is a parallel copy of $R$. (Recall that $(a,\omega_2(R)\cdot 1)$ is 2-torsion in $\tilde{\cT}_1$.)


\subsubsection{Independence of sheet choices:}\label{subsubsec:sheet-choices} 
For $A:S^2\imra X$, let $p$ and $q$ be a positive and a negative transverse self-intersection of $A$, and
denote the preimages by $A^{-1}(p)=\{x,x'\}\subset S^2$, and $A^{-1}(q)=\{y,y'\}\subset S^2$. 

If $p$ and $q$ have common group element $a$,
then any Whitney disk $W$ pairing $p$ and $q$ induces a pairing of $\{x,x'\}$ with $\{y,y'\}$ since each arc of $\partial W$ runs between a sheet of $A$ around $p$ and a sheet of $A$ around $q$. 

Framed Whitney disks exist for \emph{both} of the two pairing choices $x\leftrightarrow y, x'\leftrightarrow y'$ and $x\leftrightarrow y', x'\leftrightarrow y$ if and only if $a^2=1\in\pi_1X$ (Exercise~\ref{ex:sheet-choice-w-disks-exist}).
So we need to show that for any $a$ such that $a^2=1$, $\tau_1(A)$ does not depend on these \emph{choices of preimage pairings}, also called \emph{choices of sheets} (\cite[Sec.4]{ST4}). 

\begin{figure}[h]
\centerline{\includegraphics[scale=.2]{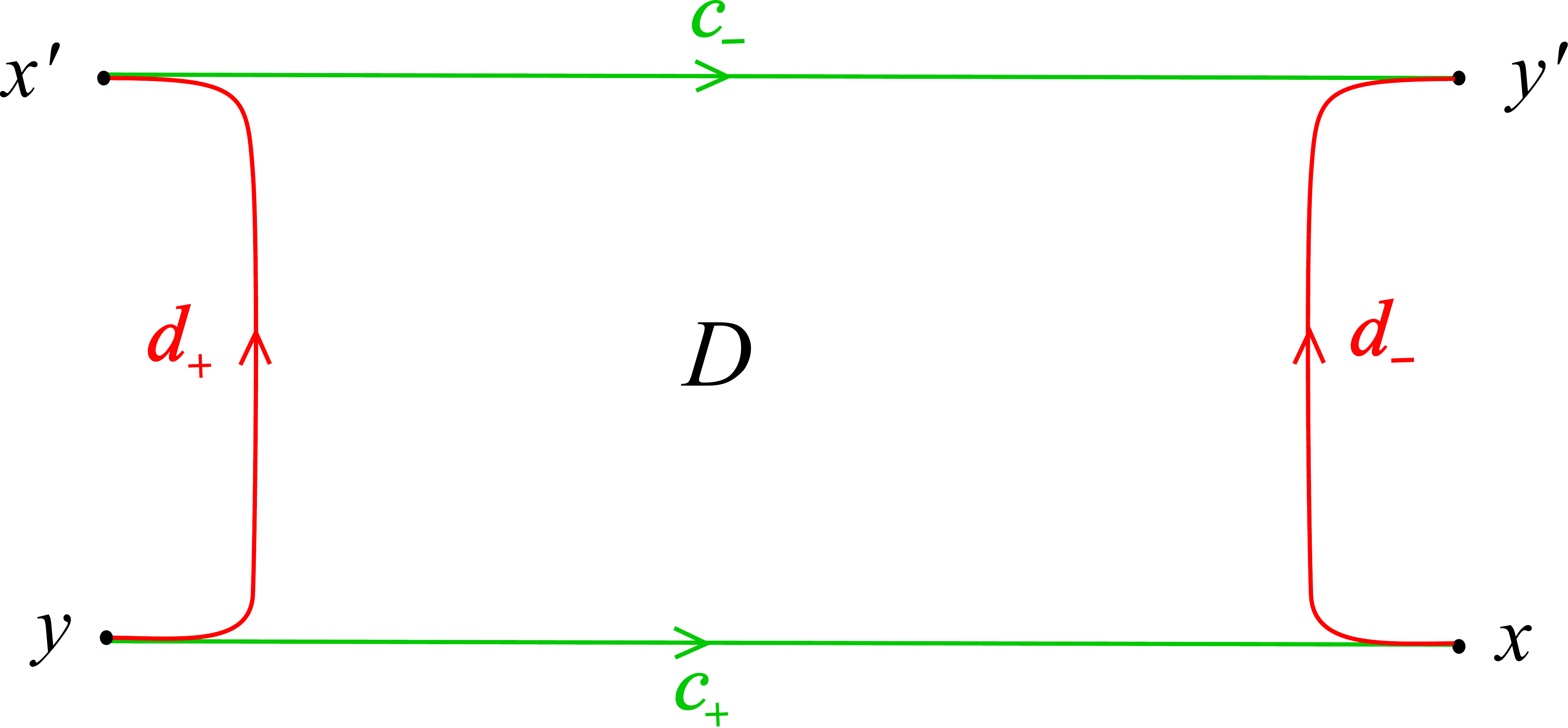}}
         \caption{In $S^2$: The quadrilateral $r=c_+\cup d_-\cup c_-\cup d_+$ bounds the 2-cell $D$.}
         \label{fig:sheet-choice-preimage-arcs}
\end{figure}

Let $W$ and $V$ be framed Whitney disks corresponding to the two
ways of pairing the preimages of such a pair $p$ and $q$ having group element $a$ with $a^2=1$. 
The union of the preimages of the Whitney disk boundary arcs $A^{-1}(\partial W)\cup A^{-1}(\partial V)$ is a quadrilateral $r$
in $S^2$ which is the union of the two pairs of arcs
$c_{\pm}:=A^{-1}(\partial_{\pm}W$) and
$d_{\pm}:=A^{-1}(\partial_{\pm}V)$
(see Figure~\ref{fig:sheet-choice-preimage-arcs}). Since $\tau_1(A)$ does not depend on the choices of boundaries of $W$ and $V$,
we may arrange that $r$ is embedded
and bounds a 2-cell $D$ in $S^2$ such that $A$
restricts to an embedding on $D$.
(For later use we have chosen $\partial V$ to coincide with $\partial W$ near $p$ and $q$, as indicated in Figure~\ref{fig:sheet-choice-preimage-arcs}.)

The union $A(D)\cup W\cup V$ defines the image of a map $R:\RP^2\to X$ representing $a$ (Exercise~\ref{ex:RP2}).  
This $R$ contains the transverse intersections $A\pitchfork W$ and $A\pitchfork V$, but $R$ is not transverse to $A$ along $A(D)$, and $R$ is not smooth along $\rho:=A(r)$. 
We will describe a perturbation of $R$ to a smoothly immersed $R':\RP^2\imra X$ transverse to $A$ such that the perturbation creates
exactly new intersections $R'\pitchfork A$ which correspond to $\omega_2(R')$. Then then the change in $\tau_1(A)$ due to replacing $W$ by $V$ will be given by $(a,\lambda_0(A, R') +\omega_2(R')\cdot 1 )$.

The perturbation of $R$ to $R'$ uses the following lemma, which will be proved below:
\begin{lem}\label{lem:rho-prime}
There exists a push-off $\rho'$ of $\rho$ such that $\rho'$ is normal to $A$ and $\rho'$ restricts to Whitney parallels of $\partial W$ and $\partial V$. 
\end{lem}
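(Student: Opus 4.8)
The plan is to build $\rho'$ arc by arc out of Whitney sections of $W$ and of $V$ that have first been isotoped into the normal bundle of $A$, and then to check that the four pieces fit together continuously across the corners of $\rho$. First I would record the combinatorics: writing $r=c_+\cup d_-\cup c_-\cup d_+$ as in Figure~\ref{fig:sheet-choice-preimage-arcs}, the loop $\rho=A(r)$ is the concatenation of the four arcs $\partial_+W$, $\partial_-V$, $\partial_-W$, $\partial_+V$, so its four corners lie over the self-intersections $p$ and $q$ paired by both $W$ and $V$; and by the normalization made just before the lemma (that $\partial V$ agrees with $\partial W$ near $p$ and near $q$) each corner sits on a short segment of $\rho$ along which a boundary arc of $W$ and a boundary arc of $V$ coincide. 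Since $A$ restricts to an embedding on the $2$-cell $D$ with $\partial D=r$, the pullback of $\nu_XA$ over $\overline{D}$ is trivial; restricting to $r$ and carrying it down along $A$, the bundle $\nu_XA$ is trivialized along $\rho$. Thus a ``push-off of $\rho$ normal to $A$'' is exactly the data of a nonvanishing section of $\nu_XA$ along $\rho$, and what I must produce is such a section whose restrictions to $\partial W$ and to $\partial V$ are Whitney parallels.

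Next I would produce the local data. Because $p$ and $q$ have opposite signs, $\nu_XW|_{\partial W}$ and $\nu_XV|_{\partial V}$ admit Whitney sections (section~\ref{subsec:framed-w-disks}); such a section is tangent to $A$ over one of the two boundary arcs and normal to $A$ over the other, and I would fix the conventions for $W$ and for $V$ so that their ``tangent'' arcs are the ones occupying the shared segments near $p$ and $q$. Then, invoking the ``nowhere-tangent'' description of parallel Whitney circles (section~\ref{subsec:general-whitney-section}), I would isotope each Whitney section --- through sections that remain nowhere tangent to $A$, so that the twisting count is unchanged --- until over the formerly-tangent arc it has been rotated fully into the normal bundle $\nu_XA$ (over the other arc it already lies there). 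This gives, over each of the four arcs of $\rho$, a nonvanishing section of $\nu_XA$; pushing $\rho$ off $A$ in these directions and interpolating by a small homotopy supported near each corner assembles a section of $\nu_XA$ along $\rho$, hence a candidate $\rho'$ whose restriction over the two $W$-arcs is by construction the (isotoped) Whitney section of $W$, and likewise for $V$. The integer ambiguity introduced by the corner interpolations only shifts the winding of $\rho'$ about $\rho$, and is precisely the quantity whose parity surfaces as $\omega_2$ of the perturbed $\RP^2$ in the step that follows.

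The hard part will be the corner matching, and more precisely showing that over the segments where $\partial W$ and $\partial V$ coincide the two isotoped sections genuinely agree --- not merely up to homotopy --- since otherwise the local interpolation could be forced to carry data I do not control. I would argue that at $p$, and likewise at $q$, the value of a Whitney section depends only on the common boundary arc there together with the two local sheets of $A$: this is exactly the content of the opposite-sign model of a Whitney disk near a paired double point, which is what makes the Whitney section continuous across $p$ and $q$ in the first place. Hence the ``tangent'' part agrees along the shared segments from the outset, and the rotations used to push each section into $\nu_XA$ can be chosen to coincide there. Granting this, the four pieces fit together into the desired $\rho'$, normal to $A$ and restricting to Whitney parallels of $\partial W$ and of $\partial V$. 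An alternative, more hands-on route --- which may well be what is intended --- is to read $\rho'$ directly off an enlargement of Figure~\ref{fig:sheet-choice-preimage-arcs} together with the model Whitney-disk picture, tracking the Whitney section around all four corners by inspection.
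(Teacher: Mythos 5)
Your plan shares the essential idea with the paper's proof — the desired $\rho'$ is a parallel Whitney circle in the ``nowhere-tangent'' sense of section~\ref{subsec:general-whitney-section} (rather than the standard section, which is tangent over one arc), chosen to lie entirely in $\nu_XA$ — but the route you take is roundabout where the paper's is direct, and your corner-matching step, which you correctly flag as the crux, is not nailed down. Two issues specifically. First, the phrase ``isotope each Whitney section through sections that remain nowhere tangent to $A$'' does not parse: the standard Whitney section you start from \emph{is} tangent to a sheet of $A$ over one whole boundary arc, so no isotopy starting from it can stay nowhere-tangent throughout. What you mean is to rotate the tangent part into $\nu_XA$ through non-vanishing sections, keeping the twisting count; the paper avoids the issue entirely by never passing through the standard Whitney section at all. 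Second, and more seriously, the assertion that ``the value of a Whitney section depends only on the common boundary arc together with the two local sheets of $A$'' is not a statement one can lean on: a Whitney section involves genuine choices, and the continuity of a single section across $p$ is not the same as two independently chosen sections for $W$ and $V$ agreeing at $p$. What actually makes the matching automatic is a canonical local choice that the paper supplies and you do not: at each of $p$ and $q$, the two sheets $A_\pm$ cut the fiber of $\nu_XW$ (which equals $\nu_XV$ there, once one arranges $W=V$ near $p,q$) into four quadrants, and the orientations of $\partial A_\pm$ single out a \emph{preferred quadrant}. The paper defines $\rho'(p)$ and $\rho'(q)$ by picking vectors in these preferred quadrants and then extends outward along $\partial W$ and $\partial V$ through sections normal to $A$ (the preferred-quadrant choice is exactly what guarantees such an extension exists); agreement at the corners is then built in from the start rather than argued a posteriori. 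Your appeal to the $\omega_2$ bookkeeping is also a red herring here: the lemma only asserts existence, and with the preferred-quadrant construction there is no ambiguity to absorb. So: right destination, but the argument for corner compatibility needs the explicit preferred-quadrant device, and the isotopy language should be repaired.
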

See Remark~\ref{subsec:general-whitney-section} for an explanation of ``Whitney parallel''. 

Given $\rho'$ as in Lemma~\ref{lem:rho-prime}, we define $R':\RP^2\imra X$ as follows.
Extend $\rho'$ across $W$ and $V$ to get Whitney parallels $W'$ and $V'$.
Extend $\rho'$ generically across $A(D)$ to get $A(D)'$ which is transverse to $A(D)$.

This defines the image $W'\cup V'\cup A(D)'$ of a map $\RP^2\to X$.
Now smooth the corners of $W'\cup V'\cup A(D)'$ near $\rho'$ by a perturbation supported away from $A$
to get $R':\RP^2\imra X$.

We have $A\pitchfork R'=(A\pitchfork W')\cup (A\pitchfork V')\cup (A\pitchfork A(D)')$.

By construction, the intersections $A\pitchfork W'$ are all parallel to the intersections $A\pitchfork W$, and  
the intersections $A\pitchfork V'$ are all parallel to the intersections $A\pitchfork V$.
Also $A\pitchfork A(D)'=A(D)\pitchfork A(D)'$, since $A$ restricts to an embedding on $D$.

We claim that $|A(D)\pitchfork A(D)'|\equiv\omega_2(R')$ mod $2$. To check this claim, we can use a smooth perturbation of the topological $R:\RP^2\to X$ above as a parallel of $R'$, and compute
$\omega_2(R')=|R\pitchfork R'|$ mod $2$, as in section~\ref{subsubsec:RP2-INT}. 
Recall that $R=A(D)\cup W\cup V$, and since we now are not concerned with controlling intersections with $A$ we can take any small perturbation near $\rho$ to make $R$ smooth.   

Since $W$ and
$V$ are framed, and $R'$ restricts to Whitney parallels of $\partial W$ and $\partial V$,
we have $W\pitchfork W'\equiv 0\equiv V\pitchfork V'$ mod 2.
So the only possible contributions to $|R\pitchfork R'|$ mod $2$ come from $A(D)\pitchfork A(D)'$ as claimed.

The group element associated to any point in $A(D)\pitchfork A(D)'=A\pitchfork A(D)'\subset A\pitchfork R'$ is $1$ or $a$, so the change in $\tau_1(A)$ due to replacing $W$ by $V$ is
$(a,\lambda_0(A,R')+\omega_2(R')\cdot 1)$, since $(a,1)=(a,a)$ is $2$-torsion.

To show independence of sheet choice it just remains to prove Lemma~\ref{lem:rho-prime}.
\begin{proof}[Proof of Lemma~\ref{lem:rho-prime}] 

We want to define a push-off $\rho'$ of $\rho=\partial_+W\cup \partial_-V\cup-\partial_-W\cup-\partial_+V$ which is normal to $A$ and restricts to Whitney parallels of $\partial W$ and $\partial V$.

Consider the normal disk-bundle $\nu_XW|_{\partial W}$ which is the restriction to $\partial W$ of the normal disk-bundle $\nu_XW$ of $W$.
Denote by $A_\pm\subset \nu_XW|_{\partial W}$ the sheets corresponding to $A$ with $\partial_-W\subset A_-$ and $\partial_+W\subset A_+$.
Figure~\ref{W-section-over-int-pts-fig} shows an embedding of $\nu_XW|_{\partial W}\cong S^1\times D^2$ into $3$-space, as in Figure~\ref{W-subspaces}.

At $p$ and at $q$, the two sheets $A_{\pm}$ split each of $\nu_XW|_{\partial W}(q)$ and $\nu_XW|_{\partial W}(p)$ into four quadrants. 
We define a preferred quadrant at $p$ and at $q$ as follows. The orientation of $A$ induces orientations of the $A_{\pm}$, which in turn induce orientations of 
$\partial A_-$ and $\partial A_+$ via the usual convention. At the positive self-intersection $p$ the preferred quadrant is bounded by the vectors $\overrightarrow{\partial A_+}$ and $\overrightarrow{\partial A_-}$, and at the negative self-intersection $q$ the preferred quadrant is bounded by the vectors $-\overrightarrow{\partial A_+}$ and $-\overrightarrow{\partial A_-}$. 

Define $\rho'(p)$ and $\rho'(q)$ by choosing a vector in each of these preferred quadrants (see left of Figure~\ref{W-section-over-int-pts-fig}).

Extend $\rho'(p)$ and $\rho'(q)$ to a section $\rho'(\partial W)$ of $\nu_XW|_{\partial W}$ over $\partial W$ such that $\rho'(\partial W)$ is normal to both sheets of $A$ along $\partial W$, as in the right of Figure~\ref{W-section-over-int-pts-fig}. Note that this section exists by our choices of preferred quadrants.

\begin{figure}[h]
\centerline{\includegraphics[scale=.275]{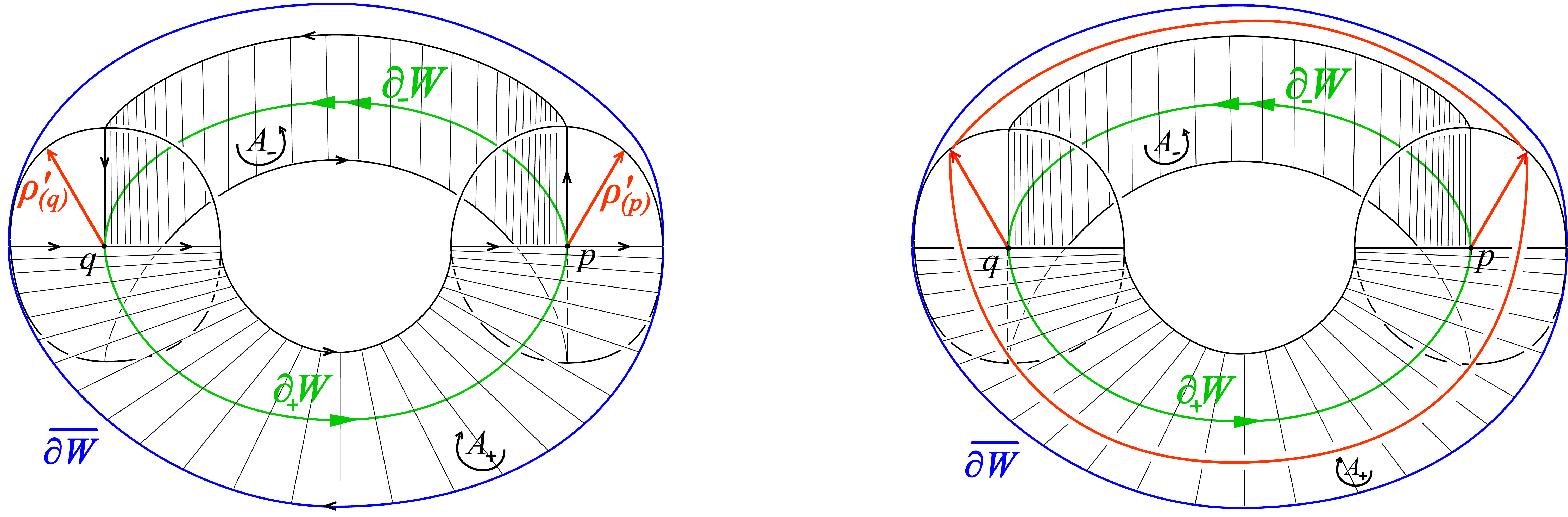}}
         \caption{In $\nu_XW|_{\partial W}$: On the left, $\rho'(q)$ and $\rho'(p)$ in the preferred quadrants, and on the right, extended to a red Whitney section $\rho'(\partial W)$ which is normal to $A$. The blue `standard' Whitney section $\overline{\partial W}$ is shown as a reference with Figure~\ref{W-subspaces}.}
         \label{W-section-over-int-pts-fig}
\end{figure}

To define $\rho'$ over $\partial V$, note that
we may assume that $V=W$ near both $p$ and $q$ (Exercise~\ref{ex:V=W-near-p-and-q-for-rho-prime-lemma}),
so  $\nu_XV|_{\partial V}$ coincides with $\nu_XW|_{\partial W}$ near $p$ and $q$.
This means that the preferred quadrants also coincide, and the previously defined $\rho'(p)$ and $\rho'(q)$ can be extended along $\partial V$ to a section $\rho'(\partial V)$
which is normal to the sheets of $A$.

So along $\rho=\partial_+W\cup \partial_-V\cup-\partial_-W\cup-\partial_+V$ we have the push-off $\rho'$ assembled from the push-offs along the given oriented subintervals starting at $\rho(q)$: The section $\rho'(\partial_+W)$ ending at $\rho'(p)$ which is the restriction to $\partial_+W$ of $\rho'(\partial W)$, followed by the section $\rho'(\partial_-V)$ which is the restriction to $\partial_-V$ of $\rho'(\partial V)$ running from $\rho'(p)$ back to $\rho'(q)$, 
followed by the section $\rho'(-\partial_-W)$ which is the restriction to $\partial_-W$ of $\rho'(\partial W)$ but running from $\rho'(q)$ back to $\rho'(p)$,
followed by the section $\rho'(-\partial_+V)$ which is the restriction to $\partial_+V$ of $\rho'(\partial V)$ but running from $\rho'(p)$ back to $\rho'(q)$.
\end{proof}

We have so far shown that $\tau_1(A)\in\widetilde{\cT}_1/\textrm{INT}(A)$ is independent of the choice of order~1 framed Whitney tower for a fixed immersion $A$.



\subsubsection{Invariance of $\tau_1(A)$ under homotopies of $A$}\label{subsubsec:htpy-invariance-tau1}
It suffices to check invariance under regular homopy by \cite[Thm.1.2]{PRT}.
Suppose that $\tau_0(A)$ vanishes, and $A$ is regularly homotopic to $A'$. By the homotopy invariance of $\tau_0$ it follows that $A'$ also supports an order~1 framed Whitney tower.
Recall that up to isotopy, any generic regular homotopy from $A$ to $A'$ can be realized as a sequence of finitely many finger moves followed by finitely many Whitney moves. 

An isotopy from $A$ to $A'$ extends to any Whitney tower $\cW$ on $A$ to yield a Whitney tower $\cW'$ on $A'$ with identical intersection forests 
$t(\cW)=t(\cW')$, so $\tau_1$ is invariant under isotopy. 

Since any Whitney move has a finger move as an ``inverse'', there exists $A''$ which differs from each of $A$ and $A'$ by only finger moves (up to isotopy). Since a finger move is supported near an arc, it can be made disjoint from the Whitney disks in any pre-existing Whitney tower by a small isotopy, and the pair of intersections created by a finger move admit a local clean Whitney disk disjoint from any other Whitney disks. So any Whitney tower on $A$ or $A'$ gives rise to a Whitney tower on $A''$ yielding $\tau_1(A)=\tau_1(A'')=\tau_1(A')$, since $\tau_1$ does not depend on the choice of Whitney tower on $A''$.
Here we are also using that $\textrm{INT}(A)=\textrm{INT}(A')=\textrm{INT}(A'')$, since $A$, $A'$ and $A''$ are all homotopic.

\subsection{$\tau_1(A)$ vanishes if and only if $A$ supports an order~2 framed Whitney tower}\label{subsec:tau1-vanishes-equals-order-2-w-tower}
Here we show the equivalence of statements~(\ref{thm-item:lambda1-vanishes}) and (\ref{thm-item:A-admits-order2-tower}) in Theorem~\ref{thm:tau1-vanishes}.

If $A$ supports an order~2 framed Whitney tower $\cV$, then $\tau_1(A)$ vanishes since $\cV$ is also an order~1 framed Whitney tower, and 
$\cV$ contains no unpaired order~1 intersections.

So assume that $\tau_1(A)$ vanishes, and let $\cW$ be an order~1 framed Whitney tower on $A$.
The vanishing of $\tau_1(A)=[t(\cW)]\in\widetilde{\cT_1}/\mbox{INT}$ means that the intersection forest $t(\cW)=\sum \epsilon_p\cdot t_p$ lies in the span of the AS, HOL, FR and INT relators.
Here we are considering $t(\cW)$ as a word in the span of decorated order~1 trees.

The construction of an order~2 framed Whitney tower on $A$ will involve two main steps: 
First, geometric realizations of the relators will be used to modify $\cW$ so that $t(\cW)$ consists of pairs $\pm t_p$ of oppositely-signed isomorphic trees
corresponding to \emph{algebraically cancelling pairs} of intersections. 
Then, further controlled modifications of $\cW$ will convert these algebraically cancelling pairs into \emph{geometrically cancelling pairs} of intersections admitting Whitney disks.  
The issue here is that an algebraically cancelling pair of intersections may lie in different Whitney disks (Figure~\ref{fig:transfer-move-Before-1}), so achieving geometric cancellation will require ``transferring'' intersections from one Whitney disk to another.

In the following $\cW$ will not be renamed as controlled modifications are made.

\subsubsection{}\label{subsubsection:towards-alg-cancellation} 
\textbf{Towards algebraic cancellation:} Perform a finger move on $A$ guided by a circle representing any $a\in\pi_1X$, and then tube the resulting clean local framed Whitney disk into a $2$-sphere $S$. The resulting Whitney disk $W$ has twisting $\omega(W)\equiv \omega_2(S)$ mod $2$. If $\omega_2(S)=0$, then after performing some interior twists on $W$ it can be arranged that $\omega(W)=0\in\Z$. If $\omega_2(S)=1$, then after performing one boundary-twist and some interior twists on $W$ it can be arranged that $\omega(W)=0\in\Z$.
So after recovering the framing on $W$ it follows that $[t(\cW)]\in\widetilde{\cT_1}$ is changed exactly by adding the INT relator $(a,\lambda_0(A,S)+\omega_2(S)\cdot 1)$. 

In the case where $a^2=1$, a similar but more complicated procedure changes $[t(\cW)]\in\widetilde{\cT_1}$ exactly by adding $(a,\lambda_0(A,R)+\omega_2(R)\cdot 1)$, where $R:\RP^2\imra X$ represents $a$. This will be shown below in Lemma~\ref{lem:realize-RP2-INT}.

By realizing INT relations in these ways we can arrange that $t(\cW)$ lies in the span of AS, HOL and FR relators.
To realize any FR relator $(1,a)+(a,a)$ perform a finger move guided by a circle representing $a$ to get a clean local framed Whitney disk $W$.
Then performing two opposite boundary twists around each boundary arc of $W$ changes $t(\cW)$ exactly by $(1,a)+(a,a)$.

So by realizing INT and FR relators we can arrange that $t(\cW)$ lies in the span of AS and HOL relators. By re-choosing whiskers
on the trivalent vertices of trees to realize HOL relations we can further arrange that $t(\cW)$ lies in the span of AS relators.

At this point it is convenient to \emph{split} $\cW$ so that each Whitney disk in $\cW$ contains exactly 
one interior intersection with $A$. This splitting is accomplished by finger moves guided by arcs the Whitney disks running from one boundary arc to the other, and is the easiest case of Lemma~\ref{lem:split-w-tower} (see also Figure~\ref{split-w-tower-with-trees-fig}).
Now any tree $(a,b)\in t(\cW)$ can be changed to $-(a,b)\in t(\cW)$ simply by re-choosing the orientation of the corresponding Whitney disk.

As a result of these constructions we can assume that now the un-paired intersections in $\cW$ consist of algebraically cancelling pairs.

\begin{figure}[h]
\includegraphics[scale=.4]{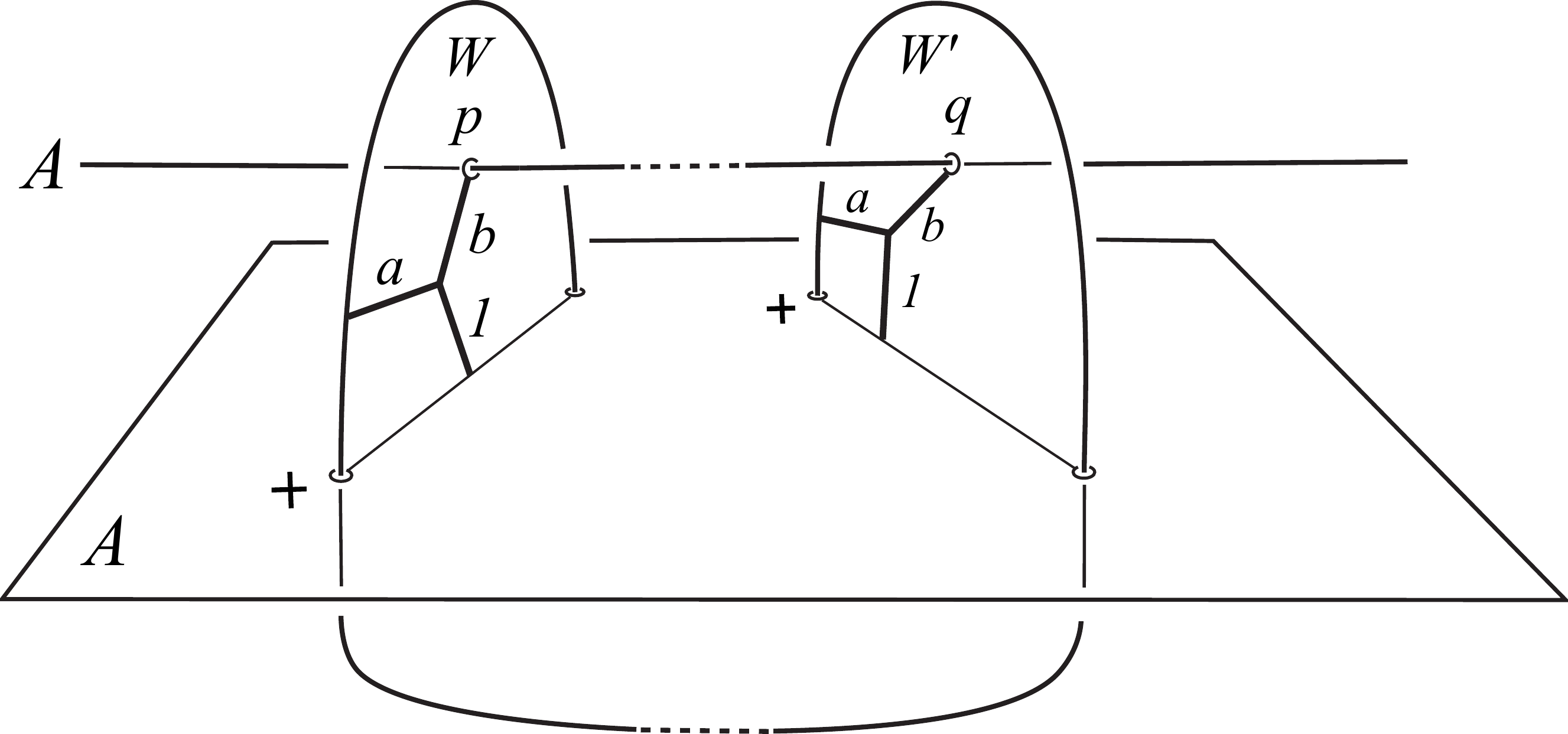}
\caption{}\label{fig:transfer-move-Before-1}
\end{figure}
\subsubsection{}\label{subsubsection:towards-geo-cancellation}
\textbf{Towards geometric cancellation:} 
Consider an algebraically cancelling pair of intersections $p=A\pitchfork W$ and $q=A\pitchfork W'$, with $\epsilon\cdot t_p=(a,b)$ and $\epsilon\cdot t_q=-(a,b)$, where the $b$-decorated edge changes sheets at $p$ and $q$ respectively. See Figure~\ref{fig:transfer-move-Before-1}, where the short dashed sub-arcs indicate where sheets extend outside the $4$-ball shown in the figure. 

We will describe a controlled modification of $\cW$ that ``transfers'' $p$ over to $W'$ so that $p$ and $q$ admit a framed order~2 Whitney disk. This ``transfer move'' will not create any new unpaired order~1 intersections, and can be iteratively applied to convert all algebraically cancelling pairs into geometrically cancelling pairs, yielding the desired order~2 framed Whitney tower on $A$. Description of this modification will be accompanied by Figures~\ref{fig:transfer-move-Before-1} through Figure~\ref{fig:transfer-move-2}, with some details left to the exercises.

\begin{figure}[h]
\includegraphics[scale=.4]{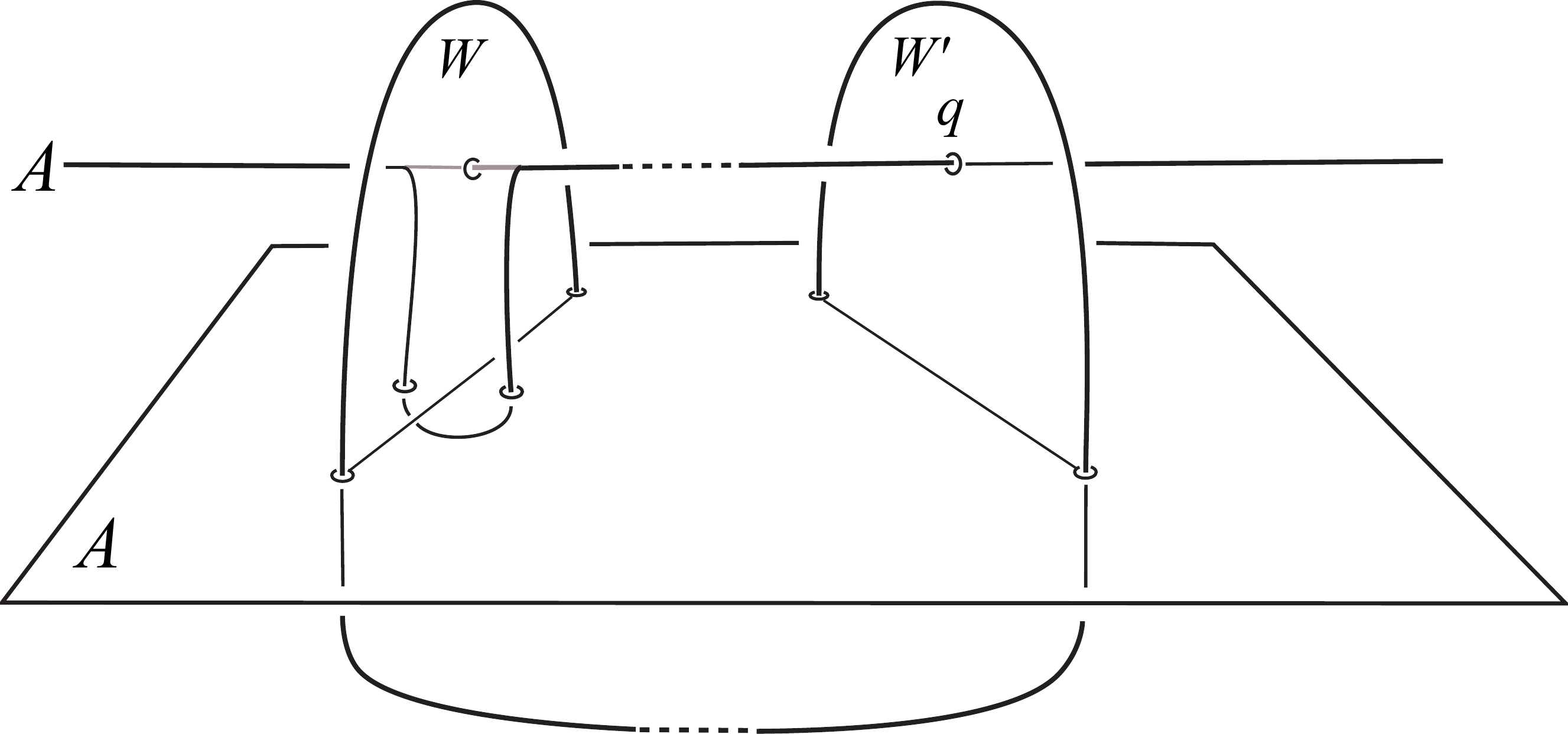}
\caption{}\label{fig:transfer-move-first-finger-move}
\end{figure}
The transfer move starts by pushing $p$ off of $W$ by a finger move into $A$ as shown in Figure~\ref{fig:transfer-move-first-finger-move}. This finger move is guided by an arc in $W$ along the $b$-labeled and $1$-labeled edges of $t_p$ from $p$ to $\partial_+W$.
\begin{figure}[h]
\includegraphics[scale=.4]{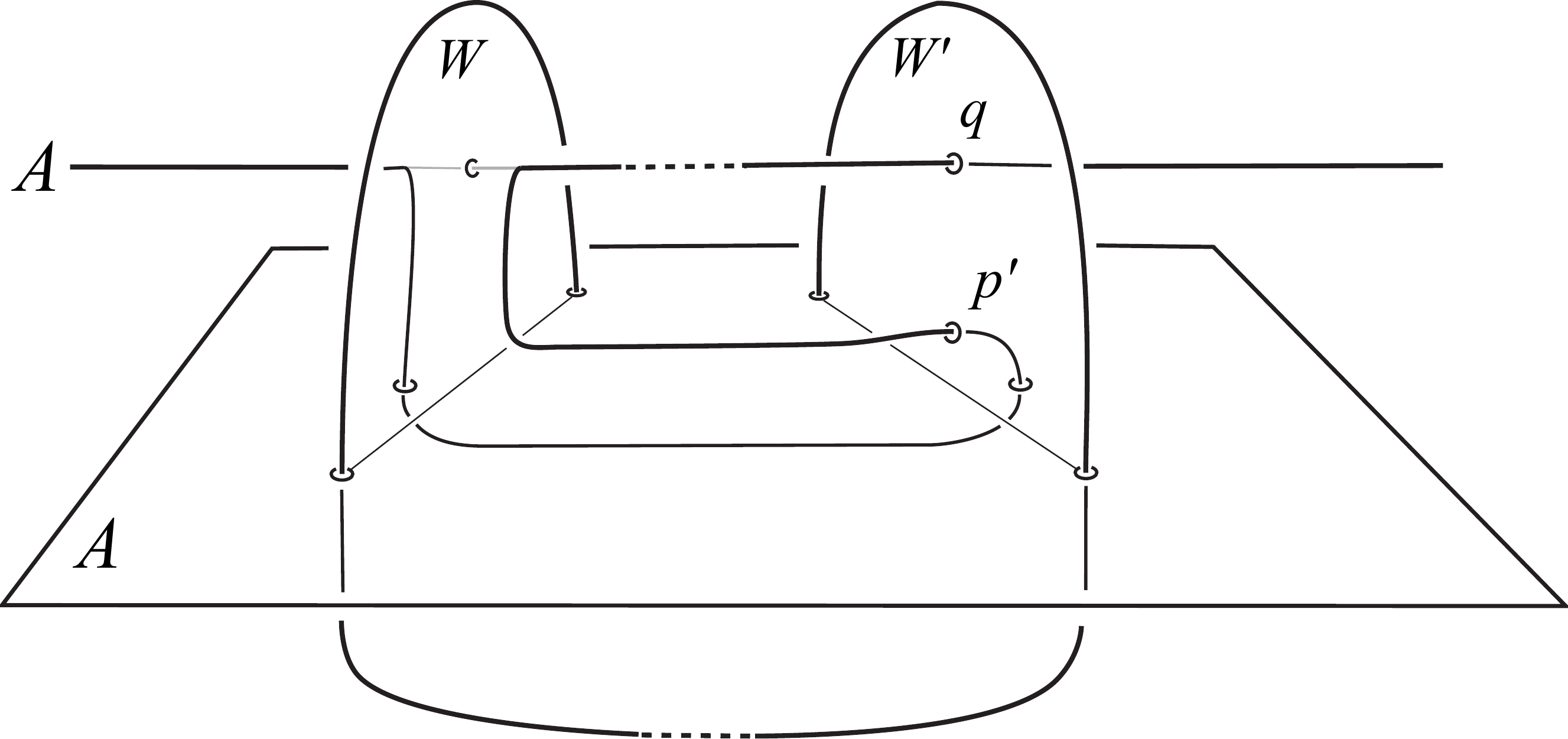}
\caption{}\label{fig:transfer-move-1}
\end{figure}
\begin{figure}[h]
\includegraphics[scale=.4]{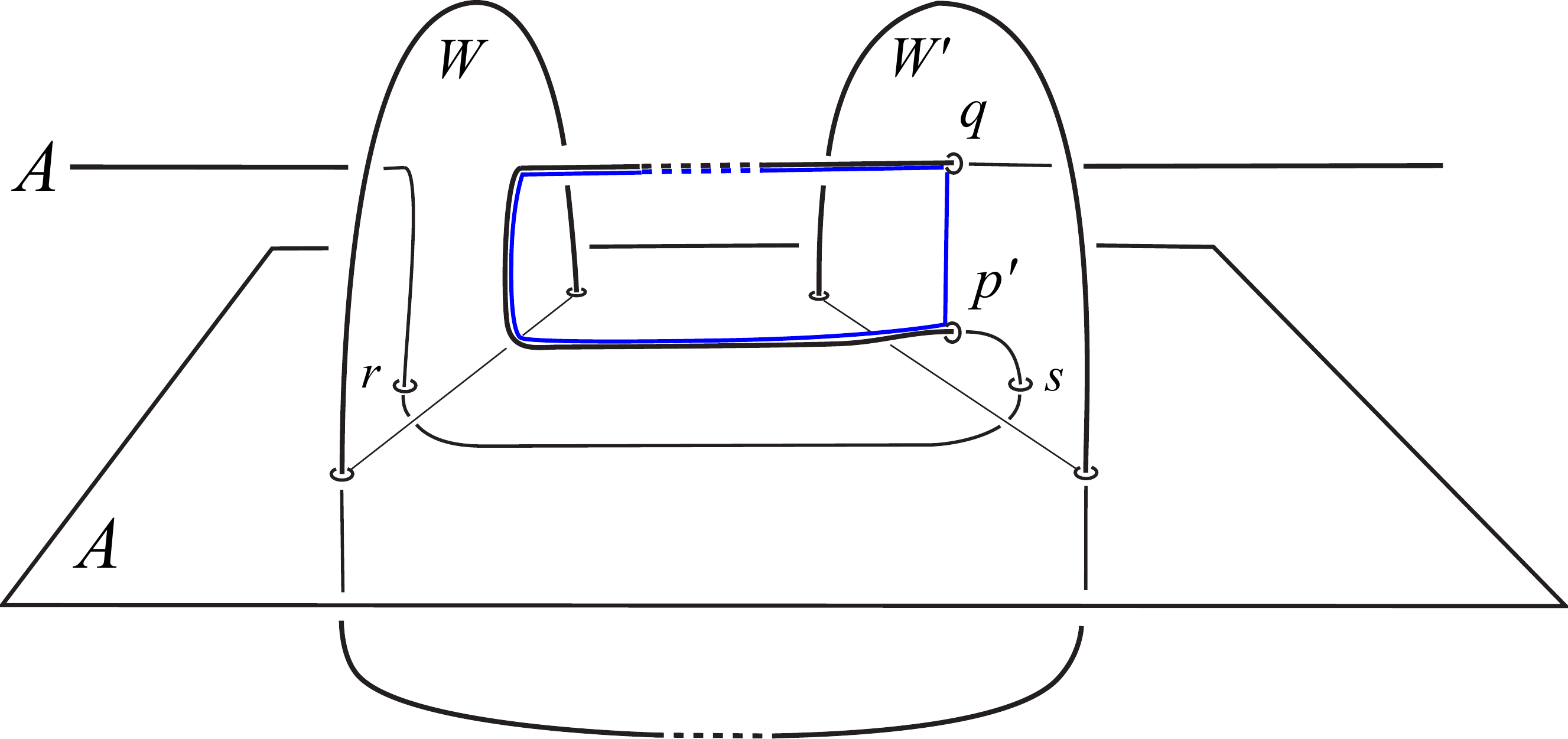}
\caption{}
\label{fig:transfer-move-1-with-order-2-boundary}
\end{figure}
\begin{figure}[h]
\includegraphics[scale=.45]{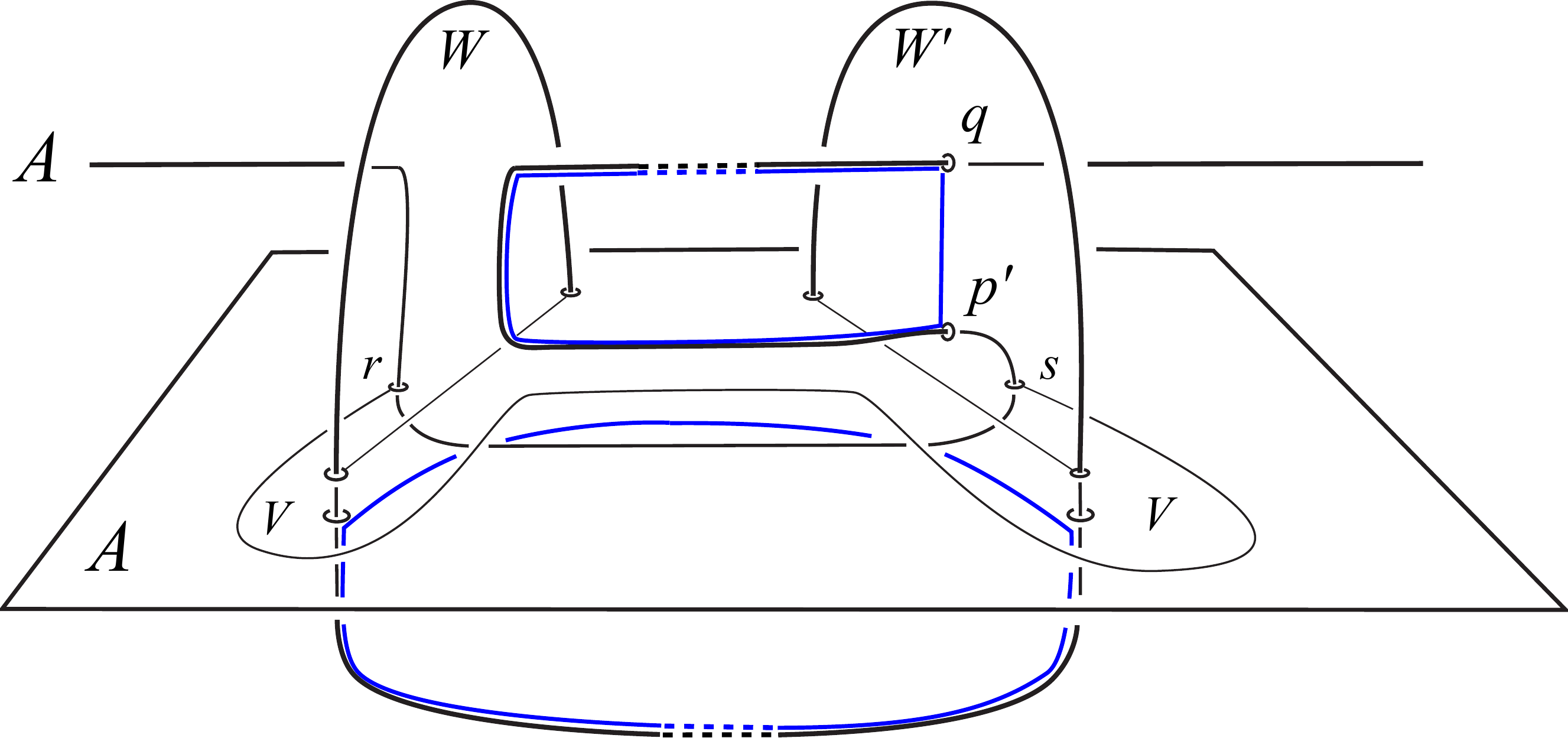}
\caption{}
\label{fig:transfer-move-2}
\end{figure}

The next step in the transfer move is to push one of the self-intersections of $A$ created by the first finger move across $\partial_+W'$ by a finger move as in Figure~\ref{fig:transfer-move-1}. This finger move is guided by an arc in $A$ from $\partial_+W$ to $\partial_+W'$, and creates a new order~1 intersection $p'\in A\pitchfork W'$. (The $4$-ball shown in Figures~\ref{fig:transfer-move-Before-1}--\ref{fig:transfer-move-2} is a neighborhood of the union of this guiding arc together with $W$ and $W'$.) 
We may ensure that $p'$ has the same sign as $p$ did by choosing this guiding arc to approach from the correct side $\partial_+W'$ in $A$ (Exercise~\ref{ex:transfer-push-sign}), and in fact $t_{p'}=t_p$ by Exercise~\ref{ex:transfer-push-tree}. 
As illustrated by the blue Whitney circle in Figure~\ref{fig:transfer-move-1-with-order-2-boundary},
$p'$ and $q$ admit an order~2 framed Whitney disk by Exercise~\ref{ex:transfer-order-2-w-disk}. 

The construction has also created order~1 intersections $r,s\in A\pitchfork A$ which admit a framed embedded Whitney disk $V$, appearing ``underneath'' the horizontal sheet in Figure~\ref{fig:transfer-move-2}.
This $V$ has a pair of oppositely-signed intersections with $A$ that admit a framed order~2 Whitney disk whose boundary is indicated in blue in the figure
(Exercise~\ref{ex:order-2-disk-underneath}).

To see that this transfer move can be iterated to convert all algebraically cancelling pairs of intersections into geometrically cancelling pairs, observe that disjointly embedded guiding arcs in $A$ can be found for all pairs of Whitney disks, and the new order~1 $V$-Whitney disks are supported near these arcs and the original Whitney disks' boundaries.  
The order~2 Whitney disks created in the construction do not need to be controlled since they can only create new intersections of order $\geq 2$. 

To complete the proof that the vanishing of $\tau_1(A)$ implies that $A$ supports an order~2 framed Whitney tower it just remains to prove the following lemma describing the realization of INT relations for $\RP^2$s, which was used to achieve all algebraically cancelling pairs:
\begin{lem}\label{lem:realize-RP2-INT}
Let $\cW$ be an order~1 framed Whitney tower on $A$, and let $R:\RP^2\imra X$ represent $a\in\pi_1X$ with $a^2=1$. Then, after one finger move, $A$ supports an order~1 framed Whitney tower $\cW'$ such that $[t(\cW')]=[t(\cW)+(a,\lambda_0(A,R)+\omega_2(R))]\in\widetilde{\cT_1}$.
\end{lem}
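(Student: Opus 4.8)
The plan is to run the sheet-choice analysis of section~\ref{subsubsec:sheet-choices} in reverse: that argument already computed the change in $\tau_1$ produced by exchanging the two Whitney disks at a pair of self-intersections with double-point element $a$ satisfying $a^2=1$, and identified that change with an INT relator for the immersed $\RP^2$ obtained from the two disks. So I would begin by fixing a loop $\gamma$ based at the basepoint of $X$ with $[\gamma]=a$ and performing a single finger move on $A$ guided by $\gamma$ (pushing one sheet of $A$ over another) — this is the one finger move of the statement; call the result $A$ again. The finger move creates a positive self-intersection $p$ and a negative self-intersection $q$, both with double-point group element $a$, together with the clean framed embedded Whitney disk $W$ that is inverse to it (Exercise~\ref{ex:W-move-inverse-to-finger-move}). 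This $W$ realizes one of the two sheet choices at $\{p,q\}$, and adjoining $W$ to $\cW$ gives an order~1 framed Whitney tower with the \emph{same} intersection-forest class, $[t(\cW\cup\{W\})]=[t(\cW)]$, because $W$ is clean.

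Next, since $a^2=1$, by Exercise~\ref{ex:sheet-choice-w-disks-exist} the other sheet choice at $\{p,q\}$ is also realized by Whitney disks, and the key step is to produce among these a Whitney disk $V$ such that (i) $\partial V$ together with $\partial W$ cuts out an embedded $2$-cell $D\subset S^2$ on which $A$ restricts to an embedding, as in section~\ref{subsubsec:sheet-choices}, and (ii) the map $\RP^2\to X$ with image $A(D)\cup W\cup V$ is homotopic to the \emph{given} $R$. Requirement (i) is arranged by choosing $\partial V$, using independence of $\tau_1$ from Whitney disk boundaries (section~\ref{subsubsec:w-disk-boundaries}). For (ii), I would fix any $V_0$ satisfying (i) and set $R_0:=A(D)\cup W\cup V_0$, an immersed $\RP^2$ representing $a$; then argue that $R$ and $R_0$ differ only by re-choosing a capping disk. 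Indeed both contain a regular neighborhood of a core circle representing $a$, i.e.\ an immersed Möbius band over that loop, and such Möbius-band immersions are essentially rigid — the normal bundle over the core is forced to be the unique non-orientable rank-$2$ bundle over $S^1$ — so after a homotopy the core Möbius bands of $R$ and $R_0$ agree, and $R$ is obtained from $R_0$ by changing the cap, i.e.\ $R\simeq R_0\#S$ for some $S\in\pi_2X$. Taking $V:=V_0\#S$ and recovering its framing by interior twists, together with boundary twists if $\omega(V_0\#S)$ is odd (the boundary twists contributing the $\omega_2(R)\cdot 1$ term, exactly as in the $2$-sphere realization of section~\ref{subsubsection:towards-alg-cancellation}), gives the desired $V$.

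I would then set $\cW':=\cW\cup\{V\}$, after small isotopies disjointly embedding all Whitney disk boundaries (Exercise~\ref{ex:push-w-disk-boundary}). This is an order~1 framed Whitney tower on the finger-moved $A$: the old self-intersections of $A$ are paired by the disks of $\cW$, the new pair $\{p,q\}$ is paired by the framed disk $V$, and every remaining interior intersection ($V$ with disks of $\cW$, or self-intersections) has order $\geq 2$. Comparing $\cW'$ with $\cW\cup\{W\}$ and invoking the computation of section~\ref{subsubsec:sheet-choices} for the sheet-choice change from $W$ to $V$, together with the boundary-twist contribution, one gets $[t(\cW')]-[t(\cW)]=(a,\lambda_0(A,R'')+\omega_2(R'')\cdot 1)$ where $R''$ is a perturbation of $A(D)\cup W\cup V\simeq R$. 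Since for an immersed $\RP^2$ representing $a$ both $\lambda_0(A,-)$ — as an element of $\Z[\pi_1X]$ modulo the relations $g=-ga$ — and $\omega_2(-)\in\Z_2$ are homotopy invariants that become well-defined in $\widetilde{\cT_1}$ after the prefactor $(a,-)$ (section~\ref{subsubsec:RP2-INT}), this equals $(a,\lambda_0(A,R)+\omega_2(R))\in\widetilde{\cT_1}$, giving $[t(\cW')]=[t(\cW)+(a,\lambda_0(A,R)+\omega_2(R))]$ as required.

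The main obstacle I anticipate is the second step: realizing an \emph{arbitrary} prescribed $R$ rather than the particular $\RP^2$ that falls out of the finger-move-and-sheet-choice construction. This rests on the rigidity of immersed Möbius bands over a fixed core loop together with the $\pi_2X$-classification of their cappings, and — closely related — on tracking the twisting corrections on $V$ carefully enough (and their interaction with the $\omega_2$-count in the section~\ref{subsubsec:sheet-choices} formula) that their total contribution is exactly $\omega_2(R)\cdot 1$ and not merely congruent to it modulo torsion; here the fact that $(a,1)=(a,a)$ is $2$-torsion in $\widetilde{\cT_1}$ (section~\ref{subsubsec:RP2-INT}) absorbs most of the bookkeeping. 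Once $R$ is put in the form $A(D)\cup W\cup V$, the identification of the resulting relator with $(a,\lambda_0(A,R)+\omega_2(R))$ is routine given section~\ref{subsubsec:sheet-choices} and section~\ref{subsubsec:RP2-INT}.
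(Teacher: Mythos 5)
Your proof is correct but takes a genuinely different route from the paper. The paper realizes $R(D^2)$ itself as the new Whitney disk $W'$: it performs the finger move along a circle isotopic to $R(\RP^1)$ and exploits a ``twisting'' freedom in how $A$ winds along that circle to align $\partial R(\textrm{Mb})$ with the alternative Whitney arcs, so that $R(D^2)$ becomes a legitimate Whitney disk with the opposite sheet choice; the INT term then drops out directly because $A\pitchfork W' = A\pitchfork R(\RP^2)$. You instead start from an arbitrary framed $V_0$ with the opposite sheet choice, form $R_0 = A(D)\cup W\cup V_0$, and reduce to the prescribed $R$ via the observation that maps $\RP^2\to X$ inducing $1\mapsto a$ on $\pi_1$ form a $\pi_2X$-torsor up to free homotopy (so $R\simeq R_0\#S$), taking $V:=V_0\#S$ and finishing with the sheet-choice calculation of section~\ref{subsubsec:sheet-choices}. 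The paper's route is more geometric and self-contained; yours offloads the alignment step to homotopy theory and to the already-established invariance (section~\ref{subsubsec:RP2-INT}) of $\lambda_0(A,-)$ and $\omega_2$ for immersed $\RP^2$s after passing to $\widetilde{\cT_1}$. One small imprecision in your bookkeeping: the boundary twists on $V_0\#S$ contribute $\omega_2(S)\cdot 1$, not $\omega_2(R)\cdot 1$; the remaining $\omega_2(R_0)$ comes from the $A(D)\pitchfork A(D)'$ count inside the sheet-choice formula, and it is the final appeal to homotopy invariance of $\omega_2$ that correctly reassembles these into $\omega_2(R)$.
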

\begin{proof}
Perform a finger move on $A$ guided by a circle which is isotopic to $R(\RP^1)$ representing $a$.
We may assume that this circle is disjoint from all Whitney disks in $\cW$.
The new pair of self-intersections created by this finger move admit a clean local framed Whitney disk $W$ whose boundary is indicated in green in Figure~\ref{finger-move-green-w-arcs}.
\begin{figure}[h]
\centerline{\includegraphics[scale=.35]{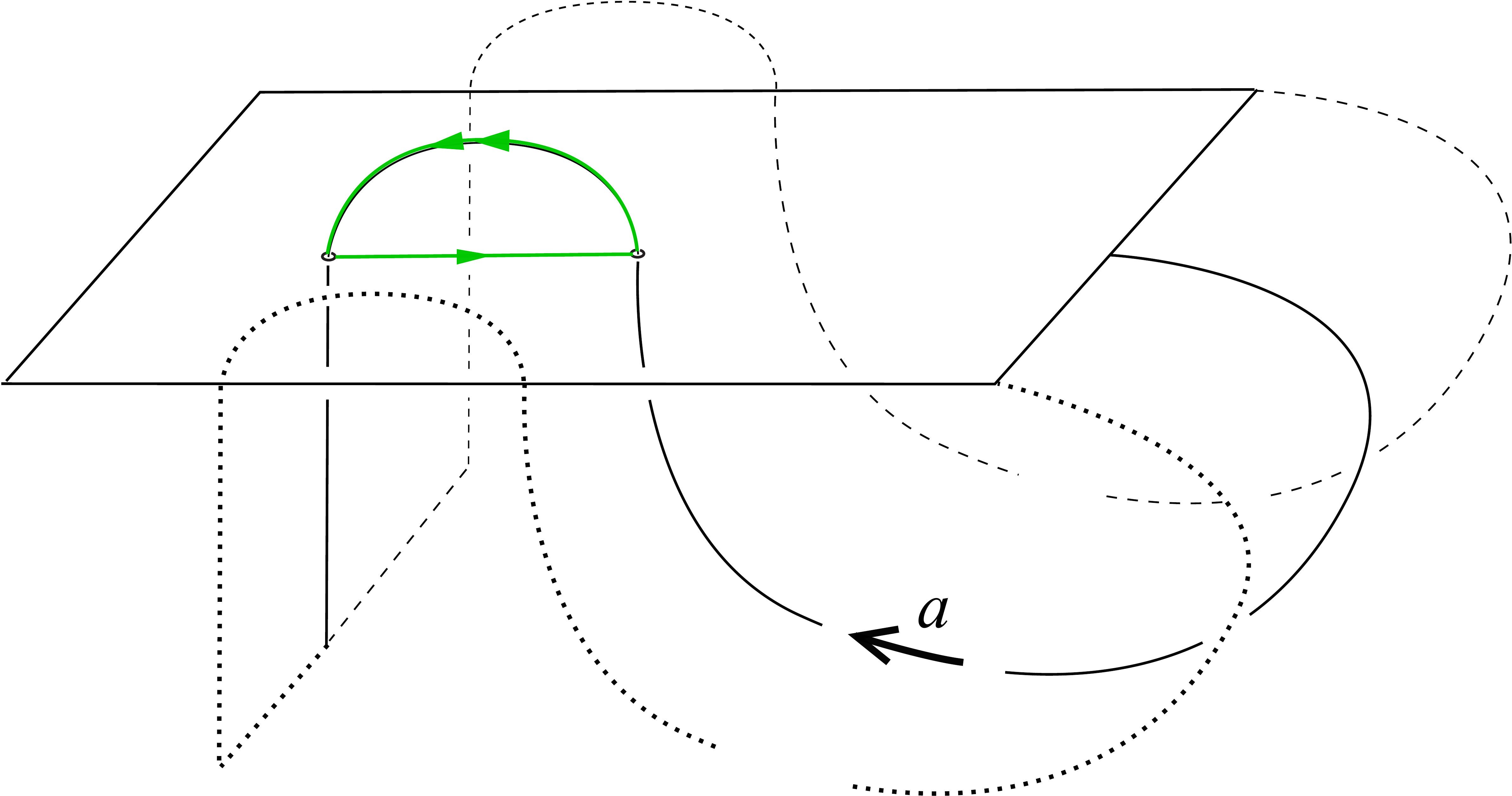}}
         \caption{After the finger move: A neighborhood of the circle representing $a$. The thick arrow indicates where the circle extends outside the illustrated local coordinates.}
         \label{finger-move-green-w-arcs}
\end{figure}

Figure~\ref{sheet-choice-preimage-arcs-realization} shows the preimages in red of a different choice of Whitney arcs that induce the opposite sheet choice as $W$. (Compare with Figure~\ref{fig:sheet-choice-preimage-arcs}, but note that here preimages of the same point are aligned vertically, as opposed to diagonally in Figure~\ref{fig:sheet-choice-preimage-arcs}.) 
\begin{figure}[h]
\centerline{\includegraphics[scale=.2]{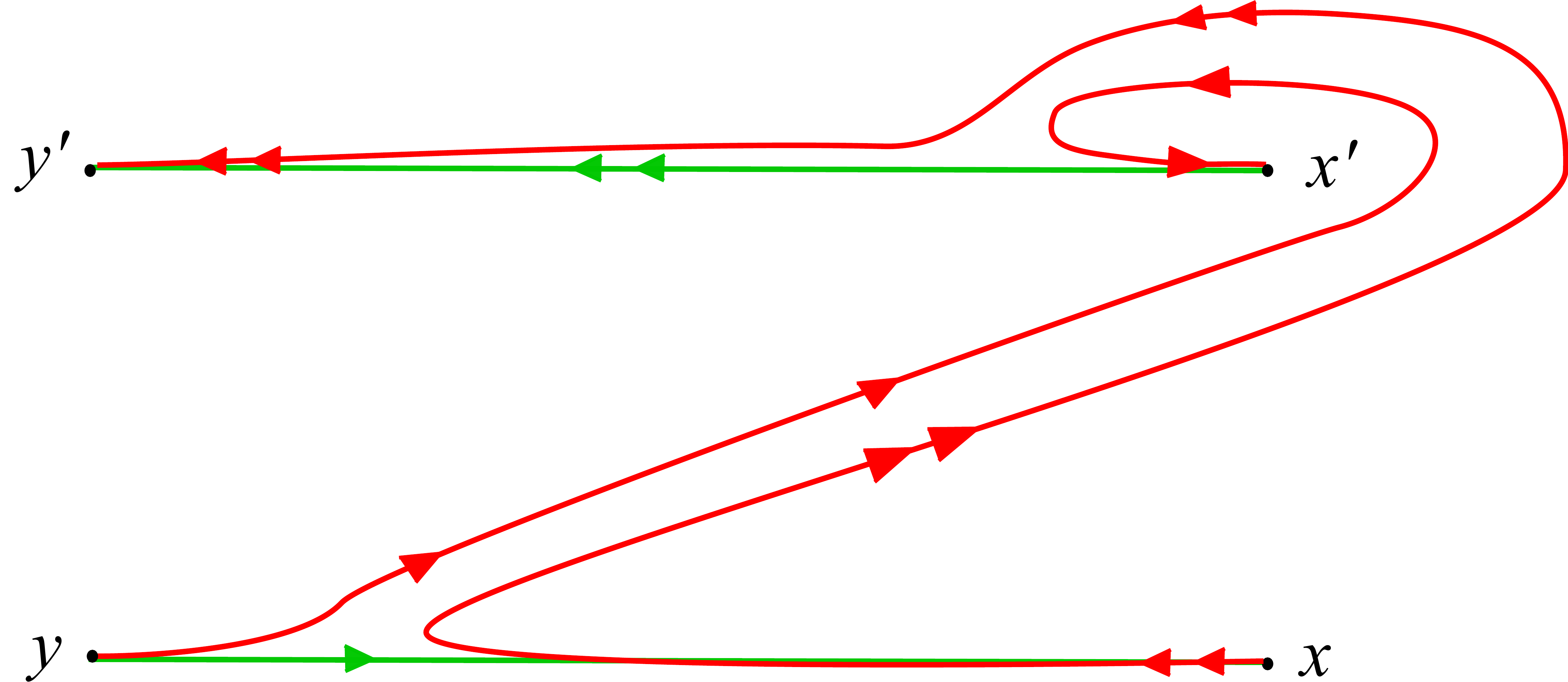}}
         \caption{}
         \label{sheet-choice-preimage-arcs-realization}
\end{figure}

Decomposing $\RP^2$ as the union $\textrm{Mb}\cup_{\partial \textrm{Mb}=\partial D^2}D^2$ of a M\"{o}bius band neighborhood Mb around 
$\RP^1$ and a disk $D^2$, want to use $R(D^2)$ as a Whitney disk $W'$ whose boundary is the image of the red arcs (see Figure~\ref{RP2-w-disk-with-double-point-loop}). To do this first observe that we can assume that $R$ restricts to an embedding on Mb. Now extend an isotopy between 
$R(\RP^1)$ and the finger move circle to a homotopy of $R(\RP^2)$ which restricts to an isotopy of $R(\textrm{Mb})$. To use the $R(D^2)$ as $W'$ we just need the image of the red arcs to align with $\partial R(\textrm{Mb})$.
This can be accomplished by appropriately choosing the twisting of $A$ in $S^1\times B^3$ along the finger move circle, which is the core of $R(\textrm{Mb})$.

By general position we may assume that $A\cap R(\RP^2)\subset R(D^2)$, so $A\pitchfork W'=A\pitchfork R(\RP^2)$. After arranging that $\omega(W')=0$ by boundary-twists, if needed, it follows that changing $\cW$ to $\cW'$ by replacing $W$ by $W'$ yields the change $[t(\cW')]=[t(\cW)+(a,\lambda_0(A,R)+\omega_2(R))]\in\widetilde{\cT_1}$. Here the $(a,\omega_2(R))$ term comes from boundary-twisting (and interior-twisting) as needed to make $W'$ framed
(as in section~\ref{subsubsection:towards-alg-cancellation}, any interior twists contribute trivially in $\widetilde{\cT_1}$).
\end{proof}

\begin{figure}[h]
\centerline{\includegraphics[scale=.35]{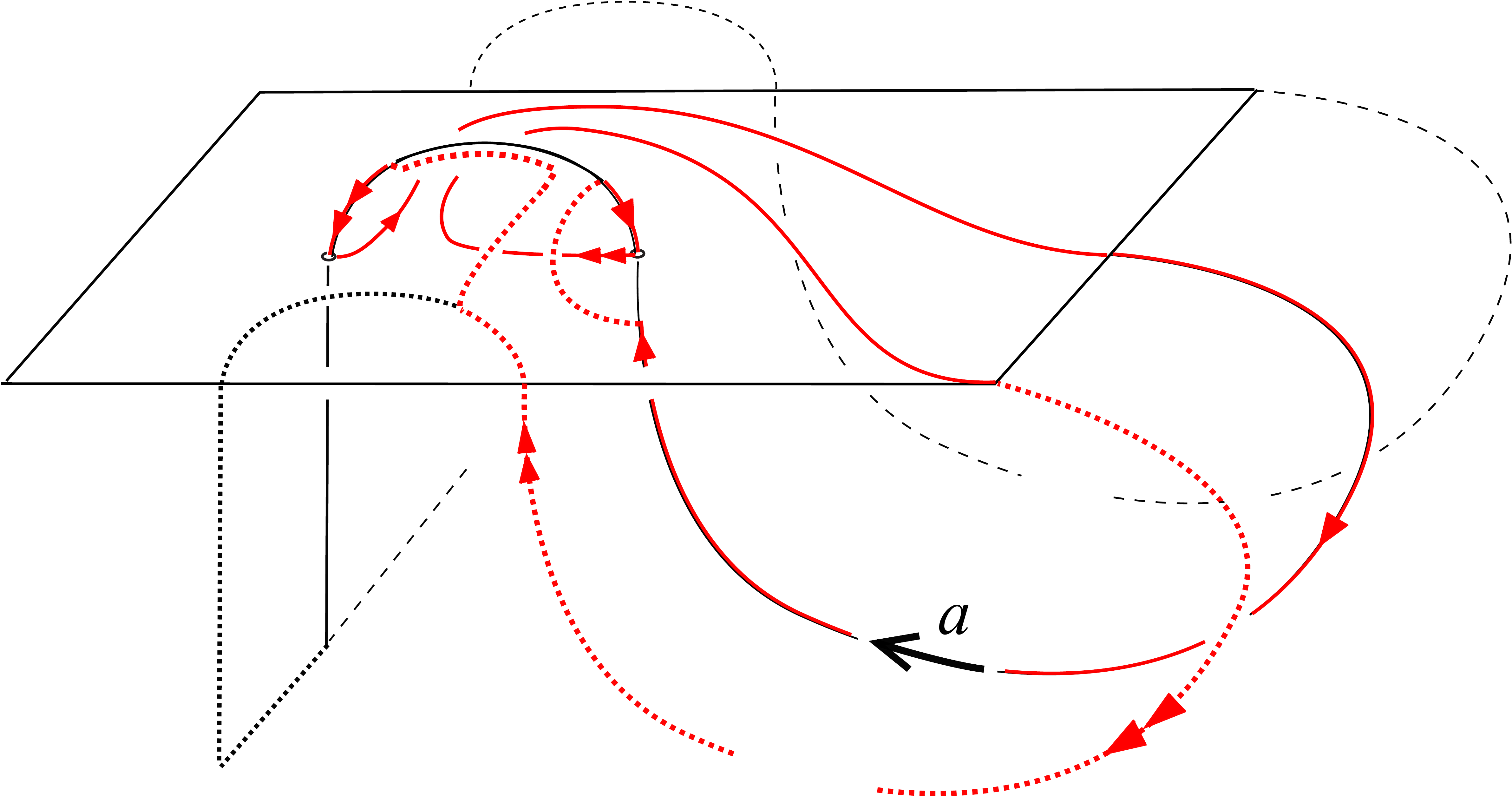}}
         \caption{The image under $A$ of the red arcs in Figure~\ref{sheet-choice-preimage-arcs-realization}. Solid arcs are in the present, dotted arcs are in the future, and dashed arcs are in the past.}
         \label{RP2-w-disk-with-double-point-loop}
\end{figure}

\subsection{Splitting twisted Whitney towers}\label{subsec:split-w-towers}
A framed Whitney tower is \emph{split} if the set of singularities in the interior
of any Whitney disk consists of either a single point, or a single boundary arc of a Whitney disk, or is empty.
This can always be arranged, as observed in Lemma~13 of \cite{ST2} (Lemma~3.5 of \cite{S1}), by performing finger moves along Whitney disks guided by arcs
connecting the Whitney disk boundary arcs (see Figure~\ref{split-w-tower-with-trees-fig}). Implicit in this construction is that the finger moves preserve the Whitney disk twistings
(by not twisting relative to the Whitney disk that is being split -- see Figure~\ref{twist-split-Wdisk-fig}).
A Whitney disk $W$ is \emph{clean} if the interior of $W$ is embedded and disjoint from the rest of the Whitney tower.
In the setting of twisted Whitney towers, it
simplifies the combinatorics of controlled manipulations to use ``twisted'' finger moves to similarly split twisted Whitney disks 
into $\pm 1$-twisted
clean Whitney disks.

We call a twisted Whitney tower \emph{split} if all of its non-trivially twisted Whitney disks are clean and have twisting 
$\pm 1$, and all of its framed Whitney disks are split in the usual sense (as for framed Whitney towers).

Recall from section~\ref{subsec:trees-for-w-disks-and-ints} our convention of embedding into a Whitney tower the trees associated to unpaired intersections and Whitney disks.
\begin{lem}[\cite{CST1,ST2}]\label{lem:split-w-tower}
If $A$ supports an order $n$ twisted Whitney tower $\cW$, then $A$ is homotopic (rel $\partial$) to 
$A'$ which supports a split order $n$ twisted
Whitney tower $\cW'$, such that:
\begin{enumerate}
\item\label{lem-item:framed-split-trees} The sub-multiset of signed framed trees $\sum_p \ \epsilon_p \cdot  t_p \subset t(\cW)$ is isomorphic to 
the sub-multiset of signed framed trees $\sum_{p'} \ \epsilon_{p'} \cdot  t_{p'} \subset t(\cW')$.
\item\label{lem-item:twisted-split-trees} Each $\omega(W_J)\cdot  J^\iinfty$ in $t(\cW)$ gives rise to exactly $|\omega(W_J) |$-many $\pm 1\cdot J^\iinfty$ in $t(\cW')$, 
where each twisting coefficient $\pm 1$ of the $J^\iinfty$ in $t(\cW')$
has the same sign as the twisting $\omega(W_J)$ of the original $W_J\subset\cW$.
\end{enumerate}
\end{lem}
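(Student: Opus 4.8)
The plan is to construct $A'$ and $\cW'$ from $A$ and $\cW$ by a finite sequence of finger moves guided by arcs in the interiors of Whitney disks, of two flavors: the \emph{framed} splitting moves already used for framed Whitney towers in \cite[Lem.13]{ST2} (see also \cite[Lem.3.5]{S1} and Figure~\ref{split-w-tower-with-trees-fig}), and a \emph{twisted} version of the same move. In both cases the prototype is: given a Whitney disk $W=W_{(I,I')}\subset\cW$ pairing intersections between $W_I$ and $W_{I'}$, and an embedded arc $\gamma$ in the interior of $W$ joining $\partial_+W$ to $\partial_-W$, a finger move of $W_I$ through $W_{I'}$ guided by $\gamma$ replaces $W$ by two Whitney disks $W^{(1)}$, $W^{(2)}$, again of type $W_{(I,I')}$, such that each interior singularity of $W$ (an interior intersection point, a self-intersection, or a boundary arc on $W$ of a higher-order Whitney disk) is distributed to $W^{(1)}$ or $W^{(2)}$ according to the side of $\gamma$ on which it lies, and such that no new unpaired intersection is created: the cancelling pair $p,q\in W_I\pitchfork W_{I'}$ produced by the finger move is re-paired by $W^{(1)}$ and $W^{(2)}$. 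There is a $\Z$'s worth of freedom in how the finger twists relative to $W$; the framed choice is the one that does not twist relative to $W$ (Figure~\ref{twist-split-Wdisk-fig}), under which $\omega(W^{(1)})+\omega(W^{(2)})=\omega(W)$, so that if $W$ is framed then so are $W^{(1)}$ and $W^{(2)}$, and the twistings of all other Whitney disks are unaffected.

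First I would split every framed Whitney disk. Working from the top order of $\cW$ downward, so that every Whitney disk built on $W$ has already been split and is simply carried along to the appropriate piece, whenever a framed Whitney disk $W$ contains more than one interior singularity, choose disjointly embedded arcs in $W$ joining $\partial_+W$ to $\partial_-W$ that separate the interior singularities of $W$ from one another --- and, crucially, that also separate the boundary arcs on $W$ of the distinct subtowers of higher-order Whitney disks supported by $W$, so that each such subtower travels intact onto a single piece --- and perform the corresponding framed finger moves. Since each such move preserves the multiset of unpaired intersections and their associated trees (embedded now in the enlarged tower), the sub-multiset of signed framed trees is unchanged up to isomorphism; induction on the total number of excess interior singularities in non-split framed Whitney disks shows the process terminates, giving statement~(\ref{lem-item:framed-split-trees}).

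Next comes the new ingredient, the \emph{twisted finger move}. Let $W_J\subset\cW$, $J=(I,I')$, be a twisted Whitney disk with $\omega(W_J)=k\neq0$. Perform the same kind of finger move of $W_I$ through $W_{I'}$ guided by an arc $\gamma$ in $W_J$ having all interior singularities of $W_J$ on one side, but now letting the finger twist once relative to $W_J$ in the direction of $\sign(k)$. This replaces $W_J$ by a \emph{clean} $\sign(k)$-twisted Whitney disk $W'$ of type $W_{(I,I')}$ --- hence carrying the rooted tree $J$ and contributing $\sign(k)\cdot J^\iinfty$ to $t(\cW')$ --- together with a Whitney disk $W''$ of type $W_{(I,I')}$ with $\omega(W'')=k-\sign(k)$ that inherits all interior singularities of $W_J$, and again with no new unpaired intersection created: the effect on relative Euler numbers is the local one depicted in Figures~\ref{boundary-twist-fig} and~\ref{interior-twist-fig}, but redistributed along $\gamma$ rather than applied as a boundary twist on the ambient, so it carries no intersection cost. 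Iterating $|k|$ times peels off $|k|$ clean $\sign(k)$-twisted copies of $J^\iinfty$ and leaves a framed remainder (clean if $W_J$ had no interior singularities) carrying the interior singularities of $W_J$, which is then split by the framed procedure; doing this to every twisted Whitney disk of $\cW$, again from the top order down, yields statement~(\ref{lem-item:twisted-split-trees}). All of the above moves are finger moves supported in interiors of Whitney disks, away from $\partial A$, so $A'$ is homotopic to $A$ rel $\partial$; and $\cW'$ is again an order $n$ twisted Whitney tower, since every framed tree of $t(\cW')$ is isomorphic to one of $t(\cW)$, hence of order $\geq n$, and every twisted tree of $t(\cW')$ is a copy of some $J^\iinfty$ already present in $t(\cW)$, hence of order $\geq n/2$.

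The main obstacle, and the point demanding genuine care, is making the twisted finger move precise: verifying that letting the finger twist once relative to $W_J$ really does split $W_J$ into a \emph{clean} $\pm1$-twisted Whitney disk and a remainder whose twisting has dropped by exactly one unit in the correct direction, and that --- in contrast to a boundary twist --- this creates no new unpaired intersection point. This is a local computation of the relative Euler numbers $\chi(\nu_X W^{(1)},\overline{\partial W^{(1)}})$ and $\chi(\nu_X W^{(2)},\overline{\partial W^{(2)}})$ of the two pieces in terms of $\chi(\nu_X W_J,\overline{\partial W_J})$ and the framing of the finger along $\gamma$ --- in essence the content of Figure~\ref{twist-split-Wdisk-fig} extended from the framed to the twisted setting --- together with the observation that the cancelling pair produced by \emph{any} finger move, twisted or not, is re-paired by the two resulting Whitney disks. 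A secondary but essential bookkeeping point, already noted above, is the consistent choice of the guiding arcs $\gamma$ so that every boundary arc on a Whitney disk $W$ of a higher-order Whitney disk (with its entire supported subtower) ends up on one of the pieces $W^{(1)}$, $W^{(2)}$; this is precisely why the definition of ``split'' permits the interior of a Whitney disk to contain ``a single boundary arc of a Whitney disk'' as its one allowed singularity.
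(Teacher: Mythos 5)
Your proposal takes essentially the same approach as the paper's: split framed Whitney disks with the untwisted finger move of \cite[Lem.13]{ST2}, and peel off clean $\pm 1$-twisted corners from twisted Whitney disks via the twisted finger move, iterating until the remainder is framed and then splitting that in the untwisted way. Your extra bookkeeping --- processing Whitney disks from highest order down and choosing the splitting arcs so each supported subtower travels intact to a single piece --- is a useful clarification of details the paper's proof leaves implicit; the one small slip is your citation of Figure~\ref{twist-split-Wdisk-fig} as depicting the untwisted (framed) finger move when its caption identifies it as the \emph{twisted} one.
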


\begin{proof}
\begin{figure}
\centerline{\includegraphics[scale=.6]{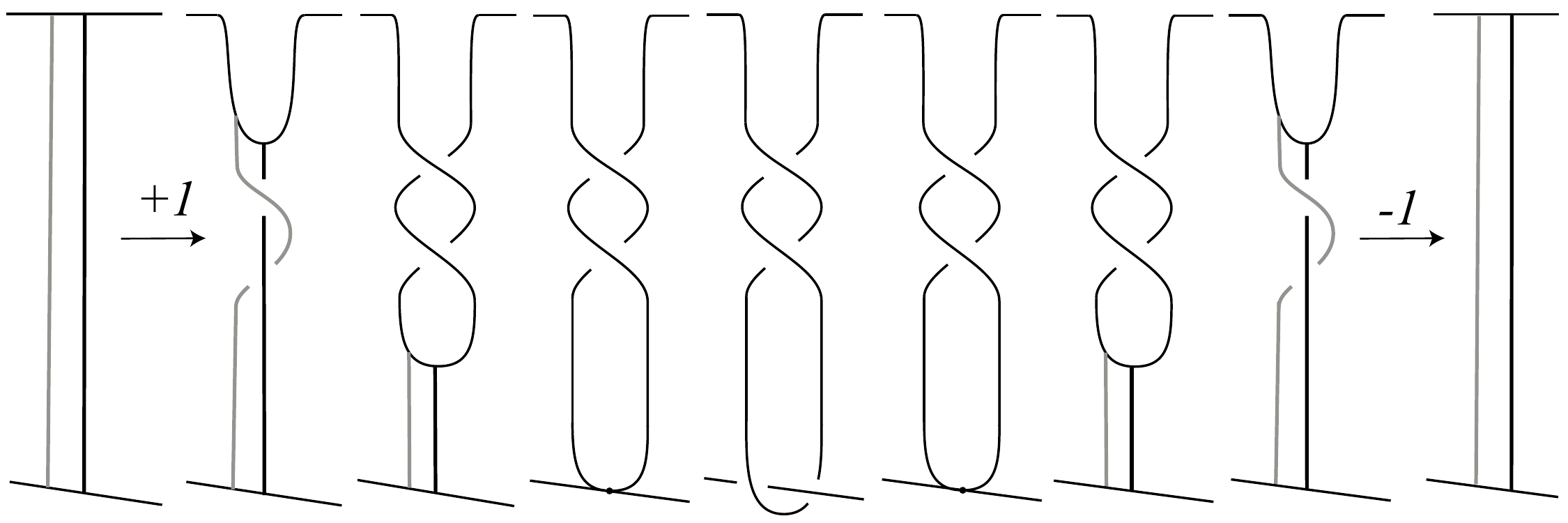}}
         \caption{A neighborhood of a twisted finger move which splits a Whitney disk into two Whitney disks. 
         The vertical black arcs are slices of
         the new Whitney disks, and the grey arcs are slices of extensions of the Whitney sections.
         The finger-move is supported in a neighborhood of an arc in the original Whitney disk running 
         from a point in the Whitney disk boundary on the ``upper'' surface sheet to a point 
         in the Whitney disk boundary on the ``lower'' surface sheet. (Before the finger-move this guiding arc would 
         have been visible in the middle picture as a vertical black arc-slice of the original
         Whitney disk.)}
         \label{twist-split-Wdisk-fig}
\end{figure}
Illustrated in
Figure~\ref{twist-split-Wdisk-fig} is a local picture of a twisted finger move, which splits one Whitney disk into two, while also changing twistings.
If the original Whitney disk in Figure~\ref{twist-split-Wdisk-fig} was framed, then the two new Whitney disks will have twistings $+1$ and $-1$, respectively. In general, if the arc guiding the finger move splits the twisting of the original Whitney disk into $\omega_1$ and $\omega_2$ zeros of the extended Whitney section, then the two new Whitney disks will have twistings $\omega_1+1$ and
$\omega_2-1$, respectively. Thus, by repeatedly splitting off framed corners into $\pm 1$-twisted Whitney disks, any 
$\omega$-twisted Whitney disk ($\omega \in\Z$) can be split into $|\omega |$-many $+1$-twisted or $-1$-twisted clean Whitney disks, together with split framed Whitney disks containing any interior intersections in the original twisted Whitney disk. Combining this with the untwisted splitting \cite[Lem.13]{ST2} of the framed Whitney disks illustrated in Figure~\ref{split-w-tower-with-trees-fig} gives the result.
\end{proof}


\subsection{The Whitney move IHX relation}\label{subsec:w-move-IHX}
Suppose that $\cW$ is a split Whitney tower on $A$. Let $p$ be an unpaired intersection with $t_p\subset \cW$.
Define the \emph{split subtower} $\cW_p\subset\cW$ to be the union of the Whitney disks containing the trivalent vertices of $t_p$ together with sheets of $A$ around the boundary arcs of each of these Whitney disks whose boundary lies in a sheet of $A$. These sheets inherit the indices of the order~0 surfaces containing them, so the univalent labels of $t_p$ are unchanged. By construction each Whitney disk in $\cW_p$ is framed and embedded.

\begin{figure}[ht!]
        \centerline{\includegraphics[scale=.7]{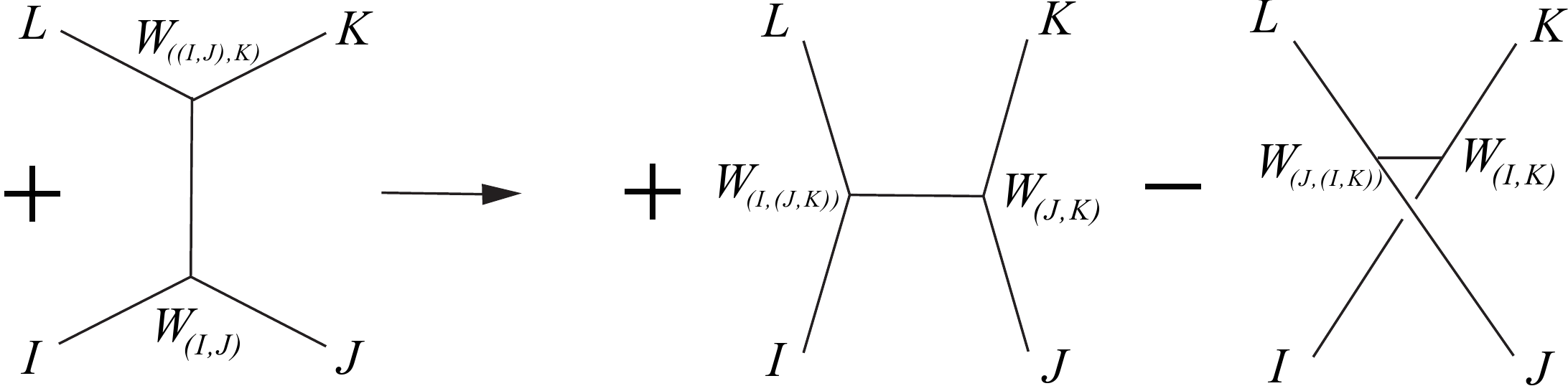}}
        \caption{The \emph{framed Whitney move} IHX \emph{relation} 
        replaces a split subtower whose signed tree looks locally like the one on the left
        with a pair of nearby disjoint split subtowers whose signed trees
        look locally like the trees on the right.}
        \label{IHX-trees-fig}

\end{figure}
\begin{lem}[Whitney move IHX relation]\label{lemma:w-move-IHX}
Let $\cW_p$ be a split subtower in a split Whitney tower $\cW$,
and let $W_{((I,J),K)}$ be a Whitney disk in $\cW_p$ so that
$t_p$ looks locally like the leftmost tree in
Figure~\ref{IHX-trees-fig} near $W_{((I,J),K)}$. Then $\cW$ can be modified in a
regular neighborhood $\nu(\cW_p)$ of $\cW_p$ yielding a split
Whitney tower on the same order 0 surface sheets, with $\cW_p$
replaced by disjoint split subtowers $\cW_{p'}$ and $\cW_{p''}$
contained in $\nu(\cW_p)$ such that the signed trees $t_{p'}$ and
$t_{p''}$ are as pictured on the right hand side of
Figure~\ref{IHX-trees-fig}.
\end{lem}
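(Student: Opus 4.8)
The plan is to reduce the statement to a computation in a standard local model near the two trivalent vertices of the leftmost tree of Figure~\ref{IHX-trees-fig}, and there to carry out the geometric Jacobi construction from the proof of Theorem~\ref{thm:IHX}. Write $v_1\in W_{(I,J)}$ and $v_2\in W_{((I,J),K)}$ for the two trivalent vertices of $t_p$ occurring in that leftmost tree: the three edges at $v_1$ descend into the $I$- and $J$-subtrees and run along the internal edge to $v_2$, while the three edges at $v_2$ are the internal edge, the descent into the $K$-subtree, and a fourth edge --- the rooted direction of $W_{((I,J),K)}$ --- along which $\cW_p$ is attached to the rest of $\cW$. Because $\cW_p$ is split (section~\ref{subsec:split-w-towers}), every Whitney disk in it is framed and embedded, and each of $W_{(I,J)}$ and $W_{((I,J),K)}$ carries a single boundary arc as its only interior decoration; hence a regular neighborhood $\nu(\cW_p)$ is a standard $4$--dimensional piece and the pair $(\nu(\cW_p),\cW_p)$ is determined up to diffeomorphism by $t_p$ together with the germs of the four legs exiting the picture. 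In particular the lower-order disks $W_I$, $W_J$, $W_K$ realizing the three descending subtrees may be taken clean, framed and disjointly embedded near $v_1\cup v_2$, so that near $v_1\cup v_2$ the configuration is exactly the one in the center picture of Figure~\ref{IHX-1-fig}, with the horizontal plane $A_4$ replaced by a collar of the fourth-leg sheet, the sheets $A_1,A_2,A_3$ replaced by collars of the $I$-, $J$- and $K$-sheets, and $W_{(I,J)}$, $W_{((I,J),K)}$ playing the roles of the order~$1$ Whitney disks there.

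Next I would run, inside $\nu(\cW_p)$, the construction from the proof of Theorem~\ref{thm:IHX} --- in the same spirit as the Whitney-move construction of Figure~\ref{fig:move-int-puncture-in-tree}, now performed in the presence of the higher-order Whitney disk $W_{((I,J),K)}$. Starting from the configuration realizing the leftmost tree, the collar modifications of Figures~\ref{IHX-3-4-fig} and~\ref{IHX-5-and-with-other-arcs-fig}, pushed into nearby past and future $B^3$-slices, convert the single unpaired intersection into two ``meridional'' Whitney disks, i.e.~two disjoint split subtowers $\cW_{p'},\cW_{p''}\subset\nu(\cW_p)$ whose framed trees $t_{p'},t_{p''}$ are the two trees on the right of Figure~\ref{IHX-trees-fig}. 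The descending subtrees $I,J,K$ and the fourth leg are untouched, since the construction is supported in an arbitrarily small neighborhood of $v_1\cup v_2$ together with the sheets immediately around them; in particular nothing in $\cW\setminus\nu(\cW_p)$ changes, and none of the order~$0$ surfaces are altered. After one further round of splitting finger moves supported in $\nu(\cW_p)$ (Lemma~\ref{lem:split-w-tower}), the resulting Whitney tower is again split.

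Finally I would record the framing and orientation bookkeeping. Each Whitney disk produced in the construction is framed --- this is the content of the last clause of Exercise~\ref{exercise:finish-ixh} --- and the auxiliary higher-order Whitney disks created along the way need no control, since they can only generate intersections of order at least that of $t_p$. Fixing the positive (or negative) corner convention of section~\ref{subsec:w-tower-tree-orientations} and orienting the new Whitney disks to agree with the reference sheets where they run parallel to them, the construction of Theorem~\ref{thm:IHX} displays the three Jacobi terms with the relative signs of the IHX relator; matching the orientation of $W_{((I,J),K)}$ with the one it carried in $\cW_p$ then yields $\epsilon_{p'}\cdot t_{p'}$ and $\epsilon_{p''}\cdot t_{p''}$ precisely as in Figure~\ref{IHX-trees-fig}, with all other signed trees of $t(\cW)$ unchanged.

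The step I expect to be the main obstacle is the reduction in the first paragraph: one must verify that the diffeomorphism type of $(\nu(\cW_p),\cW_p)$ is genuinely insensitive both to the internal structure of the subtrees $I,J,K$ and to which of the four legs is the outgoing one, so that the planar ``IHX move'' at $v_1\cup v_2$ can be executed without disturbing the lower-order disks --- the independence of leg choice ultimately reflects the symmetries already built into the AS and IHX relations, and the independence of subtree structure reflects the fact that the moves of Theorem~\ref{thm:IHX} have arbitrarily small support near $v_1\cup v_2$. A secondary check, as in Exercise~\ref{ex:embedded-trees}, is that the embedded-tree conventions for $\cW_{p'}$ and $\cW_{p''}$ can be chosen compatibly with those already in force on $\cW\setminus\nu(\cW_p)$, so that the net effect on the intersection forest is exactly the replacement of $\epsilon_p\cdot t_p$ by $\epsilon_{p'}\cdot t_{p'}+\epsilon_{p''}\cdot t_{p''}$.
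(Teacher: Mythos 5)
Your proposal takes a genuinely different route from the paper's, and there is a gap in it.

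The paper proves Lemma~\ref{lemma:w-move-IHX} by a direct three-step construction: (i) after moving $p$ onto the outgoing $L$-edge via Figure~\ref{fig:move-int-puncture-in-tree}, do the $W_{(I,J)}$ Whitney move on $W_I$, which eliminates $W_I\pitchfork W_J$ and produces two new pairs in $W_I\pitchfork W_K$ paired by meridional disks $W_{(I,K)}$ and $W_{(I,K)}'$, each meeting $W_J$ once; (ii) perform a finger-move ``transfer'' pushing $W_J\cap W_{(I,K)}'$ into $W_{(I,K)}$ so that those two points of $W_J\cap W_{(I,K)}$ admit a Whitney disk $W_{(J,(I,K))}$, at the cost of creating $W_J\cap W_K$ paired by $W_{(J,K)}$ and $W_I\cap W_{(J,K)}$ paired by $W_{(I,(J,K))}$; (iii) take $W_{(J,(I,K))}$ and $W_{(I,(J,K))}$ to be two disjoint Whitney-parallel copies of the old $W_{((I,J),K)}$, each meeting $W_L$ in one point, and split. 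The original unpaired intersection $p$ is \emph{consumed} by this construction -- the disk $W_{((I,J),K)}$ containing it disappears and is replaced by the two parallels.

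Your plan instead is to reduce to a local model and run the $S^4$ construction of Theorem~\ref{thm:IHX} (and Figures~\ref{IHX-3-4-fig}, \ref{IHX-5-and-with-other-arcs-fig}) inside $\nu(\cW_p)$. The difficulty is not the reduction step that you flag -- it is that Theorem~\ref{thm:IHX} and its Corollary~\ref{cor:IHX} are \emph{additive}, not \emph{replacement}, constructions. The $S^4$ model builds a Whitney tower on null-homotopic spheres whose intersection forest is the full IHX relator (all three signed trees); connect-summing and tubing as in Corollary~\ref{cor:IHX} changes $t(\cW)$ by \emph{adding} that relator, i.e.\ by adding $-\epsilon_p t_p+\epsilon_{p'}t_{p'}+\epsilon_{p''}t_{p''}$ while leaving the original $\epsilon_p t_p$ (and $p$ itself) untouched. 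After that operation you have two algebraically cancelling unpaired intersections with tree $t_p$ rather than zero, and converting an algebraically cancelling pair into a geometrically cancelling one is precisely the hard part of the order-raising obstruction theory -- it is done by transfer moves, the very technology the direct proof of this lemma is meant to supply. So ``running the $S^4$ construction locally'' does not, as you assert, ``convert the single unpaired intersection into two meridional Whitney disks''; the $S^4$ proof produces all three terms from nothing, and the adaptation you need (eliminate one term while creating two) requires exhibiting the Whitney move on $W_{(I,J)}$ followed by the transfer and parallel-copy steps explicitly. Without those, the argument is either circular (relying on the order-raising theorem that depends on this lemma) or incomplete.

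A secondary issue: you suggest ``one further round of splitting finger moves'' is the only cleanup needed, but in the paper's construction $W_I$, $W_J$, $W_K$ each acquire two Whitney-arc boundaries from the new disks and must be split into distinct lower-order Whitney disks -- this splitting must propagate down to the order-$0$ sheets (Lemma~\ref{lem:split-w-tower}(\ref{lem-item:framed-split-trees})) and is what separates $\cW_{p'}$ from $\cW_{p''}$ as genuinely disjoint split subtowers. Any successful variant of your approach must account for this.
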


\begin{figure}[h]
        \centerline{\includegraphics[scale=.7]{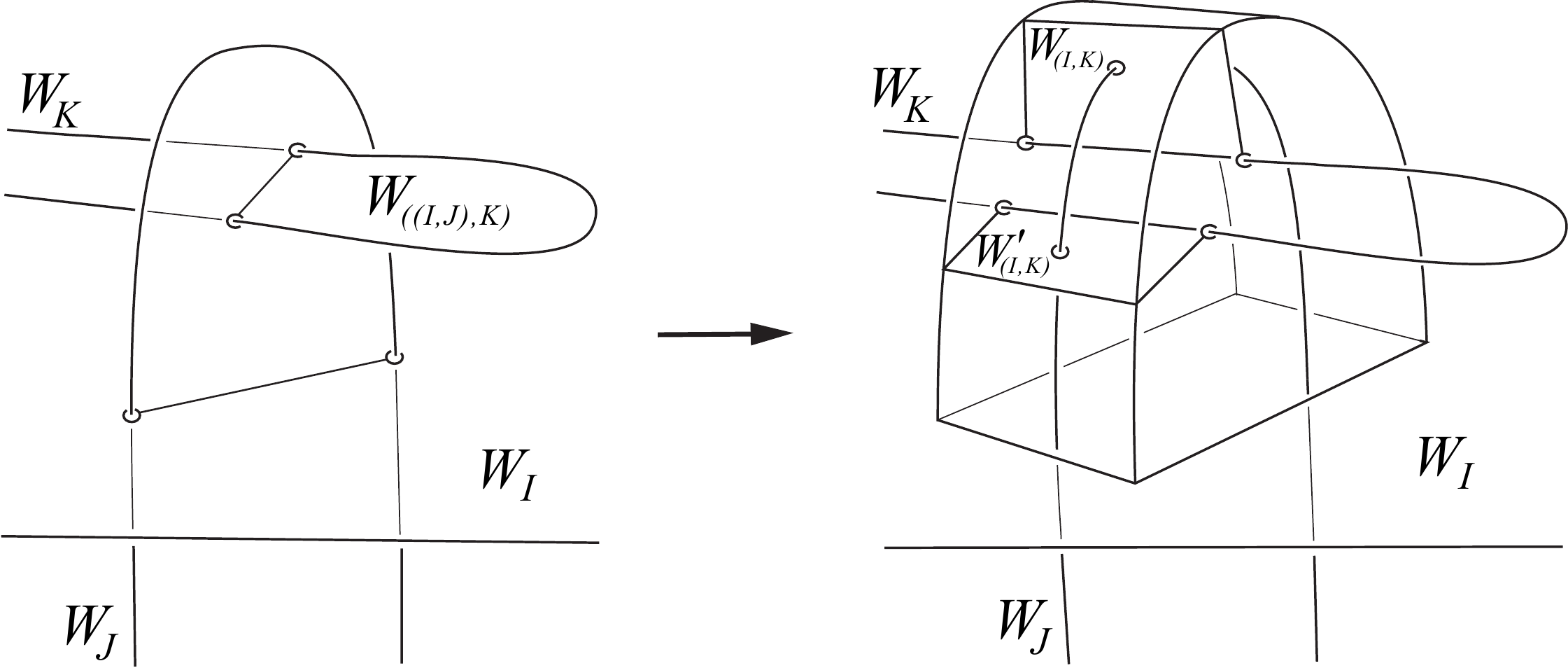}}
        \caption{The Whitney move IHX construction starts with a $W_{(I,J)}$ Whitney move
        on $W_I$. Note that the intersection $p=W_{((I,J),K)}\pitchfork W_L$ is {\em not} shown in this
        figure (and is suppressed in subsequent figures as well).}
        \label{IHX-W-move1and2-fig}
\end{figure}
\begin{figure}[h]
        \centerline{\includegraphics[scale=.65]{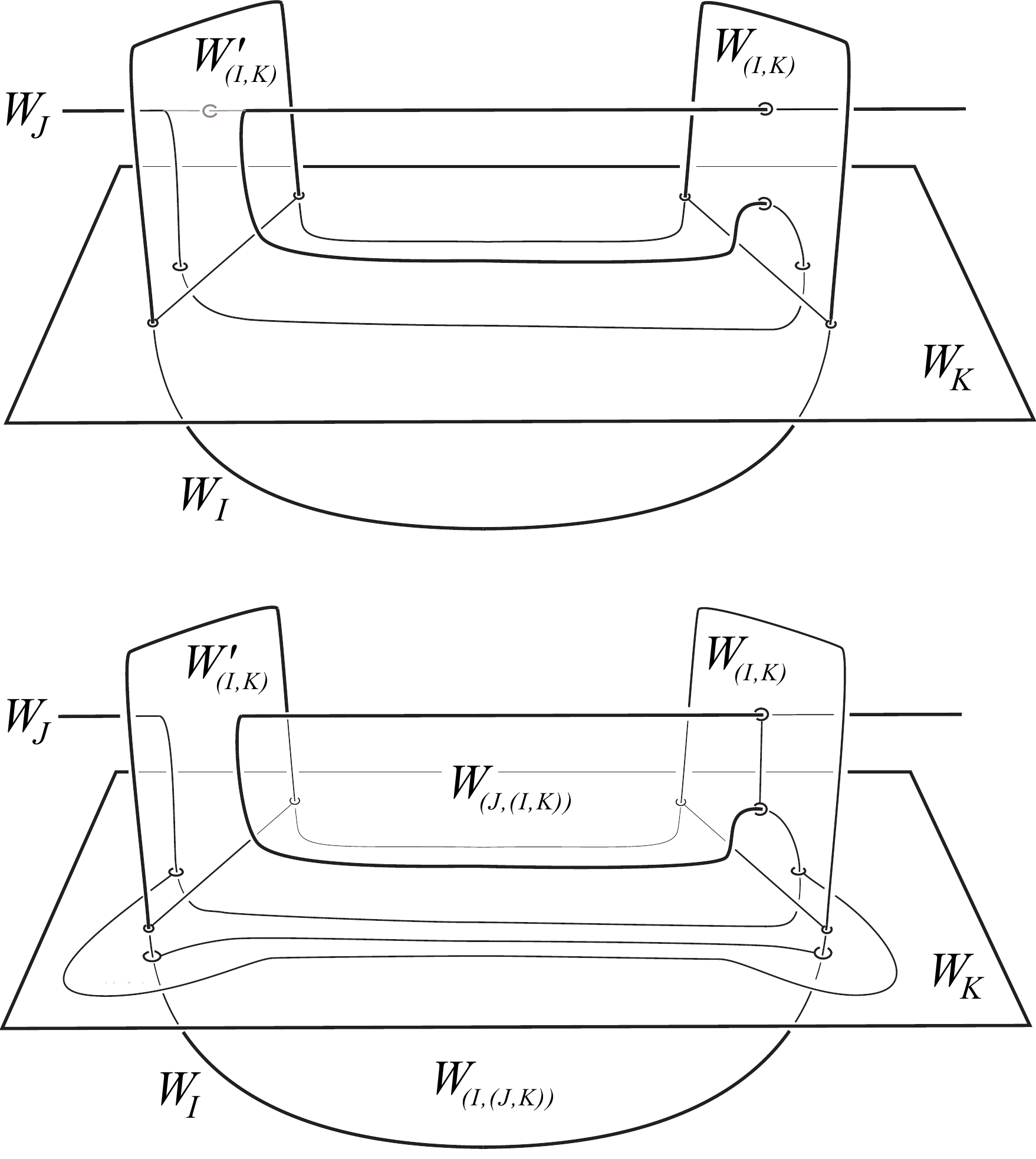}}
        \caption{The intersection point $W_J\cap W_{(I,K)}'$ is
        `transferred' via a finger move (top)
        to create a cancelling pair $W_J\cap
W_{(I,K)}$ paired by $W_{(J,(I,K))}$ at the cost of also creating $W_J\cap W_K$ paired by
$W_{(J,K)}$ and $W_I\cap W_{(J,K)}$ paired by $W_{(I,(J,K))}$ (bottom).}
        \label{IHX-transfer-move1and1A-fig}
\end{figure}

\begin{proof}
As a preliminary step, observe that the labelling of the trivalent vertices in the left-hand tree in Figure~\ref{IHX-trees-fig} indicates that the unpaired intersection $p\in\cW_p$ corresponds to an edge in the $L$-subtree. By applying the move of Figure~\ref{fig:move-int-puncture-in-tree} in section~\ref{subsec:moving-unpaired-int-edge} we may assume that in fact $p=W_{((I,J),K)}\pitchfork W_L$, so that the edge corresponding to $p$ is the $L$-labeled edge in the lefthand tree. This will simplify later steps in the construction. For visual clarity $p$ will be suppressed from view in the figures.

Now do the $W_{(I,J)}$
Whitney move on $W_I$ (see Figure~\ref{IHX-W-move1and2-fig}). This eliminates the cancelling
pair of intersections between $W_I$ and $W_J$ at the cost of
creating two cancelling pairs of intersections between $W_I$ and
$W_K$ which we pair by Whitney disks $W_{(I,K)}$ and $W_{(I,K)}'$ which are meridional disks to $W_J$
as illustrated in Figure~\ref{IHX-W-move1and2-fig}.

The new
Whitney disks $W_{(I,K)}$ and $W_{(I,K)}'$ each have a single
interior intersection with $W_J$ and the next step is to
``transfer'' (as illustrated in the upper part of
Figure~\ref{IHX-transfer-move1and1A-fig}) the intersection point
$W_J\cap W_{(I,K)}'$ to create a cancelling pair $W_J\cap
W_{(I,K)}$ paired by $W_{(J,(I,K))}$ at the cost of also creating
$W_J\cap W_K$ paired by $W_{(J,K)}$ and $W_I\cap W_{(J,K)}$ paired
by $W_{(I,(J,K))}$ (as illustrated in the lower part of
Figure~\ref{IHX-transfer-move1and1A-fig}). Note that
Figure~\ref{IHX-transfer-move1and1A-fig} differs from
Figure~\ref{IHX-W-move1and2-fig} by a change of coordinates
which brings the sheet of $W_K$ into the ``present'' slice of
3--space.

\begin{figure}[h]
        \centerline{\includegraphics[scale=.65]{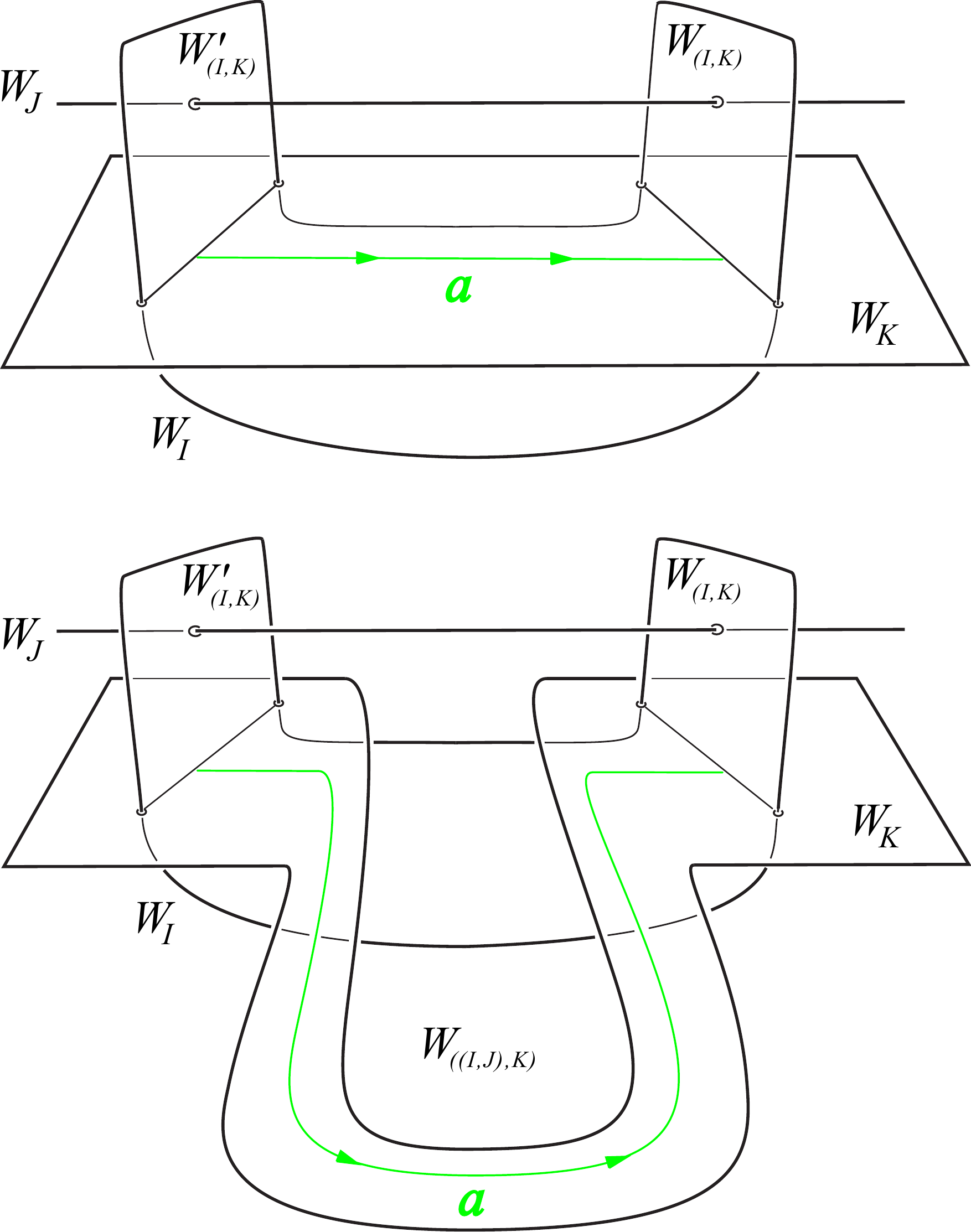}}
        \caption{The transferring finger move is guided by an arc $a$ (shown in green)
        which can be taken to run along what used to be the part of the boundary arc of
        $W_{((I,J),K)}$ lying in $W_K$. The bottom picture is the same as the top picture but indicating
         where $W_{((I,J),K)}$ used to be.}
        \label{IHX-transfer-move2and3-fig}

\end{figure}

\begin{figure}[h]
        \centerline{\includegraphics[scale=.65]{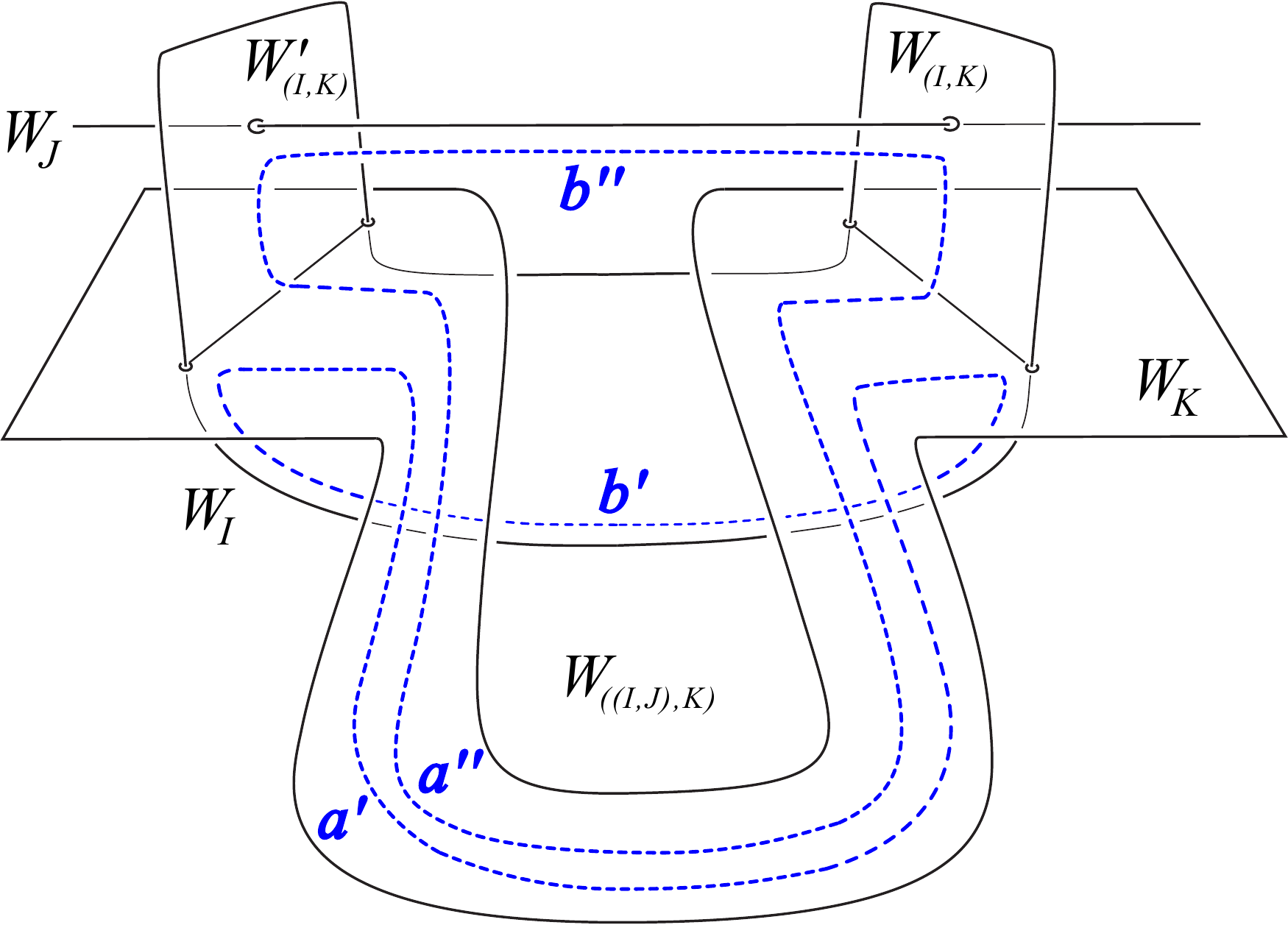}}
        \caption{Before the transfer move: New Whitney disks $W_{(I,(J,K))}$
        and $W_{(J,(I,K))}$, whose boundaries are the unions of arcs $a'\cup b'$
        and $a''\cup b''$ (see also Figure
        ~\ref{IHX-W-move1and2B-fig}),
         will be created from parallel copies of the old $W_{((I,J),K)}$.}
        \label{IHX-transfer-move3B-fig}
\end{figure}

This transfer move is combinatorially the same as in section~\ref{subsubsection:towards-geo-cancellation} but here applied to higher-order sheets. The important thing to note here is that the finger move is
guided by an arc $a$ (see Figure~\ref{IHX-transfer-move2and3-fig})
from $\partial W_{(I,K)}'$ to $\partial W_{(I,K)}$ in $W_K$ and we
can take this arc to run along what used to be the part of
$\partial W_{((I,J),K)}$ lying in $W_K$. This is illustrated in
the lower part of Figure~\ref{IHX-transfer-move2and3-fig} which
gives a better picture of the situation before the finger move is
applied.
\begin{figure}[h]
        \centerline{\includegraphics[scale=.55]{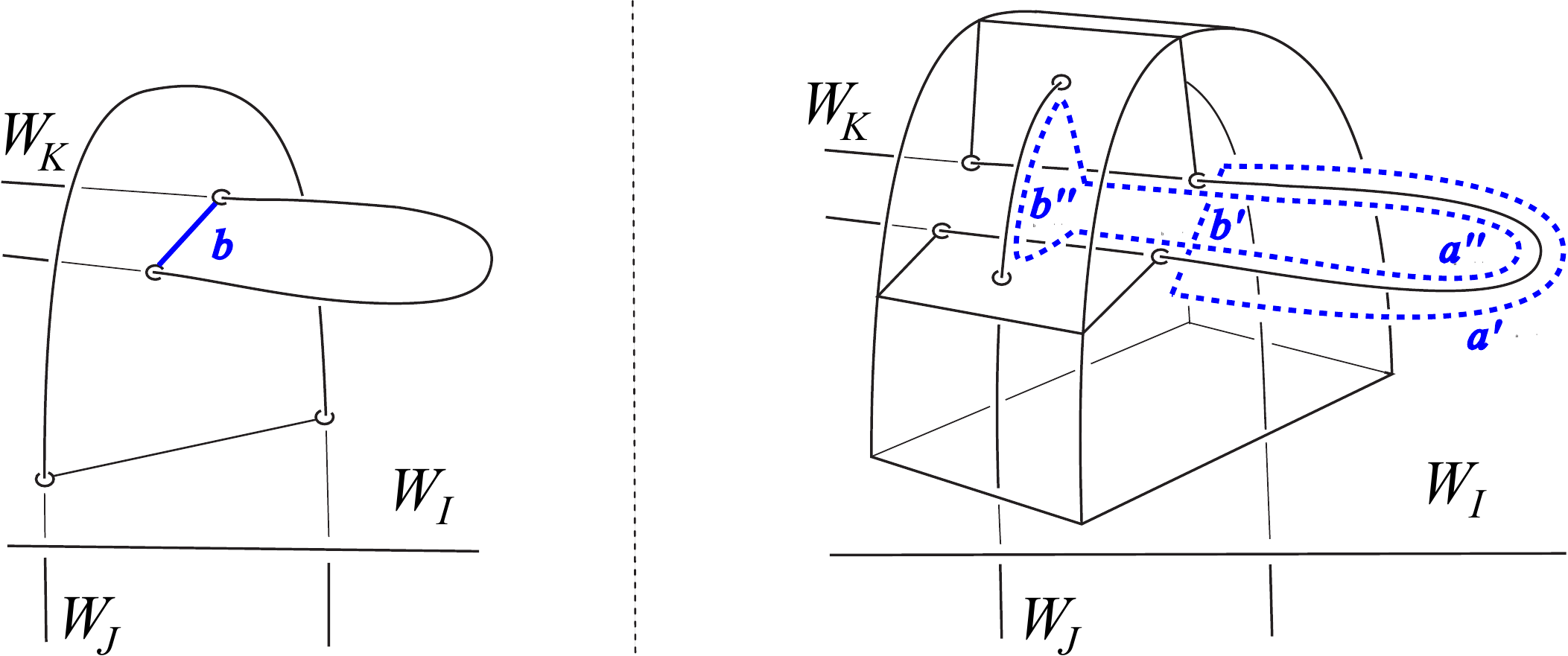}}
        \caption{Applying the transfer move to the right-hand side will create
        new Whitney disks $W_{(I,(J,K))}$
        and $W_{(J,(I,K))}$, whose boundaries are the unions of arcs $a'\cup b'$
        and $a''\cup b''$ (see also Figure
        ~\ref{IHX-transfer-move3B-fig}),
         from parallel copies of the old $W_{((I,J),K)}$ shown on the left.}
        \label{IHX-W-move1and2B-fig}

\end{figure}
The Whitney disks $W_{(I,(J,K))}$ and $W_{(J,(I,K))}$ are taken
to be parallel copies of the old $W_{((I,J),K)}$ as follows: The
boundary of $W_{(I,(J,K))}$ (resp. $W_{(J,(I,K))}$) consists of
arcs $a'$ and $b'$ (resp. $a''$ and $b''$), where $a'$ and $a''$
are tangential push-offs of $a$ in $W_K$ and $b'$ and $b''$ are
normal push-offs of what was the boundary arc $b$ of
$W_{((I,J),K)}$ in $W_{(I,J)}$. This is shown in both
Figure~\ref{IHX-transfer-move3B-fig} and
Figure~\ref{IHX-W-move1and2B-fig}, where again it is easier to
picture things before the transferring finger move. Since
$W_{((I,J),K)}$ was framed and embedded, $W_{(I,(J,K))}$ and
$W_{(J,(I,K))}$ can be formed from two disjoint parallel copies of
$W_{((I,J),K)}$ which each intersect $W_L$ in a single point as $W_{((I,J),K)}$ did.

Thus exactly two new unpaired intersection
points $p'$ and $p''$ have been created (near where $p$ was) with corresponding trees
$t(p')$ and $t(p'')$ as shown locally in the right hand side of
Figure~\ref{IHX-trees-fig}. After the transferring finger move,
the $W'_{(I,K)}$ Whitney move can be done (on either sheet)
without affecting anything else. Finally, $W_I$, $W_J$ and $W_K$
will need to be split since they now each contain two boundary
arcs of Whitney disks. Splitting $W_I$, $W_J$ and $W_K$ down into
the lower order Whitney disks (as in Figure~\ref{split-w-tower-with-trees-fig} and Lemma~\ref{lem:split-w-tower}(\ref{lem-item:framed-split-trees}))
yields the two split subtowers $\cW_{p'}$ and $\cW_{p''}$.
\end{proof}

\subsection{The Whitney move twisted IHX relation}\label{subsec:w-move-twistedIHX}
Recall the twisted IHX/Jacobi relation in the even order twisted tree groups (section~\ref{subsec:twisted-tree-groups}):
$$\textrm{I}^\iinfty=\textrm{H}^\iinfty+\textrm{X}^\iinfty-\langle \textrm{H},\textrm{X}\rangle$$
where I, H and X denote rooted trees which correspond to the three terms of a Jacobi identity, ie.~they differ locally as in Figure~\ref{IHX-trees-fig}, where we now are interpreting the trees in the figure as being rooted trees with each of the $L$-subtrees containing a root univalent vertex. Here we are using Roman font in the twisted IHX relation rather than the italic font used in section~\ref{subsec:twisted-tree-groups} to clarify the distinction between the lefthand term ``I'' of the Jacobi identity and the sub-tree ``$I$'' in each of the three terms in Figure~\ref{IHX-trees-fig}.

We will modify the proof of the above Whitney move IHX relation (Lemma~\ref{lemma:w-move-IHX}) to show how to locally replace a twisted Whitney disk whose associated signed $\iinfty$-tree is I$^\iinfty$ by two twisted Whitney disks whose associated signed $\iinfty$-trees are H$^\iinfty$ and X$^\iinfty$, respectively, together with a single unpaired intersection $p$ having signed tree $-\langle \textrm{H},\textrm{X}\rangle$.

Suppose that $\cW$ is a split Whitney tower on $A$. Let $W$ be a Whitney disk in $\cW$ with rooted tree $J\subset \cW$.
Then the \emph{split subtower} $\cW_W\subset\cW$ is defined to be the union of the Whitney disks containing the trivalent vertices of $J$ together with sheets of $A$ around the boundary arcs of each of these Whitney disks whose boundary lies in a sheet of $A$. Note that $W\subset\cW_W$. If $W$ is twisted, then by construction all the other Whitney disks in $\cW_W$ are framed and the twisting of $W$ is $\pm 1$ by the definition of a split Whitney tower, and $W_J$ contains the $\iinfty$-labeled root vertex of $J^\iinfty\subset\cW_W$.

\begin{lem}\label{lem:w-move-twistedIHX}
Let $\cW_W$ be a split subtower for an $\epsilon$-twisted Whitney disk $W$ in a split Whitney tower $\cW$, 
with
the $\iinfty$-tree I$^\iinfty$ associated to $W$ 
looking locally like the leftmost tree in
Figure~\ref{IHX-trees-fig} (with the $L$-subtree containing a root univalent vertex), and denote by H$^\iinfty$ and X$^\iinfty$ the $\iinfty$-trees which only differ locally from I$^\iinfty$ as in the first two trees in the right side of the equation in Figure~\ref{IHX-trees-fig}. 

Then $\cW$ can be modified in a
regular neighborhood $\nu(\cW_W)$ of $\cW_W$ yielding a split
Whitney tower on the same order 0 surfaces, with $\cW_W$
replaced by disjoint split subtowers $\cW_{W'}$, $\cW_{W''}$ and $\cW_p$
contained in $\nu(\cW_W)$ such that:
\begin{enumerate}
\item
 $W'$ is an $\epsilon$-twisted Whitney disk with associated $\iinfty$-tree H$^\iinfty$, and

 \item
 $W''$  is an $\epsilon$-twisted Whitney disk with associated $\iinfty$-tree X$^\iinfty$, and

\item
the unpaired intersection $p$ has sign $-\epsilon$ and associated tree $\langle \textrm{H},\emph{X}\rangle$.

\end{enumerate}

\end{lem}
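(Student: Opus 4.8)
The plan is to adapt the construction in the proof of the framed Whitney move IHX relation, Lemma~\ref{lemma:w-move-IHX}, essentially verbatim, and to isolate the single step in which the twisting of $W$ produces the extra term $-\epsilon\cdot\langle\textrm{H},\textrm{X}\rangle$. First I would record the structural consequences of splitness: since $\cW$ is split and $W$ is $\epsilon$-twisted, $W$ is clean and embedded with twisting $\omega(W)=\epsilon\cdot 1$, while every other Whitney disk in $\cW_W$ is framed and embedded. The hypothesis that the $\iinfty$-tree $\textrm{I}^\iinfty$ associated to $W$ looks locally like the leftmost tree of Figure~\ref{IHX-trees-fig} with the $\iinfty$-root lying in the $L$-subtree identifies $W=W_{((I,J),K)}$, pairing intersections between $W_{(I,J)}$ and $W_K$, with the trivalent vertex of $W$ adjacent to the $\iinfty$-root occupying the position of the $L$-labeled edge in the leftmost tree (the edge which in Lemma~\ref{lemma:w-move-IHX} carries the unpaired intersection $p$). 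In particular no preliminary application of the ``move an unpaired intersection along its tree'' construction of Figure~\ref{fig:move-int-puncture-in-tree} is needed here, because the $\iinfty$-root already sits in the $L$-position.

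Next I would carry out, word for word, the first two stages of the proof of Lemma~\ref{lemma:w-move-IHX}, all of which are insensitive to whether $W$ is framed or twisted and are supported in $\nu(\cW_W)$: do the $W_{(I,J)}$-Whitney move on $W_I$ (Figure~\ref{IHX-W-move1and2-fig}), which replaces the cancelling pair of $W_I\pitchfork W_J$ intersections by two cancelling pairs of $W_I\pitchfork W_K$ intersections paired by meridional disks $W_{(I,K)}$ and $W_{(I,K)}'$ to $W_J$; then transfer the point $W_J\pitchfork W_{(I,K)}'$ by a finger move guided along what used to be the arc $\partial W\cap W_K$ (Figure~\ref{IHX-transfer-move1and1A-fig}), producing the cancelling pair $W_J\pitchfork W_{(I,K)}$ paired by $W_{(J,(I,K))}$, together with $W_J\pitchfork W_K$ paired by $W_{(J,K)}$ and $W_I\pitchfork W_{(J,K)}$ paired by $W_{(I,(J,K))}$. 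These moves create no twisted Whitney disks and no new unpaired intersections other than the one analyzed below.

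The one genuinely new point is the last stage, where the Whitney disks $W':=W_{(I,(J,K))}$ and $W'':=W_{(J,(I,K))}$ are built from parallel copies of the old $W$. In the framed case these were two \emph{disjoint} parallel copies of the clean framed embedded $W=W_{((I,J),K)}$, each inheriting the single interior intersection with $W_L$ that $W$ had. Here instead $W$ is clean, embedded, and $\epsilon$-twisted with no interior intersections: taking two parallel copies of $W$ along a nowhere-vanishing Whitney section of $W$ and assembling $W'$ and $W''$ from them (with the same boundary push-off arcs used in Lemma~\ref{lemma:w-move-IHX}) gives, exactly as before, Whitney disks whose rooted trees, rooted at the $\iinfty$-root, are $\textrm{H}$ and $\textrm{X}$; and each copy, being a parallel of an $\epsilon$-twisted disk taken along its Whitney section, is again $\epsilon$-twisted, so $W'$ and $W''$ have associated $\iinfty$-trees $\textrm{H}^\iinfty$ and $\textrm{X}^\iinfty$. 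Since $\omega(W)=\epsilon\cdot 1$, however, the two parallel copies are \emph{not} disjoint: they meet transversely in $|\omega(W)|=1$ point $p$ (cf.~the intersection count for Whitney moves on twisted Whitney disks, Exercise~\ref{ex:count-w-move-ints}), which is a point of $W'\pitchfork W''$ and therefore carries the tree $t_p=\langle\textrm{H},\textrm{X}\rangle$. Tracking orientations through the corner convention of section~\ref{subsec:w-tower-tree-orientations} shows $\epsilon_p=-\epsilon$, in agreement with the sign of the $\langle\textrm{H},\textrm{X}\rangle$ term in the twisted IHX relator.

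To finish, as in the framed case the disks $W_I$, $W_J$ and $W_K$ now each contain two Whitney-disk boundary arcs and are re-split down into the lower-order Whitney disks (Figure~\ref{split-w-tower-with-trees-fig}, Lemma~\ref{lem:split-w-tower}) after the $W_{(I,K)}'$-Whitney move is completed; this replaces $\cW_W$ inside $\nu(\cW_W)$ by three disjoint split subtowers $\cW_{W'}$, $\cW_{W''}$ and $\cW_p$ realizing the claimed data $(W',\textrm{H}^\iinfty)$, $(W'',\textrm{X}^\iinfty)$ and $(p,-\epsilon,\langle\textrm{H},\textrm{X}\rangle)$. I expect the main obstacle to be the orientation bookkeeping in the parallel-copies step: verifying that each of $W'$, $W''$ retains twisting precisely $\epsilon$, and that the unique intersection between the two copies has sign precisely $-\epsilon$ under the fixed positive- (or negative-) corner convention. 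Everything else is a faithful reprise of the proof of Lemma~\ref{lemma:w-move-IHX}.
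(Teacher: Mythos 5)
Your proposal correctly reproduces the paper's argument for the special case where the $L$-subtree of $\mathrm{I}^\iinfty$ is order zero, so that the twisted Whitney disk $W$ coincides with $W_{((I,J),K)}$ (the disk whose trivalent vertex is the IHX vertex). In that case the Whitney-move and transfer stages go through unchanged from Lemma~\ref{lemma:w-move-IHX}, the two Whitney parallels of $W$ inherit its $\epsilon$-twisting, and the unique intersection $p=W'\pitchfork W''$ forced by $\omega(W)=\pm 1$ provides the $\langle \textrm{H},\textrm{X}\rangle$ term. That matches the paper's first case.

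However, you have silently restricted the hypothesis. The lemma only asks that the $L$-subtree of $\mathrm{I}^\iinfty$ \emph{contain} the $\iinfty$-root, not that $L$ itself be the root edge: when $L$ has positive order, $W_{((I,J),K)}$ is a \emph{framed} Whitney disk carrying a boundary arc of some higher-order Whitney disk $V$ in the $L$-subtree, and the twisted disk $W$ sits strictly above $W_{((I,J),K)}$ at the end of a chain $V=V_1,V_2,\ldots,V_n=W$. Your step ``$W'$ and $W''$ are two parallel copies of the clean twisted $W$'' is then unavailable: the Whitney move and transfer move replace $W_{((I,J),K)}$ by two framed parallels, each of which meets $W_{L_1}$ in a new pair of intersections, and one must construct Whitney disks for all these new pairs by producing parallel copies of each $V_i$ up the chain. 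This is exactly what Lemma~\ref{lem:parallel-V-disks} is for, applied iteratively: the framed $V_i$ yield disjoint parallels, and only the final twisted $V_n=W$ yields parallels meeting in the single point $p$. Your sketch neither states nor proves this parallel-disks lemma, and without it the argument does not cover the general hypothesis of the statement.
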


\begin{proof}

We first consider the case where the $L$-labeled sub-tree is order zero, which means that $L$ is just the $\iinfty$-label, and the upper trivalent vertex of the I$^\iinfty$-tree in Figure~\ref{IHX-trees-fig} corresponds to the clean $\epsilon$-twisted $W_{((I,J),K)}$, with $\iinfty$-tree $\textrm{I}^\iinfty=((I,J),K)^\iinfty$. Then the construction in the proof of Lemma~\ref{lemma:w-move-IHX}, which starts by performing a Whitney move on the framed Whitney disk
$W_{(I,J)}$ corresponding to the lower trivalent vertex of the I-tree, exchanges $W_{((I,J),K)}$ for two Whitney-parallel Whitney disks $W_{(I,(J,K))}$ and $W_{(J,(I,K))}$ (Figures~\ref{IHX-transfer-move3B-fig} and~\ref{IHX-W-move1and2B-fig}). In our current setting these two new Whitney disks $W'$ and $W''$ inherit the $\epsilon$-twisting of $W$, and have associated twisted trees H$^\iinfty$ and X$^\iinfty$. 
And because of the twisting $\epsilon=\pm 1$, there is now a single new intersection $p=W'\cap W''$.
In the construction of Lemma~\ref{lemma:w-move-IHX} $W'$ inherits the orientation of $W$, and $W''$ inherits the opposite orientation, but in our current setting the signs of the twisted trees associated to $W$ and $W''$ are given by their twisting $\epsilon=\omega(W')=\omega(W'')$ which does not depend on orientations.
So we are free to choose orientations on $W'$ and $W''$ so that the sign of $p$ is $-\epsilon$.
Finally, splitting yields the desired split subtowers $\cW_{W'}$, $\cW_{W''}$ and $\cW_p$.

Now we consider the case where the $L$-labeled sub-tree has positive order. This means that the Whitney disk $W_{((I,J),K)}$ is framed and contains a boundary arc of some higher-order Whitney disk $V$ corresponding to a trivalent vertex of $L$ which is adjacent to the trivalent vertex for $W_{((I,J),K)}$ in the I$^\iinfty$ tree. 
As above, the construction of Lemma~\ref{lemma:w-move-IHX} exchanges $W_{((I,J),K)}$ for two parallel Whitney disks $W_{(I,(J,K))}$ and $W_{(J,(I,K))}$, but now these three Whitney disks are all framed. The pair of intersections between $W_{((I,J),K)}$ and some sheet $W_{L_1}$ that was paired by $V$ gives rise to two pairs of intersections $W_{L_1}\pitchfork W_{(I,(J,K))}$ and $W_{L_1}\pitchfork W_{(J,(I,K))}$.
To continue with the construction we need to find Whitney disks for these new intersection pairs. This will be accomplished using parallels of $V$, and we state this as a lemma for future reference:
\begin{lem}\label{lem:parallel-V-disks}
Let $V$ be an embedded Whitney disk for a pair of intersections between embedded sheets $A$ and $B$, and let $A'$ and $A''$ be pairwise disjoint parallel copies of $A$. 
\begin{enumerate}

\item\label{item:lem-parallel-framed-V-disks}
If $V$ is framed, then there exist pairwise disjointly embedded framed Whitney disks $V'$ pairing $A'\pitchfork B$, and $V''$ pairing $A''\pitchfork B$, which are parallel to $V$.

\item\label{item:lem-parallel-framed-V-disks}
If $V$ is $\epsilon$-twisted ($\epsilon=\pm 1$), then there exist embedded $\epsilon$-twisted Whitney disks $V'$ pairing $A'\pitchfork B$, and $V''$ pairing $A''\pitchfork B$, which are parallel to $V$, such that $V'\pitchfork V''$ is a single point.

\end{enumerate}
\end{lem} 
\begin{figure}[h]
        \centerline{\includegraphics[scale=.4]{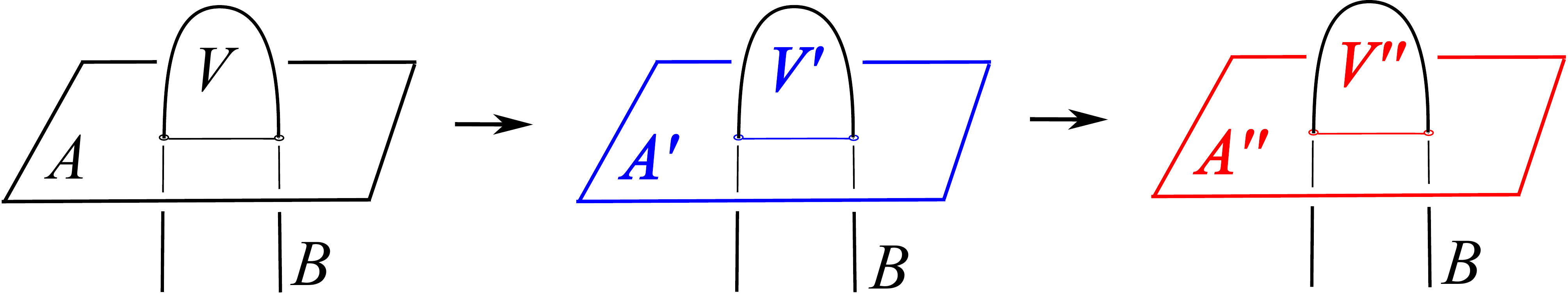}}
        \caption{A neighborhood of the framed embedded Whitney disk $V$ pairing sheets $A$ and $B$ contains (in nearby time-slices) parallel Whitney disks $V'$ and $V''$ pairing the intersections between $B$ and parallels $A'$ and $A''$ of $A$. Each of $V'$ and $V''$ has one boundary arc which is tangent to a boundary arc of $V$, and the other boundary arc normal to the boundary arc of $V$ in $A$. Only $B$ extends into all time-slices; the three Whitney disks and other three sheets are each contained in single time slices.}
        \label{fig:parallel-V-disks}

\end{figure}

\begin{proof}
Figure~\ref{fig:parallel-V-disks} shows the construction of $V'$ and $V''$ which are Whitney-parallel to $V$ in the case that $V$ is framed.
If $V$ is $(\pm 1)$-twisted, then Figure~\ref{fig:parallel-V-disks} is only accurate near the Whitney disk boundaries, but since $V'$ and $V''$ are Whitney parallel it follows that $V'$ and $V''$ intersect in a single point since they inherit the $(\pm 1)$-twisting of $V$.
\end{proof}

Returning to the twisted IHX construction, and simplifying notation by writing $A=W_{((I,J),K)}$, and $B=W_{L_1}$, and
$A'=W_{(I,(J,K))}$, and $A''=W_{(J,(I,K))}$, Lemma~\ref{lem:parallel-V-disks} gives us $V'$ pairing $A'\pitchfork B$, and $V''$ pairing $A''\pitchfork B$.

If $V$ was the $\epsilon$-twisted $W$, then we take $W'=V'$, $W''=V''$, and $p=W'\pitchfork W''$. Splitting then yields the desired split subtowers $\cW_{W'}$, $\cW_{W''}$ and $\cW_p$. 

If $V$ was not the $\epsilon$-twisted $W$, then $V$ intersected some sheet $W_{L_2}$ in a pair of intersections paired by some Whitney disk $V_2$, and hence the parallels $V'$ and $V''$ also each intersect $W_{L_2}$ in a pair of intersections. Again applying Lemma~\ref{lem:parallel-V-disks} (but now with $A:=V_2$ and $B:=W_{L_2}$), yields $V'_2$ and $V''_2$ pairing $V'\pitchfork W_{L_2}$ and $V''\pitchfork W_{L_2}$, with $V'_2$ and $V''_2$ inheriting the twisting or the intersections that 
$V_2$ had. 

We continue to apply Lemma~\ref{lem:parallel-V-disks} in this way as needed until reaching the $\epsilon$-twisted $W=V_n$, which gives rise to $W'=V'_n$, $W''=V''_n$ and $p=W'\pitchfork W''$. Splitting then yields the desired split subtowers $\cW_{W'}$, $\cW_{W''}$ and $\cW_p$.
\end{proof}


\subsection{Outline of twisted order-raising obstruction theory proof}\label{subsec:order-raising-proof-sketch}
Recall from sections~\ref{subsec:twisted-tree-groups},~\ref{subsec:geometry-of-relations} and~\ref{subsec:twisted-order-n-invariant} the statement of Theorem~\ref{thm:twisted-order-raising}: A link $L\subset S^3$ bounds an order $n$ twisted $\cW\subset B^4$ with $\tau_n^\iinfty(\cW)=0\in\cT_n^\iinfty$ if and only if $L$ bounds an order $n+1$ twisted Whitney tower.

Theorem~\ref{thm:twisted-order-raising} is essential to the classification of order~$n$ twisted Whitney towers in the 4-ball discussed in Section~\ref{sec:twisted-order-n-classification-arf-conj}, and this section gives a brief outline of the proof, as given in \cite[Thm.1.9]{CST1}.
This proof, which depends on the above Whitney move IHX Lemmas~\ref{lemma:w-move-IHX} and \ref{lem:w-move-twistedIHX}, generalizes part of the construction of an order~2 framed Whitney tower in section~\ref{subsec:tau1-vanishes-equals-order-2-w-tower}.

The ``if'' direction of the theorem holds since by definition any order $n+1$ twisted Whitney tower is also an order $n$ twisted Whitney tower with no unpaired order $n$ intersections or order $n/2$ twisted Whitney disks.
For the ``only if'' direction, we will sketch how the realization of the relations in $\cT_n^\iinfty$ by geometric constructions can be used to arrange that all unpaired intersections and twisted Whitney disks occur in ``algebraically cancelling'' pairs (representing inverse elements in $\cT_n^\iinfty$), which can then be exchanged for ``geometrically cancelling'' intersection pairs (admitting Whitney disks) and framed Whitney disks of order $n/2$. 
The strategy is analogous to the order~2 construction in section~\ref{subsec:tau1-vanishes-equals-order-2-w-tower}, but now we are dealing with twisted Whitney disks and higher-order intersections, and hence more complicated trees (although here in $B^4$ we do not have to keep track of edge decorations and INT relations).

We can assume that $\cW$ is split (Lemma~\ref{lem:split-w-tower}), and that $t(\cW)$ contains no framed trees of order $>n$, and no twisted trees of order $>n/2$ (Exercise~\ref{ex:eliminate-higher-order-trees}).

The condition ${\tau_n}^\iinfty(\cW)=0\in{\cT_n}^\iinfty$
means that in the free abelian group on order~$n$ framed trees and order $n/2$ twisted trees $t(\cW)$ lies in the span of the relators in section~\ref{subsec:twisted-tree-groups} which define ${\cT_n}^\iinfty$.
As usual we consider the trees $t(\cW)$ to be embedded in $\cW$.

As described in \cite[Sec.4.1]{CST1}, using also \cite[Sec.4]{ST2} and \cite{CST}, there are three main steps to constructing an order~$n+1$ twisted Whitney tower from $\cW$: 

First, controlled modifications of $\cW$ realizing the relators, as discussed in section~\ref{subsec:geometry-of-relations}, are used to arrange that the order $n$ trees and order $n/2$ $\iinfty$-trees in $t(\cW)$ all occur in isomorphic oppositely-signed algebraically canceling pairs (see the start of section~4 of \cite{CST1}). 

Secondly, using the above Whitney move IHX Lemmas~\ref{lemma:w-move-IHX} and \ref{lem:w-move-twistedIHX}, all these paired trees are converted into pairs of ``simple'' (right- or left-normed) trees by IHX constructions. These simple trees are characterized by the property that every trivalent vertex is adjacent to a univalent vertex. This corresponds to every Whitney disk having a boundary arc on an order zero disk. (The reason for this step will be explained momentarily in the description of the third step.) After this step the all trees still occur in algebraically cancelling pairs.

The final third step uses an iterated higher-order variation of the ``transfer move'' used in section~\ref{subsubsection:towards-geo-cancellation} to achieve geometric cancellation for algebraically canceling order~1 pairs. This step is described in detail for algebraically cancelling pairs of framed trees in \cite[Lem.15]{ST2}, and
for twisted order~$n/2$ Whitney disks in \cite[Sec.4.1]{CST1}, and
requires that all cancelling pairs are simple, as arranged in the second step. 
The reason for this requirement has to do with the connectivity of sheets that is needed for the transfer move. This can be seen by observing that the construction of the two order~2 Whitney disks in Figure~\ref{fig:transfer-move-2} depends on all three sheets of $A$ being connected. In higher orders it turns out that having just two of these sheets connected suffices to iterate the move finitely many times until eventually terminating with the desired result, provided one starts at an ``end'' of a simple tree.

After this third step the new layer of order $n+1$ Whitney disks have uncontrolled intersections, but all of these new intersections are of order $\geq n+1$. And the construction combining the twisted Whitney disk pairs into framed Whitney disks (Figures 21--22 in \cite[Sec.4.1]{CST1}) creates only new twisted Whitney disks of order $>n/2$, which are supported near the original twisted Whitney disk pairs, along with intersections of order $\geq n$ among these new twisted Whitney disks. 
Hence an order~$n+1$ twisted Whitney tower has been created.


Analogous order-raising intersection-obstruction theories are described in \cite[Sec.4.4]{CST1} for order $n$ framed Whitney towers, in \cite[Thm.6]{ST2} for non-repeating Whitney towers, and in \cite[Thm.6.17]{CST6} for ``$k$-repeating'' Whitney towers.

%
%
%
%


\subsection{Whitney disk orientations and the AS relation}\label{subsec:w-disk-orientation-choices-AS}
This section explains why the signs associated to trees for unpaired intersections in a Whitney tower only depend on the orientations of the underlying order~0 surface and the ambient $4$-manifold modulo the AS antisymmetry relations.

Let $\cW$ be a Whitney tower in an oriented $4$-manifold $X$.
The order~0 surface supporting $\cW$ comes with a fixed orientation, and arbitrary orientations on the Whitney disks in $\cW$ are chosen and fixed.
Fix a choice of either the positive or negative corner convention described in section~\ref{subsec:w-tower-tree-orientations}. Here we will refer to this fixed convention choice as ``our corner convention''.

For $p\in W_I\pitchfork W_J$ an unpaired intersection between Whitney disks $W_I$ and $W_J$ in $\cW$, 
the corresponding tree $t_p=\langle I,J\rangle$ from section~\ref{subsec:trees-for-w-disks-and-ints} is embedded in $\cW$ in a way that satisfies our corner convention, and the orientations of the trivalent vertices of $t_p$ are taken to be induced by the Whitney disk orientations.

We will consider here the effect on the signed oriented tree $\epsilon_p\cdot t_p$ of switching the fixed orientation choice on any of the Whitney disks corresponding to the trivalent vertices of $t_p$. The conclusion will be that any such orientation switch corresponds to an AS relation (Figure~\ref{fig:ASandIHXtree-relations}), so that modulo AS relations $\epsilon_p\cdot t_p$ only depends on the orientation of the underlying order~0 surface.  

First of all,
the sign $\epsilon_p=\pm 1$ is determined by comparing the concatenated orientations of $W_I$ and $W_J$ with the orientation of $X$ at $p$. 
So in the case that $W_I\neq W_J$, then switching the orientation of one of either $W_I$ or $W_J$ will switch the trivalent orientation of the corresponding trivalent vertex of $t_p$, and will switch the sign $\epsilon_p$, meaning that $\epsilon_p\cdot t_p$ is changed by an AS relation. 

In the case that $W_I=W_J$, then switching the orientation of $W_I=W_J$ will switch the trivalent orientation of $t_p$ at both of the trivalent vertices adjacent to the edge passing through $p$, and will not change the sign $\epsilon_p$, meaning that $\epsilon_p\cdot t_p$ is changed by two AS relations.

Now consider the effect of switching the orientation on a Whitney disk $W_{K_1}$ other than $W_I$ or $W_J$, so that $W_{K_1}$ contains a boundary arc of a higher-order Whitney disk $W_{(K_1,K_2)}$ which pairs intersections $q$ and $r$ between $W_{K_1}$ and some $W_{K_2}$, as in the left side of Figure~\ref{fig:W-disk-orientation-switch}.

The orientation switch on $W_{K_1}$ changes the cyclic orientation at the trivalent vertex in $W_{K_1}$, and it also switches the signs of $q$ and $r$.
This switching of the signs of $q$ and $r$ means that the embedding of $t_p$ needs to changed near the trivalent vertex $v$ of $W_{(K_1,K_2)}$
in order to preserve our corner convention. The effect of this convention-preserving change in the embedding of $t_p$ near $v$ is to switch the cyclic orientation of $t_p$ at $v$ as shown in the right side of Figure~\ref{fig:W-disk-orientation-switch}.
Note that the orientation of the Whitney disk $W_{(K_1,K_2)}$ does not change.


\begin{figure}[h]
        \centerline{\includegraphics[scale=.3]{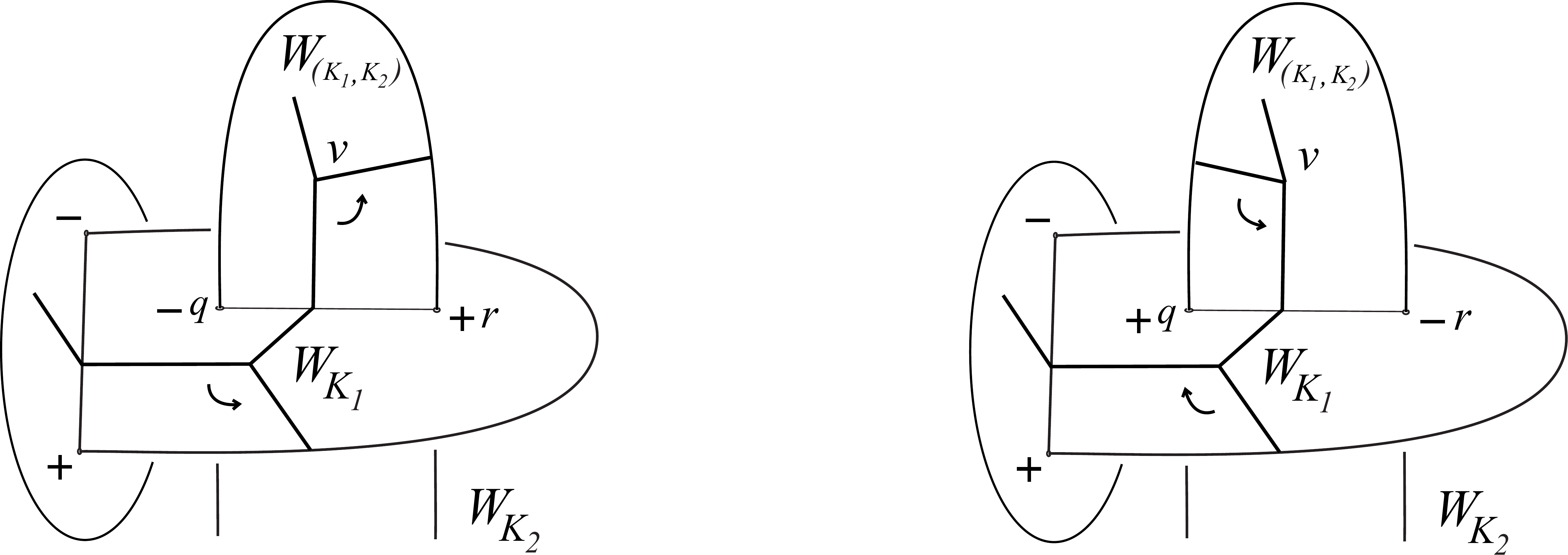}}
        \caption{Before (left) and after (right) the orientation switch on $W_{K_1}$, using the positive corner convention. Any interior intersection(s) in $W_{(K_1,K_2)}$ are not shown.}
        \label{fig:W-disk-orientation-switch}

\end{figure}

Since the orientation of $W_{(K_1,K_2)}$ is not changed, the sign $\epsilon_p$ of $p\in W_I\pitchfork W_K$ is unchanged.
(It is possible that $W_{(K_1,K_2)}=W_I$ or $W_{(K_1,K_2)}=W_J$.)
So the result of switching the orientation of any one $W_{K_1}$ (with $W_{K_1}\neq W_I$, and $W_{K_1}\neq W_J$) while preserving our corner convention is the same as applying \emph{two} AS relations to $t_p$, one at the trivalent vertex in $W_{K_1}$, and one at $v\in W_{(K_1,K_2)}$.

\subsection{Section~\ref{sec:appendix} Exercises}\label{subsec:appendix-exercises}

\subsubsection{Exercise} \label{ex:independence-of-interior-htpy}
In the proof of independence of $\tau_1(A)$ on Whitney disk interiors given in section~\ref{subsubsec:w-disk-interiors} show that the sphere $S=(W\setminus w)\cup (V'\setminus v')$ is homotopic to $S'=W\cup V'$ by a homotopy supported near $\gamma$.
HINT: Consider a homotopy from $S'$ to $S$ which pulls the common collars $w=v'$ slightly apart while perturbing $S'$ to be smooth, and then shrinks the union of the collars away from $\gamma$.

\subsubsection{Exercise} \label{ex:boundary-int-def-well-defined}
Check that the signed tree $t_p$ associated to 
$p\in
\partial_{\epsilon}W\cap\partial_{\delta}V$
in Equation~\ref{eq:boundary-int-tree}
near Figure~\ref{W-disk-boundary-int-fig} is well-defined, including the case that $W=V$.

\subsubsection{Exercise} \label{ex:cancelling-boundary-push-cases}
Check that in each of the other cases of Figure~\ref{fig:cancelling-boundary-push}
pushing $\partial_\delta W$ into $\partial_\epsilon V$ across the $\pm$ self-intersection of $A$ paired by $V$ creates
an algebraically canceling pair of signed trees.

\subsubsection{Exercise} \label{ex:pairing-choice}
In the setting of section~\ref{subsubsec:pairings} 
and Figure~\ref{pairing-choice-change-fig}, 
check that the order~1 decorated trees corresponding to the intersections $A\pitchfork V'$ cancel with those corresponding to the oppositely-signed parallel copy of $V'$ in $V$.

\subsubsection{Exercise} \label{ex:RP2-lambda0-indeterminacy}
In the discussion in section~\ref{subsubsec:RP2-INT} of the pairing $\lambda_0(A,R)$ for $A:S^2\imra X$ and $R:\RP^2\imra X$ with the generator of $\pi_1\RP^2$ mapping to $a\in\pi_1X$, show that changing the choice of sheet-changing path through $p\in A\pitchfork R$ changes $g_p$ by right multiplication by $a^n$ and changes $\epsilon_p$ by multiplication by $(-1)^n$ for some integer $n$.

\subsubsection{Exercise} \label{ex:sheet-choice-w-disks-exist}
From section~\ref{subsubsec:sheet-choices}: 
For $A:S^2\imra X$, let $p$ and $q$ be a positive and a negative transverse self-intersection of $A$, and
denote the preimages by $A^{-1}(p)=\{x,x'\}\subset S^2$, and $A^{-1}(q)=\{y,y'\}\subset S^2$. 

If $p$ and $q$ have common group element $g_p=a=g_q$,
then any Whitney disk $W$ pairing $p$ and $q$ induces a pairing of $\{x,x'\}$ with $\{y,y'\}$ since each arc of $\partial W$ runs between a sheet of $A$ around $p$ and a sheet of $A$ around $q$. 

Check that Whitney disks exist for \emph{both} of the two pairing choices $x\leftrightarrow y, x'\leftrightarrow y'$ and $x\leftrightarrow y', x'\leftrightarrow y$ if and only if $a^2=1\in\pi_1X$.

\subsubsection{Exercise} \label{ex:RP2}
From section~\ref{subsubsec:sheet-choices} and Figure~\ref{fig:sheet-choice-preimage-arcs}: 
Check that the union $A(D)\cup W\cup V$ defines the image of a map $R:\RP^2\to X$ which sends the generator of $\pi_1\RP^2$ to $a\in\pi_1X$.

\subsubsection{Exercise} \label{ex:V=W-near-p-and-q-for-rho-prime-lemma}
Let $W$ and $V$ be Whitney disks pairing the same intersections $p$ and $q$ between surfaces in a 4-manifold,
and suppose that $\partial W=\partial V$ near $p$ and near $q$. 
Convince yourself that by an isotopy it can be arranged that $W=V$ near $p$ and near $q$ without creating any
new intersections.

\subsubsection{Exercise} \label{ex:transfer-push-sign}
Check that in the setting of Figures~\ref{fig:transfer-move-Before-1},~\ref{fig:transfer-move-first-finger-move}
and~\ref{fig:transfer-move-1} it can be arranged that $p'$ has the same sign as $p$.

\subsubsection{Exercise} \label{ex:transfer-push-tree}
Check that in the setting of Figures~\ref{fig:transfer-move-Before-1},~\ref{fig:transfer-move-first-finger-move}
and~\ref{fig:transfer-move-1} it can be arranged that $p'$ has the same decorated tree as $p$.

\subsubsection{Exercise} \label{ex:transfer-order-2-w-disk}
Use the previous two exercises to check that $p'$ and $q$ in Figure~\ref{fig:transfer-move-1-with-order-2-boundary} admit an order~2 Whitney disk.

\subsubsection{Exercise} \label{ex:order-2-disk-underneath}
The construction has also created order~1 intersections $r,s\in A\pitchfork A$ which admit a framed embedded Whitney disk $V$, shown ``underneath'' the horizontal sheet in Figure~\ref{fig:transfer-move-2}.
Check that the pair of transverse intersections between $A$ and the embedded order~1 Whitney disk $V$ in Figure~\ref{fig:transfer-move-2} admits a framed order~2 Whitney disk (whose boundary is indicated in blue in the figure).

\subsubsection{Exercise} \label{ex:sheet-choice-whiskers}
In the paragraph before the proof of Lemma~\ref{lem:rho-prime}, check that the group element associated to any point in $A(D)\pitchfork A(D)'$ is $1$ or $a$.

\subsubsection{Exercise} \label{ex:splitting-preserves-edge-decorations}
Check that the operation of splitting a Whitney tower preserves edge decorations on trees.
Do this first for decorated order~1 trees (as in section~\ref{subsec:decorated-order-1-trees}), then generalize to higher-order trees.

\subsubsection{Exercise} \label{ex:IHX-signs}
Check that the signs of the trees created by the IHX construction in the proof of Lemma~\ref{lemma:w-move-IHX}
have the correct signs, as shown in the right side of Figure~\ref{IHX-trees-fig}.

\subsubsection{Exercise} \label{ex:IHX-preserves-edge-decorations}
Check that the IHX construction in the proof of Lemma~\ref{lemma:w-move-IHX}
also works for decorated trees. 

\subsubsection{Exercise}\label{ex:derive-non-repeating-order-1-thm-from-tau-1-proof}
Prove that $\lambda_1(A_1,A_2,A_3)$ from Theorem~\ref{thm:lambda1-vanishes} is a well-defined homotopy invariant which vanishes if and only if $A_1\cup A_2\cup A_3$ admits an order~2 non-repeating Whitney tower by adapting to this easier non-repeating setting the proof given in sections~\ref{subsec:tau1-well-defined} and~\ref{subsec:tau1-vanishes-equals-order-2-w-tower}
of the analogous statements for $\tau_1(A)$.

HINT: First show that $\lambda_1(A_1,A_2,A_3)$ is independent of the choice of order~1 non-repeating Whitney tower, following sections~\ref{subsubsec:w-disk-interiors}, \ref{subsubsec:w-disk-boundaries} and \ref{subsubsec:pairings} (but without worrying about Whitney disk twistings in section~\ref{subsubsec:w-disk-interiors}). Then one gets homotopy invariance exactly as in section~\ref{subsubsec:htpy-invariance-tau1}.
To get an order~2 non-repeating Whitney tower proceed as in
section~\ref{subsec:tau1-vanishes-equals-order-2-w-tower} to first achieve algebraic cancellation and then geometric cancellation of all order~1 intersections intersections.

\end{document}